 \def\@textbottom{\vskip \z@ \@plus 1pt}
 \let\@texttop\relax
\apptocmd{\sloppy}{\hbadness 10000\relax}{}{}
\DeclareMathOperator{\tr}{\text{tr}}
\let\emptyset\varnothing
\newcommand{\newterm}[1]{\textit{#1}\index{#1}}
\setlist{itemsep=.01em}
\newtheorem{thm}{T}
\newcommand{\oldc}[1]{}
\newcommand{\newc}[1]{#1}
\numberwithin{thm}{chapter}
\newtheorem{theorem}[thm]{Theorem}
\newtheorem{lemma}[thm]{Lemma}
\newtheorem{corollary}[thm]{Corollary}
\newtheorem{proposition}[thm]{Proposition}
\theoremstyle{remark} 
\newtheorem{remark}[thm]{Remark}
\newtheorem{example}[thm]{Example}
\newtheorem{notation}[thm]{Notation}
\theoremstyle{plain} 
\theoremstyle{definition} 
\newtheorem{definition}[thm]{Definition}
\theoremstyle{plain} 
\newtheorem*{theorem*}{Theorem}
\newtheorem*{corollary*}{Corollary}
\newtheorem*{proposition*}{Proposition}
\newtheorem*{lemma*}{Lemma}
\newtheorem*{conjecture*}{Conjecture}
\newtheorem*{problem*}{Problem}
\theoremstyle{remark} 
\newtheorem*{remark*}{Remark}
\newtheorem*{notation*}{Notation}
\newtheorem*{example*}{Example}
\theoremstyle{plain} 
\theoremstyle{definition} 
\newtheorem*{definition*}{Definition}
\theoremstyle{plain} 
\newtheorem*{rep@theorem}{\rep@title}
\newcommand{\newreptheorem}[2]{%
\newenvironment{rep#1}[1]{%
 \def\rep@title{#2 \ref{##1}}%
 \begin{rep@theorem}}%
 {\end{rep@theorem}}}
\newenvironment{customthm}[1]
  {\innercustomthm}
  {\endinnercustomthm}
\newcommand*\circled[1]{\tikz[baseline=(char.base)]{
   \node[shape=circle,draw,inner sep=1pt, thick] (char) {#1};}}
\newcommand*\circledRed[1]{\text{\normalsize $\color{rgb,255:red,180;green,51;blue,51}\circled{#1}$}}
\begin{document}

\frontmatter


\title{AF Embeddability of the C*-Algebra of a Deaconu-Renault Groupoid}
\author{Rafael Pereira Lima}

\subject{Mathematics}
\abstract{We study Deaconu-Renault groupoids corresponding to surjective local homeomorphisms on  locally compact, Hausdorff, second countable, totally disconnected spaces, and we characterise  when the C*-algebras of these groupoids are AF embeddable. Our main result generalises theorems in the literature for graphs and for crossed products of commutative C*-algebras by the integers. We give a condition on the surjective local homeomorphism that characterises the AF embeddability of the C*-algebra of the associated Deaconu-Renault groupoid. In order to prove our main result, we analyse homology groups for AF groupoids, and we prove a theorem that gives an explicit formula for the isomorphism of these groups and the corresponding K-theory. This isomorphism generalises \cite{FKPS, Matui}, since we give an explicit formula for the isomorphism and we show that it preserves positive elements.}

\phd


\maketitle

\chapter*{Acknowledgments}\label{C:ack} 

The completion of this thesis is not the result of an independent work. So I would like to recognise the work and help that I have got during these three years. My supervisors, Astrid an Huef and Lisa Orloff Clark, were present during the process. Thank you for meeting regularly with me, for reading carefully the text, for the patience, and for the support to attend conferences.

Camila Sehnem has contributed during her stay in New Zealand by attending my weekly meetings with the supervisors, and giving some helpful suggestions. 

I would like also to thank Rodrigo Bissacot, Paulo Cordaro and Cristi\'an Ortiz for the reference letters when I was applying for the PhD. There are other people who contributed to my career, like Steven Archer and Mark McGuinness. Iain Raeburn offered an interesting course on C*-algebras during my first year.

I would like to thank the colleagues who have been nice to me. I am grateful that I still keep in touch with the research group from my master's degree Working with Rodrigo Bissacot, Bruno Kimura and Thiago Raszeja motivates me to stay in academia. My family has been very supportive, as always. 

\tableofcontents


\mainmatter



\chapter{Introduction}

A problem that has received much interest during the last decades is the classification of C*-algebras. This problem consists of finding an invariant that identifies all C*-algebras of a certain class up to isomorphism. One of the first results in the area is Glimm's \cite{Glimm} classification of some uniformly hyperfinite C*-algebras. Glimm's result was interesting because uniformly hyperfinite algebras are the C*-algebraic version of the $\text{II}_1$ factors for von Neumann algebras, but unlike the von Neumann case \cite{MurrayvonNeumann} (see also \cite[Theorem V.1.13]{Connes}), uniformly hyperfinite C*-algebras are not all isomorphic.

Dixmier \cite{Dixmier-classification} then generalised this classification by studying a class of C*-algebras called matroid C*-algebras. In \cite{BratteliAF}, Bratteli defined a more general class \oldc{the class} of C*-algebras, called approximately finite dimensional C*-algebras (or AF algebras). An AF algebra is the completion of an increasing union of finite dimensional C*-algebras. Elliott \cite[Theorem 4.3]{ElliottAF} (see \cite[Theorem IV.4.3]{Davidson} for a more modern formulation) classified all AF algebras by their K-theory.

Since Elliott's classification of AF algebras, different types of C*-algebras have been classified by invariants. Several stably finite C*-algebras \cite{GLN1, GLN2} are classifiable. Kirchberg \cite{Kirchberg} and Phillips \cite{Phillips} classified all unital, simple, separable, nuclear and purely infinite C*-algebras (called Kirchberg algebras) satisfying the Universal Coefficient Theorem by their K-theory. See Strung's book on the classification of C*-algebras \cite{Strung-classification} for more details.

In certain types of C*-algebras, a dichotomy exists between purely infinite and stably finite. In other words, for certain classes of C*-algebras, a C*-algebra is either stably finite or purely infinite. This dichotomy holds for reduced C*-algebras of certain minimal, topological principal, ample groupoids \cite[Theorem D]{BonickeLi}, \cite[Theorem 7.4]{RainoneSims}, \cite{Ma}.  Additionally to the classification programme, the possibility of finding a dichotomy for large classes of C*-algebras motivates us to study purely infinite and stably finite C*-algebras, and related properties.

For different C*-algebras, the notions of stably finite, quasidiagonal and AF embeddable C*-algebras are equivalent. This equivalence was given by Pimsner \cite[Theorem 9]{Pimsner} for crossed products of commutative C*-algebras by the integers. Brown \cite{Brown-AFE} generalised this result to crossed products of AF algebras to the integers, and Schafhauser \cite[Theorem 6.7]{Schafhauser-topologicalgraphs} proved the equivalence for topological graph algebras with no sinks. Topological graph algebras are very general and cover the C*-algebras of Deaconu-Renault groupoids, that we study in this thesis. So, the equivalence of these notions hold for the C*-algebras we are interested.

In this thesis we study C*-algebras that are AF embeddable \newc{(or AFE algebras)}, i.e., C*-algebras that are subalgebras of AF algebras. 

There are several results in the literature studying C*-algebras that are AF, AF embeddable or purely infinite. For example, graph loops characterises when a graph algebra is approximately finite dimensional \cite{KPR}. Similarly, Schafhauser \cite{Schafhauser-AFEgraph} describes when a graph algebra is AF embeddable, while \cite[Proposition 5.3]{BPRS} gives a graph condition that makes the corresponding C*-algebra purely infinite. Szyma\'{n}ski \cite{Szymanski} showed that every stable, purely infinite, simple and classifiable C*-algebra with $K_1$ torsion-free is a graph algebra. 

Kumjian and Pask \cite{KP-kgraph} study higher rank graph algebras and give sufficient conditions that make the C*-algebras AF \oldc{dimensional} or purely infinite. \cite{CaHS}, \cite[Corollary 6.2]{Schafhauser-AFEsimple} characterise when certain higher rank graph algebras are AF embeddable.

 In this thesis we study C*-algebras of Deaconu-Renault groupoids. These algebras generalise graph algebras \cite{KPRR} and are examples of topological graph algebras \cite[Proposition 10.9]{KatsuraII}. In Appendix \ref{appendix:Deaconu-Renault}, we describe this class of groupoids with more detail.

 The main result of this thesis is the characterisation of the AF embeddability for C*-algebras of Deaconu-Renault groupoids, given some topological conditions, as we state below.

\begin{customthm}{1}
\label{thm:AFEDR}
Let $\sigma$ be a surjective local homeomorphism on a locally compact, Hausdorff, second countable, totally disconnected space $X$. Denote by $\mathcal{G}$ be the Deaconu-Renault groupoid corresponding to $\sigma$. Then the following are equivalent:
\begin{enumerate} [(i)]
\item $C^*(\mathcal{G})$ is AF embeddable,
\item $C^*(\mathcal{G})$ is quasidiagonal,
\item $C^*(\mathcal{G})$ is stably finite,
\item $\mathrm{Im}(\sigma_\ast - \mathrm{id}) \cap C_c(X, \mathbb{N}) = \lbrace 0 \rbrace$,
\end{enumerate}
where the map $\sigma_\ast: C_c(X, \mathbb{Z}) \rightarrow C_c(X, \mathbb{Z})$ is defined by
\begin{align*}
\sigma_\ast(f)(x) = \sum_{y: \sigma(y) = x} f(y)
\hspace{20pt}
\text{for $x \in X$, $f \in C_c(X, \mathbb{Z})$.}
\end{align*}
\end{customthm}

This theorem generalises similar results for graph algebras  \cite{Schafhauser-AFEgraph}, and for crossed products of unital commutative C*-algebras by the integers \cite[Theorem 11.5]{Brown-quasidiagonal}, \cite[Theorem 9]{Pimsner}. It is analogous to some results for higher-rank graphs \cite{CaHS}, \cite[Corollary 6.2]{Schafhauser-AFEsimple}. C*-algebras of Deaconu-Renault groupoids are examples of topological graph algebras. When $X$ is compact, then the corresponding topological graph algebra is unital and, in this case, Theorem \ref{thm:AFEDR} is an example of \cite[Theorem 6.7]{Schafhauser-topologicalgraphs}, that characterises when a unital topological graph algebra is AF embeddable.

Note that the construction of the Deaconu-Renault groupoid $\mathcal{G}$ depends on the surjective local homeomorphism $\sigma$. The contribution of Theorem \ref{thm:AFEDR} is that it provides a condition on $\sigma$ that is equivalent to the AF embeddability of the C*-algebra of the corresponding Deaconu-Renault groupoid.

In order to prove Theorem \ref{thm:AFEDR}, we apply the following result by Brown \cite{Brown-AFE}, that gives an equivalent condition to AF embeddability of crossed products of AF algebras by the integers:

\begin{definition*}
If $B$ is an AF algebra and $\beta \in \mathrm{Aut}(B)$, then we denote by $H_\beta$ the subgroup of $K_0(B)$ given by all elements of the form $K_0(\beta)(x) - x$ for $x \in K_0(B)$.
\end{definition*}

\begin{theorem} (Brown) \cite[Theorem 0.2]{Brown-AFE}
\label{thm:brown}
If $B$ is an AF algebra and \oldc{$\alpha \in \mathrm{Aut}(B)$}\newc{$\beta \in \mathrm{Aut}(B)$}, then the following are equivalent:
\begin{enumerate} [(a)]
\item $B \rtimes_\beta \mathbb{Z}$ is AF embeddable,
\item $B \rtimes_\beta \mathbb{Z}$ is quasidiagonal,
\item $B \rtimes_\beta \mathbb{Z}$ is stably finite,
\item $H_\beta \cap K_0(B)^+ = \lbrace 0 \rbrace$.
\end{enumerate}
\end{theorem}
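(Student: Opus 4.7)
My plan is to establish the cyclic chain (a) $\Rightarrow$ (b) $\Rightarrow$ (c) $\Rightarrow$ (d) $\Rightarrow$ (a). The first three implications follow from standard general machinery (quasidiagonality of AF algebras, the Pimsner--Voiculescu exact sequence), while (d) $\Rightarrow$ (a) is the technical core and is where I would expect to spend the bulk of the effort.

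For (a) $\Rightarrow$ (b), I would use that every AF algebra is quasidiagonal (directly from its increasing net of finite-dimensional subalgebras in a faithful representation) and that quasidiagonality passes to C*-subalgebras, since a quasidiagonal net of projections for the ambient algebra serves equally well for any subalgebra. The implication (b) $\Rightarrow$ (c) is Voiculescu's classical observation that a quasidiagonal C*-algebra is stably finite: quasidiagonality is preserved under passage to matrix algebras, and in a quasidiagonal algebra any isometry must be a unitary, so no proper subprojection can be Murray--von Neumann equivalent to its containing projection.

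For (c) $\Rightarrow$ (d), the key tool is the Pimsner--Voiculescu six-term exact sequence, which contains the fragment
\[
K_0(B) \xrightarrow{\mathrm{id} - K_0(\beta)} K_0(B) \xrightarrow{\iota_*} K_0(B \rtimes_\beta \mathbb{Z}),
\]
so that $H_\beta = \ker \iota_*$. Suppose for contradiction that there exists $0 \neq x \in H_\beta \cap K_0(B)^+$. Since $B$ is AF and hence has cancellation, $x$ is represented by a nonzero projection $p \in M_n(B)$. As $\iota_*[p]_0 = 0$, there is $N$ and a projection $q \in M_N(B \rtimes_\beta \mathbb{Z})$ such that $\iota(p) \oplus q \sim q$ in $M_{n+N}(B \rtimes_\beta \mathbb{Z})$. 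But $q \leq \iota(p) \oplus q$, so stable finiteness of $B \rtimes_\beta \mathbb{Z}$ forces $\iota(p) \oplus q = q$, i.e. $\iota(p) = 0$. Since the inclusion $B \hookrightarrow B \rtimes_\beta \mathbb{Z}$ is faithful (via the canonical conditional expectation), this contradicts $p \neq 0$.

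The main obstacle is (d) $\Rightarrow$ (a), where an AF embedding must be produced from purely K-theoretic data. My approach would follow Brown's strategy: condition (d) means that the quotient ordered group $K_0(B)/H_\beta$ is a dimension group, and by Elliott's theorem is therefore the $K_0$ of some AF algebra $A$. The goal is then to construct an embedding $B \rtimes_\beta \mathbb{Z} \hookrightarrow A \otimes \mathcal{K}$ by simultaneously approximating the Bratteli tower of $B$ and the unitary implementing $\beta$. Concretely, at each stage one would realise a finite-dimensional subalgebra $B_n \subseteq B$ together with $\beta(B_n)$ as commuting pieces of a larger finite-dimensional subalgebra of $A \otimes \mathcal{K}$, and construct partial isometries interpolating $B_n$ and $\beta(B_n)$ using that (d) removes the K-theoretic obstruction to matching their multiplicities. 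Iterating this consistently and passing to the limit yields the embedding. I anticipate that the main technical hurdle is coherence across levels: arranging the approximations so that the limiting map is isometric on the dense $*$-subalgebra generated by $B$ and $u$, rather than merely a $*$-homomorphism. Controlling this requires the full strength of the ordered-group condition in (d) together with a Voiculescu-type absorption argument that simultaneously handles $B$ and the implementing unitary.
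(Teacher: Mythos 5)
First, a point of comparison: the thesis does not prove this statement at all. It is quoted verbatim as Brown's Theorem 0.2 from \cite{Brown-AFE} and used as a black box, so there is no in-paper proof to measure your argument against. Judged on its own merits, your chain (a) $\Rightarrow$ (b) $\Rightarrow$ (c) $\Rightarrow$ (d) is essentially correct and standard: AF algebras are quasidiagonal and quasidiagonality passes to C*-subalgebras; Voiculescu's observation gives (b) $\Rightarrow$ (c) (after passing to unitizations, which preserve quasidiagonality, since $B \rtimes_\beta \mathbb{Z}$ need not be unital); and your Pimsner--Voiculescu argument for (c) $\Rightarrow$ (d) is right, using $K_1(B) = 0$ to get $\ker \iota_* = H_\beta$, cancellation in the AF algebra $B$ to represent a nonzero positive class by a nonzero projection, and faithfulness of $\iota$. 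The only imprecision there is that the auxiliary projection $q$ witnessing stable equivalence lives in matrices over the unitization of $B \rtimes_\beta \mathbb{Z}$, so you need the (standard) fact that stable finiteness of a C*-algebra is equivalent to stable finiteness of its unitization.

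The genuine gap is (d) $\Rightarrow$ (a), which is the entire content of Brown's paper and which your proposal only sketches. Two specific problems. First, the assertion that condition (d) makes $K_0(B)/H_\beta$ a dimension group is not automatic: (d) guarantees only that the image of $K_0(B)^+$ induces a partial order (rather than a mere pre-order) on the quotient; unperforation and the Riesz interpolation property do not obviously pass to quotients by the subgroup $\mathrm{Im}(K_0(\beta) - \mathrm{id})$, and establishing the correct ordered-group structure is a nontrivial part of Brown's argument. Second, the step ``simultaneously approximate the Bratteli tower of $B$ and the unitary implementing $\beta$, then pass to the limit'' is precisely the analytic core of the theorem (Berg's technique / a Voiculescu-type absorption argument), and as written it is a description of a goal rather than a construction: you give no mechanism for producing the interpolating partial isometries, no control of norms ensuring the limiting $*$-homomorphism is injective, and no argument for coherence across levels beyond noting that it is needed. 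As it stands the proposal proves (a) $\Rightarrow$ (b) $\Rightarrow$ (c) $\Rightarrow$ (d) but not the converse implication that closes the cycle, so the equivalence is not established.
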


Note that AF embeddability, quasidiagonality and the stable finite properties are preserved by stable isomorphisms. If we choose an AF algebra $B$ such that $B \rtimes_\beta \mathbb{Z}$ is stably isomorphic to $C^*(\mathcal{G})$, then we can apply Brown's theorem to get a characterisation of the AF embeddability of $C^*(\mathcal{G})$. Indeed, we let $c: \mathcal{G} \rightarrow \mathbb{Z}$ be the canonical cocycle, and we let $B = C^*(\mathcal{G}(c))$ be the C*-algebra of the skew product $\mathcal{G}(c)$. This C*-algebra is AF by \cite[Lemma 6.1]{FKPS}. Theorem III.5.7 of \cite{Renault} gives the isomorphism
\begin{align*}
B \cong C^*(\mathcal{G}) \rtimes_{\alpha^c} \mathbb{T},
\end{align*}
where $\alpha^c$ is the action of $\mathbb{T}$ on $C^*(\mathcal{G})$ corresponding to the cocycle $c$. We let $\beta$ be an automorphism on $B$ corresponding to the dual map $\widehat{\alpha}^c$ with respect to the isomorphism above. By applying the crossed products on both sides of the equivalence above with the respective maps, we get the isomorphism $B \rtimes_\beta \mathbb{Z} \cong C^*(\mathcal{G}) \rtimes_{\alpha^c} \mathbb{T} \rtimes_{\widehat{\alpha}^c} \mathbb{Z}$, and by Takai duality, we have that $B \rtimes_\beta \mathbb{Z}$ is stably isomorphic to $C^*(\mathcal{G})$. So, we apply Brown's theorem to $C^*(\mathcal{G})$ and we get the equivalence of items (i)--(iii) of Theorem \ref{thm:AFEDR} with item (d) of Brown's theorem.

This equivalence gives a characterisation of when the C*-algebra is AF embeddable. Note that item (d) depends on the K-theory, which is very abstract. We use the theory of homology groups of groupoids in order to show that (d) is equivalent to condition (iv) of Theorem \ref{thm:AFEDR}. By applying results from \cite{FKPS, Matui} on homology groups, we get the commutative diagram below.

\begin{equation}
\label{eqn:diagram}
 \centering
\begin{tikzcd}
      K_0(B) \arrow[r, "K_0(\beta)"] \arrow{d}[swap]{\cong} 
      & K_0(B) \arrow[d, "\cong"]\\
      H_0(\mathcal{G}(c))  \arrow[r, "{[\widetilde{\beta}]}_{\mathcal{G}(c)}"] \arrow{d}[swap]{\cong} & H_0(\mathcal{G}(c))  \arrow[d,"\cong"]\\
      H_0(c^{-1}(0))  \arrow{r}{[\sigma_\ast] }[swap]{} & H_0(c^{-1}(0))  
\end{tikzcd}
\end{equation}

The existence of the isomorphism $K_0(B) \rightarrow H_0(\mathcal{G}(c))$ is given by \cite[Corollary 5.2]{FKPS} and is such that $[1_V]_0 \mapsto [1_V]$ for every compact open $V \subset \mathcal{G}^{(0)}$.  The groupoid $c^{-1}(0)$ is the kernel of the cocycle $c:\mathcal{G} \rightarrow \mathbb{Z}$ defined by $c(x,k,y) = k$. We define the isomorphism $H_0(\rho_\varphi): H_0(\mathcal{G})(c)) \rightarrow H_0(c^{-1}(0))$ in Chapter \ref{section:diagram}. The homomorphisms $K_0(\beta)$ and $[ \widetilde{\beta} ]_{\mathcal{G}(c)}$ are group homomorphisms induced by $\beta$, and the group homomorphism $[\sigma_\ast]$ is induced by the map $\sigma_\ast$ of Theorem \ref{thm:AFEDR}.

By studying the diagram and by applying properties of homology groups of groupoids, we can show that items (d) and (iv) are equivalent, proving Theorem \ref{thm:AFEDR}.

Note, that we need to know the positive elements of $K_0(B)$ in order to study item (d). So, when we apply the commutative diagram, we need to guarantee that the isomorphisms above takes positive elements from one group to the another. Although \cite[Corollary 5.2]{FKPS} gives the existence of the group isomorphism $K_0(B) \cong H_0(\mathcal{G}(c))$, we need this isomorphism to be an ordered group isomorphism. We prove that this isomorphism is indeed a ordered group isomorphism using different techniques, and we give an explicit formula for the isomorphism, stated as follows:
\begin{definition*}
Let $\sigma: X \rightarrow Y$ be a surjective local homeomorphism on locally compact, Hausdorff, second countable, totally disconnected spaces, and let $\varphi: Y \rightarrow X$ be a continuous section of $\sigma$. We define the map $\mathrm{tr}_\varphi: \mathcal{P}_\infty(C^*(R(\sigma))) \rightarrow C_c(X, \mathbb{N})$ by
\begin{itemize}
\item $\mathrm{tr}_\varphi(p)(x) = 1_{X_\varphi}(x) \displaystyle\sum_{y: \sigma(x) = \sigma(y)} p(y)$, \hspace{15pt} for \oldc{$p \in \mathcal{P}(C^*(R(\sigma)))$}\newc{$p \in C^*(R(\sigma))$}, $x \in X$, and
\item $\mathrm{tr}_\varphi(p)(x) = \displaystyle\sum_{i=1}^n \mathrm{tr}_\varphi(p_{ii})(x)$, \hspace{52pt} for $p \in \mathcal{P}_n(C^*(R(\sigma)))$, $n \geq 1$,
\end{itemize}
where we identify $C^*(R(\sigma))$ as a subset of $C_0(R(\sigma))$ by applying the $j$ map from \cite[Proposition II.4.2]{Renault} and \cite[Proposition 9.3.3]{sims2017hausdorff}.
\end{definition*}

\begin{customthm}{2}
\label{thm:HK}
Let $G$ be an AF groupoid, written as the inductive limit $G = \displaystyle\lim_\rightarrow G_n$ of elementary groupoids. Then, for every $n$, there exists \newc{a locally compact, Hausdorff, second countable, totally disconnected space $Y_n$, and there is} a surjective local homeomorphism $\sigma_n: G_n^{(0)} \rightarrow Y_n$ such that \oldc{$G_n = R(\sigma)$}\newc{$G_n = R(\sigma_n)$}. Moreover,
\begin{enumerate}
\item there exists a unique isomorphism $\nu: K_0(C^*(G)) \rightarrow H_0(G)$ defined by
\begin{align*}
\nu([p]_0) = [\mathrm{tr}_{\varphi_n}(p)]
\end{align*}
for $p \in \mathcal{P}_n(C^*(G_n))$, and such that $\varphi_n$ is an arbitrary continuous section of $\sigma_n$;
\item the map $\nu$ is an isomorphism of ordered groups, i.e., $\nu(K_0(C^*(G))^+) = H_0(G)^+$; and
\item the isomorphism $\nu$ is precisely the map given by \cite[Corollary 5.2]{FKPS}. In other words, $\nu([1_V]_0) = [1_V]$ for all $V \subset G^{(0)}$ compact open.
\end{enumerate}
\end{customthm}

This theorem also generalises \cite[Theorem 4.10]{Matui} to the AF groupoids with non-compact unit spaces.

We organise this thesis as follows: In Chapter \ref{section:crossedproducts}, we apply results from the theory of crossed products to give a more modern proof for the isomorphism \cite[Proposition II.5.1]{Renault} by Renault. More specifically, we show that, for a given continuous cocycle $c: G \rightarrow \mathbb{Z}$ on a certain groupoid $G$, the C*-algebra of the skew-product groupoid $G(c)$ is \newc{isomorphic to a crossed product} of $C^*(G)$ by the unit circle.

In Chapter \ref{section:homology}, we study the theory of homology groups of groupoids. In Chapter \ref{section:K0H0}, we prove Theorem \ref{thm:HK}, giving a isomorphism between the homology groups for AF groupoids and the K-theory of the C*-algebras of AF groupoids. Similar results have been proved \oldc{by}\newc{in} \cite[Corollary 5.2]{FKPS}, \cite[Theorem 4.10]{Matui}. Our innovation is that we give an explicit formula for this isomorphism, and we show that it preserves positive elements from one group to another.

In Chapter \ref{section:diagram}, we apply the results from Chapters 3 and 4 to prove the commutative diagram \eqref{eqn:diagram}.

In Chapter \ref{section:result}, we prove Theorem \ref{thm:AFEDR}, the main theorem of our thesis, that characterises when the C*-algebra of certain Deaconu-Renault \newc{groupoids} is AF embeddable. In Chapter \ref{section:examples}, we show how our main theorem generalises some known results in the literature.

The appendices introduce some of the definitions and known results that we apply in the thesis.

\chapter{Skew-product groupoids and crossed products}
\chaptermark{skew-products and Crossed products} 
\label{section:crossedproducts}

Let $G$ be a locally compact, Hausdorff, second countable, \'etale groupoid, \newc{and} let $c: G \rightarrow \mathbb{Z}$ be a continuous cocycle. In this chapter, we study the isomorphism between $C^*(G) \rtimes_{\alpha^c} \mathbb{T}$ and $C^*(G(c))$, for a groupoid with certain topological properties. Although this result is given by Theorem III.5.7 of Renault's book \cite{Renault}, we will give a more modern proof to this isomorphism.

As explained in the introduction, we use this isomorphism to understand when the C*-algebra of a Deaconu-Renault groupoid $\mathcal{G}$ is AFE by applying Brown's theorem (Theorem \ref{thm:brown}) to the AF algebra $B = C^*(\mathcal{G}(c))$, for the canonical cocycle $c: \mathcal{G} \rightarrow \mathbb{Z}$ given by $c(x,k,y) = k$.

Our isomorphism depends on results from Williams's book \cite{Williams-crossedproducts}. In particular, we apply \cite[Theorem 2.39]{Williams-crossedproducts} to show that a certain covariant homomorphism $(\rho, u)$ induces the homomorphism $\rho \rtimes u: C^*(G) \rtimes_{\alpha^c} \mathbb{T} \rightarrow M(C^*(G(c)))$. Then we show that this homomorphism is identified with the isomorphism $C^*(G) \rtimes_{\alpha^c} \mathbb{T} \cong C^*(G(c))$ that we are looking for.

We say that our proof is more modern than Renault's because we use techniques of crossed products that can be found in Williams's book, that was published in 2007. In contrast, Renault's book was \oldc{publish}\newc{published} in 1980, but it is also more general. He proves the isomorphism $C^*(G, \sigma) \rtimes_\alpha \widehat{A} \cong C^*(G(c), \sigma)$, where the groupoid is not necessarily \'etale, and must be equipped with a 2-cocyle $\sigma$. Also, $A$ is an arbitrary locally compact abelian group. His proof uses a completion of $C_c(G(c))$ with respect to a certain norm to find the isomorphism. 

For our purposes, it is sufficient to prove the case $A = \mathbb{Z}$.


After proving the isomorphism, we also find a formula for the action $\beta$ of $\mathbb{Z}$ on $C^*(G(c))$, induced by $\widehat{\alpha}^c$. This formula is used in Chapter \ref{section:diagram} to study the commutative diagram \eqref{eqn:diagram} of page \pageref{eqn:diagram}.


In this chapter, we need many definitions and technical results. Since we are focusing on the proof of the isomorphism, the results are stated without much explanation. See the references for more details.

In Section \ref{section:crossedproducts:skewproduct}, we define the skew product groupoid $G(c)$ as an example of \cite[Definition I.1.6]{Renault} by Renault. Then, in Section \ref{section:crossedproducts:crossedproducts}, we state some results on crossed products from Williams's book \cite[Chapters 1, 2 and 7]{Williams-crossedproducts}. Finally, in Section \ref{subsection:crossedproducts:isomorphism}, we combine the results in this chapter to prove the isomorphism $C^*(G) \rtimes_{\alpha^c} \mathbb{T} \cong C^*(G(c))$.


\par\nobreak\section{The skew-product groupoid}
\label{section:crossedproducts:skewproduct}


Here we define the skew-product groupoid using \cite[Definition I.1.6]{Renault} from Renault's book. Our definition is more specific because here we consider a cocycle on the integers, while Renault considers more general groups.

\begin{definition}
\label{def:Gc}
Let $G$ be a groupoid and $c: G \rightarrow \mathbb{Z}$ a homomorphism. Define the sets
\begin{align*}
G(c) = \lbrace (g,a) : g \in G, a \in \mathbb{Z} \rbrace
\hspace{10pt}\text{and}\hspace{10pt}
G(c)^{(0)} = G^{(0)} \times \mathbb{Z}.
\end{align*}
Define the maps $r, s: G(c) \rightarrow G^{(0)} \times \mathbb{Z}$ by $r(g,a) = (r(g), a)$ and $s(g,a) = (s(g), a + c(g))$. Consider the set
\begin{align}
\label{eqn:Gc2}
G(c)^{(2)} = \lbrace ((g,a), (h,a + c(g))): (g,h) \in G^{(2)}, a \in \mathbb{Z} \rbrace,
\end{align}
and define the product $G(c)^{(2)} \rightarrow G(c)$ and the inverse $G(c) \rightarrow G(c)$ by
\begin{align}
(g,a)(h, a + c(g)) = (gh,a)
\hspace{15pt}\text{and}\hspace{15pt}
(g,a)^{-1} = (g^{-1},a+c(g))
\label{eqn:Gcprod},
\end{align}
respectively. Then $G(c)$ becomes a groupoid, called a \newterm{skew-product}.
\end{definition}

Now we show that, if $c$ is a continuous cocycle and we equip $G(c)$ with the induced topology from $G \times \mathbb{Z}$, then $G(c)$ inherits the usual topogical properties of $G$. Recall that we assume that $G$ is locally compact, Hausdorff, second countable and \'etale.

\begin{lemma}
\label{lemma:Gctopology}
Equip $G(c)$ with the topology induced from $G \times \mathbb{Z}$. Then $G(c)$ is also a locally compact, Hausdorff, second countable, \'etale groupoid. Moreover, if $G$ is totally disconnected, then $G(c)$ is totally disconnected, too. If $G$ is measurewise amenable, then $G(c)$ is also measurewise amenable.
\end{lemma}
\begin{proof}
First we show that $G(c)$ is a topological groupoid. Since $c$ is continuous, then \eqref{eqn:Gcprod} shows that the operations on $G(c)$ are compositions of continuous functions. Hence, the groupoid operations are continuous, and thus $G(c)$ is a topological groupoid.

Now we show that $G(c)$ is \'etale. Fix $(g,a) \in G(c)$. Let $\mathcal{U} \subset G$ be an open bisection containing $g$ such that $c$ is constant on $\mathcal{U}$. Then $r$ is injective on $\mathcal{U} \times \lbrace a \rbrace$ and has image $r(\mathcal{U} \times \lbrace a \rbrace) = r(\mathcal{U}) \times \lbrace a \rbrace$, which is open. Similarly, $s$ is injective on $\mathcal{U} \times \lbrace a \rbrace$ and has image $s(\mathcal{U} \times \lbrace a \rbrace) = \mathcal{U} \times \lbrace a + c(g) \rbrace$ is also open. Thus $G(c)$ is \'etale.

Since $\mathbb{Z}$ is discrete, and $G$ is Hausdorff and second countable, then $G(c)$ is also Hausdorff and second countable. By the same argument, if $G$ is totally disconnected, then $G(c)$ has also a basis of compact open sets, which implies that $G(c)$ is totally disconnected.

If $G$ is measurewise amenable, then $G(c)$ is measurewise amenable by \cite[Proposition II.3.8]{Renault}.
\end{proof}

Note that, for $f, f_1, f_2 \in C_c(G(c))$ and $(g, a) \in G \times \mathbb{Z}$, the operations on $C_c(G(c))$ are given by
\begin{align*}
(f_1 \cdot f_2)(g,a)
&= \sum_{g_1 g_2 = g} f_1(g_1,a) f_2(g_2, a + c(g_1)), \text{ and }\\
f^*(g,a) &= \overline{f(g^{-1}, a + c(g))}.
\end{align*}


It follows from \cite[page 92]{Renault} that if $G$ is measurewise amenable, the full and the reduced C*-algebras of $G$ coincide.

\par\nobreak\section{Crossed products}
\label{section:crossedproducts:crossedproducts}

The definition of a crossed product $A \rtimes_\alpha Z$ depends on the C*-dynamical system $(A, Z, \alpha)$, where $A$ is a C*-algebra, $Z$ is a locally compact group and $\alpha$ decribes how $Z$ `acts' on $A$. We define $A \rtimes_\alpha Z$ as the completion of $C_c(Z, A)$, where the norm and operations on this space of functions are connected with $\alpha$.

Throughout this section we assume that the locally compact group $Z$ is either $\mathbb{Z}$ or $\mathbb{T}$. These are the only groups we use in this thesis when we apply crossed products. Moreover, this assumption allows us to skip some technical details from the construction of more abstract crossed products.

Here we use Chapters 1 and 2 of Dana Williams's book \cite{Williams-crossedproducts}.

\begin{definition}
A \newterm{C*-dynamical system} (or just a \newterm{dynamical system}) is a triple $(A, Z, \alpha)$ consisting of a C*-algebra $A$, a locally compact group $Z$ and a continuous homomorphism $\alpha: Z \rightarrow \mathrm{Aut\hphantom{.}}A$ with $z \mapsto \alpha_z$. We say that $(A, Z, \alpha)$ is \newterm{separable} if $A$ is separable.
\end{definition}

Sometimes we write $\alpha: Z \rightarrow \mathrm{Aut}\hphantom{.}A$ is a dynamical system to denote that $(A, Z, \alpha)$ is a dynamical system. If $Z = \mathbb{Z}$, we identify the action $\alpha$ with the automorphism $\alpha_1$. This identification makes sense because, for each $n \in \mathbb{Z}$, $\alpha_n = (\alpha_1)^n$.

\begin{example*}
Let $A$ be a unital C*-algebra, and let $u \in A$ be unitary. Define $\alpha \in \mathrm{Aut}\hphantom{.}A$ by $\alpha(a) = uau^*$. Then $\alpha$ is a dynamical system.
\end{example*}

The lemma below defines the operations on $C_c(Z,A)$. The theory of crossed products depends on Haar measures on locally compact groups. As we are assuming that $Z$ is either $\mathbb{Z}$ or $\mathbb{T}$, we fix the Haar measures on $\mathbb{Z}$ and $\mathbb{T}$ to be the counting measure and the Lebesgue measure, respectively.

\begin{lemma}
\cite[page 48]{Williams-crossedproducts}
\label{lemma:operationscrossed}
Let $(A, Z, \alpha)$ be a dynamical system and fix a Haar measure $\mu$ on $Z$, with $Z  = \mathbb{Z}, \mathbb{T}$. Given \oldc{$f, f_1, f_2 \in C(Z, A)$}\newc{$f, f_1, f_2 \in C_c(Z, A)$}, we define the \newterm{convolution} $f_1 \cdot f_2 \in C_c(Z, A)$ and the \newterm{involution} $f^* \in C_c(Z, A)$ by
\begin{align*}
f_1 \cdot f_2(z) = \int_Z f_1(w) \alpha_w(f_2(w^{-1}z)) d\mu(w)
\end{align*}
and
\begin{align*}
f^*(z) = \alpha_z(f(z^{-1})^*).
\end{align*}
With these operations, $C_c(Z, A)$ is a $\ast$-algebra.
\end{lemma}

Let us now study the notion of covariant representations. Later on we use these representations to equip $C_c(Z, A)$ with a norm such that the crossed product $A \rtimes_\alpha Z$ is defined as the completion of $C_c(Z, A)$.


\begin{definition}
\label{def:covariantrepresentation}
Let $(A, Z, \alpha)$ be a dynamical system and let $B$ be a C*-algebra. Then a \newterm{covariant homomorphism} of $(A, Z, \alpha)$ into $M(B)$ is a pair $(\pi, u)$ consisting of a homomorphism $\pi: A \rightarrow M(B)$ and a strongly continuous homomorphism $u: Z \rightarrow U M(B)$ such that
\begin{align*}
\pi(\alpha_z(a)) = u_z \pi(a) u_z^*
\hspace{20pt}
\text{$z \in Z$, $a \in A$}.
\end{align*}
If $B = B(H)$ is the space of bounded operators on a Hilbert space $H$, we say that $u$ is a \newterm{unitary representation} of $Z$ and that $(\pi, u)$ is a \newterm{covariant representation} of $(A, Z, \alpha)$.
\end{definition}


Note that \cite[Definition 2.37]{Williams-crossedproducts} of covariant homomorphisms is more general is considers $\mathcal{L}(X)$ instead of $M(B)$, where $X$ is a Hilbert $B$-module. See \cite[Remark 2.38]{Williams-crossedproducts} for more details.

We use the representations of the proposition below to equip $C_c(Z, A)$ with a norm and then define the crossed product as the completion of $C_c(Z, A)$. Throughout this section, we assume that $(A, Z, \alpha)$ is a dynamical system and that $\mu$ is a Haar measure on $Z$.

\begin{proposition}
\label{prop:integratedform}
\cite[Proposition 2.23]{Williams-crossedproducts}
Suppose that $(\pi,u)$ is a covariant representation of $(A, Z, \alpha)$ on $H$. Equip $C_c(Z, A)$ with the operations Lemma \ref{lemma:operationscrossed}. Then
\begin{align*}
\pi \rtimes u(f) = \int_Z \pi(f(z)) u_z d\mu(s)
\hspace{20pt}
\text{for $f \in C_c(Z,A)$}
\end{align*}defines a $L^1$-norm decreasing $\ast$-representation $\pi: C_c(Z,A) \rightarrow B(H)$ called the \newterm{integrated form} of $(\pi, u)$.
\end{proposition}

Now we define the crossed product $A \rtimes_\alpha Z$.

\begin{lemma}
\label{lemma:crossedproducts}
\cite[Lemma 2.27]{Williams-crossedproducts}
Equip $C_c(Z, A)$ with the operations Lemma \ref{lemma:operationscrossed}. Define for each $f \in C_c(Z, A)$,
\begin{align*}
\Vert f \Vert
= \sup \lbrace \Vert \pi \rtimes u(f) \Vert: (\pi, u)
\text{ is a covariant representation of }
(A, Z, \alpha) \rbrace.
\end{align*}
Then $\Vert \cdot \Vert$ is a norm on $C_c(Z, A)$ called the \newterm{universal norm}. The universal norm is dominated by the $\Vert \cdot \Vert_1$-norm, and the completion of $C_c(Z, A)$ with respect to $\Vert \cdot \Vert$ is a C*-algebra $A \rtimes_\alpha Z$, called the \newterm{crossed product} of $A$ by $Z$.
\end{lemma}

It follows from \cite[Theorem 1.57]{Williams-crossedproducts} that, up to isomorphism, the definition of the crossed products does not depend on the choice of the Haar measure $\mu$.

\begin{example}
Let $X$ be a locally compact, Hausdorff, second countable space, and let $\sigma: X \rightarrow X$ be a homeomorphism. We use the notation $C_0(X) \rtimes_\sigma \mathbb{Z}$ to denote the crossed product $C_0(X) \rtimes_{\alpha} \mathbb{Z}$ with respect to the automorphism $\alpha: C_0(X) \rightarrow C_0(X)$ given by
\begin{align*}
\alpha_\sigma(f)(x) = f(\sigma^{-1}(x))
\hspace{20pt}
\text{for $f \in C(X), x \in X$.}
\end{align*}
\end{example}


We use the proposition below in Section \ref{subsection:crossedproducts:isomorphism} to find the isomorphism $C^*(G) \rtimes_{\alpha^c} \mathbb{Z} \cong C^*(G(c))$.

\begin{proposition}
\cite[Proposition 2.39]{Williams-crossedproducts}
\label{prop:covariant}
Let $B$ be a C*-algebra. If $(\pi,u)$ is a covariant homomorphism of $(A, Z, \alpha)$ into $M(B)$, then there exists a unique homomorphism $\pi \rtimes u: A \rtimes_\alpha Z \rightarrow M(B)$ such that
\begin{align*}
\pi \rtimes u(f) = \int_Z \pi(f(z))u_z d\mu(z)
\hspace{20pt}\text{$f \in C_c(Z, A)$}.
\end{align*}
\end{proposition}

Now we study a condition that allows us to extend an isomorphism of two C*-algebras $A$, $B$ to an isomorphism of crossed products $A \rtimes Z \cong B \rtimes Z$.

\begin{definition}
Two dynamical systems $(A,Z,\alpha)$ and $(B, Z, \beta)$ are \newterm{equivariantly isomorphic} if there is an isomorphism $\varphi: A \rightarrow B$ such that $\varphi(\alpha_z(a)) = \oldc{\beta(\varphi(a))}\newc{\beta_z(\varphi(a))}$ for all $z \in Z$ and $a \in A$. We call $\varphi$ an \newterm{equivariant isomorphism}.
\end{definition}

\begin{lemma} \cite[Lemma 2.65]{Williams-crossedproducts}
\label{lemma:crossedproductiso}
Suppose that $\varphi$ is an equivariant isomorphism of $(A, Z, \alpha)$ onto $(B, Z, \beta)$. Then the map $\varphi \rtimes \mathrm{id}: C_c(Z, A) \rightarrow C_c(Z, B)$ defined by
\begin{align*}
\varphi \rtimes \mathrm{id}(f)(z) = \varphi(f(z))
\end{align*}
extends to an isomorphism of $A \rtimes_\alpha Z$ onto $B \rtimes_\alpha Z$.
\end{lemma}


\par\nobreak\section{The isomorphism $C^*(G(c)) \cong C^*(G) \rtimes_{\alpha^c} \mathbb{T}$}
\label{subsection:crossedproducts:isomorphism}

Here we find the isomorphism $\rho \rtimes u: C^*(G) \rtimes_{\alpha^c} \mathbb{T} \rightarrow C^*(G(c))$ by applying Proposition \ref{prop:covariant} to a specific covariant homomorphism $(\rho, u)$ of $(\oldc{C^*(G(c))}\newc{C^*(G)}, \mathbb{T}, \alpha^c)$ into $M(C^*(G(c)))$.

Throughout this section, we assume $G$ to be a locally compact, Hausdorff, second countable, \'etale, measurewise amenable groupoid, and we suppose that $c: G \rightarrow \mathbb{Z}$ is a continuous cocycle. We require $G$ to be measurewise amenable because, when we study the continuity of $\rho$ and $u$, we assume that the reduced and full norms on $C_c(G(c))$ coincide. Note that $G$ and $G(c)$ have the same topological properties by Lemma \ref{lemma:Gctopology}.

Let us begin by defining the dynamical system $\alpha^c: \mathbb{T} \rightarrow \mathrm{Aut}(C^*(G))$ induced by the continuous cocycle $c$. For every $z \in \mathbb{T}$, $f \in C_c(G)$ and $g \in G$, we set
\begin{align*}
\alpha_z^c(f)(g) = z^{c(g)} f(g).
\end{align*}
By Proposition II.5.1 of Renault's book \cite{Renault}, there exists a unique dynamical system defined by the equation above. We make an abuse of notation by denoting this dynamical system $\alpha^c: \mathbb{T} \rightarrow \mathrm{Aut}(C^*(G))$.

Now we define the $\ast$-homomorphism $\rho$ of the covariant representation $(\rho, u)$.

\begin{lemma}
\label{lemma:rhomultiplier}
There exists a unique $\ast$-homomorphism $\rho$ from $C^*(G)$ to the multiplier algebra $M(C^*(G(c)))$ such that, for $h \in C_c(G)$, $\rho(h)$ is defined by
\begin{align}
(\rho(h) \cdot F)(g,a) &= \sum_{g_1 g_2 = g} h(g_1) F(g_2, a + c(g_1)), \label{eqn:rhomultiplierL}\\
(F \cdot \rho(h))(g,a) &= \sum_{g_1 g_2 = g} F(g_1, a)h(g_2), \label{eqn:rhomultiplierR}
\end{align}
for $F \in C_c(G(c))$ and $(g, a) \in G(c)$.
Moreover, $\Vert \rho(h) \Vert = \Vert h \Vert$.
\end{lemma}
\begin{proof}
We fix $h \in C_c(G(c))$ and we let $L, R$ be the linear operators on $C_c(G(c))$ \oldc{defined by $L(F) = \rho(h) \cdot F$ and $R(F) = F \cdot \rho(h)$ as in equations \eqref{eqn:rhomultiplierL} and \eqref{eqn:rhomultiplierR}.}\newc{by letting $L(F)$ and $R(F)$ be the formulae on the right-hand sides of \eqref{eqn:rhomultiplierL} and \eqref{eqn:rhomultiplierR}, respectively.}

This proof is extensive and has several calculations. So we divide this proof into five parts:
\begin{enumerate}[(a)]
\item For $(x,a) \in G^{(0)} \times \mathbb{Z}$, we have $\pi_{(x,a)}(L(F)) = \iota_a \pi_x(h) \iota_a^{-1} \pi_{(x,a)}(F)$, where $\iota_a: \ell^2(G_x) \rightarrow \ell^2(G_{(x,a)})$ is the isomorphism given by $\iota_a \delta_g = \delta_{(g,a - c(g))}$ for $g \in G_x$. \label{item:rhoa}
\item Similarly, $\pi_{(x,a)}(R(F)) = \pi_{(x,a)}(F) \iota_a \pi_x(h) \iota_a^{-1} $. \label{item:rhob}
\item Using \eqref{item:rhoa} and \eqref{item:rhob}, we extend $L$ and $R$ to operators on $C^*(G(c))$ with $\Vert L \Vert = \Vert R \Vert = \Vert h \Vert$. We make an abuse of notation and denote these extensions by $L$ and $R$. \label{item:rhoc}
\item We show that $\rho(h) = (L, R) \in M(C^*(G(c)))$. \label{item:rhod}
\item We prove that $\rho$ is a $\ast$-homomorphism. \label{item:rhoe}
\end{enumerate}
\oldc{Note that we do not need to prove part (a)} \newc{The proof of item (a) is straightforward, since formula for $\pi_{(x,a)}(L(F))$ follows from the definitions of the representations $\pi_{(x,a)}$ and $\pi_x$.} Let us prove the other items.

\begin{enumerate}[(a)]
  \setcounter{enumi}{1}
  \item $\pi_{(x,a)}(L(F)) = \iota_a \pi_x(h) \iota_a^{-1} \pi_{(x,a)}(F)$
  
  Fix $(x, a) \in G^{(0)} \times \mathbb{Z}$ and $g \in G_x$. Let $\iota_a: \ell^2(G_x) \rightarrow \ell^2(G_{(x,a)})$ be the operator defined by $\iota_a \delta_g = \delta_{(g, a - c(g))}$ for $g \in G_x$. Note that $\iota_a$ is an isometry and its inverse is given by $\iota_a^{-1} \delta_{(g, a - c(g))} = \delta_g$ for $g \in G_x$.
  
  We claim that $\pi_{(x,a)}(L(F)) = \iota_a \pi_x(h) \iota_a^{-1} \pi_{(x,a)}(F)$ for $F \in C_c(G(c))$. In fact, let $F \in C_c(G(c))$ and let $g \in G_x$. Then
  \begin{align*}
      &\iota_a \pi_x(h) \iota_a^{-1} \pi_{(x,a)}(F) \delta_{(g, a - c(g))} \\
      =& \iota_a \pi_x(h) \iota_a^{-1} \sum_{l \in G_{r(g)}} F(l,a - c(lg)) \delta_{(lg,a - c(lg))} \\
      =& \sum_{l \in G_{r(g)}} F(l,a - c(lg)) \iota_a \pi_x(h) \iota_a^{-1} \delta_{(lg, a - c(lg))} \\
      =& \sum_{l \in G_{r(g)}} F(l,a - c(lg)) \iota_a \pi_x(h) \delta_{lg} \\
      =& \sum_{l \in G_{r(g)}} F(l,a - c(lg)) \iota_a \sum_{j \in G_{r(lg)}} h(j) \delta_{jlg} \\
      =& \sum_{l \in G_{r(g)}} F(l,a - c(lg)) \sum_{j \in G_{r(lg)}} h(j) \delta_{(jlg, a - c(jlg))}
      \intertext{By the change of variables $k_1 = j$, $k_2 = l$, $k = jl$, we have}
      &\iota_a \pi_x(h) \iota_a^{-1} \pi_{(x,a)}(F) \delta_{(g, a - c(g))} \\
      =& \sum_{k \in G_{r(g)}} \sum_{k_1 k_2 = k} h(k_1) F(k_2, a - c(k_2 g)) \delta_{(kg, a - c(kg))} \\
      =& \sum_{\substack{k \in G_{r(g)} \\ k_1 k_2 = k}} h(k_1) F(k_2, a - c(kg) + c(k_1)) \delta_{(kg, a - c(kg))} \\
      =& \sum_{k \in G_{r(g)}} L(F)(k, a - c(kg)) \delta_{(kg, a - c(kg))} \\
      =& \pi_{(x,a)}(L(F)) \delta_{(g, a - c(g))}.
   \end{align*}
   Therefore, $\iota_a \pi_x(h) \iota_a^{-1} \pi_{(x,a)}(F) = \pi_{(x,a)}(L(F))$.
   
   \item $\pi_{(x,a)}(R(F)) = \pi_{(x,a)}(F) \iota_a \pi_x(h) \iota_a^{-1} $
      \begin{align*}
            &\pi_{(x,a)}(F) \iota_a \pi_x(h) \iota_a^{-1} \delta_{(g, a - c(g))} \\
            =& \pi_{(x,a)}(F) \iota_a \pi_x(h) \delta_g \\
            =& \pi_{(x,a)}(F) \iota_a \oldc{\sum_{j \in G_{r(g)}}}\newc{\sum_{l \in G_{r(g)}}} h(l) \delta_{lg} \\
            =& \sum_{l \in G_{r(g)}} h(l) \pi_{(x,a)}\newc{(F)}\iota_a \delta_{lg} \\
            =& \sum_{l \in G_{r(g)}} h(l) \pi_{(x,a)}\newc{(F)} \oldc{\delta_{(lj, a - c(lg)}} \newc{\delta_{(lg, a - c(lg))}}\\
            =& \sum_{l \in G_{r(g)}} h(l) \sum_{j \in G_{r(lg)}} F(j, a - c(jlg)) \delta_{(jlg, a - c(jlg))}
            \intertext{The change of variables $k_1 = j$, $k_2 = l$, \oldc{$k_1 k_2 = jl$}\newc{$k = jl$} implies that}
            &\pi_{(x,a)}(F) \iota_a \pi_x(h) \iota_a^{-1} \delta_{(g, a - c(g))} \\
            =& \sum_{k \in G_{r(g)}} \sum_{k_1 k_2 = k} F(k_1, a - c(kg)) h(k_2) \delta_{(kg, a - c(kg))} \\
            =& \sum_{k \in G_{r(g)}} R(F)(k, a - c(kg)) \delta_{(kg,a - c(kg))} \\
            =& \pi_{(x,a)}(R(F)) \delta_{(g, a - c(g))}.
      \end{align*}
      Therefore, $\pi_{(x,a)}(F) \iota_a \pi_x(h) \iota_a^{-1} = \pi_{(x,a)}(R(F))$.
      
      \item $\Vert L \Vert = \Vert R \Vert = \Vert h \Vert$

      Since $G$ is measurewise amenable, we have that $G(c)$ is measurewise amenable by Lemma \ref{lemma:Gctopology}. Thus, the full and reduced norms on $C^*(G(c))$ \newc{are identical}. We calculate $\Vert L \Vert$ and $\Vert R \Vert$ by applying the definition of the reduced norm.
      
      Note that, for $F \in C_c(G)$, $\Vert L(F) \Vert$ is the supremum of $\Vert \pi_{(x,a)}(L(F)) \Vert$ for $(x,a) \in G^{(0)} \times \mathbb{Z}$. It follows from item (b) that
\begin{align*}
\sup_{\substack{F \in C_c(G(c)) \\ \Vert F \Vert \leq 1}} \Vert L(F) \Vert
&= \sup_{\substack{F \in C_c(G(c)) \\ \Vert F \Vert \leq 1}} \sup_{(x,a) \in G^{(0)} \times \mathbb{Z}} \Vert \pi_{(x,a)} L(F) \Vert \\
&= \sup_{\substack{F \in C_c(G(c)) \\ \Vert F \Vert \leq 1}} \sup_{(x,a) \in G^{(0)} \times \mathbb{Z}} \Vert \iota_a \pi_x(h) \iota_a^{-1} \pi_{(x,a)}(F) \Vert \\
&= \sup_{(x,a) \in G^{(0)} \times \mathbb{Z}} \sup_{\substack{F \in C_c(G(c)) \\ \Vert F \Vert \leq 1}}  \Vert \iota_a \pi_x(h) \iota_a^{-1} \pi_{(x,a)}(F) \Vert \\
&= \sup_{(x,a) \in G^{(0)} \times \mathbb{Z}} \Vert \iota_a \pi_x(h) \iota_a^{-1} \Vert \\
&= \sup_{(x,a) \in G^{(0)} \times \mathbb{Z}} \Vert \pi_x(h) \Vert \\
&= \Vert h \Vert.
\end{align*}

Then $L$ is bounded and we extend it to a map on $C^*(G(c))$, also denoted $L$, that has norm $\Vert h \Vert$ . Analogously, using item \eqref{item:rhob}, we extend $R$ to an operator on $C^*(G(c))$ with $\Vert R \Vert = \Vert h \Vert$.

\item $\rho(h) = (L, R) \in M(C^*(G(c)))$

We show that $R(F_1) \cdot F_2 = F_1 \cdot L(F_2)$ for all $F_1, F_2 \in C_c(G(c))$ and then we apply Lemma \ref{lemma:conditionmultiplier} to show that $\rho(h) \in M(C^*(G(c)))$.

Let $F_1, F_2 \in C_c(G(c))$ and $(g,a) \in G(c)$. Then
\begin{align*}
\oldc{R(F_1) \cdot F_2(g,a)}\newc{(R(F_1) \cdot F_2)(g,a)}
&= \sum_{g_1 g_2 = g} R(F_1)(g_1, a) F_2(g_2, a + c(g_1)) \\
&= \sum_{g_1 g_2 = g} \sum_{k_1 k_2 = g_1} F_1(k_1, a) h(k_2) F_2( g_2, a+ c(g_1)) \\
\intertext{We make the change of variables $j_1 = k_1$, $l_1 = k_2$, $l_2 = g_2$, $j_2 = l_1 l_2$ and obtain}
\oldc{R(F_1) \cdot F_2(g,a)}\newc{(R(F_1) \cdot F_2)(g,a)}
&= \sum_{j_1 j_2 = g} \sum_{l_1 l_2 = j_2} F_1(j_1, a) h(l_1) F_2(l_2, a + c(j_1 l_1) \\
&= \sum_{j_1 j_2 = g} F_1(j_1, a) \sum_{l_1 l_2 = j_2} h(l_1) F_2(l_2, a + c(j_1) + c(l_1)) \\
&= \sum_{j_1 j_2 = g} F_1(j_1, a) L(F_2)(j_2, a + c(j_1)) \\
&= \oldc{ F_1 \cdot L(F_2)(j_2, a + c(j_1))}
\newc{(F_1 \cdot L(F_2))(j_2, a + c(j_1))} \\
&= \oldc{F_1 \cdot L(F_2)(g,a).}
\newc{(F_1 \cdot L(F_2))(g,a).}
\end{align*}
Thus \oldc{$R(F_1) \cdot F_2 = R(F_1) \cdot F_2$}\newc{$R(F_1) \cdot F_2 = F_1 \cdot L(F_2)$} for all $F_1, F_2 \in C_c(G(c))$. Note that $C_c(G(c))$ is dense in $C^*(G(c))$. Then, by Lemma \ref{lemma:conditionmultiplier}, $\rho(h) = (L, R) \in M(C^*(G(c)))$.

\item $\rho$ is a $\ast$-homomorphism

It follows from equations \eqref{eqn:rhomultiplierL} and \eqref{eqn:rhomultiplierR} that the map $h \in C_c(G) \mapsto \rho(h)$ is linear. Part \ref{item:rhoc} implies that $\Vert \rho(h) \Vert = \Vert h \Vert$ for each $h \in C_c(G)$. Then this map has a unique extension $\rho: C^*(G) \rightarrow M(C^*(G(c)))$ and $\rho$ is an isometry.

If we show that the restriction of $\rho$ to $C_c(G(c))$ is a $\ast$-homomophism, then the continuity of $\rho$ implies that $\rho$ is a $\ast$-homomophism.

Let $h_1, h_2, h \in C^*(G)$. Set $\rho(h_i) = (L_i, R_i)$ for $i=1,2$, and let $\rho(h) = (L, R)$.
\begin{enumerate}[(i)]
\item $\rho(h^*) = \rho(h)^*$

Let $F \in C^*(G(c))$ and $(g,a) \in G(c)$. Then
\begin{align}
[\rho(h)^* \cdot F](g,a)
&= R^\sharp(F)(g,a) \nonumber \\
&= R(F^*)^*(g,a) \nonumber \\
&= \overline{R(F^*)[(g,a)^{-1}]} \nonumber \\
&= \overline{R(F^*)[(g^{-1},a + c(g))]} \nonumber \\
&= \sum_{k_1 k_2 = g^{-1}} \overline{F^*(k_1, a + c(g))h(k_2)}. \nonumber \\
\intertext{We make the change of variables $g_1 = k_2^{-1}, g_2 = k_1^{-1}$ and obtain}
[\rho(h)^* \cdot F](g,a)
&= \sum_{g_1 g_2 = g} \overline{h(g_1^{-1})} \overline{F^*(g_2^{-1}, a + c(g))} \nonumber \\
&= \sum_{g_1 g_2 = g} h^*(g_1) F(g_2, a + c(g) + c(g_2^{-1})) \\
&= \sum_{g_1 g_2 = g} h^*(g_1) F(g_2, a + c(g) - c(g_2)) \nonumber \\
&= \sum_{g_1 g_2 = g} h^*(g_1) F(g_2, a + c(g_1)) \nonumber \\
&= [\rho(h^*) \cdot F](g,a) \label{eqn:Lrhohast}.
\end{align}
By Lemma \ref{lemma:equalmultipliers}, \eqref{eqn:Lrhohast} implies that $\rho(h^*) = \rho(h)^*$.

\item $\rho(h_1 \cdot h_2) = \rho(h_1) \cdot \rho(h_2)$

Let $F \in C^*(G(c))$ and $(g,a) \in G(c)$. Then
\begin{align}
[\rho(h_1 \cdot h_2) \cdot F](g,a)
&= \sum_{k_1 k_2 = g} (h_1 \cdot h_2)(k_1) F(k_2, a + c(k_1)) \nonumber \\
&= \sum_{k_1 k_2 = g} \sum_{j_1 j_2 = k_1} h_1(j_1) h_2(j_2) F(k_2, a + c(k_1)). \nonumber
\intertext{Make the change of variables $g_1 = j_1$, \oldc{$l_1 = j$}\newc{$l_1 = j_2$}, $l_2 = k_2$, $g_2 =j_1 j_2$. Then}
[\rho(h_1 \cdot h_2) \cdot F](g,a)
&= \sum_{g_1 g_2 = g} \sum_{l_1 l_2 = g_2} h_1(g_1) h_2(l_1) F(l_2, a + c(g_1 l_1)) \nonumber \\
&= \sum_{g_1 g_2 = g} h_1(g_1) \sum_{l_1 l_2 = g_2} h_2(l_1) F(l_2, a + c(g_1 l_1)) \nonumber \\
&= \sum_{g_1 g_2 = g} h_1(g_1) \oldc{\rho(h_2)(F)}\newc{(\rho(h_2) \cdot F)}(g_2, a + c(g_1)) \nonumber \\
&= [\rho(h_1) \cdot \rho(h_2) \cdot F](g, a). \label{eqn:Lrhoprod}
\end{align}
By Lemma \ref{lemma:equalmultipliers}, \eqref{eqn:Lrhoprod} implies that $\rho(h_1) \cdot \rho(h_2) = \rho(h_1 \cdot h_2)$. Therefore, $\rho$ is a $\ast$-homomorphism. \qedhere
\end{enumerate}
\end{enumerate}
\end{proof}

The following lemma defines the group homomorphism $u: \mathbb{T} \rightarrow M(C^*(G(c)))$ of the covariant representation $(\rho, u)$.
\begin{lemma}
\label{lemma:uzmultiplier}
There exists a strongly continuous unitary-valued homomorphism $u: \mathbb{T} \rightarrow M(C^*(G(c)))$ with $z \mapsto u_z$ and such that
\oldc{\begin{align*}
u_z \cdot F(g,a) = z^{-a} F(g,a)
\hspace{10pt}\text{and}\hspace{10pt}
F \cdot u_z (g,a) = z^{-a - c(g)} F(g,a),
\end{align*}}
\newc{
\begin{align*}
(u_z \cdot F)(g,a) = z^{-a} F(g,a)
\hspace{10pt}\text{and}\hspace{10pt}
(F \cdot u_z) (g,a) = z^{-a - c(g)} F(g,a),
\end{align*}
}
for $(g,a) \in G(c)$ and $F \in C_c(G(c))$.
\end{lemma}
\begin{proof}
We divide the proof into a few parts:
\begin{enumerate}[(i)]
\item $u$ is well-defined,
\item $u_z u_w = u_{zw}$ for $z, w \in \mathbb{T}$,
\item $u_z$ is unitary with $u_z^* = u_{z^{-1}}$ for $z \in \mathbb{T}$, and
\item $u$ is strongly continuous.
\end{enumerate}
We show each of these properties.

\begin{enumerate}[(i)]
\item $u$ is well-defined

Fix $z \in \mathbb{T}$. Let $P, Q: C_c(G(c)) \rightarrow C_c(G(c))$ be defined by
\begin{align}
P(F)(g,a) = z^{-a} F(g,a),
\text{\hspace{10pt}}
Q(F)(g,a) = z^{-a-c(g)} F(g,a).
\label{eqn:uzmultiplier}
\end{align}

It is straightforward that these functions are well-defined and linear.

We want to extend $P$ and $Q$ to operators on $C^*(G(c))$, also denoted by $P$ and $Q$, such that $u_z = (P, Q)$ is in $M(C^*(G(c)))$. Before we extend these maps, we show that they are bounded.

We show that $P$ is bounded by studying the definition of the reduced norm. We prove that, for all $(x,a) \in G^{(0)} \times \mathbb{Z}$ and $F \in C_c(G(c))$, we have $\pi_{(x,a)}(P(F)) = S_{(x,a)} \pi_{(x,a)}(F)$, where $S_{(x,a)}$ is a unitary. Then it follows from the definition of the reduced norm that $\Vert P(F) \Vert = \Vert F \Vert$.

Given $(x,a) \in G^{(0)} \times \mathbb{Z}$, let $S_{(x,a)}: \ell^2(G_{(x,a)}) \rightarrow \ell^2(G_{(x,a)})$ be defined by
\begin{align*}
S_{(x,a)} \delta_{(g, a - c(g))} = z^{-a+c(g)} \delta_{(g, a - c(g))}
\text{\hspace{15pt} for $g \in G_x$.}
\end{align*}
Then $S_{(x,a)}$ is unitary. Given $F \in C_c(G(c))$, we have
\begin{align*}
S_{(x,a)} \pi_{(x,a)}(F) \delta_{(g,a - c(g))}
&= S_{(x,a)} \sum_{k \in G_{r(g)}} F(k, a - c(kg)) \delta_{(kg, a - c(kg))} \\
&= \sum_{k \in G_{r(g)}} F(k, a - c(kg)) S_{(x,a)} \delta_{(kg, a - c(kg))} \\
&= \sum_{k \in G_{r(g)}} z^{-a + c(kg)} F(k,a - c(kg)) \delta_{(kg, a - c(kg))} \\
&= \sum_{k \in G_{r(g)}} P(F)(k, a - c(kg)) \delta_{(kg, a - c(kg))} \\
&= \pi_{(x,a)}(P(F)) \delta_{(g, a - c(g))}.
\end{align*}
Therefore, using the equation above and the fact that $S_{(x,a)}$ is an isometry, we have
\begin{align*}
\Vert P(F) \Vert
&= \sup_{(x,a) \in G^{(0)} \times \mathbb{Z}} \Vert \pi_{(x,a)} (P(F)) \Vert\\
&= \sup_{(x,a) \in G^{(0)} \times \mathbb{Z}} \Vert S_{(x,a)} \pi_{(x,a)}(F) \Vert\\
&= \sup_{(x,a) \in G^{(0)} \times \mathbb{Z}} \Vert \pi_{(x,a)}(F) \Vert\\
&= \Vert F \Vert.
\end{align*}

Then $P$ is bounded.

Next we show that $Q$ is bounded by applying the same ideas we used to study $P$. Given $(x,a) \in G^{(0)} \times \mathbb{Z}$, let $T_{(x,a)}: \ell^2(G_{(x,a)}) \rightarrow \ell^2(G_{(x,a)})$ be defined by
\begin{align*}
T_{(x,a)} \delta_{(g, a - c(g))} = z^{-a +c(g)} \delta_{(g, a - c(g))}
\text{\hspace{15pt} for $g \in G_x$.}
\end{align*}

The map $T_{(x,a)}$ is unitary. Then, for $g \in G_x$, we have
\begin{align}
T_{(x,a)} \pi_{(x,a)}(F) \delta_{(g, a - c(g))}
&= T_{(x,a)} \sum_{k \in G_{r(g)}} F(k,a - c(kg)) \delta_{(kg, a - c(kg))} \nonumber\\
&= \sum_{k \in G_{r(g)}} F(k, a - c(kg)) T_{(x,a)} \delta_{(kg,a - c(kg))} \nonumber\\
&= \sum_{k \in G_{r(g)}} F(k, a - c(kg))z^{-a + c(kg)} \delta_{(kg,a-c(kg))}\nonumber\\
&= \sum_{k \in G_{r(g)}} z^{-a + c(kg)} F(k,a - c(kg)) \delta_{(kg,a - c(kg))}\nonumber\\
&= \sum_{k \in G_{r(g)}} Q(F)(k, a - c(kg)) \delta_{(kg,a - c(kg))} \nonumber\\
&= \pi_{(x,a)}(Q(F)) \delta_{(g, a - c(g))}.\label{eqn:pixaQFz}
\end{align}


Then $\pi_{(x,a)}(Q(F))  = T_{(x,a)} \pi_{(x,a)}(F)$ and, therefore,
\begin{align*}
\Vert Q(F) \Vert
&= \sup_{(x,a) \in G^{(0)} \times \mathbb{Z}} \Vert \pi_{(x,a)}(Q(F)) \Vert\\
&= \sup_{(x,a) \in G^{(0)} \times \mathbb{Z}} \Vert T_{(x,a)} \pi_{(x,a)}(F) \Vert\\
&= \sup_{(x,a) \in G^{(0)} \times \mathbb{Z}} \Vert \pi_{(x,a)}(F) \Vert\\
&= \Vert F \Vert.
\end{align*}
So $Q$ is also bounded. We extend $P$ and $Q$ to maps on $C^*(G(c))$, also denoted $P$ and $Q$.

Finally, we show that $u_z = (P, Q) \in M(C^*(G(c)))$. Using Lemma \ref{lemma:conditionmultiplier} and the continuity of $P$ and $Q$, we only need to show that $Q(F_1) \cdot F_2 = F_1 \cdot P(F_2)$ for $F_1, F_2 \in C_c(G(c))$. In fact, given $(g,a) \in G(c)$ and $F_1, F_2 \in C_c(G(c))$, we have
\begin{align*}
(Q(F_1) \cdot F_2)(g,a)
&= \sum_{k_1 k_2 = g} Q(F_1)(k_1, a)F_2(k_2, a + c(k_1))\\
&= \sum_{k_1 k_2 = g} z^{-a - c(k_1)}F_1(k_1, a)F_2(k_2, a + c(k_1))\\
&= \sum_{k_1 k_2 = g} F_1(k_1, a)P(F_2)(k_2, a + c(k_1))\\
&= (F_1 \cdot P(F_2))(g,a).
\end{align*}
Thus $Q(F_1) \cdot F_2 = F_1 \cdot P(F_2)$ and therefore $(P, Q) \in M(C^*(G(c)))$.

\item $u_z u_w = u_{zw}$

Let $z, w \in \mathbb{T}$. Since $C_c(G(c))$ is dense in $C^*(G(c))$, it follows from Proposition \ref{prop:multiplieralgebra} and Lemma \ref{lemma:equalmultipliers} that $u_{zw} = u_z \cdot u_w$ if, and only if,
\begin{align*}
u_z \cdot u_w \cdot F = u_{zw} \cdot F
\hspace{10pt}
\text{for } F \in C_c(G(c)).
\end{align*}
Indeed, given $F \in C_c(G(c))$ and $(g,a) \in G(c)$, we have
\newc{
\begin{align*}
(u_z \cdot (u_w \cdot F))(g,a)
= z^{-a} w^{-a} F(g,a)
= (u_{zw} \cdot F)(g,a).
\end{align*}
}
Therefore, $u_z \cdot u_w = u_{zw}$.

\item $u_z^* = u_{z^{-1}}$

Let $z \in \mathbb{T}$. Write $u_z = (P, Q)$ with $P, Q$ being bounded operators on $C^*(G(c))$. Hence, $u_z^* = (Q^\sharp, P^\sharp)$. Moreover, for $F \in C_c(G(c))$, we have
\begin{align*}
(u_z^* \cdot F)(g,a)
&= Q^\sharp(F)(g,a)\\
&= Q(F^*)^*(g,a)\\
&= \overline{Q(F^*)(g^{-1}, a + c(g))}\\
&= \overline{z^{-(a + c(g)) - c(g^{-1})} F^*(g^{-1}, a + c(g))}\\
&= \overline{z^{-a}\overline{F(g,a)}}\\
&= z^a F(g,a)\\
&= (u_{z^{-1}} \cdot F)(g,a).
\end{align*}
Therefore $u_{z}^* = u_{z^{-1}}$. So, by item \oldc{(i)}\newc{(ii)}, we have $u_{z} u_{z}^* = u_{z}^* u_{z} = u_1$. Then $u_z$ is unitary.

\item $u$ is strongly continuous

First we show that $\Vert F \cdot u_w - F \cdot u_z \Vert \rightarrow 0$ as $w \rightarrow z$ for $z \in \mathbb{T}$ and $F \in C_c(G(c))$. Then we use that $C_c(G(c))$ is dense in $C^*(G(c))$ to show that $z \mapsto u_z$ is strongly continuous.

We begin by proving the convergence for a particular class of functions. Let $b \in \mathbb{Z}$ and $f \in C_c(G(c))$ be such that $\mathrm{supp\hphantom{.}} f \subset G \times \lbrace b \rbrace$. Let $w, z \in \mathbb{T}$ and $(x, a) \in G^{(0)} \times \mathbb{Z}$. Then, for $g \in G_x$,
\begin{align*}
\pi_{(x,a)}(u_w \cdot f) \delta_{(g, a - c(g))}
&= \sum_{k \in G_{r(g)}} (u_w \cdot f)(k, a - c(kg)) \delta_{(kg, a - c(kg))} \\
&= \sum_{k \in G_{r(g)}} w^{-a + c(kg)} f(k, a - c(kg)) \delta_{(kg, a - c(kg))} \\
&= \sum_{k \in G_{r(g)}} w^{-b- c(kg) + c(kg)} f(k, a - c(kg)) \delta_{(kg, a - c(kg))}
\intertext{because $f(k, a - c(kg)) \neq 0$ implies that $a - c(kg) = b$ and, then, $a = b + c(kg)$. So,}
\pi_{(x,a)}(u_w \cdot f) \delta_{(g, a - c(g))}
&= w^{-b} \sum_{k \in G_{r(g)}} f(k, a - c(kg)) \delta_{(kg, a - c(kg))} \\
&= w^{-b} \pi_{(x,a)}(F).
\end{align*}
Thus, $\pi_{(x,a)}(u_w \cdot f) = w^{-b} \pi_{(x,a)}(f)$. Similarly, $\pi_{(x,a)}(u_z \cdot f) = z^{-b} \pi_{(x,a)}(f)$. Then
\begin{align*}
\Vert u_w \cdot f - u_z \cdot f \Vert
&= \sup_{(x,a) \in G^{(0)} \times \mathbb{Z}} \Vert \pi_{(x,a)}(u_w \cdot f) - \pi_{(x,a)}(u_z \cdot f) \Vert \\
&= \sup_{(x,a) \in G^{(0)} \times \mathbb{Z}} \Vert w^{-b} \pi_{(x,a)}(f) - z^{-b} \pi_{(x,a)}(f) \Vert \\
&= \vert w^{-b} - z^{-b} \vert \sup_{(x,a) \in G^{(0)} \times \mathbb{Z}} \Vert \pi_{(x,a)}(f) \Vert \\
&= \vert w^{-b} - z^{-b} \vert \Vert f \Vert.
\end{align*}
This implies that $\Vert u_w \cdot f - u_z \cdot f \Vert \rightarrow 0$ as $w \rightarrow z$.

Now let $F \in C_c(G(c))$ and $z \in \mathbb{T}$. Write $F = F_1 + \hdots + F_n$ such that, for each $i = 1, \hdots, n$, $\mathrm{supp}\hphantom{.} F \subset G \times \lbrace b_i \rbrace$ for some $b_i$. Then $\Vert u_w \cdot F_i - u_z \cdot F_i \Vert \rightarrow 0$ as $w \rightarrow z$.

Note that, for each $w \in \mathbb{T}$, we have
\begin{align*}
\Vert u_w \cdot F - u_z \cdot F \Vert
\leq \sum_{i=1}^n \Vert u_w \cdot F_i - u_z \cdot F_i \Vert.
\end{align*}
Then $\Vert u_w \cdot F - u_z \cdot F \Vert \rightarrow 0$ as $w \rightarrow z$.

Finally, we show that $u$ is strongly continuous. Fix $\widetilde{F} \in C^*(G(c))$ and $z \in \mathbb{T}$, and let $\varepsilon > 0$. Then there exists $F \in C_c(G(c))$ with $\Vert \widetilde{F} - F \Vert < \frac{\varepsilon}{4}$. 

As shown previously, there exists $\delta > 0$ such that for $w \in \mathbb{T}$ with $\vert w - z \vert < \delta$, we have $\Vert F \cdot u_w - F \cdot u_z \Vert < \varepsilon/2$. In this case,
\begin{align*}
\Vert \widetilde{F} \cdot u_w - \widetilde{F} \cdot u_z \Vert
&\leq \Vert \widetilde{F} \cdot u_w - F \cdot u_w \Vert +
      \Vert F \cdot u_w - F \cdot u_z \Vert \\
      &\hphantom{\leq.} + \Vert F \cdot u_z - \widetilde{F} \cdot u_z \Vert\\
&=    \Vert (\widetilde{F} - F)\cdot u_w \Vert +
      \Vert F \cdot u_w - F \cdot u_z \Vert \\
      &\hphantom{\leq.} + \Vert (F - \widetilde{F}) \cdot u_z \Vert\\
&\leq \Vert \widetilde{F} - F \Vert \Vert u_w \Vert +
      \Vert F \cdot u_w - F \cdot u_z \Vert +
      \Vert F - \widetilde{F} \Vert \Vert u_z \Vert\\
&= \Vert \widetilde{F} - F \Vert +
      \Vert F \cdot u_w - F \cdot u_z \Vert +
      \Vert F - \widetilde{F} \Vert\\
&= 2 \Vert \widetilde{F} - F \Vert +
      \Vert F \cdot u_w - F \cdot u_z \Vert\\
&< 2 \frac{\varepsilon}{4} + \frac{\varepsilon}{2}\\
&= \varepsilon.
\end{align*}
Then $\widetilde{F} \cdot u_w \rightarrow \widetilde{F} \cdot u_z$ as $w \rightarrow z$. Therefore $u$ is strongly continuous. \qedhere
\end{enumerate}
\end{proof}

In Proposition \ref{prop:crossedproduct-isomorphism} we show that $\rho \rtimes u$ is the isomorphism from $C^*(G) \rtimes_{\alpha^c} \mathbb{T}$ onto $C^*(G(c))$. We apply Proposition \ref{prop:covariant} and get $\rho \rtimes u$ as a  map from $C^*(G) \rtimes_{\alpha^c} \mathbb{T}$ to $M(C^*(G(c)))$. We use the lemma below to prove that the image of $\rho \rtimes u$ is precisely $C^*(G(c))$.

\begin{lemma}
\label{lemma:Zdense}
The functions of the form
\begin{align*}
z \mapsto z^n h,
\hspace{20pt}
\text{for $z \in \mathbb{T}$, $n \in \mathbb{Z}$,}
\end{align*}
and $h \in C_c(G)$ with $c(\mathrm{supp}\hphantom{.} h)$ singleton span a dense subset of $C^*(G) \rtimes_{\alpha^c} \mathbb{T}$.
\end{lemma}
\begin{proof}
Let $\mathcal{Z}$ be the set spanned by functions $z \mapsto z^n h$ as above. We begin by proving that $\mathcal{Z}$ approximates each element of the form $b 1_A \in C^*(G) \rtimes_{\alpha^c} \mathbb{T}$ in the norm $\Vert \cdot \Vert_1$, such that $A \subset \mathbb{T}$ is Borel \oldc{with finite measure} and $b \in C^*(G)$. Then we show that elements of the form $b 1_A$  span a dense subset of $C^*(G) \rtimes_{\alpha^c} \mathbb{T}$. Note that, for every $f \in C(\mathbb{T}, C^*(G))$, we have $\Vert f \Vert_1 \leq 2 \pi \Vert f \Vert_\infty$.

Fix $b \in C^*(G)$ and \oldc{$A \subset \mathbb{T}$ with finite measure}\newc{let $A \subset \mathbb{T}$ be a Borel subset}. Since the case $b 1_A = 0$ is trivial, we suppose that $b 1_A \neq 0$. So $b \neq 0$ and $A$ has a positive measure.

Let $0 < \varepsilon < 1$. It follows from the Stone-Weierstrass theorem (see \cite[Corollary 7.5.4]{Dixmier}) that there exists a function $p \in C(\mathbb{T})$ given by
\begin{align*}
p(z) = \sum_{i = 1}^N \alpha_i z^{n_i}
\end{align*}
with $n_i \in \mathbb{Z}$ and $\alpha_i \in \mathbb{C}$, such that $\Vert p - 1_A \Vert_\infty \leq \min \lbrace \frac{\varepsilon}{4 \pi \Vert b \Vert}, \varepsilon \rbrace$. The triangle inequality implies that $p \neq 0$. In fact,
\begin{align*}
\Vert p \Vert_\infty
\geq
\Vert 1_A \Vert_\infty - \Vert p - 1_A \Vert_\infty
= 1 - \Vert p - 1_A \Vert_\infty
> 1 - \varepsilon
> 0.
\end{align*}

Now we choose an $h \in C_c(G)$ such that $ph$ approximates $b 1_A$. Here $ph: \mathbb{T} \rightarrow C_c(G)$ is the function defined by $(ph)(z) = p(z) h$. Note that $ph \in \mathcal{Z}$. We show that $ph$ approximates $b 1_A$ in the norm $\Vert \cdot \Vert_1$.

Let $h \in C_c(G)$ be such that $\Vert b -h \Vert < \frac{\varepsilon}{4 \pi \Vert p \Vert_\infty}$. Then
\begin{align*}
\Vert b 1_A - p h \Vert_1
&\leq \oldc{\Vert b 1_A - p h \Vert_1}\newc{\Vert b 1_A - p b \Vert_1} + \Vert p b - p h \Vert_1 \\
&= \Vert b \Vert \Vert 1_A - p \Vert_1 + \Vert p \Vert_1 \Vert b -h \Vert \\
&\leq 2 \pi \Vert b \Vert \Vert 1_A - p \Vert_\infty + 2\pi \vert p \Vert_\infty \Vert b - h \Vert \\
&< 2 \pi \Vert b \Vert \frac{\varepsilon}{4 \pi \Vert b \Vert} + 2\pi \Vert p \Vert_\infty \frac{\varepsilon}{4 \pi \Vert p \Vert_\infty}. \\
&= \frac{\varepsilon}{2} + \frac{\varepsilon}{2} \\
&= \varepsilon.
\end{align*}
We have that the functions in $\mathcal{Z}$ approximates all elements of the form \oldc{$1_A$}\newc{$b 1_A$} in the $\Vert \cdot \Vert_1$-norm.

Let $\mathcal{W}$ be the set consisting of all linear combinations  of the form $1_A b$ as before. The elements of $\mathcal{W}$ are called \newterm{simple functions} in Williams's book \cite[Appendix B]{Williams-crossedproducts}. It follows from Proposition B.32 of the book that $\mathcal{W}$ approximates $C(\mathbb{T}, C^*(G))$ in the $\Vert \cdot \Vert_1$-norm. This implies that $\mathcal{Z}$ approximates $C(\mathbb{T}, C^*(G))$ in the $\Vert \cdot \Vert_1$-norm.

Now we show that $\mathcal{Z}$ is dense in $C^*(G) \rtimes_{\alpha^c} \mathbb{T}$. Let $a \in C^*(G) \rtimes_{\alpha^c} \mathbb{T}$ and $\varepsilon > 0$. There exists $v \in C(\mathbb{T}, C^*(G))$ such that $\Vert a - v \Vert < \frac{\varepsilon}{2}$. There exists $f \in \mathcal{Z}$ with $\Vert v - f \Vert_1 < \varepsilon/2$. Recall from Lemma \ref{lemma:crossedproducts} that $\Vert \cdot \Vert$ is dominated by $\Vert \cdot \Vert_1$. Then
\begin{align*}
\Vert a - f \Vert
\leq \Vert a - v \Vert + \Vert v - f \Vert
\leq \Vert a - v \Vert + \Vert v - f \Vert_1
\leq \frac{\varepsilon}{2} + \frac{\varepsilon}{2}
= \varepsilon.
\end{align*}
Therefore $\mathcal{Z}$ is dense in $C^*(G) \rtimes_{\alpha^c} \mathbb{T}$.
\end{proof}

The proof of the following proposition is the alternative to Renault's \cite[Proposition II.5.7]{Renault}. We assume $\rho: C^*(G) \rightarrow M(C^*(G(c)))$ is that $\ast$-homomorphism of Lemma \ref{lemma:rhomultiplier}, and that $u: \mathbb{T} \rightarrow M(C^*(G(c)))$ is the unitary representation of Lemma \ref{lemma:uzmultiplier}.

\begin{proposition}
\label{prop:crossedproduct-isomorphism}
There exists a unique isomorphism $\rho \rtimes u: C^*(G) \rtimes_{\alpha^c} \mathbb{T} \rightarrow C^*(G(c))$ such that
\begin{align*}
\rho \rtimes u(f)(g,a)
&= \int_\mathbb{T} z^{-c(g) - a} f(z)(g) dz
\end{align*}
for \oldc{$f \in C_c(\mathbb{Z}, C_c(G))$}\newc{$f \in C_c(\mathbb{T}, C_c(G))$} and $(g,a) \in G(c)$. Moreover, the inverse of $\rho \rtimes u$ is given by
\begin{align*}
(\rho \rtimes u)^{-1}(F)(z)(g)
&= \frac{1}{2\pi} \sum_{a \in \mathbb{Z}} z^{c(g) + a} F(g,a),
\end{align*}
for $F \in C_c(G(c))$, $z \in \mathbb{T}$, $g \in G$.
\end{proposition}
\begin{proof}
First we prove that $(\rho, u)$ is a covariant homomorphism. Because $C_c(G(c))$ is dense in $C^*(G(c))$, we have that $(\rho, u)$ is a covariant homomorphism if the equation below holds.
\begin{align}
\label{eqn:rhoucovariant}
\rho(\alpha_z(h))
= u_z \rho(h) u_z^*
\hspace{23pt}
\text{for }
z \in \mathbb{T}, h \in C_c(G).
\end{align}
In fact, fix $z \in \mathbb{T}$ and $h \in C_c(G)$. Let $F \in C_c(G(c))$ and $(g,a) \in G(c)$. Then
\begin{align*}
[u_z \cdot \rho(h) \cdot u_z^* \cdot F](g,a)
&= z^{-a} [\rho(h) \cdot u_{z^{-1}} \cdot F](g,a) \\
&= z^{-a} \sum_{g_1 g_2 = g} h(g_1) [u_{z^{-1}} \cdot F](g_2, a + c(g_1)) \\
&= z^{-a} \sum_{g_1 g_2 = g} h(g_1) z^{a + c(g_1)} F(g_2, a + c(g_1)) \\
&= \sum_{g_1 g_2 = g} z^{c(g_1)} h(g_1) F(g_2, a + c(g_1)) \\
&= \sum_{g_1 g_2 = g} \alpha_z^c(h)(g_1) F(g_2, a + c(g_1)) \\
&= [\rho(\alpha_z^c(h)) \cdot F](g,a).
\end{align*}
Since $C_c(G(c))$ is dense in $C^*(G(c))$, equation \eqref{eqn:rhoucovariant} follows from Lemma \ref{lemma:equalmultipliers}. Then $(\rho, u)$ is a covariant homomorphism.

Proposition \ref{prop:covariant} implies that $\rho \rtimes u: C^*(G) \rtimes_{\alpha^c} \mathbb{T} \rightarrow M(C^*(G(c)))$ is a $\ast$-homomorphism. We claim that this map is an isomorphism onto its image with $C^*(G(c))$.

We show that the image of $\rho \rtimes u$ is a subset of $C^*(G(c))$ by applying Lemma \ref{lemma:Zdense}. Note that here we identify $C^*(G(c))$ as a subset of $M(C^*(G(c)))$ by applying the inclusion map $\iota_{C^*(G(c))}: C^*(G(c)) \rightarrow M(C^*(G(c)))$ of Proposition \ref{prop:multiplieralgebra}.

Fix $n, p \in \mathbb{Z}$, and let $h \in C_c(G)$ be such that $c( \mathrm{supp }\hphantom{.} h) = \lbrace p \rbrace$. Define $F \in C_c(G(c))$ by
\begin{align*}
F(g,a) = 1_{\lbrace n - p \rbrace}(a) h(g)
\hspace{20pt}
\text{ for $(g,a) \in G(c)$.}
\end{align*}
Let $f \in C(\mathbb{T}, C_c(G))$ be defined by $f(z) = z^n h$.
We show that $\rho \rtimes u(f) = 2\pi F$. Indeed, given $(g,a) \in G(c)$ \newc{and $H \in C_c(G(c))$,}
\begin{align*}
\newc{\rho \rtimes u(f) \cdot H (g,a)}
&= \newc{\left[ \int_\mathbb{T} \rho(f(z)) u_z dz \cdot H\right] (g,a)} \nonumber \\
&= \newc{\int_\mathbb{T} [\rho(f(z)) \cdot (u_z \cdot H)](g,a) dz} \nonumber \\
&= \newc{ \int_\mathbb{T} \sum_{g_1 g_2 = g} f(z)(g_1) u_z \cdot H(g_2, a + c(g_1)) dz} \nonumber \\
&= \newc{\int_\mathbb{T} \sum_{g_1 g_2 = g} z^n h(g_1) z^{-a-c(g_1)} H(g_2, a+c(g_1)) dz} \nonumber \\
&= \newc{\sum_{g_1 g_2 = g} \int_\mathbb{T} z^{n-a-c(g_1)} h(g_1) dz H(g_2, a+c(g_1))} \nonumber \\
&= \newc{\sum_{g_1 g_2 = g} \int_\mathbb{T} z^{n-a-p}dz h(g_1) H(g_2, a + c(g_1))} \nonumber \\
&= \newc{\sum_{g_1 g_2 = g} 2\pi 1_{\lbrace n-p \rbrace}(a) h(g_1)H(g_2, a+c(g_1))} \nonumber \\
&= \newc{\sum_{g_1 g_2 = g} 2\pi F(g_1, a) H(g_2, a + c(g_1))} \\
&= \newc{(2\pi F \cdot H)(g,a)} 
\end{align*}
\newc{Analogously, we have that $H \cdot \rho \rtimes u(f) (g,a) = (H \cdot 2\pi F)(g,a)$. Therefore $\rho \rtimes u(f) \in C_c(G(c))$ and}
\begin{align}
\label{eqn:2pi1nph}
\newc{\rho \rtimes u(f)(g,a) = 2\pi F(g,a) = 2\pi 1_{\lbrace n - p \rbrace}(a) h(g).}
\end{align}

By linearity, we have that $\rho \rtimes u(\mathcal{Z}) \subset C_c(G(c))$. Since $\mathcal{Z}$ is dense in $C^*(G) \rtimes_{\alpha^c} \mathbb{T}$ and $\rho \rtimes u$ is bounded, then the image of $\rho \rtimes u$ is a subset of $C^*(G(c))$.

We show that $\rho \rtimes u: C^*(G) \rtimes_{\alpha^c} \mathbb{T} \rightarrow C^*(G(c))$ is an isomorphism by defining the homomorphism $\varphi: C^*(G(c)) \rightarrow C^*(G) \rtimes_{\alpha^c} \mathbb{T}$, which we prove to be the inverse of $\rho \rtimes u$.

Define $\varphi: C_c(G(c)) \rightarrow C_c(\mathbb{T}, C_c(G(c)))$ by
\begin{align*}
\varphi(F)(z)(g) = \frac{1}{2\pi} \sum_{a \in \mathbb{Z}} z^{c(g) + a} F(g,a),
\end{align*}
for $F \in C_c(G(c))$, $z \in \mathbb{Z}$, $g \in G$.

Note that $\varphi$ is well-defined. For a fixed $F$, the sum above finitely many terms because $F$ is compactly supported. Also, $\varphi(F)$ is continuous because it is the composition of continuous functions.

The map $\varphi$ is a $\ast$-homomorphism. In fact, it is straightforward that it is linear. Given $F, F_1, F_2 \in C_c(G(c))$, $z \in \mathbb{T}$ and $g \in G$, we have
\begin{align*}
\varphi(F^*)(z)(g)
&= \frac{1}{2\pi} \sum_{a \in \mathbb{Z}} z^{c(g)+a} F^*(g,a) \\
&= \frac{1}{2\pi} \sum_{a \in \mathbb{Z}} z^{c(g) + a} \overline{F(g^{-1}, a + c(g))}.
\intertext{Make the change of variables $a' = a + c(g)$. Then}
\varphi(F^*)(z)(g)
&= \frac{1}{2\pi} \sum_{a' \in \mathbb{Z}} z^a \overline{F(g^{-1},a')} \\
&= z^{c(g)} \frac{1}{2\pi} \sum_{a' \in \mathbb{Z}} z^{a' + c(g^{-1})} \overline{F(g^{-1}, a)} \\
&=z^{c(g)} \frac{1}{2\pi} \sum_{a' \in \mathbb{Z}} \overline{(z^{-1})^{a' + c(g^{-1})} F(g^{-1}, a')} \\
&= z^{c(g)} \overline{\varphi(F)(z^{-1})(g^{-1})} \\
&= z^{c(g)} \varphi(F)(z^{-1})^*(g) \\
&= \alpha_z^c(\varphi(F)(z^{-1})^*)(g) \\
&= \varphi(F)^*(z)(g),
\end{align*}
and
\begin{align*}
&[\varphi(F_1) \cdot \varphi(F_2)](z)(g) \\
=& \left[ \int_\mathbb{T} \varphi(F_1)(w) \cdot \alpha_w(\varphi(F_2)(w^{-1}z))dw \right](g) \\
=& \int_\mathbb{T} \sum_{g_1 g_2 = g} \varphi(F_1)(w)(g_1) \alpha_w(\varphi(F_2))(w^{-1}z))(g_2) dw \\
=& \int_\mathbb{T} \sum_{g_1 g_2 = g} \varphi(F_1)(w)(g_1)w^{c(g_2)} \varphi(F_2)(w^{-1}z)(g_2) dw \\
=& \int_\mathbb{T} \sum_{g_1 g_2 = g} \left[ \frac{1}{2\pi} \sum_{a \in \mathbb{Z}} w^{c(g_1) + a} F_1(g_1, a) \right] w^{c(g_2)} \\
&\hphantom{........uM}\left[\frac{1}{2\pi} \sum_{b \in \mathbb{Z}} (w^{-1}z)^{c(g_2) + b} F_2(g_2,b) \right] dw\\
=& \frac{1}{4\pi^2} \sum_{a,b \in \mathbb{Z}} \int_\mathbb{T} \sum_{g_1 g_2 = g} w^{c(g_1) +a + c(g_2) - c(g_2) - b} z^{c(g_2) + b} F_1(g_1, a) F_2(g_2, b) dw\\
=& \frac{1}{4\pi^2} \sum_{a,b \in \mathbb{Z}} \sum_{g_1 g_2 = g} z^{c(g_2) + b} F_1(g_1, a) F_2(g_2, b) \int_\mathbb{T} w^{c(g_1) + a - b} dw. \\
\intertext{For $b \neq a + c(g_1)$, the integral above is zero. When $b = a + c(g_1)$ the integral is $2\pi$. Then}
&[\varphi(F_1) \cdot \varphi(F_2)](z)(g)\\
=& \frac{1}{2\pi} \sum_{a \in \mathbb{Z}} \sum_{g_1 g_2 = g} z^{c(g_2) + c(g_1) + a} F_1(g_1, a) F_2(g_2, a + c(g_1)) \\
=& \frac{1}{2\pi} \sum_{a \in \mathbb{Z}} z^{c(g) + a} \sum_{g_1 g_2 = g} F_1(g_1, a) F_2(g_2, a + c(g_1)) \\
=& \frac{1}{2\pi} \sum_{a \in \mathbb{Z}} z^{c(g) + a} (F_1 \cdot F_2)(g,a) \\
=& \varphi(F_1 \cdot F_2)(z)(g).
\end{align*}
Then $\varphi$ is a $\ast$-homomorphism and its image is a subset of the C*-algebra $C^*(G) \rtimes_{\alpha^c} \mathbb{T}$. We make an abuse of notation and extend $\varphi$ to a $\ast$-homomorphism $\varphi: C^*(G(c)) \rightarrow C^*(G) \rtimes_{\alpha^c} \mathbb{T}$.

Finally, we show that $\varphi$ is the inverse of $\rho \rtimes u$. Fix $p, n \in \mathbb{Z}$, let $h \in C_c(G)$ be such that $c(\mathrm{supp}\hphantom{.} h) = \lbrace p \rbrace$, and define $f \in C(\mathbb{T}, C_c(G))$ by $f(z) = z^n h$. Then, for $z \in \mathbb{T}$ and $g \in G$, we have
\begin{align*}
\varphi \circ \rho \rtimes u(f)(z)(g)
&= \frac{1}{2\pi} \sum_{a \in \mathbb{Z}} z^{c(g) + a} \rho \rtimes u(f)(g,a) \\
&= \frac{1}{2\pi} \sum_{a \in \mathbb{Z}} z^{c(g) + a} 2\pi 1_{\lbrace n - p \rbrace}(a) h(g)
\hspace{15pt}\text{by \eqref{eqn:2pi1nph}} \\
&= z^{c(g)+n-p} h(g) \\
&= z^{p+n-p}h(g) \hspace{15pt}\text{because $c(\mathrm{supp } h) = \lbrace p \rbrace$} \\
&= z^n h(g) \\
&= f(g).
\end{align*}
Since $f$ is arbitrary and $\mathcal{Z}$ is dense in $C^*(G) \rtimes_{\alpha^c} \mathbb{T}$, we have $\varphi \circ \rho \rtimes u = \mathrm{id}$.

Now fix $F \in C_c(G(c))$. Then, for $(g,a) \in G(c)$, we have
\begin{align*}
\rho \rtimes u \circ \varphi(F)(g,a)
&= \int_\mathbb{T} z^{-c(g)-a} \varphi(F)(z)(g) dz \\
&= \int_\mathbb{T} z^{-c(g)-a} \frac{1}{2\pi} \sum_{b \in \mathbb{Z}} z^{c(g) + b} F(g,b) dz \\
&= \sum_{a \in \mathbb{Z}} \frac{1}{2\pi} F(g,b) \int_\mathbb{T} z^{b - a} dz  \\
&= F(g,a).
\end{align*}
Then $\rho \rtimes u \circ \varphi = \mathrm{id}$. This implies that $\varphi$ is the inverse of $\rho \rtimes u$ and  therefore $\rho \rtimes u$ is an isomorphism.
\end{proof}

The $\ast$-homomorphism $\varphi:C_c(G(c)) \rightarrow C_c(\mathbb{T}, C_c(G))$ in the previous theorem was extended uniquely to a representation from $C^*(G(c))$ to $C^*(G) \rtimes_{\alpha^c} \mathbb{T}$. This is possible because, given a covariant representation $(\pi, v)$ of $\alpha^c: \mathbb{T} \rightarrow \mathrm{Aut} C^*(G)$, then $\pi \rtimes v \circ \varphi$ is a representation of $C_c(G(c))$. Thus, for $F \in C_c(G(c))$, we have
\begin{align*}
\Vert \pi \rtimes v \circ \varphi(F) \Vert \leq \Vert F \Vert.
\end{align*}
Since $\pi \rtimes v$ is arbitrary, it follows that $\Vert \varphi(F) \Vert \leq \Vert F \Vert$. By continuity, $\varphi$ can be extended continuously to a $\ast$-homomorphism from $C^*(G(c))$ to $C^*(G) \times_{\alpha^c} \mathbb{T}$.

\par\nobreak\section{The action $\beta$}
\label{subsection:crossedproducts:beta}


Now we find a formula for the action $\beta$ on $C^*(G(c))$ induced by the dual action $\widehat{\alpha}^c$. We will apply this formula in Chapter \ref{section:result} for Deaconu-Renault groupoids in order to understand in more detail the condition $H_\beta \cap K_0(B)^+ = \lbrace 0 \rbrace$, given by Brown's theorem.

\begin{proposition}
\label{prop:beta}
Let $\rho \rtimes u: C^*(G) \rtimes_{\alpha^c} \mathbb{T} \rightarrow \oldc{B}\newc{C^*(G(c))}$ be the isomorphism of Proposition \ref{prop:crossedproduct-isomorphism}. Then there exists a unique dynamical system $\beta: \mathbb{Z} \rightarrow \mathrm{Aut}(\oldc{B}\newc{C^*(G(c))})$ such that
\begin{align*}
\beta(F)(g,a) = F(g, a + 1)
\hspace{15pt}
\text{for $F \in C_c(G(c))$, $(g,a) \in G(c)$,}
\end{align*}
and $\beta = (\rho \rtimes u) \circ \widehat{\alpha} \circ (\rho \rtimes u)^{-1}$ for all $n \in \mathbb{Z}$. Moreover, $\oldc{B}\newc{C^*(G(c))} \rtimes_\beta \mathbb{Z}$ and $C^*(G) \rtimes_{\alpha^c} \mathbb{T} \rtimes_{\widehat{\alpha}^c} \mathbb{Z}$ are isomorphic.
\end{proposition}
\begin{proof}
For each $n \in \mathbb{Z}$, let $\beta_n = (\rho \rtimes u)^{-1} \circ \oldc{\widehat{\alpha}_n}\newc{\widehat{\alpha}^c_n} \circ \rho \rtimes u$. Since $\rho \rtimes u$ is an isomorphism and $\widehat{\alpha}^c$ is an automorphism, we have that $\beta: \mathbb{Z} \rightarrow \mathrm{Aut}(C^*(G(c)))$ is a dynamical system.

Fix $n \in \mathbb{Z}$. By continuity of $\beta_n$, finding the formula of this automorphism on $C_c(G(c))$ is sufficient to determine $\beta_n$ uniquely. Let $F \in C_c(G(c))$ and $(g,a) \in G(c)$. Then, Proposition \ref{prop:crossedproduct-isomorphism},
\begin{align*}
\beta_n(F)(g,a)
&= \oldc{(\rho \rtimes u)^{-1} \circ \oldc{\widehat{\alpha}_n}\newc{\widehat{\alpha}^c_n} \circ \rho \rtimes u(F)(g,a)}\newc{(\rho \rtimes u) \circ \widehat{\alpha^c}_n \circ (\rho \rtimes u)^{-1}(F)(g,a)}\\
&= \frac{1}{2\pi}\int_\mathbb{T} z^{-c(g) - a} \oldc{\widehat{\alpha}_n}\newc{\widehat{\alpha^c}_n} \circ \oldc{\rho \rtimes u}\newc{(\rho \rtimes u)^{-1}}(F)(z)(g)dz\\
&= \frac{1}{2\pi}\int_\mathbb{T} z^{-c(g) -a} z^{-n} \oldc{\rho \rtimes u}\newc{(\rho \rtimes u)^{-1}}(F)(z)(g) dz 
\hspace{15pt}\text{by Example \ref{ex:dualactionT},}\\
&= \frac{1}{2\pi}\int_\mathbb{T} z^{-c(g)-a-n} \sum_{b \in \mathbb{Z}} z^{c(g) + b} F(g,b) dz \\
&= \frac{1}{2\pi}\sum_{b \in \mathbb{Z}} F(g,b) \int_\mathbb{T} z^{-a-n+b} dz\\
&= F(g,a+n).
\end{align*}

By definition, we have that $\rho \rtimes u \circ \widehat{\alpha}^c = \beta \circ \rho \rtimes u$. Then $\rho \rtimes u$ is an equivariant isomorphism from $(C^*(G) \rtimes_{\alpha^c} \mathbb{T}, \widehat{\alpha}^c, \mathbb{Z})$ onto $(\oldc{B}\newc{C^*(G(c))}, \beta, \mathbb{Z})$. By Lemma \ref{lemma:crossedproductiso}, $\oldc{B}\newc{C^*(G(c))} \rtimes_\beta \mathbb{Z}$ and $C^*(G) \rtimes_{\alpha^c} \mathbb{T} \rtimes_{\widehat{\alpha}^c} \mathbb{Z}$ are isomorphic.
\end{proof}

\chapter{Homology groups}
\label{section:homology}

In this chapter we study the theory of homology groups for groupoids, as described in \cite{FKPS, Matui}.

As explained in the introduction, in order to characterise when the C*-algebra of a Deaconu-Renault groupoid $\mathcal{G}$ is AF embeddable, we need to study the $K_0$-group \oldc{$K_0(C^*(\mathcal{G}))$}\newc{$K_0(C^*(\mathcal{G}(c)))$} of the skew-product groupoid $\mathcal{G}(c)$, where $c: \mathcal{G} \rightarrow \mathbb{Z}$ is the continuous cocycle given by $c(x,k,y) = k$. Moreover, we need to know the positive elements of this $K_0$-group. 

It is difficult to understand this \oldc{$K_0(\mathcal{G}(c))$}\newc{$K_0(C^*(\mathcal{G}(c)))$} in detail because its construction is very abstract, depending on equivalence classes of projections in $M_n(C^*(\mathcal{G}(c)))$ for all $n \geq 1$. Fortunately, the groupoid $\mathcal{G}(c)$ is AF, and Theorem \ref{thm:K0H0} of Chapter \ref{section:K0H0} gives an isomorphism from the zeroth homology group of an AF groupoid to the $K_0$-group of the corresponding C*-algebra. This isomorphism is one of our major contributions of this thesis and generalises \cite[Corollary 5.2]{FKPS} and \cite[Theorem 4.10]{Matui}. In Chapter \ref{section:K0H0} we explain how our theorem differs from these two results from the literature.

The advantage of applying this isomorphism is that it allows us to use the techniques from the theory of homology groups for groupoids in order to understand the K-theory of $C^*(\mathcal{G}(c))$. The homology zeroth group \oldc{$H_0(\mathcal{G})$}\newc{$H_0(G)$} of an AF groupoid $G$ is much more tractable than its corresponding K-group -- its construction depends on equivalences of functions in \oldc{$C_c(\mathcal{G}(c)^{(0)}, \mathbb{Z})$}\newc{$C_c(G, \mathbb{Z})$} as opposed to complicated equivalences of projections in $\mathcal{P}_\infty(C^*(G))$. Also, the theory of homology groups provides the notion of homological similarity, which helps us to understand these groups in more detail.

Homological similarity gives isomorphisms of homology groups for different \oldc{group-oids}\newc{groupoids.} We use this technique in Section \ref{section:diagram:part1} to prove the second isomorphism of the commutative diagram \eqref{eqn:diagram} from page \pageref{eqn:diagram}.


Here we assume that $G$, $H$, $G_n$ are locally compact, Hausdorff, second countable, \'etale groupoids with totally disconnected unit spaces. In this chapter, we focus on the main ideas and definitions. So we decided to omit some of proofs that are long or more technical.

We begin in Section \ref{subsection:homology:algebra} by studying the abstract theory of homology groups. Then we define homology groups of groupoids in Section \ref{subsection:homology:groupoids}. Next, in Section \ref{subsection:homology:similarity}, we study homological similarity of groupoids. 

\par\nobreak\section{Homology in algebra}
\label{subsection:homology:algebra}

Let us see the definition of homology groups of chain complexes. The results in this section are in Dummit and Foote's book on abstract algebra \cite[Section 17.1]{dummitfoote}.

\begin{definition}
Let $\mathcal{C}$ be a sequence of abelian group homomorphisms
$$
\begin{tikzcd}
0 &
\arrow{l}{}[swap]{d_0} C_0 &
\arrow{l}{}[swap]{d_1} C_1 &
\arrow{l}{}[swap]{d_2} \cdots
\end{tikzcd}
$$
such that $d_n \circ d_{n+1} = 0$ for all $n \geq 0$. The sequence is called a \newterm{chain complex}. If $\mathcal{C}$ is a chain complex, its $n^{\text{th}}$ \newterm{homology group} is the quotient group $\ker d_{n}/\mathrm{im\hphantom{.}}d_{n+1}$, and is denoted by $H_n(\mathcal{C})$.
\end{definition}

\begin{definition}
\label{def:homomorphism_complexes}
Let $\mathcal{A} = \lbrace A_n \rbrace_{n \geq 0}$ and $\mathcal{B} = \lbrace B_n \rbrace_{n \geq 0}$ be chain complexes. A \newterm{homomorphism of complexes} $\alpha: \mathcal{A} \rightarrow \mathcal{B}$ is a set of homomorphisms $\alpha_n: A_n \rightarrow B_n$ such that for every $n$ the following diagram commutes:
$$
\begin{tikzcd}
\cdots &
A_n \arrow{l}{}[swap]{d_n} \arrow{d}{}[swap]{\alpha_n} &
A_{n+1} \arrow{l}{}[swap]{d_{n+1}} \arrow{d}{}[swap]{\alpha_{n+1}} &
\cdots \arrow{l}{}[swap]{d_{n+2}}\\
\cdots &
B_n \arrow{l}{}[swap]{d_n} &
B_{n+1} \arrow{l}{}[swap]{d_{n+1}} &
\cdots \arrow{l}{}[swap]{d_{n+2}}
\end{tikzcd}
$$
\end{definition}

\begin{proposition}
\label{prop:chain_homology_homomorphism}
A homomorphism $\alpha: \mathcal{A} \rightarrow \mathcal{B}$ of chain complexes induces a group homomorphism $H_n(\alpha): H_n(\mathcal{A}) \rightarrow H_n(\mathcal{B})$, for $n \geq 0$, given by
\begin{align}
\label{eqn:chain_homology_homomorphism}
H_n(\alpha)([a]) = [\alpha_n(a)],
\text{\hspace{10pt}for $a \in A_n$ with $d_n(a) = 0$.}
\end{align}
\end{proposition}
\begin{proof}
We need to prove that $H_n(\alpha)$ is well-defined. In other words, we need to verify that \eqref{eqn:chain_homology_homomorphism} does not depend on the choice of the representative $a$. Let $a_1, a_2 \in A_n$ such that $d_n(a_1) = d_n(a_2) = 0$ and $[a_1] = [a_2]$. By definition of the homology group $H_n(\mathcal{A})$, there is a $c \in A_{n+1}$ such that
\begin{align*}
a_1 = a_2 + d_{n+1}(c).
\end{align*}
By applying $\alpha_n$ on both sides of the equation and using the commutativity of the diagram in Definition \ref{def:homomorphism_complexes}, we obtain
\begin{align*}
\alpha_n(a_1)
= \alpha_n(a_2) + \alpha_n(d_{n+1}(c))
= \alpha_n(a_2) + d_{n+1}(\alpha_{n+1}(c)),
\end{align*}
with $\alpha_n(a_1), \alpha_n(a_2) \in B_n$ and $\alpha_{n+1}(c) \in B_{n+1}$. Then $[\alpha_n(a_1)] = [\alpha_n(a_2)]$.

Finally, we prove that $H_n(\alpha)$ is a homomorphism. Let $a_1, a_2 \in A_n$ such that $d_n(a_1) = d_n(a_2) = 0$. Then $d_n(a_1 + a_2) = 0$ and
\begin{align*}
H_n(\alpha)([a_1 + a_2])
&= [\alpha_n(a_1 + a_2)] \\
&= [\alpha_n(a_1)] + [\alpha_n(a_2)] \\
&= H_n(\alpha)([a_1]) + H_n(\alpha)([a_2])
\end{align*}
Therefore $H_n(\alpha)$ is a homomorphism.
\end{proof}

\par\nobreak\section{Homology for groupoids}
\label{subsection:homology:groupoids}

Here we define homology groups for Hausdorff, second countable, ample groupoids. We use the definitions from Matui's paper \cite{Matui} and Farsi et al's paper \cite{FKPS}. Here we fix $X$, $Y$ to be locally compact, Hausdorff, second countable, totally disconnected spaces. Moreover, $C_c(X, \mathbb{Z})$ (and analogously $C_c(Y, \mathbb{Z})$) denotes the set of continuous and compactly supported functions on $X$ assuming integer values.

\begin{definition}
\label{def:sigmaast}
Let $\sigma: X \rightarrow Y$ be a local homeomorphism. Then, for all $y \in Y$ and $f \in C_c(X, \mathbb{Z})$, we define
\begin{align}
\label{eqn:sigmaast}
\sigma_\ast(f)(y) = \sum_{\sigma(x) = y} f(x)
\end{align}
\end{definition}

\begin{lemma*}
The sum of equation \eqref{eqn:sigmaast} is finite and defines the continuous map $\sigma_\ast(f) \in C_c(Y, \mathbb{Z})$ for all $f \in C_c(X, \mathbb{Z})$.
\end{lemma*}
\begin{proof}
Since $\sigma$ is a local homeomorphism, each $x \in X$ has a compact open neighbourhood $V_x$ on which $\sigma$ is injective. By continuity, the elements of $C_c(X, \mathbb{Z})$ are locally constant and assume finitely many values. Then
\begin{align}
\label{eqn:CcXZspan}
C_c(X, \mathbb{Z}) = \mathrm{span} \lbrace 1_V : V \subset X \text{ is compact open and  $\sigma$ injective on $V$} \rbrace.
\end{align}

Let $V \subset X$ be a compact open subset on which $\sigma$ is injective, and fix $y \in X$. We claim that $\sigma_\ast(1_V)(y) = 1_{\sigma(V)}(y)$.

In fact, suppose that $y \in \sigma(V)$. Then there exists an $x_0 \in V$ such that $x_0 \in V$ and $\sigma(x_0) = y$. Since $\sigma$ is injective on $V$, it follows that for every $x \neq x_0$ with $\sigma(x) = y$, we have $x \notin V$. Then
\begin{align*}
\sum_{x: \sigma(x) = y} 1_V(x)
= 1_V(x_0)
= 1
= 1_{\sigma(V)}(y).
\end{align*}
Now suppose that $y \notin \sigma(V)$. Then, for every $x \in X$ with $\sigma(x) = y$, we must have $x \notin V$. This implies that
\begin{align*}
\sum_{x: \sigma(x) = y} 1_V(x)
= 0
= 1_{\sigma(V)}(y).
\end{align*}
Therefore $\sigma_\ast(1_V)(y) = 1_{\sigma(V)}(y)$ for $y \in X$.

It follows from the linearity of of \eqref{eqn:sigmaast} and from \eqref{eqn:CcXZspan} that, for each $f \in C_c(X, \mathbb{Z})$, equation \eqref{eqn:sigmaast} is finite and $\sigma_\ast(f) \in C_c(X, \mathbb{Z})$.
\end{proof}

Using the Definition \ref{def:sigmaast}, we have $(\phi \circ \psi)_\ast = \phi_\ast \circ \psi_\ast$.

\begin{definition}
Let $n \in \mathbb{N}$ with $n \geq 1$. Define the \newterm{space of composable $n$-tuples} by
\begin{align*}
G^{(n)}
= \lbrace (g_1, \hdots, g_n) \in \underbrace{G \times \dots \times G}_{\text{$n$ times}} : 
s(g_i) = r(g_{i+1}) \text{  for }1 \leq i < n \rbrace,
\end{align*}
while $G^{(0)}$ is the unit space of $G$. For $n \geq 2$, we equip $G^{(n)}$ with the subspace topology from $G^n = \underbrace{G \times \dots \times G}_{\text{$n$ times}}$.
\end{definition}

\begin{definition}
Let $n \geq 1$ and $0 \leq i \leq n$. We define the maps $d_i^{(n)} : G^{(n)} \rightarrow G^{(n-1)}$ such that, for $n \geq 2$,
\begin{align*}
d_i^{(n)}(g_1, \hdots, g_n) =
\begin{cases}
(g_2, \hdots, g_n) & i = 0,\\
(g_1, \hdots, g_{i-1}, g_i g_{i+1}, g_{i+2}, \hdots, g_n) & 1 \leq i \leq n - 1\\
(g_1, \hdots, g_{n-1}) & i = n.
\end{cases}
\end{align*}
If $n = 1$, we set $d_0^{(n)} = s$ and $d_1^{(n)} = r$.
\end{definition}

Using that $G$ is étale, we can show that $d_i^{(n)}$ is a local homeomorphism.

\begin{definition}
\label{def:partialn}
For $n \geq 1$, define $\partial_n: C_c(G^{(n)}, \mathbb{Z}) \rightarrow C_c(G^{(n-1)}, \mathbb{Z})$ by
\begin{align*}
\partial_n = \sum_{i=0}^n (-1)^i (d_i^{(n)})_\ast.
\end{align*}
We set $\partial_0: C_c(G^{(0)}, \mathbb{Z}) \rightarrow 0$.
\end{definition}

We want to show that the sequence
$$
\begin{tikzcd}
0 &
\arrow{l}{}[swap]{\partial_0 = 0} C_c(G^{(0)}, \mathbb{Z}) &
\arrow{l}{}[swap]{\partial_1} C_c(G, \mathbb{Z}) &
\arrow{l}{}[swap]{\partial_2} C_c(G^{(2)}, \mathbb{Z}) &
\arrow{l}{}[swap]{\partial_3}\cdots
\end{tikzcd}
$$
(also denoted $(C_c(G^{(\ast)}, \mathbb{Z}), \partial_\ast)$)  is a chain complex. So we prove that, for all $n \geq 1$,
\begin{align}
\label{eqn:compositionpartial}
\partial_n \partial_{n+1} = \sum_{i=0}^n \sum_{j=0}^{n+1}(-1)^{i+j} (d_i^{(n)} d_j^{(n+1)})_\ast = 0.
\end{align}

In order to prove this, we need the following lemma.
\begin{lemma}
\label{lemma:didj}
For $0 \leq i < j \leq n + 1$, we have
\begin{align}
\label{eqn:didj}
d_i^{(n)} d_j^{(n+1)} = d_{j-1}^{(n)} d_i^{(n+1)}.
\end{align}
\end{lemma}
\begin{proof}
In this proof, we divide $n, i, j$ into eight cases and verify the equation above for each of them using the properties of $G^{(n+1)}$.

Let $g = (g_1, \dots, g_{n+1}) \in G^{(n+1)}$. Note that $s(g_l) = r(g_{l+1})$ for $l = 1, \dots, n$.

\begin{enumerate}
\item Suppose $n=1$, $i = 0$ and $j = 1$.
\begin{align*}
d_0^{(1)} d_1^{(2)} (g_1, g_2) &= d_0^{(1)}(g_1 g_2) = s(g_1 g_2) = s(g_2), \\
d_0^{(1)} d_0^{(2)}(g_1, g_2) &= d_0^{(1)}(g_2) = s(g_2).
\end{align*}
\item Suppose $n = 1, i = 0$ and $j = 2$.
\begin{align*}
d_0^{(1)} d_2^{(2)}(g_1, g_2)
&= d_0^{(1)} (g_1) = s(g_1), \\
d_1^{(1)} d_0^{(2)} (g_1, g_2)
&= d_1^{(1)}(g_2) = r(g_2) = s(g_1).
\end{align*}
\item Suppose $n = 1$, $i = 1$ and $j = 2$.
\begin{align*}
d_1^{(1)} \oldc{d_-2^{(2)}}\newc{d_2^{(2)}} (g_1, g_2)
&= d_1^{(1)}(g_1) = r(g_1), \\
d_1^{(1)} \oldc{d_1^{(1)}}\newc{d_1^{(2)}}(g_1, g_2)
&= d_1^{(1)}\oldc{(g_1, g_2)}\newc{(g_1 g_2)} = r(g_1 g_2) = r(g_1).
\end{align*}
\item \newc{Suppose $n \geq 2$, $i = 0$ and $j = 1$.
\begin{align*}
d_0^{(n)} d_1^{(n+1)}(g_1, \dots, g_{n+1})
&= d_0^{(n)}(g_1 g_2, g_3, \dots, g_{n+1}) \\
&= (g_3, \dots, g_{n+1}), \\
d_0^{(n)} d_0^{(n+1)}(g_1, \dots, g_{n+1})
&= d_0^{(n)}(g_2, \dots, g_{n+1}) \\
&= (g_3, \dots, g_{n+1}).
\end{align*}}
\item Suppose $n \geq 2$, $i = 0$ and $\oldc{i}\newc{1} < j \leq n$.
\begin{align*}
d_0^{(n)} d_j^{(n+1)}(g_1, \dots, g_{n+1})
&= d_0^{(n)}(g_1, \dots, g_j g_{j+1}, \dots, g_{n+1})\\
& = (g_2, \dots, g_j g_{j+1}, \dots, g_{n+1}), \\
d_{j-1}^{(n)} d_0^{(n+1)}(g_1, \dots, g_{n+1})
&= d_{j-1}^{(n)}(g_2, \dots, g_{n+1}) \\
&= (g_2, \dots, g_j g_{j+1}, \dots, g_{n+1}).
\end{align*}
\item Suppose $n \geq 2$, $i = 0$ and $j = n + 1$.
\begin{align*}
d_0^{(n)} d_{n+1}^{(n+1)}(g_1, \dots, g_{n+1})
&= d_0^{(n)}(g_1, \dots, g_n) = (g_2, \dots, g_n), \\
d_n^{(n)} d_0^{(n+1)} (g_1, \dots, g_{n+1})
&= d_n^{(n)} (g_2, \dots, g_{n+1})
= (g_2, \dots, g_n).
\end{align*}
\item Suppose $n \geq 2$, $1 \leq i \leq n$ and $i + 1 < j \leq n$.

Note that the equality $d_i^{(n)} d_j^{(n+1)} = d_{j-1}^{(n)} d_i^{(n+1)}$ is obvious if $j = i + 1$. So suppose $j > i + 1$. Then
\begin{align*}
d_i^{(n)} d_j^{(n+1)}(g_1, \dots, g_{n+1})
&= d_i^{(n)}(g_1, \dots, g_j g_{j+1}, \dots, g_{n+1}) \\
&= (g_1, \dots, g_i g_{i+1}, \dots, g_j g_{j+1}, \dots, g_{n+1}), \\
d_{j-1}^{(n)} d_i^{(n+1)}(g_1, \dots, g_{n+1})
&= d_{j-1}^{(n)}(g_1, \dots, g_i g_{i+1}, \dots, g_{n+1}) \\
&= (g_1, \dots, g_i g_{i+1}, \dots, g_j g_{j+1}, \dots, g_{n+1}).
\end{align*}
\item Suppose $n \geq 2$, $i \leq i \leq n - 1$ and $j = n + 1$.
\begin{align*}
d_i^{(n)} d_{n+1}^{(n+1)}(g_1, \dots, g_{n+1})
&= d_i^{(n)}(g_1, \dots, g_n)
= (g_1, \dots, g_i g_{i+1}, \dots, g_n), \\
d_n^{(n)} d_i^{(n+1)}(g_1, \dots, g_{n+1})
&= d_n^{(n)}(g_1, \dots, g_i g_{i+1}, \dots, g_{n+1}) \\
&= (g_1, \dots, g_i g_{i+1}, \dots, g_n).
\end{align*}
\item Suppose $n \geq 2$, $i = n$ and $j = n + 1$.

In this case, the equation $d_i^{(n)} d_{n+1}^{(n+1)} = d_n^{(n)} d_i^{(n+1)}$ is obvious.
\end{enumerate}
We went \oldc{throught}\newc{through} all possible cases and showed \oldc{that $d_n^{(n)} d_i^{(n+1)}$}\newc{\eqref{eqn:didj}}.
\end{proof}


\begin{proposition}
The sequence $(C_c(G^{(\ast)}, \mathbb{Z}), \partial_\ast)$ is a chain complex.
\end{proposition}
\begin{proof}
As mentioned on page \pageref{eqn:compositionpartial}, we need to prove that $\partial_n \partial_{n+1} = 0$ for $n \geq 1$.

Note that we write $\lbrace 0, \dots, n \rbrace \times \lbrace 0, \dots, n+1 \rbrace$ as the disjoint union of the sets $I_<$ and $I_\geq$, where
\begin{align*}
I_< = \lbrace (i,j) : 0 \leq i < j \leq n + 1 \rbrace
\hspace{15pt}\text{and}\hspace{15pt}
I_\geq = \lbrace (i,j) : 0 \leq j < i \leq n \rbrace.
\end{align*}
Moreover, there exists a bijection $I_< \rightarrow I_\geq$ given by $(i,j) \mapsto (j-1, i)$. Using this bijection, we can write $\partial_n \partial_{n+1}$ as
\begin{align*}
\partial_n \partial_{n+1}
&= \sum_{i=0}^n \sum_{j=0}^{n+1} (-1)^{i+j} (d_i^{(n)} d_j^{(n+1)})_\ast \\
&= \sum_{i=0}^n \sum_{j = i + 1}^{n+1} [(-1)^{i+j}(d_i^{(n)} d_j^{(n+1)} )_\ast + (-1)^{j-1+i}(d_{j-1}^{(n)} d_i^{(n+1)})_\ast] \\
&= \sum_{i=0}^n \sum_{j=i+1}^{n+1} (-1)^{i+j}[(d_i^{(n)} d_j^{(n+1)})_\ast - (d_{j-1}^{(n)} d_i^{(n+1)})_\ast].
\intertext{Note that Lemma \ref{lemma:didj} implies that $(d_i^{(n)} d_j^{(n+1)})_\ast - (d_{j-1}^{(n)} d_i^{(n+1)})_\ast = 0$. Then}
\partial_n \partial_{n+1} &= \sum_{i=0}^n \sum_{j=i+1}^{n+1} 0 = 0.
\end{align*}
This implies that $(C_c(G^{(\ast)}, \mathbb{Z}), \partial_\ast)$  is a chain complex.
\end{proof}

\begin{definition}
\label{def:gpdhomology}
For $n \geq 0$, let $H_n(G)$ be the $n^{\text{th}}$ homology group of the chain complex $(C_c(G^{(\ast)}, \mathbb{Z}), \partial_\ast)$. In addition, we define the set of positive elements
\begin{align*}
H_0(G)^+ = \lbrace [f] \in H_0(G) : f(x) \geq 0 \text{ for all }x \in G^{(0)} \rbrace.
\end{align*}
\end{definition}

The operations on $H_0(G)$ are given by $[f_1] + [f_2] = [f_1 + f_2]$ and $-[f] = [-f]$. The group $H_0(G)$ is not necessarily an ordered group because the intesection $H_0(G)^+ \cap (- H_0(G)^+)$ may be different from $\lbrace 0 \rbrace$. See \cite[Remark 3.2]{Matui}.

We will see that certain homomorphisms of groupoids induce homomorphisms between the homology groups of these groupoids.

\begin{definition}
Let $\rho: G \rightarrow H$ be a homomorphism between étale groupoids. We define the map $\rho^{(0)}: G^{(0)} \rightarrow H^{(0)}$ by $\rho^{(0)} = \rho\vert_{G^{(0)}}$. Given $n \geq 1$, we define the map $\rho^{(n)}: G^{(n)} \rightarrow H^{(n)}$ such that
\begin{align*}
\rho^{(n)}(g_1, \hdots, g_n) = (\rho(g_1), \hdots, \rho(g_n)),
\text{\hspace{10pt}for $(g_1, \hdots, g_n) \in G^{(n)}$.}
\end{align*}
\end{definition}

Using the properties of \'etale groupoids, we have the lemma below. This lemma was stated without proof in \cite[Subsection 3.2]{Matui}.

\begin{lemma}{}
\label{lemma:rhon-equivalent}
Let $G, H$ be étale groupoids and let $\rho: G \rightarrow H$ be a homomorphism. Then the following are equivalent:
\begin{enumerate}[(i)]
\item $\rho^{(0)}$ is a local homeomorphism,
\item $\rho$ is a local homeomorphism,
\item $\rho^{(n)}$ is a local homeomorphism for all $n \in \mathbb{N}$.
\end{enumerate}
\end{lemma}
\begin{proof}
First we show (i) $\Rightarrow$ (ii). We begin by showing that $\rho$ is open. Note that $\rho \circ r: G \rightarrow H^{(0)}$ is a local homeomorphism since it is the composition of the local homeomorphisms $r: G \rightarrow G^{(0)}$ and $\rho^{(0)}: G^{(0)} \rightarrow H^{(0)}$.

\newc{Let $\mathcal{U}$ be an open set such that $r \circ \rho(\mathcal{U})$ is open and $r \circ \rho\vert_{\mathcal{U}}: \mathcal{U} \rightarrow r(\rho(\mathcal{U}))$ is a homeomorphism. Suppose $\rho(\mathcal{U})$ is not open. Then there exists a $g \in \mathcal{U}$ such that}
\begin{align*}
\newc{V \setminus \rho(\mathcal{U}) \neq \emptyset,}
\end{align*}
\newc{for every open set $V$ containing $\rho(g)$.}

\newc{Let $V_0$ be an open neighbourhood of $\rho(g)$ such that $r$ is injective on $V_0$ and $r(V_0)$ is an open subset of $r \circ \rho(\mathcal{U})$. Since $H$ is Hausdorff, we choose a decreasing sequence of distinct open sets $V_0 \supset V_1 \supset V_2 \supset \dots$ with intersection $\lbrace \rho(g) \rbrace$.}

\newc{Let $h_i$ be a sequence in $H$ such that $h_i \in V_i \setminus \rho(\mathcal{U})$ for $i=1,2, \dots$. Then $h_i \rightarrow \rho(g)$. Moreover, $r(h_i) \rightarrow r(\rho(g))$. Note that each $r(h_i) \in r \circ \rho(\mathcal{U})$.}

\newc{Let $g_i = (\rho \circ r)\vert_{\mathcal{U}}^{-1}(r(h_i))$ for all $i$. Since $\rho \circ r$ is a homeomorphism on $\mathcal{U}$, it follows that $g_i \rightarrow g$. By continuity of $\rho$, we have that $\rho(g_i) \rightarrow \rho(g)$.}

\newc{For a suffieciently large $i$, we have $\rho(g_i) \in V_1$. But $r(\rho(g_i)) = r(h_i)$ and $h_i \in V_0$. Since $r$ is injective on $V_0$, we must have $\rho(g_i) = h_i$. This implies that $h_i \in \rho(\mathcal{U})$, which is a contradiction. Then $\rho(\mathcal{U})$ is open. Since $\mathcal{U}$ is arbitrary and $r \circ \rho$ is a local homeomorphism, we have that $\rho$ is an open map.}


Fix $g \in G$, and let $\mathcal{U}$ be an open bisection containing $g$ such that $\rho^{(0)}$ is injective on $r(\mathcal{U})$, and such that $\rho(\mathcal{U})$ is an open bisection. We claim that $\rho$ is injective on $\mathcal{U}$. Indeed, let $g' \in \mathcal{U}$ be such that $\rho(g) = \rho(g')$. Since $\rho$ is a homomorphism, we have
\begin{align*}
\rho(r(g)) = r(\rho(g)) = r(\rho(g')) = \rho(r(g')).
\end{align*}
Since $r(g), r(g') \in r(\mathcal{U})$ and $\rho$ is injective on $r(\mathcal{U})$, we have $r(g) = r(g')$. But $\mathcal{U}$ is an open bisection, thus $g = g'$.

Since $G, H$ are \'etale and $\rho^{(0)}$ is a homeomorphism, there are an open bisection $\mathcal{U}_1 \subset \mathcal{U}$ containing $g$, and an open bisection $V_1 \subset \rho(\mathcal{U})$ containing $\rho(g)$ such that $\rho \circ r(\mathcal{U}_1) = r(V_1)$. Note that $\rho(\mathcal{U}_1) \subset \rho(\mathcal{U})$ is an open bisection. Let $T: \mathcal{U}_1 \rightarrow V_1$ be the homeomorphism given by
\begin{align*}
g' \mapsto T(g') = r\vert_{V_1}^{-1} \circ \rho \circ r(g').
\end{align*}
Then $\rho(g'), T(g')$ are in $r(\mathcal{U})$, and
\begin{align*}
r(T(g')) = r( r\vert_{V_1}^{-1} \circ \rho \circ r(g') )
= \rho \circ r(g'))
= r(\rho(g')).
\end{align*}
Since $\mathcal{U}_1$ is an open bisection, we have $T(g') = \rho(g')$. Therefore $\rho$ is an open homeomorphism.

The implication (ii) $\Rightarrow$ (i) is straightforward because $\rho^{(0)}$ is the restriction of $\rho$ on the unit space $G^{(0)}$\newc{, which is an open subset of $G$}. Now we prove (ii) $\Rightarrow$ (iii). Suppose $\rho$ is a local homeomorphism and let \oldc{$n \geq 1$}\newc{$n \geq 2$}.

\oldc{Then}\newc{Note that} the map $\rho^n: G^n \rightarrow H^n$ defined by
\begin{align*}
\rho^{n} = \underbrace{\rho \times \dots \times \rho}_{\text{$n$ times}}
\end{align*}
is a local homeomorphism. \newc{Let $g = (g_1, \dots, g_n) \in G$, and let $\mathcal{U}_1, \dots \mathcal{U}_n$ be open bisections such that:}\newc{
\begin{itemize}
\item $g_i \in \mathcal{U}_i$ for all $i$,
\item $r(\mathcal{U}_i) = s(\mathcal{U}_{i+1})$ for $i = 1, \dots, n-1$, and
\item $\rho\vert_{r(\mathcal{U}_i)}: r(\mathcal{U}_i) \rightarrow \rho( r(\mathcal{U}_i))$ is a homeomorphism for  $i=1, \dots, n-1$.
\end{itemize}}

\newc{Let $\mathcal{U} = \mathcal{U}_1 \times \cdots \times \mathcal{U}_n$. We claim that $\rho^{n}(\mathcal{U} \cap G^{(n)}) = \rho^{n}(\mathcal{U}) \cap H^{(n)}$. Indeed, let $\widetilde{g} = (\widetilde{g}_1, \dots, \widetilde{g}_n) \in \mathcal{U} \cap G^{(n)}$. Then $\rho^{(n)}(\widetilde{g}) \in \rho^{n}(\mathcal{U})$. Moreover, for $i = 1, \dots, n-1$, we have}
\newc{\begin{align*}
s(\rho(\widetilde{g}_i))
= \rho(s(\widetilde{g}_i))
= \rho(r(\widetilde{g}_{i+1}))
= r(\rho(\widetilde{g}_{i+1})),
\end{align*}}
\newc{thus $\rho^{n}(\widetilde{g}) \subset H^{(n)}$. Then we have the inclusion $\rho^{n}(\mathcal{U} \cap G^{(n)}) \subset \rho^{n}(\mathcal{U}) \cap H^{(n)}$.}

\newc{Now, let $\widetilde{g} = (\widetilde{g}_1, \dots, \widetilde{g}_n) \in \mathcal{U}$ be such that $\rho(\widetilde{g}) \in H^{(n)}$. Then, for all $i = 1, \dots, n-1$, we have}
\newc{
\begin{align*}
s(\rho(\widetilde{g}_i)) = r(\rho(\widetilde{g}_{i+1}))
\Rightarrow 
\rho(s(\widetilde{g}_i)) = \rho(r(\widetilde{g}_{i+1}))
\end{align*}
}
\newc{By assumption, $s(\widetilde{g}_i), r(\widetilde{g}_{i+1}) \in r(\mathcal{U}_i)$. Since $\rho$ is injective on this subset, it follows that $s(\widetilde{g}_i) = r(\widetilde{g}_{i+1})$. Therefore, $\widetilde{g} \in G^{(n)}$. This implies that $\rho^{n}(\mathcal{U} \cap G^{(n)}) = \rho^{n}(\mathcal{U}) \cap H^{(n)}$. Moreover, $\rho^{n}(\mathcal{U} \cap G^{(n)})$ is open, and}
\newc{
\begin{align*}
\rho^{n}(\mathcal{U} \cap G^{(n)}) = \rho^{n}(\mathcal{U}) \cap H^{(n)} = \rho^{(n)}(\mathcal{U}).
\end{align*}
}
\newc{Note that $\rho^{(n)}$ that is continuous and injective on $\mathcal{U} \cap G^{(n)}$. Then the restriction $\rho^{(n)}\vert_{\mathcal{U} \cap G^{(n)}}: \mathcal{U} \cap G^{(n)} \rightarrow \rho^{(n)}(\mathcal{U})$ is a homeomorphism. Therefore, $\rho^{(n)}$ is a local homeomorphism.}


The implication (iii) $\Rightarrow$ (i) is straightforward. This completes the proof.
\end{proof}

We use the previous lemma in Chapter \ref{section:diagram} to prove the commutative diagram \eqref{eqn:diagram} of page \pageref{eqn:diagram}. The following proposition \oldc{describe}\newc{describes} homology group homomorphisms induced by groupoid homomorphisms.

\begin{proposition}
Let $\rho: G \rightarrow H$ be a homomorphism that is also a local homeomorphism. Then, for each $n \geq 0$, $\rho$ induces the homomorphism $H_n(\rho): H_n(G) \rightarrow H_n(H)$ defined by
\begin{align*}
H_n(\rho)([f]) = [\rho_\ast^{(n)}(f)],
\text{\hspace{10pt}for $f \in C_c(G^{(n)}, \mathbb{Z})$.}
\end{align*}
\end{proposition}

\begin{remark}
\label{rmk:rhobracket}
In this thesis, for a function $\tau: \newc{C_c(G^{(0)}, \mathbb{Z})} \rightarrow \oldc{C_c(H^{(0}, \mathbb{Z})}\newc{C_c(H^{(0)}, \mathbb{Z})}$ we use the notation $[\tau]$ to denote the function $[\tau]: H_0(G) \rightarrow H_0(H)$ \newc{given} by $[\tau]([f]) = [\tau(f)]$. Note that, for an arbitrary $\tau$, we need to prove that $[\tau]$ is well-defined before applying this notation. Moreover, for a homomorphism $\rho: G \rightarrow H$ that is a local homeomorphism, we have \oldc{$H_0(\rho) = [\rho^{(0)}]$}\newc{$H_0(\rho) = [\rho^{(0)}_\ast]$}.
\end{remark}

Although we omit the long proof of the previous proposition, we make some comments about it. From Definition \ref{def:homomorphism_complexes} and Proposition \ref{prop:chain_homology_homomorphism}, we only need to show that $\rho$ induces the homomorphism of complexes \oldc{$\alpha: \lbrace G^{(n)} \rbrace_{n\geq 0} \rightarrow \lbrace H^{(n)} \rbrace_{n\geq 0}$ with $\alpha_n = \rho^{(n)}$}\newc{$\alpha: \lbrace C_c(G^{(n)}, \mathbb{Z}) \rbrace_{n \geq 0} \rightarrow \lbrace C_c(H^{(n)}, \mathbb{Z}) \rbrace_{n \geq 0}$, with $\alpha_n = H_n(\rho)$}. In other words, we need to show that
\begin{align*}
\oldc{\rho^{(n)} \circ \partial_{n+1} = \partial_{n+1} \circ \rho^{(n+1)}}
\newc{H_n(\rho) \circ \partial_{n+1} = \partial_{n+1} \circ H_{n+1}(\rho)}
\text{\hspace{10pt}for } n \geq 1.
\end{align*}
However, \newc{by} applying the definition of $\partial_n$ and Definition \ref{def:sigmaast}, it is sufficient to show that
\begin{align*}
\rho^{(n)} \circ d_i^{(n+1)} = d_i^{(n+1)} \circ \oldc{\rho^{(n)}}\newc{\rho^{(n+1)}},
\end{align*}
which is easier to verify. Indeed, we show the case $\rho^{(n)} \circ d_0^{(n+1)} = d_0^{(n+1)} \circ \rho^{(n)}$. Given $(g_1, \dots, g_{n+1}) \in G^{(n+1)}$, we have
\begin{align*}
\rho^{(n)} \circ d_0^{(n+1)}(g_1, \dots, g_{n+1})
&= \rho^{(n)}(g_2, \dots, g_{n+1}) \\
&= (\rho(g_2), \dots, \rho(g_{n+1})) \\
&= d_0^{(n+1)}(\rho(g_1), \dots, \rho(g_{n+1})) \\
&= d_0^{(n+1)} \circ \oldc{\rho^{(n)}}\newc{\rho^{(n+1)}(g_1, \dots, g_{n+1})}.
\end{align*}
The other cases can be proved analogously.

The following lemma gives a condition for the equivalence of functions in $H_0(R(\sigma))$. We apply this condition in Chapter \ref{section:K0H0} to study Matui's HK conjecture for AF groupoids.

\begin{lemma}
\label{lemma:traceH0Rsigma}
Let $\sigma: X \rightarrow Y$ be a surjective local homeomorphism. Then two functions $f_1, f_2 \in \oldc{C_c(X, \mathbb{Z})}\newc{C_c(X, \mathbb{N})}$ are equivalent in $H_0(R(\sigma))$ if, and only if,
\begin{align}
\label{eqn:equivconditionH0Rsigma}
\sum_{u: \sigma(u) = \sigma(x)} f_1(u)
= \sum_{u: \sigma(u) = \sigma(x)} f_2(u)
\hspace{15pt}\newc{\text{for all $x \in X$.}}
\end{align}
\end{lemma}
\begin{proof}
Let $f_1, f_2 \in C_c(X, \mathbb{N})$ be equivalent in $H_0(R(\sigma))$. Then there exists $F \in C_c(R(\sigma), \mathbb{Z})$ such that, for all $u \in X$,
\begin{align*}
f_1(u)
&= f_2(u) + \partial_1 F(u) \\
&= f_2(u) + s_\ast(F)(u) - r_\ast(F)(u) \\
&= f_2(u) + \sum_{v: \sigma(v) = \sigma(u)} [F(v,u) - F(u,v)].
\end{align*}
Fix $x \in X$. Then
\begin{align*}
\sum_{u: \sigma(u) = \sigma(x)} f_1(u)
&= \sum_{u: \sigma(u) = \sigma(x)} f_2(u) + \sum_{\substack{u,v \\ \sigma(u) =  \sigma(x) \\ \sigma(v) = \sigma(x)}} (F(v,u) - F(u,v)) \\
&= \sum_{u: \sigma(u) = \sigma(x)} f_2(u) + \sum_{\substack{u,v \\ \sigma(u) =  \sigma(x) \\ \sigma(v) = \sigma(x)}} F(v,u) - \sum_{\substack{u,v \\ \sigma(u) =  \sigma(x) \\ \sigma(v) = \sigma(x)}} (F(u,v).
\end{align*}
Note that the second and the third sums of the right-hand side are equal. Then \eqref{eqn:equivconditionH0Rsigma} holds.

Conversely, suppose that \eqref{eqn:equivconditionH0Rsigma} holds, and fix a continuous section $\varphi: Y \rightarrow X$ of $\sigma$. Let $T: C_c(X, \mathbb{N}) \rightarrow C_c(X, \mathbb{N})$ be given by
\begin{align}
\label{eqn:Tfx}
T(f)(x) = 1_{X_\varphi}(x) \sum_{u: \sigma(u) = \sigma(x)} f(u),
\end{align}
\newc{where $X_\varphi = \varphi(Y)$.}

We need to prove that $T$ is well-defined. Let $V \subset X$ be a compact open subset of which $\sigma$ is injective. We claim that $T(1_V) = 1_{\varphi(\sigma(V))}$. Indeed, let $x \in X$. Suppose that $T(1_V)(x) \neq 0$. Equation \eqref{eqn:Tfx} implies that $x \in X_\varphi$. Moreover, there exists $u \in X_\varphi$ with
\begin{align}
\label{eqn:1Vusigmax}
1_V(u) = 1 \text{\hspace{10pt} and \hspace{10pt}} \sigma(u) = \sigma(x).
\end{align}
Thus,  $u \in V$. Since $\sigma$ is injective on $V$, $u$ is the unique element in $X$ such that \eqref{eqn:1Vusigmax} holds. Then $T(1_V)(x) = 1$. Since $\sigma(u) = \sigma(x)$ and $x \in X_\varphi$, then $x = \varphi(\sigma(u)) \in \varphi(\sigma(V))$. Thus $1_{\varphi(\sigma(V))}(x) = 1$, and then $T(1_V)(x) = 1_{\varphi(\sigma(V))}(x)$.

Now suppose that $T(1_V)(x) = 0$. Then $x \notin X_\varphi$ or $\sum_{u: \sigma(u) = \sigma(x)} 1_V(u) = 0$. If $x \notin X_\varphi$, then $x \notin\varphi(\sigma(V))$ because $\varphi(\sigma(V)) \subset X_\varphi$. Then $1_{\varphi(\sigma(V))}(x) = 0$. If $\sum_{u: \sigma(u) = \sigma(x)} 1_V(u) = 0$, then
\begin{align*}
V \cap \sigma^{-1}(\sigma(x)) = \emptyset
&\Rightarrow \sigma(V) \cap \lbrace \sigma(x) \rbrace =\emptyset \\
&\Rightarrow \varphi(\sigma(V)) \cap \lbrace x \rbrace = \emptyset \\
&\newc{\Rightarrow x \notin \varphi(\sigma(V))} \\
&\Rightarrow 1_{\varphi(\sigma(V))}(x) = 0.
\end{align*}
Therefore $T(1_V) = 1_{\varphi(\sigma(V))}$.

Now let $f \in C_c(X, \mathbb{N})$ be non-zero (the case $f = 0$ is trivial). Then there are finitely many compact open subsets $V_1, \dots, V_n$ such that $\sigma$ is injective on each of them, and $f = 1_{V_1} + \dots + 1_{V_n}$. We know that each $T(1_{V_i})$ is in $C_c(X, \mathbb{N})$. It follows from \eqref{eqn:Tfx} that $T$ is a semigroup homomorphism. Then $T(f) \in C_c(X, \mathbb{N})$. Therefore $T$ is well-defined.

Note that, for $f \in C_c(X, \mathbb{N})$ and $x \in X$, we have
\begin{align}
\label{eqn:Tfvarphisigmax}
\newc{T(f)(\varphi(\sigma(x)))}
= \sum_{u: \sigma(u) = \sigma(\varphi(\sigma(x))) } f(u)
= \sum_{u: \sigma(u) = \sigma(x) } f(u).
\end{align}

Since \eqref{eqn:equivconditionH0Rsigma} holds, then $T(f_1) = T(f_2)$. Define the set
\begin{align*}
K = \lbrace x \in X : x \in \mathrm{supp\hphantom{.}}f_1 \cup \mathrm{supp\hphantom{.}} f_2 \text{\hspace{7pt} and \hspace{7pt}} T(f_1)(\varphi(\sigma(x))) \neq 0 \rbrace.
\end{align*}
This set is compact because it is the intesection of the compact set $\mathrm{supp\hphantom{.}}f_1 \cup \mathrm{supp\hphantom{.}} f_2$ and the closed set \oldc{$(T(f) \circ \varphi \circ \sigma)^{-1}(\mathbb{Z} \setminus \lbrace 0 \rbrace)$}\newc{$(T(f_1) \circ \varphi \circ \sigma)^{-1}(\mathbb{Z} \setminus \lbrace 0 \rbrace)$}.

Let $F \in C_c(R(\sigma), \mathbb{Z})$ be defined by
\begin{align*}
F(x,y)
&= \begin{cases}
\displaystyle\frac{f_1(y)f_2(x)}{T(f_1)(\varphi(\sigma(x)))}
& \text{if } (x,y) \in K \times \mathrm{supp\hphantom{.}}f_2,\\
0 &\text{otherwise.}
\end{cases}
\end{align*}
Then, for $x \in K$, we have
\begin{align}
\sum_{y: \sigma(y) = \sigma(x)} F(x,y)
&= \frac{f_2(x)}{T(f_1)(\varphi(\sigma(x)))} \sum_{y: \sigma(y) = \sigma(x)} f_1(y) \nonumber\\
&= \frac{f_2(x)}{T(f_1)(\varphi(\sigma(x)))} T(f_1)(\varphi(\sigma(x))) \text{\hspace{20pt} by \eqref{eqn:Tfvarphisigmax}} \nonumber\\
&= f_2(x). \label{eqn:partialFf1}
\end{align}
If $x \in X \setminus K$, we have
\begin{align}
\label{eqn:partialFf1zero}
\sum_{y: \sigma(y) = \sigma(x)} F(x,y) = 0 = f_2(x).
\end{align}
By similar arguments, we have
\begin{align}
\label{eqn:partialFf2}
\sum_{y: \sigma(y) = \sigma(x)} \oldc{F(x,y)}\newc{F(y,x)} = f_1(x)
\hspace{20pt}\text{for $x \in X$.}
\end{align}
Combining, \eqref{eqn:partialFf1}, \eqref{eqn:partialFf1zero} and \eqref{eqn:partialFf2}, we obtain for all $x \in X$,
\begin{align*}
f_1(x) = f_2(x) + \partial_1 F(x).
\end{align*}
Therefore $f_1 \sim f_2$.
\end{proof}

Now we show that the functor $H_0$ is continuous with respect to inductive limits, where $G = \displaystyle\lim_\rightarrow G_n$ is an inductive limit as in Lemma \ref{lemma:inductivelimitgroupoids}. That is, $G$ is the increasing union of the open subsets $G_n$, where $G_n^{(0)} = G^{(0)}$.

\begin{lemma}
\label{lemma:partial1F}
Let $G = \displaystyle\lim_\rightarrow G_n$. For each $n$, let $\partial_1^{(n)}: C_c(G_{n}, \mathbb{Z}) \rightarrow C_c(G^{(0)}, \mathbb{Z})$ be defined by $\partial_1^{(n)} = (s\vert_{G_n})_\ast - (r\vert_{G_n})_\ast$. Analogously, let $\partial_1: C_c(G, \mathbb{Z}) \rightarrow C_c(G^{(0)}, \mathbb{Z})$ be given by $\partial_1 = s_\ast - r_\ast$. Let $F \in C_c(G_n, \mathbb{Z})$. Then for all $k \geq 0$, we have
\begin{align*}
\partial_1 F = \partial_1^{(n)} F = \partial_1^{(n+k)} F.
\end{align*}
\end{lemma}
\begin{proof}
Note that $F \in C_c(G_n, \mathbb{Z}) \subset C_c(G_{n+1}, \mathbb{Z}) \subset C_c(G, \mathbb{Z})$. If $\oldc{(y_1, y_2)}\newc{g} \in G$ is such that $\oldc{F(y_1, y_2)}\newc{F(g)} \neq 0$, then $\oldc{(y_1, y_2)}\newc{g} \in G_n$. Then, for each $\oldc{x \in X}\newc{x \in G^{(0)}}$, we have
\begin{align*}
\partial_1(F)(x)
&= s_\ast(F)(x) - r_\ast(F)(x)\\
&= \newc{\sum_{g \in G \hphantom{.}:\hphantom{.} s(g) = x} F(g) - \sum_{g \in G \hphantom{.}:\hphantom{.} r(g) = x} F(g) } \\
&= \newc{\sum_{g \in G_n \hphantom{.}:\hphantom{.} s(g) = x} F(g) - \sum_{g \in G_n \hphantom{.}:\hphantom{.} r(g) = x} F(g) } \\
&= \partial_1^{(n)}(F)(x).
\end{align*}
Thus $\partial_1(F) = \partial_1^{(n)}(F)$. Analogously, $\partial_1^{(n+1)}(F) = \partial_1^{(n)}(F)$.
\end{proof}

\begin{lemma}
\label{lemma:limH0}
Let $G = \displaystyle\lim_\rightarrow G_n$. Then $H_0(G) = \displaystyle\lim_\rightarrow H_0(G_n)$ as an inductive limit of groups. Moreover, if each $H_0(G_n)$ is an ordered group, then $H_0(G)$ is an ordered group.
\end{lemma}
\begin{proof}
Let $[\hphantom{f}]_n$ and $[\hphantom{f}]$ denote the equivalence classes in $H_0(G_n)$ and $H_0(G)$, respectively. For each $n$, let $i_n: H_0(G_n) \rightarrow H_0(G_{n+1})$ be the inclusion maps defined by
\begin{align*}
i_n([f]_n) = [f]_{n+1}.
\end{align*}
Note that $i_n$ is well-defined. In fact, let $f_1, f_2 \in C_c(G^{(0)}, \mathbb{Z})$ be such that $f_1, f_2$ are equivalent in $H_0(G_n)$. Then there exists \oldc{$F \in C_c(G, \mathbb{Z})$}\newc{$F \in C_c(G_n, \mathbb{Z})$} such that
\begin{align*}
f_1 &= f_2 + \partial_1^{(n)} F \\
&= f_2 + \partial_1^{(n+1)} F
\hspace{20pt}\text{by Lemma \ref{lemma:partial1F}.}
\end{align*}
Then $f_1 \sim f_2$ in $H_0(G_{n+1})$. This implies that $i_n$ is well-defined.

For every $n \geq 1$, let $\alpha_n: H_0(G_n) \rightarrow H_0(G)$ be given by
\begin{align*}
\alpha_n([f]_n) = [f].
\end{align*}
Using analogous arguments to the proof for $i_n$, we can show that $\alpha_n$ is well-defined. Note that both $\alpha_n$ and $i_n$ are positive homomorphisms.

Now we verify the properties (i) and (ii) of Definition \ref{def:inductivelimit}.
\begin{enumerate}[(i)]
\item Let $f \in C_c(G^{(0)}, \mathbb{Z})$. Then
\begin{align*}
\alpha_{n+1}(i_n([f]_n))
= \alpha_{n+1}([f]_{n+1})
= [f]
= \alpha_n([f]).
\end{align*}

\item Let $B$ be an ordered group such that, for every $n$, there is a positive homomorphism $\lambda_n: H_0(G_n) \rightarrow B$ such that
\begin{align}
\label{eqn:lambdaninductive}
\lambda_n = \lambda_{n+1} \circ i_n.
\end{align}
\newc{Since $G^{(0)}$ is an open subgroupoid of $G_n$ for all $n$, we assume that $G_0 = G^{(0)}$, without loss of generality. Moreover, we can identify $H_0(G_0)$ with $C_c(G^{(0)}, \mathbb{Z})$.} Let $\lambda: H_0(G) \rightarrow B$ be defined by
\begin{align*}
\lambda([f]) = \lambda_0(f).
\end{align*}
Note that $\lambda$ is well-defined. Suppose $f_1, f_2 \in C_c(G^{(0)}, \mathbb{Z})$ are such that $f_1 \sim f_2$ in $H_0(G)$. Then there exists $F \in C_c(G, \mathbb{Z})$ such that $f_1 = f_2 + \partial_1 F$. There is $n$ such that $F \in C_c(G_n, \mathbb{Z})$. Then, by Lemma \ref{lemma:partial1F},
\begin{align*}
f_1 = f_2 + \partial_1^{(n)} F.
\end{align*}
Applying \eqref{eqn:lambdaninductive} multiple times, we have
\begin{align}
\lambda_0(f_1)
&= \lambda_1 \circ \oldc{i_1}\newc{i_0}(f_1) = \lambda_1([f_1]_1) \nonumber\\
&= \lambda_2 \circ \oldc{i_2}\newc{i_1}([f_1]_1) = \lambda_2([f_1]_2) \nonumber\\
&= \hdots \nonumber\\
&= \lambda_n([f_1]_n) \nonumber\\
&= \lambda_n(\oldc{[f_1]_n}\newc{[f_2]_n}) \hspace{20pt}\text{because $f_1 \sim f_2$ in $H_0(G_n)$}\nonumber\\
&= \lambda_0(f_2). \label{eqn:lambda0f1}
\end{align}
Thus $\lambda$ is well-defined.

We need to show that the diagram

\begin{equation}
\label{eqn:diagramH0Gn}
\begin{tikzcd}
& H_0(G_n) \arrow{dl}{}[swap]{\alpha_n} \arrow{dr}{\lambda_n}[swap]{} \\
H_0(G) \arrow{rr}{}[swap]{\lambda} && B
\end{tikzcd}
\end{equation}
commutes for all $n$. Let $f \in C_c(G^{(0)}, \mathbb{Z})$. Then, by the same argument used in \eqref{eqn:lambda0f1},
\begin{align*}
\lambda \circ \alpha_n([f]_n)
= \lambda([f])
= \lambda_0(f)
= \lambda_n([f]_n).
\end{align*}
Suppose that $\widetilde{\lambda}: H_0(G) \rightarrow B$ is another positive homomorphism making the diagram \eqref{eqn:diagramH0Gn} commutative for all $n$. Then, for $f \in C_c(G^{(0)}, \mathbb{Z})$,
\begin{align*}
\widetilde{\lambda}([f])
= \widetilde{\lambda}(\alpha_0(f))
= \lambda_0(f)
= \lambda([f]).
\end{align*}
Then $\widetilde{\lambda} = \lambda$. Therefore $H_0(G) =\displaystyle\lim_\rightarrow H_0(G_n)$. If each $H_0(G)$ is an ordered group, it follows from Proposition \ref{prop:inductivegroupunion} that $H_0(G)$ is an ordered group. \qedhere
\end{enumerate}
\end{proof}

Now we show that, for AF groupoids, the zeroth homology group is an ordered group.

\begin{proposition}
Let $G$ be an AF groupoid. Then $H_0(G)$ is an ordered group.
\end{proposition}
\begin{proof}
We begin by showing that, for a surjective local homeomorphism $\sigma: X \rightarrow \newc{Y}$ of locally compact, Hausdorff, second countable, totally disconnected spaces, the group $H_0(R(\sigma))$ is an ordered group. Indeed, it is straightforward from the definition that $H_0(R(\sigma)) = H_0(R(\sigma))^+ - H_0(R(\sigma))^+$. We now prove the intersection
\begin{align*}
H_0(R(\sigma))^+ \cap (-H_0(R(\sigma))^+) = \lbrace 0 \rbrace.
\end{align*}

Choose a function $f \in C_c(X, \mathbb{Z})$ with $[f] \in H_0(R(\sigma))^+ \cap (-H_0(R(\sigma))^+)$. Then there are two functions $f_1, f_2 \in C_c(X, \mathbb{Z})$ assuming non-negative values, such that
\begin{align*}
[f] = [f_1] = [-f_2].
\end{align*}
Choose $x \in X$. By Lemma \ref{lemma:traceH0Rsigma},
\begin{align*}
0
\leq f_1(x)
\leq \sum_{u: \sigma(u) = \sigma(x)} f_1(u)
= -  \sum_{u: \sigma(u) = \sigma(x)} f_2(u)
\leq - f_2(x)
\leq 0.
\end{align*}
Then $f_1(x) = f_2(x) = 0$. Since $x$ is arbitrary, we have $f_1 = f_2 = 0$. This implies that $H_0(R(\sigma))^+ \cap (-H_0(R(\sigma))^+) = \lbrace 0 \rbrace$. Therefore, $H_0(R(\sigma))$ is an ordered group.

Now let $G$ be an AF groupoid. Then there exists a sequence of surjective local homeomorphisms $\sigma_n: G^{(0)} \rightarrow Y_n$, where $G^{(0)}$ and $Y_n$ are locally compact, Hausdorff, second countable and totally disconnected, such that $G = \displaystyle\lim_\rightarrow R(\sigma_n)$. Since each $H_0(R(\sigma_n))$ is an ordered group, then Lemma \ref{lemma:limH0} implies that $H_0(G)$ is an ordered group.
\end{proof}

\par\nobreak\section{Homological similarity}
\label{subsection:homology:similarity}

We will define the notion of homolgical similarity between groupoids and show that, when two groupoids are homologically similar, then they have isomorphic homology groups. Here we also use Matui's paper \cite{Matui}.

\begin{definition}
\begin{enumerate}[(i)]
\item Two homomorphisms $\rho, \sigma$ from $G$ to $H$ are said to be \newterm{similar} if there exists a continuous map $\theta: G^{(0)} \rightarrow H$ such that
\begin{align}
\label{eqn:similar}
\theta(r(g)) \rho(g) = \sigma(g)\theta(s(g)),
\end{align}
for all $g \in G$.
\item $G$ and $H$ are said to be \newterm{homologically similar} if there exist homomorphisms $\rho: G \rightarrow H$ and $\sigma: H \rightarrow G$ that are local homeomorphisms and such that $\sigma \circ \rho$ is similar to $\mathrm{id}_G$ and $\rho \circ \sigma$ is similar to $\mathrm{id}_H$.
\end{enumerate}
\end{definition}

\begin{proposition}
\label{prop:similarhomomorphisms}
\cite[Proposition 3.5]{Matui}
Let $n \geq 1$. If $\rho, \sigma: G \rightarrow H$ are similar homomorphisms and local homeomorphisms, then $H_n(\rho) = H_n(\sigma)$.
\end{proposition}

\begin{corollary}
\label{corollary:isoHn}
\cite[Proposition 3.5]{Matui}
If $G$ and $H$ are homologically similar, then they have isomorphic homology groups. In particular, if $n \geq 1$, $\rho: G \rightarrow H$ and $\sigma: H \rightarrow G$ are local homeomorphisms and homomorphisms satisfying \eqref{eqn:similar}, then $H_n(\rho): H_n(G) \rightarrow H_n(H)$ is an isomorphism with inverse $H_n(\sigma)$. Moreover, both $H_0(\rho)$ and $H_0(\sigma)$ preserve positive elements.
\end{corollary}
\begin{proof}
It follows from \ref{prop:similarhomomorphisms} that  
\begin{align*}
H_n(\mathrm{id}_G) &= H_n(\sigma \circ \rho) = H_n(\sigma) \circ H_n(\rho), \text{ and}\\
H_n(\mathrm{id}_H) &= H_n(\rho \circ \sigma) = \oldc{H_n(\sigma)}\newc{H_n(\rho)} \circ H_n(\sigma).
\end{align*}
Then $H_n(\rho) = H_n(\sigma)^{-1}$. Therefore $H_n(\rho)$ is an isomorphism. It is straightforward from the definition of $H_0(\rho)$ and $H_0(\sigma)$ that these isomorphisms preserve positive elements.
\end{proof}

\chapter{K-theory for AF groupoids}
\label{section:K0H0}

In this chapter we show Theorem 2, that connects the K-theory with the homology group $H_0$ for AF groupoids. More precisely, we prove that, for an AF groupoid $G$, the homology group $H_0(G)$ and the $K_0$-group $K_0(C^*(G))$ are isomorphic as ordered groups. Our isomorphism generalises \cite[Corollary 5.2]{FKPS} by Farsi, Kumjian, Pask and Sims, where they prove a group isomorphism, but do not show that the map preserves positive maps from one another. Our theorem also generalises Theorem 4.10 of \cite{Matui} by Matui, where he considered AF groupoids with compact unit space. In this chapter we also provide an explicit formula for the isomorphism.

In comparison with the K-theory, homology groups of groupoids are easier to calculate and there \newc{are} useful tools in the theory to understand these homology groups in more detail. So, the isomorphism of Theorem 2 will be useful to study the main result of this thesis. Indeed, in we use this theorem to obtain the commutative diagram in Chapter \ref{section:diagram}, which is important to prove Theorem 1 in Chapter \ref{section:result}.

Although it is often difficult to find the K-theory of an arbitrary C*-algebra, this problem is simple when we consider that basic C*-algebra $M_n$. For this case, we get the isomorpism $K_0(M_n) \rightarrow \mathbb{Z}$ given by $[p]_0 \mapsto \tr p$, where $\tr p$ is the matrix trace of $p \in \mathcal{P}_\infty(M_n)$. Similarly, for a locally compact, Hausdorff, second countable, totally disconnected space $X$, the K-theory of the C*-algebra $C_0(X, M_n)$ is also described by the matrix trace. See Appendix \ref{appendix:C0XMn} for more details on the K-theory of $C_0(X, M_n)$. In this chapter, we apply the results for $C_0(X, M_n)$ to study the K-theory of the C*-algebras of AF groupoids.

An AF groupoid $G$ is an inductive limit of a sequence of groupoids $G_n$ such that $G_n \cong R(\sigma_n)$, where
\begin{align*}
\sigma_n: X_n \rightarrow Y_n
\end{align*}
are surjective local homeomorphisms, and the sets $X_n$, $Y_n$ have the same topological properties as $X$ mentioned in the previous paragraph. We study AF groupoids in Appendix \ref{appendix:AFgroupoids}, and inductive limits in Appendix \ref{appendix:inductivelimits}. Throughout this chapter, we always assume that $X, Y, X_n, Y_n$ are locally compact, Hausdorff, second countable, totally disconnected spaces.

In \oldc{most}\newc{the first} part of the chapter, we focus on the proof of the ordered group isomorphism $K_0(C^*(R(\sigma))) \cong H_0(R(\sigma))$, where $\sigma: X \rightarrow Y$ is a surjective local homeomorphism satisfying the following property: $X$ has a compact open subset $X_0$ such that
\begin{align}
\label{eqn:X0invariant}
\sigma^{-1}(\sigma(X_0)) = X_0
\hspace{15pt}
\text{and}
\hspace{15pt}
\text{$\sigma$ is injective on $X \setminus X_0$.}
\end{align}
In this case, we show that the C*-algebra $C^*(R(\sigma))$ is isomorphic to a direct sum of C*-algebras of the form $C_0(V_i, M_{n_i})$. This isomorphism and Appendix \ref{appendix:C0XMn} let us use the trace operator induced by the isomorphism to study the K-theory of $C^*(R(\sigma))$. Then we study this trace operator to prove that $K_0(C^*(R(\sigma))) \cong H_0(R(\sigma))$. We generalise this isomorphism to AF groupoids by applying inductive limits.

Throughout the chapter, we introduce the following notation. Given a continuous section\footnote{A map $\varphi: Y \rightarrow X$ is continuous section  of $\sigma: X \rightarrow Y$ if $y = \sigma(\varphi(y))$ for all $y \in Y$.} $\varphi: Y \rightarrow X$ of $\sigma: X \rightarrow Y$, we define the set $X_\varphi = \varphi(Y)$. Note that $X_\varphi$ is a clopen subset of $X$. In order to simplify our notation, for now we write $[f_{jk}]$ instead of $[f_{jk}]_{j,k=1, \dots, n}$ to denote a matrix of elements of $f_{jk}$ with  $j,k$ indexed from $1$ to \oldc{$k$}\newc{$n$}. 

\par\nobreak

\begin{remark}
\label{rmk:XvarphisetminusX0}
Note that \oldc{$\sigma^{-1}(\sigma(X_0))$}\newc{$\sigma^{-1}(\sigma(X_0)) = X_0$} implies that $\sigma^{-1}(\sigma(X \setminus X_0)) = X \setminus X_0$. We also have $X \setminus X_0 = X_\varphi \setminus X_0$. In fact, let $x \in X \setminus X_0$, and let $y = \varphi(\sigma(x))$. Then $y \in X_\varphi \setminus X_0$ and $\sigma(x) = \sigma(y)$. Since $\sigma$ is injective on $X \setminus X_0$, we have $x = y = \varphi(\sigma(x))$. But $x \in X \setminus X_0$ is arbitrary, therefore $X \setminus X_0 = X_\varphi \setminus X_0$.
\end{remark}

\begin{lemma}
\label{lemma:covmap-sets}
Let $\varphi: Y \rightarrow X$ be a continuous section of $\sigma$, and let $y \in Y$.Then $y$ has a clopen neighbourhood $W$, there is a positive integer $n$, and there are disjoint clopen subsets $\mathcal{U}_1, \dots, \mathcal{U}_n \subset X$ such that
\begin{enumerate}[(i)]
\item $\sigma^{-1}(W) = \bigcup_{i=1}^n \mathcal{U}_i$,
\item $\sigma\vert_{\mathcal{U}_i}: \mathcal{U}_i \rightarrow W$ is a homeomorphism for all $i$, and
\item $\sigma^{-1}(W) \cap X_\varphi = \mathcal{U}_1$.
\end{enumerate}
If $y \in \sigma(X_0)$, then $W$ and the sets $\mathcal{U}_i$ are compact. If $y \in Y \setminus \sigma(X_0)$, we let $n = 1$, $W = Y \setminus \sigma(X_0)$ and $\mathcal{U}_1 = X \setminus X_0$.
\end{lemma}
\begin{proof}
The proof of the case $y \in Y \setminus \sigma(X_0)$ follows from the fact that $\sigma$ is injective on $X \setminus X_0$.

Let $y \in \sigma(X_0)$. Since $\sigma$ is a local homeomorphism, $X_0$ is compact and $\sigma^{-1}(\sigma(X_0)) = X_0$, then $y$ has a compact open neighbourhood $W$ and there are finite disjoint compact open subsets $\mathcal{O}_1, \dots, \mathcal{O}_n$ with
\begin{align}
\label{eqn:sigmainvWOi}
\sigma^{-1}(W) = \bigcup_{i=1}^n \mathcal{O}_i,
\end{align}
and such that $\sigma\vert_{\mathcal{O}_i}: \mathcal{O}_i \rightarrow W$ is a homeomorphism for all $i$.

Now we define the sets $\mathcal{U}_i$. Let $\mathcal{U}_1 = \sigma^{-1}(W) \cap X_\varphi$. Since $\sigma$ is injective on $X_\varphi$, $\sigma$ is also injective on $\mathcal{U}_1$. Note that $\sigma(\mathcal{U}_1) = W$. In fact,
\begin{align*}
\sigma(\mathcal{U}_1)
= \sigma(\sigma^{-1}(W)) \cap \sigma(X_\varphi)
= W \cap Y
= W.
\end{align*}
Then $\sigma\vert_{\mathcal{U}_1}: \mathcal{U}_1 \rightarrow W$ is a homeomorphism.

We define the sets $\mathcal{U}_2, \dots, \mathcal{U}_n$ by manipulating the subsets $\mathcal{O}_1, \dots, \mathcal{O}_n$. Let $u \in \mathcal{U}_1$. By \eqref{eqn:sigmainvWOi}, Then there is a unique $k_u$ such that $u \in \mathcal{U}_1 \cap \mathcal{O}_{k_u}$. Moreover, there is a compact open set $\Omega_u$ with $u \in \Omega_u \subset \mathcal{U}_1 \cap \mathcal{O}_{k_u}$. By compactness of $\mathcal{U}_1$, there are $u_1, \dots, u_m \in \mathcal{U}_1$ such that $\mathcal{U}_1 = \Omega_{u_1} \cup \dots \cup \Omega_{u_m}$ and that each $\Omega_{u_i}$ is not a subset of $\bigcup_{\substack{j=1 \\ j \neq i}}^m \Omega_{u_j}$.

Let $\Omega_1 = \Omega_{u_1}$, and define for all $i = 2, \dots, m$ the set $\Omega_i = \Omega_{u_i} \setminus \bigcup_{j=1}^{i-1} \Omega_{u_j}$. Then we have the disjoint union
\begin{align}
\label{eqn:U1unionOmega}
\mathcal{U}_1 = \Omega_1 \cup \dots \cup \Omega_m.
\end{align}

For all $i=2, \dots, n$, and $j = 1,\dots, m$, let
\begin{align*}
\Omega_j^{(i)}
&= \begin{cases}
\mathcal{O}_{i-1} \cap \sigma^{-1}(\sigma(\Omega_j))
& \text{if } i < k_u, \\
\mathcal{O}_i \cap \sigma^{-1}(\sigma(\Omega_j))
& \text{if } i \geq k_{u_i}.
\end{cases}
\end{align*}
Note that $\Omega_j^{(1)} = \mathcal{O}_{k_{u_j}} \cap \sigma^{-1}(\sigma(\Omega_j))$.

Since the sets $\mathcal{O}_i$ are disjoint and compact open, and since the sets $\Omega_j$ are disjoint open, then the sets $\Omega_j^{(i)}$ are disjoint and compact open. Moreover, by \eqref{eqn:sigmainvWOi} and \eqref{eqn:U1unionOmega},
\begin{align}
\sigma^{-1}(W)
&= \left( \bigcup_{i=1}^n \mathcal{O}_i \right) \cap \sigma^{-1}(\sigma(\mathcal{U}_1)) \nonumber\\
&= \left( \bigcup_{i=1}^n \mathcal{O}_i \right) \cap \bigcup_{j=1}^m \sigma^{-1}(\sigma(\Omega_j)) \nonumber\\
&= \bigcup_{i=1}^n \bigcup_{j=1}^m \mathcal{O}_i \cap \sigma^{-1}(\sigma(\Omega_j)) \nonumber\\
&= \bigcup_{i=1}^n \bigcup_{j=1}^m \Omega_{j}^{(i)}. \label{eqn:unionOmegaij}
\end{align}

For all $i = 2, \dots, n$, let $\mathcal{U}_i = \Omega_i^{(1)} \cup \dots \cup \Omega_i^{(m)}$. Then property (i) follows from \eqref{eqn:unionOmegaij}.

We show (ii). Note that each $\Omega_j^{(i)}$ is a subset of either $\mathcal{O}_{i-1}$, $\mathcal{O}_{i}$ or $\mathcal{O}_{k_{u_i}}$. Since $\sigma$ is injective on the sets $\mathcal{O}_i$, then $\sigma$ is injective on all $\mathcal{U}_i = \Omega_1^{(i)} \cup \dots \cup \Omega_m^{(i)}$. Also,
\begin{align*}
\sigma(\mathcal{U}_i)
&= \sigma \left( \bigcup_{j=1}^m \mathcal{O}_i \cap \sigma^{-1}(\sigma(\Omega_j)) \right) \\
&= \sigma(\mathcal{O}_i) \cap \sigma\left(\bigcup_{j=1}^m \Omega_j \right) \\
&= \sigma(\mathcal{O}_i) \cap \sigma(\mathcal{U}_1) \\
&= W.
\end{align*}
Then property (ii) holds.

Recall that $\mathcal{U}_1 = \sigma^{-1}(W) \cap X_\varphi$. By \eqref{eqn:U1unionOmega}, $\Omega_j^{(i)} \cap X_\varphi \neq \emptyset$ if, and only if $i = 1$. Then (iii) holds by definition of the sets $\mathcal{U}_i$.
\end{proof}

\begin{remark}
\label{rmk:Rsigmaamenable}
Note that $R(\sigma)$ is amenable. Indeed, the C*-algebra $C^*(R(\sigma))$ is AF, it is also nuclear by Theorems 6.3.9 and 6.9.10 of Murphy's book \cite{Murphy}. Since \oldc{$C_r(R(\sigma))$}\newc{$C_r^*(R(\sigma))$} is a C*-subalgebra of $C^*(R(\sigma))$, then it is also nuclear. By \cite[Theorem 10.1.5]{sims2017hausdorff}, $R(\sigma)$ is amenable. Hence, the reduced and the full C*-algebras, $C_r^*(R(\sigma))$ and $C^*(R(\sigma))$, are the same by Corollary 6.2.14  of \cite{ADRenault}.
\end{remark}

\begin{proposition}
\label{prop:isoRsigmacompact}
Let $\varphi: Y \rightarrow X$ be a continuous section of $\sigma$. Then there is a positive integer $N$, there are disjoint open sets $V_1, \dots, V_N$ with $X_\varphi = V_1 \cup \dots \cup V_N$, there are positive integers $n_1, \dots, n_N$, and there is an isomorphism $\mu: C^*(R(\sigma)) \rightarrow \bigoplus_{i=1}^N C_0(V_i, M_{n_i})$ such that
\begin{enumerate}[(i)]
\item for $f \in \oldc{C_c(R(\sigma))}\newc{C^*(R(\sigma))}$ and $x \in X_\varphi$, we have
\begin{align*}
\mathrm{tr}\hphantom{.}\mu(f)(x)
= \sum_{u: \sigma(u) = \sigma(x)} f(u),
\end{align*}
\newc{where we apply Renualt's $j$ map \cite[Proposition II.4.2]{Renault} to identify $C^*(R(\sigma))$ as a subset of $C_0(R(\sigma))$.}
\item For a positive integer $k$, for $F \in M_k(C^*(R(\sigma)))$, and for $x \in X_\varphi$, we have
\begin{align*}
\mathrm{tr}\hphantom{.}\mu_k(F)(x)
= \sum_{i=1}^k \mathrm{tr}\hphantom{.}\mu(F_{ii})(x),
\end{align*}
where $\mu_k: M_k(C^*(R(\sigma))) \rightarrow \bigoplus_{i=1}^N \newc{C_0}(V_i, M_{kn_i})$ is the isomorphism given by
\begin{align*}
\mu_k(F)(x) = [\mu(F_{rs})(x)],
\hspace{15pt}
\text{for }
F = [F_{rs}] \in M_k(C^*(R(\sigma))), \text{ } x \in X_\varphi.
\end{align*}
\end{enumerate}
For the next items, we fix positive integers $k,l$, and choose projections $p \in \mathcal{P}_k(C^*(R(\sigma)))$, $q \in \mathcal{P}_l(C^*(R(\sigma)))$. Then
\begin{enumerate}[(i)]
\setcounter{enumi}{2}
\item $\mathrm{tr}\hphantom{.}\mu_k(p) \in C_c(X_\varphi, \mathbb{N})$.

\item $\mathrm{tr}\hphantom{.} \mu_{k+l}(p \oplus q) = \mathrm{tr}\hphantom{.} \mu_{k}(p) + \mathrm{tr}\hphantom{.} \mu_{l}(q)$.

\item $p \sim q
\Leftrightarrow
\mathrm{tr} \hphantom{.}\mu_k(p) = \mathrm{tr}\hphantom{.} \mu_{l}(q).$

\item $\mathrm{tr}\hphantom{.}\mu_k(p) = 0 \Rightarrow p = 0_k$.
\end{enumerate}
\end{proposition}
\begin{proof}
Let $y \in Y$. By Lemma \ref{lemma:covmap-sets}, $y$ has a clopen neighbourhood $\widetilde{W}_y$, there is a positive integer $n_y$, and there are $n_y$ disjoint clopen subsets $\widetilde{\mathcal{U}}_1^{(y)}, \widetilde{\mathcal{U}}_2^{(y)}, \dots, \widetilde{\mathcal{U}}_{n_y}^{(y)} \subset X$ such that
\begin{enumerate}[(a)]
\item $\sigma^{-1}(\widetilde{W}_y) = \bigcup_{j=1}^{n_y} \widetilde{\mathcal{U}}_j^{(y)}$,

\item $\sigma\vert_{\widetilde{\mathcal{U}}_j^{(y)}}: \widetilde{\mathcal{U}}_j^{(y)} \rightarrow \widetilde{W}_y$ is a homeomorphism for all $j$, and

\item $\sigma^{-1}(\widetilde{W}_y) \cap X_\varphi = \widetilde{\mathcal{U}}_1^{(y)}.$
\end{enumerate}

Since $\sigma(X_0)$ is compact and Hausdorff, we choose $N - 1$ elements $y_1, \dots, y_{N-1} \in \sigma(X_0)$ such that \oldc{$\sigma(X_0) = W_{y_1} \cup \dots \cup W_{y_{N-1}}$}\newc{$\sigma(X_0) = \widetilde{W}_{y_1} \cup \dots \cup \widetilde{W}_{y_{N-1}}$} and such that for all $i,j = 1, \dots, N-1$,
\begin{align}
\label{eqn:yiWj}
y_i \in \oldc{W_{y_j}}\newc{\widetilde{W}_{y_j}} \Leftrightarrow i = j.
\end{align}
Let $W_1 = \widetilde{W}_1$ and, for $i= 2, \dots, N-1$, define $W_i = \widetilde{W}_i \setminus \bigcup_{j=1}^{i-1} W_j$. It follows from property \eqref{eqn:yiWj} that the sets $W_i$ are non-empty. Moreover, $\sigma(X_0)$ is the disjoint union $\sigma(X_0) = W_1 \cup \dots \cup W_{N-1}$.

\begin{itemize}
\item If $\sigma(X_0) = Y$, we make an abuse of notation and replace $N-1$ by $N$,
\item Otherwise, let $y_N \in Y \setminus \sigma(X_0)$, and set $n_{N} = n_{y_N} = 1$, $W_{N} = Y \setminus \sigma(X_0)$, and $\oldc{\mathcal{U}_1^{(N)}}\newc{\mathcal{U}_1^{(y_N)}} = X \setminus X_0$. Then properties (a), (b) and (c) also hold by replacing $\widetilde{W}_y$, $n_y$ and $\widetilde{\mathcal{U}}_j^{(y)}$ by $W_N$, $n_N$ and $\mathcal{U}_1^{(y_n)}$, respectively.
\end{itemize}

In either case, we have the disjoint union
\begin{align*}
Y = \sigma(X_0) \cup (Y \setminus \sigma(X_0)) = W_1 \cup \dots \cup W_N.
\end{align*}

Define $n_i = n_{y_i}$ and $V_i = \mathcal{U}_1^{(y_i)}$ for all $i = 1, \dots, N$. Note that $V_1, \dots, V_N \subset X_\varphi$ by property (c). Since $\sigma$ is injective on $X_\varphi$, and since $W_1, \dots, W_N$ are disjoint, then $V_1, \dots, V_N$ are disjoint subsets. Moreover,
\begin{align*}
X_\varphi
= \sigma^{-1}(Y) \cap X_\varphi
= \sigma^{-1} \left( \bigcup_{i=1}^N W_i \right) \cap X_\varphi
= \bigcup_{i=1}^N \sigma^{-1}(W_i) \cap X_\varphi
= \bigcup_{i=1}^N V_i.
\end{align*}
Now we use the sets above to define the map \oldc{$\mu_c$}\newc{$\mu$} from \oldc{$C_c(R(\sigma))$ to  $\bigoplus_{i=1}^N C_c(V_i, M_{n_i})$} \newc{$C^*(R(\sigma))$ to  $\bigoplus_{i=1}^N C_0(V_i, M_{n_i})$} by
\begin{align}
\label{eqn:muCRsigma}
\oldc{\mu_c}\newc{\mu}(f)(x)_{rs} = f(\sigma\vert_{\mathcal{U}_r^{(y_i)}}^{-1}(\sigma(x)), \sigma\vert_{\mathcal{U}_s^{(y_i)}}^{-1}(\sigma(x))),
\end{align}
for $x \in V_i$, $r, s = 1, \dots, n_i$, $i =1, \dots, n_i$, $f \in C^*(R(\sigma))$.

It is straightforward that \oldc{$\mu_c$}\newc{$\mu$} is well-defined. We show that \oldc{$\mu_c$}\newc{$\mu$} is an $\ast$-isomorphism. \oldc{ that extends continuously to an isomorphism $\mu: C^*(R(\sigma)) \rightarrow \bigoplus_{i=1}^N C_0(V_i, M_{n_i})$.}
\begin{itemize}

\item \oldc{$\mu_c$}\newc{$\mu$} is a homomorphism

In order to simplify our notation, given $i = 1,\dots, N$, $x \in \sigma^{-1}(W_i)$, and $r = 1, \dots, n_i$, we denote $x_r = \sigma\vert_{\mathcal{U}_r^{(y_i)}}^{-1}(\sigma(x))$. Note that
\begin{align*}
\newc{\sigma^{-1}(\lbrace \sigma(x) \rbrace)} = \lbrace x_1, \dots, x_{n_i} \rbrace.
\end{align*}

Let $f, f_1, f_2 \in \oldc{C(R(\sigma))}\newc{C^*(R(\sigma))}$ and let $x \in V_i$ with $i = 1, \dots, N$. Then, for $r, s = 1, \dots, n_i$,
\begin{align*}
\oldc{\mu_c}\newc{\mu}(f^*)(x)_{rs}
= f^*(x_r, x_s) = \overline{f(x_s, x_r)} 
= \oldc{\mu_c(f)_{rs}^*} \newc{(\mu(f)(x)^*)_{rs}}.
\end{align*}
By properties (a) and (b), we have that an element $u \in X$ satisfies $\sigma(u) = \sigma(x)$ if, and only if, $u = x_j$ for some $j = 1, \dots, n_i$. Then
\begin{align*}
\oldc{\mu_c}\newc{\mu}(f_1 \cdot f_2)(x)_{rs}
&= f_1 \cdot f_2 (x_r, x_s) \\
&= \sum_{u: \sigma(u) = \sigma(x)} f_1(x_r, u) f_2(u, x_s) \\
&= \sum_{t = 1}^{n_i} f_1(x_r, x_t) f_2(x_t, x_s) \\
&= (\oldc{\mu_c}\newc{\mu}(f_1) \mu(f_2))_{rs}.
\end{align*}
Thus \oldc{$\mu_c$}\newc{$\mu$} is a homomorphism.

\item \oldc{$\mu_c$}\newc{$\mu$} is an isometry

By \cite[Proposition 9.3.1]{sims2017hausdorff}, for all $x \in X$, the $\ast$-representation $\pi_x: C_c(R(\sigma)) \rightarrow B(\ell^2(R(\sigma)_x))$ is given by
\begin{align*}
\pi_x(f)\delta_{(v,x)} = \sum_{(u,v) \in R(\sigma)} f(u,v) \delta_{(u,x)}
= \sum_{u: \sigma(u) = \sigma(x)} f(u,v) \delta_{(u,x)},
\end{align*}
for $f \in C_c(R(\sigma))$ and $v \in X$ with $\sigma(v) = \sigma(x)$.

Let $x \in X_\varphi$. Then there exists a unique $i = 1, \dots, N$ such that $x \in V_i$. To simplify our notation, we let $x_r = \sigma_{\mathcal{U}_r^{(y_i)}}^{-1}(\sigma(x))$ for all $r = 1, \dots, n_i$. Note that $\sigma^{-1}(\sigma(x)) = \lbrace x_1, \dots, x_{n_i} \rbrace$.

Let $T_x: \mathbb{C}^{n_i} \rightarrow \ell^2(R(\sigma)_x)$ be the isometry defined by $T_x e_r = \delta_{(x_r, x)}$ for $r = 1,\dots,n_i$. Then, for all $s = 1, \dots, n_i$, we have
\oldc{
\begin{align*}
\pi_x(f) T_x e_s
&= \pi_x(f) \delta_{(x_s, x)} \\
&= \sum_{u: \sigma(u) = \sigma(x)} f(u, x_s) \delta_{(x_s, x)} \\
&= \sum_{r=1}^{n_i} f(x_r, x_s) \delta_{(x_s, x)} \\
&= T_x^* \sum_{r=1}^{n_1} f(x_r, x_s) e_s \\
&=T_x^* \mu(f)(x) e_s.
\end{align*}
}
\newc{
\begin{align*}
\pi_x(f) T_x e_s
&= \pi_x(f) \delta_{(x_s, x)} \\
&= \sum_{u: \sigma(u) = \sigma(x)} f(u, x_s) \delta_{(u, x)} \\
&= \sum_{r=1}^{n_i} f(x_r, x_s) \delta_{(x_r, x)} \\
&= T_x \sum_{r=1}^{n_1} f(x_r, x_s) e_r \\
&=T_x \mu(f)(x) e_s.
\end{align*}
}
This implies that $\Vert \pi_x(f) \Vert = \Vert \mu(f)(x) \Vert$ for all $x \in X_\varphi$.

Given $x \in X$, let $u = \varphi(\sigma(x)) \in X_\varphi$. \cite[Proposition 9.3.1]{sims2017hausdorff} implies that there exists a unitary $U_{(x, u)}: \ell^2(R(\sigma)_u) \rightarrow  \ell^2(R(\sigma)_x)$ such that $\pi_x = U_{(x,u)} \pi_u U_{(x,u)}^*$. Then $\Vert \pi_x(f) \Vert = \Vert \pi_u(f) \Vert$. Therefore,
\begin{align*}
\Vert f \Vert
= \sup_{x \in X} \Vert \pi_x(f) \Vert
= \sup_{x \in X_\varphi} \Vert \pi_x(f) \Vert
= \sup_{x \in X_\varphi} \Vert \mu(f)(x) \Vert
= \Vert \oldc{\mu_c}\newc{\mu}(f) \Vert.
\end{align*}
Therefore \newc{the restriction of $\mu$ to $C_c(R(\sigma))$} is an isometry. \newc{By continuity, $\mu$ is an isometry.}

\item \newc{$\mu$ is an isomoprism}\oldc{$\mu_c$ extends to the isomorphism $\mu: C^*(R(\sigma)) \rightarrow \bigoplus_{i=1}^N C_0(V_i, M_{n_i})$}

\newc{Since $\mu$ is an isometry, it follows that it is injective.}
\oldc{Since $\mu_c$ is a homomorphism and an isomotry, then by continuity it extends uniquely to a homomorphism $\mu: C^*(R(\sigma)) \rightarrow \bigoplus_{i=1}^N C_0(V_i, M_{n_i})$, where $\mu$ is also an isometry, and thus injective.}
We show that \oldc{$\mu_c$}\newc{$\mu$} is surjective. \oldc{Then, by continuity, we will also have that $\mu$ is surjective.}

Let $F \in \bigoplus_{i=1}^N C_c(V_i, M_{n_i})$. Define $f \in C_c(R(\sigma))$ by
\begin{align*}
f(x_r, x_s)
= F(x)_{rs}
\hspace{15pt}
\text{for $x \in V_i$, $r,s = 1, \dots, n_i$.}
\end{align*}
Note that $f$ is continuous because it is the composition of continuous functions. Let $K \subset X_\varphi$ by a compact subset containing the support of $F$, then the support of $f$ is a subset of $[\sigma^{-1}(\sigma(K)) \times \sigma^{-1}(\sigma(K))] \cap R(\sigma)$, which is compact. This implies that $f \in C_c(R(\sigma))$. Moreover, $\oldc{\mu_c}\newc{\mu}(f) = F$. \newc{So far we have that the restriction}
\newc{
\begin{align*}
\mu\vert_{C_c(R(\sigma))}: C_c(R(\sigma)) \rightarrow \bigoplus_{i=1}^N C_c(V_i, M_{n_i})
\end{align*}
is surjective. Since $\mu$ is an isometry and since $\bigoplus_{i=1}^N C_c(V_i, M_{n_i})$ is dense in $\bigoplus_{i=1}^N C_0(V_i, M_{n_i})$, then $\mu$ is surjective.
}
\end{itemize}

Now we show property (i). Let $f \in \oldc{C^*(R(\sigma))}\newc{C_c(R(\sigma))}$, and $x \in V_i$ with $i = 1, \dots, N$. Since $\sigma^{-1}(x) = \lbrace x_1, \dots, x_{n_i} \rbrace$, then
\begin{align*}
\mathrm{tr}\hphantom{.}\mu(f)\newc{(x)}
= \sum_{l=1}^{n_i} f(x_l, x_l)
= \sum_{u: \sigma(u) = \sigma(x)} f(u).
\end{align*}

Now we prove property (ii). Let $k$ be a positive integer, and let $F = [F_{rs}] \in M_k(C^*(R(\sigma)))$, and fix $x \in V_i$ with $i = 1, \dots, N$. Then
\begin{align*}
\mathrm{tr}\hphantom{.}\mu_k(F)(x)
&= \sum_{q = 1}^{k n_i} \mu_k(F)_{qq} \\
&= \sum_{j= 0}^{k-1} \sum_{l=1}^{n_i} \mu_k (F)_{n_ij + l, n_ij + l} \\
&= \sum_{i=1}^k \sum_{l=1}^{n_i} \mu(F_{ii})_{ll}.
\end{align*}
Thus property (ii) holds.

Next, we prove item (iii). Let $k$ be a positive integer, and fix $p \in \mathcal{P}_k(C^*(R(\sigma)))$. Note that $C^*(R(\sigma)) = C(R(\sigma))$, so it follows that, for all $i = 1, \dots, k$, $p_{ii} \in C(R(\sigma))$. given $x \in X_\varphi$, items (i) and (ii) imply that
\begin{align*}
\mathrm{tr}\hphantom{.}\mu_k(p)(x)
= \sum_{i=1}^k \mathrm{tr}\hphantom{.}\mu(p_{ii})(x)
= \sum_{i=1}^k \sum_{u: \sigma(u) = \sigma(x)} p_{ii}(u).
\end{align*}
The equation above implies that $\mathrm{tr}\hphantom{.}\mu_k(p)$ is a composition of continuous functions, then $\mathrm{tr}\hphantom{.}\mu_k(p)$ is a continuous function.

We need to check that the image of $\mathrm{tr}\hphantom{.}\mu_k(p)$ lies in $\mathbb{N}$. Indeed, let $x \in X_\varphi$. Then there a $j \in \lbrace 1, \dots, N \rbrace$ such that $x \in V_j$. Then $\mu_k(p)(x)$ is a projection in $M_{kn_i}$. Hence, $\mathrm{tr}\hphantom{.}\mu_k(p)(x) \in \mathbb{N}$. Therefore, $\mathrm{tr}\hphantom{.}\mu_k(p)$ \oldc{is continuous}\newc{takes values in $\mathbb{N}$}.

Now we show that $\mathrm{tr}\hphantom{.}\mu_k(p)$ is compactly supported. \newc{By Remark \ref{rmk:projectioncompactlysupported}, $\mu_k(p)$ is compactly supported. Therefore $\mathrm{tr}\hphantom{.}\mu_k(p) \in C_c(X, \mathbb{N})$.}

\oldc{Since the projections $p_{11},$ $\dots,$ $p_{kk}$ are in $C_0(R(\sigma))$, there exists a compact subset $K \subset R(\sigma)$ with $X_0 \subset K$ and such that
\begin{align*}
\vert p_{ii}(x) \vert < \frac{1}{k}
\hspace{10pt}\text{for }
x \in X_0 \setminus K
\text{ and }
i = 1, \dots, k.
\end{align*}
Given $x \in X_0 \setminus K$, we have $\sigma^{-1}(\sigma(x)) = \lbrace x \rbrace$. Then
\begin{align*}
\vert \mathrm{tr}\hphantom{.}\mu_k(p)(x) \vert
= \left\vert \sum_{i=1}^k \sum_{u: \sigma(u) = \sigma(x)} p_{ii}(u) \right\vert
= \left\vert \sum_{i=1}^k p_{ii}(x) \right\vert
\leq \sum_{i=1}^k \vert p_{ii}(x) \vert
< 1.
\end{align*}
Since $\mathrm{tr}\hphantom{.}\mu_k(p)(x) \in \mathbb{N}$, we have $\mathrm{tr}\hphantom{.}\mu_k(p)(x) = 0$. Therefore $\mathrm{tr}\hphantom{.}\mu_k(p) \in C_c(X, \mathbb{N})$.}

Now we show item (iv). Let $x \in X_\varphi$. By definition of $\mu$,
\begin{align*}
\mu_{k+l}(p \oplus q)(x)
= \mu_k(p)(x) \oplus \mu_l(q)(x).
\end{align*}
Then item (ii) implies that
\begin{align*}
\mathrm{tr}\hphantom{.}\mu(p\oplus q)(x)
= 
\mathrm{tr}\hphantom{.}\mu_k(p)(x) + \mathrm{tr}\hphantom{.}\mu_k(q)(x).
\end{align*}

Now we show (v). Suppose $p \sim q$. Then $p \oplus 0_l,$ $q \oplus 0_k \in \mathcal{P}_l(C^*(R(\sigma)))$ are equivalent.

Given $x \in X_\varphi$, then $\mu_{k+l}(p \oplus 0_l), \mu_{k+l}(q \oplus 0_k)$ are equivalent in $M_{k+l}$. This  fact and item (iv) imply that
\begin{align*}
\mathrm{tr}\hphantom{.}\mu_k(p)(x)
= \mathrm{tr}\hphantom{.}\mu_{k+l}(p \oplus 0_l)(x)
= \mathrm{tr}\hphantom{.}\mu_{k+l}(q \oplus 0_k)(x)
= \mathrm{tr}\hphantom{.}\mu_l(q)(x).
\end{align*}

Conversely, assume that $\mathrm{tr}\hphantom{.}\mu_k(p) = \mathrm{tr}\hphantom{.} \mu_l(q)$. By item (iv), $\mathrm{tr}\hphantom{.}\mu_{k+l}(p \oplus 0_l) = \mathrm{tr}\hphantom{.} \mu_{k+l}(q \oplus 0_k)$. Hence, by Proposition \ref{prop:traceCXMn}, $\mu_{k+l}(p \oplus 0_l) \sim \mu_{k+l}(q \oplus_k)$. Since $\mu_{k+l}$ is an isomorphism, we have $p \oplus 0_l \sim q \oplus 0_k$ and then $p \sim q$.

Finally, we prove item (vi). Suppose $\mathrm{tr}\hphantom{.}\mu_k(p) = 0$. Let $x \in X_\varphi$. Then, by Remark \ref{rmk:dimp}
\begin{align*}
0 = \mathrm{tr}\hphantom{.}\mu_k(p)(x)
= \dim \mu_k(p(x)) \mathbb{C}^k.
\end{align*}
So $\mu_k(p(x)) = 0$. Since $x$ is arbitrary and $\mu_k$ is an isomorphism, we have $p = 0$.
\end{proof}

We use the isomorphism $\mu$ to define a trace operator on $\mathcal{P}_\infty(C^*(R(\sigma)))$ that characterises the $K_0$-group of $C^*(R(\sigma))$. Given a continuous section $\varphi: Y \rightarrow X$ of $\sigma$, we define $\mathrm{tr}_\varphi: \mathcal{P}_\infty(C^*(R(\sigma))) \rightarrow C(X, \mathbb{N})$ by
\begin{align*}
\mathrm{tr}_\varphi \hphantom{.} p(x)
= \begin{cases}
\mathrm{tr}\hphantom{.}\mu_k(p)(x) & \text{if } x \in X_\varphi, \\
0 & \text{otherwise,}
\end{cases}
\end{align*}
for $p \in \mathcal{P}_k(C^*(R(\sigma)))$, $k \geq 1$, and $x \in X$.

Note that $\mathrm{tr}_\varphi \hphantom{.} p$ is the extension of $\mathrm{tr}\hphantom{.}\mu_k(p)$ to a larger domain $X \supset X_\varphi$. We do this because later we define the isomorphism from $K_0(C^*(R(\sigma)))$ to $H_0(R(\sigma))$ by $[p]_0 \mapsto [\mathrm{tr}_\varphi\hphantom{.}p]$ for $p \in \mathcal{P}_\infty(C^*(R(\sigma)))$. In order to study the equivalence class of $\mathrm{tr}_\varphi \hphantom{.} p$ in the homology group of $H_0(R(\sigma))$, the function $\mathrm{tr}_\varphi \hphantom{.} p$ needs to be defined in the unit space $X$ of $R(\sigma)$.

\oldc{
\begin{remark}
\label{rmk:trvarphi}
By applying the $j$ map from \cite[Proposition II.4.2]{Renault} (see also \cite[Proposition 9.3.3]{sims2017hausdorff}), we assume that $C^*(R(\sigma))$ is a subset of $C_0(R(\sigma))$. By Proposition \ref{prop:isoRsigmacompact}, we have
\begin{align*}
\mathrm{tr}_\varphi\hphantom{.}p(x) = 1_{X_\varphi}(x) \sum_{u: \sigma(u) = \sigma(x)} p(u)
\end{align*}
for $u \in X$ and \oldc{$p \in \mathcal{P}(C^*(R(\sigma)))$}\newc{$p \in C^*(R(\sigma))$}. Moreover,
\begin{align*}
\mathrm{tr}_\varphi\hphantom{.}p = \mathrm{tr}_\varphi\hphantom{.}p_{11} + \dots + \mathrm{tr}_\varphi\hphantom{.}p_{nn}
\end{align*}
for $p \in \mathcal{P}_n(C^*(R(\sigma)))$, $n \geq 1$. It follows from Proposition \ref{prop:isoRsigmacompact} that $\mathrm{tr}_\varphi\hphantom{.} p \in C_c(X, \mathbb{N})$.
\end{remark}}

The following lemma shows the connection of the trace operator $\mathrm{tr}_\varphi$ with the homology group $H_0(R(\sigma))$.

\begin{lemma}
\label{lemma:traceH0}
Let $\varphi, \psi$ be two continuous sections of $\sigma$. Then
\begin{enumerate}[(i)]
\item Given $p \in \mathcal{P}_\infty(C^*(R(\sigma)))$, $\mathrm{tr}_\varphi\hphantom{.}p \sim \mathrm{tr}_\psi\hphantom{.}p$.
\item For a compact open set $V \subset X$, we have $1_V \sim \mathrm{tr}_\varphi 1_V$.
\end{enumerate}
\end{lemma}
\begin{proof}
We begin by proving (i). Fix $p \in \mathcal{P}_\infty(C^*(R(\sigma)))$, then there exists a positive integer $k$ with $p \in \mathcal{P}_k(C^*(R(\sigma)))$.

Given $x \in X$, set $x_\varphi = \varphi(\sigma(x))$ and $x_\psi = \psi(\sigma(x))$. Then $x_\varphi$ (resp. $x_\psi$) is the only element in $X_\varphi$ (resp. $X_\psi$) with $\sigma(x_\varphi) = \sigma(x_\psi) = \sigma(x)$.

By definition of $\mathrm{tr}_\varphi\hphantom{.}p$ and by Proposition \ref{prop:isoRsigmacompact}, we have
\begin{align*}
\sum_{u: \sigma(u) = \sigma(x)} \mathrm{tr}_\varphi \hphantom{.} p(u)
&= \mathrm{tr}_\varphi \hphantom{.} p(x_\varphi) \\
&= \sum_{i=1}^k \mathrm{tr}_\varphi \hphantom{.} p_{ii} (x_\varphi) \\
&= \sum_{i=1}^k \sum_{v: \sigma(v) = \sigma(x_\varphi)} p_{ii}(v) \\
&= \sum_{i=1}^k \sum_{v: \sigma(v) = \sigma(x_\psi)} p_{ii}(v) \\
&= \mathrm{tr}_\psi\hphantom{.} p(x_\psi) \\
&= \sum_{u: \sigma(u) = \sigma(x)} \mathrm{tr}_\psi \hphantom{.} p(u).
\end{align*}
Since $x$ is arbitrary, it follows from Lemma \ref{lemma:traceH0Rsigma} that $\mathrm{tr}_\varphi\hphantom{.}p \sim \mathrm{tr}_\psi\hphantom{.}p$.

Now we prove item (ii). Let $V \subset X$ a compact open subset, let $x \in X$, and set $x_\varphi = \varphi(\sigma(x))$. Then
\begin{align*}
\sum_{u: \sigma(u) = \sigma(x)}\mathrm{tr}_\varphi\hphantom{.}1_V(u)
&= \mathrm{tr}_\varphi \hphantom{.} 1_V (x_\varphi) \\
&= \sum_{u: \sigma(u) = \sigma(x_\varphi)} 1_V(u) \\
&= \sum_{u: \sigma(u) = \sigma(x)} 1_V(u).
\end{align*}
Since $x$ is arbitrary, Lemma \ref{lemma:traceH0Rsigma} implies that $\mathrm{tr}_\varphi \hphantom{.} 1_V \sim 1_V$.
\end{proof}

Now we give an alternative proof of the isomorphism $\oldc{K_0(C^*(R(\sigma)))}\newc{K_0(C^*(G))} = H_0(G)$ for AF groupoids $G$ given by Farsi, Kumjian, Pask and Sims \cite[Corollary 5.2]{FKPS}.

The following theorem gives an isomorphism of the ordered groups $K_0(C^*(G))$  and $H_0(G)$. Corollary 5.2 by Farsi, Kumjian, Pask and Sims \cite{FKPS} and Matui's \cite[Theorem 4.10]{Matui} prove isomorphisms of homology groups of groupoids and the corresponding $K_0$-group. However, \cite[Corollary 5.2]{FKPS} does not prove whether such isomorphism preseves positive elements from one group to another, and Matui only considers groupoids with compact unit spaces, and does not give a detailed proof. Our theorem generalises both results, and we describe the isomorphism by an explicit formula.

\begin{customthm}{2}
\label{thm:K0H0}
Let $G$ be an AF groupoid, written as the inductive limit $G = \displaystyle\lim_\rightarrow G_n$ of elementary groupoids. Then, for every $n$, there exists \newc{a locally compact, Hausdorff, second countable, totally disconntect space $Y_n$, and there is a} surjective local homeomorphism $\sigma_n: G_n^{(0)} \rightarrow Y_n$ such that \oldc{$G_n = R(\sigma)$}\newc{$G_n = R(\sigma_n)$}. Moreover,
\begin{enumerate}
\item there exists a unique isomorphism $\nu: K_0(C^*(G)) \rightarrow H_0(G)$ defined by
\begin{align*}
\nu([p]_0) = [\mathrm{tr}_{\varphi_n}(p)]
\end{align*}
for $p \in \mathcal{P}_n(C^*(G_n))$, and such that $\varphi_n$ is an arbitrary continuous section of $\sigma_n$;
\item the map $\nu$ is an isomorphism of ordered groups, i.e., $\nu(K_0(C^*(G))^+) = H_0(G)^+$; and
\item the isomorphism $\nu$ is precisely the map given by \cite[Corollary 5.2]{FKPS}. In other words, $\nu([1_V]_0) = [1_V]$ for all $V \subset G^{(0)}$ compact open.
\end{enumerate}
\end{customthm}

We divide the proof into three parts:
\begin{enumerate}[{Part} 1]
\item First we study the groupoid $R(\sigma)$ where $\sigma$ satisfies the conditions of the beginning of the chapter, i.e., that there exists a compact subset $X_0$ such that $\sigma^{-1}(\sigma(X_0)) =X_0$ and that $\sigma$ is injective on $X \setminus X_0$.
\item Then we generalise the result to a groupoid of the form $R(\sigma)$, where $X$ no longer need to necessarily satisfy the conditions of Part 1.
\item Finally, we prove the isomorphism for an AF groupoid $G$.
\end{enumerate}

\begin{proof}[Proof of Part 1]\renewcommand{\qedsymbol}{}
Fix a continuous section $\varphi$ of $\sigma$. Given $p \in \mathcal{P}_\infty(C^*(R(\sigma)))$, let
\begin{align}
\label{eqn:trvarphip}
\nu([p]_0) = [\mathrm{tr}_\varphi \hphantom{.}p].
\end{align}
It follows from item (v) of Proposition \ref{prop:isoRsigmacompact} that $\nu([p]_0)$ is well-defined. Moreover, item (iv) of the same proposition implies that for \newc{a projection} $q \in \oldc{\mathcal{P}_\infty}\newc{\mathcal{P}_\infty(C^*(R(\sigma)))}$,
\begin{align*}
\nu([p]_0+[q]_0)
&= \nu([p\oplus q]_0) \\
&= [\mathrm{tr}_\varphi(p \oplus q)] \\
&= [\mathrm{tr}_\varphi(p) + \mathrm{tr}_\varphi(q)] \\
&= [\mathrm{tr}_\varphi(p)]+[\mathrm{tr}_\varphi(q)] \\
&= \nu([p]_0) + \nu([q]_0).
\end{align*}
Then we extend \eqref{eqn:trvarphip} uniquely to a homomorphism $\nu: K_0(C^*(R(\sigma))) \rightarrow H_0(R(\sigma))$.

We claim that $\nu$ is an isomorphism. \newc{Let $x \in K_0(C^*(R(\sigma)))$ be such that $\nu(x) = 0$. Then there are $p, q \in \mathcal{P}_\infty(C^*(R(\sigma)))$ such that \begin{align*}
x = [p]_0 - [q]_0.
\end{align*}
Then 
\begin{align*}
 [\mathrm{tr}_\varphi \hphantom{.} p] = \nu([p]_0) = \nu([q]_0) = [\mathrm{tr}_\varphi \hphantom{.} q],
\end{align*}
by definition of the homomorphism $\nu$. It follows from Lemma \ref{lemma:traceH0Rsigma} that, for every $x \in X$, we have
\begin{align*}
\sum_{u:\sigma(u) = \sigma(x)} \mathrm{tr}_\varphi \hphantom{.} p(u) = \sum_{u:\sigma(u) = \sigma(x)} \mathrm{tr}_\varphi \hphantom{.} q(u).
\end{align*}
Since $\mathrm{tr}_\varphi \hphantom{.} p$ and $\mathrm{tr}_\varphi \hphantom{.} q$ have support in $X_\varphi$, this is equivalent to the following
\begin{align*}
\mathrm{tr}_\varphi \hphantom{.} p(\varphi(\sigma(x))) = \mathrm{tr}_\varphi \hphantom{.} q(\varphi(\sigma(x))).
\end{align*}
This implies that $\mathrm{tr}_\varphi \hphantom{.} p = \mathrm{tr}_\varphi \hphantom{.} q$. It follows from item (v) of Proposition \ref{prop:isoRsigmacompact} that $p \sim q$. Hence, $x = 0$ and therefore $\nu$ is injective.
}

\oldc{It follows from item (vi) of Proposition \ref{prop:isoRsigmacompact} that $\nu$ is injective.} We show that $\nu$ is also surjective. Given a compact open subset $V \subset X$, Lemma \ref{lemma:traceH0}  implies that
\begin{align*}
[1_V] = [\mathrm{tr}_\varphi\hphantom{.} 1_V] = \nu([1_V]_0).
\end{align*}
Since the elements of the form $[1_V]$ generate $H_0(R(\sigma))$, it follows that $\nu$ is an isomorphism.

Now we show that $\nu$ is a positive isomorphism. Equation \eqref{eqn:trvarphip} and item (iii) of Proposition \ref{prop:isoRsigmacompact} imply that $\nu(K_0(C^*(R(\sigma))^+) \subset H_0(R(\sigma))^+$. Let $f \in C_c(X, \mathbb{N})$. Then there are $V_1, \dots, V_n \subset X$ compact open sets, and $a_1, \dots, a_n$ non-negative integers such that $f = a_1 1_{V_1} + \dots + a_n 1_{V_n}$. Then
\begin{align*}
[f]
&= \sum_{i=1}^n a_i [1_{V_i}] \\
&= \sum_{i=1}^n a_i \nu([1_{V_i}]_0) \\
&= \nu \left(\sum_{i=1}^n a_i [1_{V_i}]_0 \right).
\end{align*}
Then $\nu(K_0(C^*(R(\sigma)))^+) = H_0(R(\sigma))^+$ and, therefore $\nu$ is a positive isomorphism. This completes the proof of Part 1.
\end{proof}

We need the following lemma before proving Part 2. Now we no longer assume that $\sigma$ satisfies that conditions of Part 1.

\begin{lemma}
\label{lemma:tracevarphi}
Let $\sigma: X \rightarrow Y$ be a surjective local homeomorphism with continuous section $\varphi: Y \rightarrow X$. Let $X = \bigcup_{k=1}^\infty X_k$ be an increasing union of compact open sets \newc{such that $\varphi(\sigma(X_k)) \subset X_k$}. For each $k$, define the map $\sigma_k : X \rightarrow \sigma(X_k) \sqcup (X \setminus X_k)$ by
\begin{align*}
\sigma_k(x) =
\begin{cases}
\sigma(x) & \text{if }x \in X_k, \\
x & \text{otherwise.}
\end{cases}
\end{align*}
Define also the map $\varphi_k: \sigma(X_k) \sqcup (X \setminus X_k) \rightarrow X$ by
\begin{align*}
\varphi_k(y) =
\begin{cases}
\varphi(y) & \text{if } \oldc{x}\newc{y}\in \sigma(X_k), \\
y & \text{otherwise.}
\end{cases}
\end{align*}
Then
\begin{enumerate}[(i)]
\item $\sigma_k$ is a surjective local homeomorphism with continuous section $\varphi_k$.
\item $R(\sigma)$ is an inductive limit of the form $R(\sigma) = \displaystyle\lim_\rightarrow R(\sigma_k)$.
\item \newc{For $k \geq 1$, $p \in \mathcal{P}_\infty(C^*(R(\sigma_k))$, we have $\mathrm{tr}_\varphi \hphantom{.} p = \mathrm{tr}_{\varphi_l} \hphantom{.} p$ for all $l \geq k$, where $\mathrm{tr}_\varphi \hphantom{.} p$ is given by
\begin{align*}
\mathrm{tr}_\varphi \hphantom{.} p (x) = 1_{X_\varphi}(x) \sum_{i=1}^n \sum_{u : \sigma(u) = \sigma(x)}  p_{ii}(u),
\hspace{15pt}
\text{for $x \in X$.}
\end{align*}
}
\end{enumerate}
\end{lemma}
\begin{proof}
Since $\sigma$ is a surjective local homeomorphism \newc{and $X_k$ is open}, then the restriction $\sigma_k\vert_{X_k}: X_k \rightarrow \sigma(X_k)$ is a surjective local homeormorphism. The restriction $\sigma_k$ on $X \setminus X_k$ is the identity. Thus, $\sigma_k$ is a surjective local homeomorphism.

\newc{It is straightforward from its definition that $\varphi_k$ is continuous. We show that $\varphi_k$ is a continuous section of $\sigma_k$. Let $y \in \sigma(X_k)$. Then $\varphi(y) \in X_k$ by hypothesis. It follows that
\begin{align*}
\sigma_k(\varphi_k(y))
= \sigma_k(\varphi(y))
= \sigma(\varphi(y))
= y.
\end{align*}
Now let $x \in X \setminus X_k$. Then
\begin{align*}
\sigma_k(\varphi_k(x))
= \sigma_k(x)
= x.
\end{align*}
Therefore, $\varphi_k$ is a continuous section of $\sigma_k$.} This proves (i).

Now we prove (ii). Let $(x,y) \in R(\sigma)$. Since $X$ is covered by the union of the sets $X_k$, then there exists a $k$ such that $x, y \in X_k$. By definition of $\sigma_k$, we have that
\begin{align*}
\sigma_k(x) = \sigma(x) = \sigma(y) = \sigma_k(y).
\end{align*}
Hence, $(x,y) \in R(\sigma_k)$. Therefore, $R(\sigma) \subset \bigcup_{k=1}^\infty R(\sigma_k)$.

Now let $(x,y) \newc{\in} \bigcup_{k=1}^\infty R(\sigma_k)$. Pick a $k$ such that $(x,y) \in R(\sigma_k)$. Then $\sigma_k(x) = \sigma_k(y)$. If $x \notin X_k$, then $x = y$ by definition of $\sigma_k$, and therefore $(x,y) \in R(\sigma)$. If $x \in X_k$, then $y \in X_k$ by definition of $\sigma_k$. This implies that
\begin{align*}
\sigma(x) = \sigma_k(x) = \sigma_k(y) = \sigma(y).
\end{align*}
Thus, $(x,y) \in R(\sigma)$. Therefore, $R(\sigma) = \bigcup_{k=1}^\infty R(\sigma_k)$. \oldc{Note that this union}\newc{By similar arguments, we have that} this union is increasing and all groupoids have the same unit space $X$. Therefore, (ii) holds.

\newc{Finally, we show (iii). Let $k \geq 1$, and let $p \in \mathcal{P}_\infty(C_c(R(\sigma_k)))$. Then there exists an $n$ such that $p \in \mathcal{P}_n(C_c(R(\sigma)))$. By increasing $n$ if necessary, we have $p \in \mathcal{P}_n(C_c(R(\sigma_n)))$ and that the support of $p$ is included in $X_n$. Then for any $l \geq n$ and $x \in X$
\begin{align*}
\mathrm{tr}_{\varphi_l}\hphantom{.}p(x)
&= 1_{X_{\varphi_l}}(x) \sum_{i=1}^n \sum_{u: \sigma_l(u) = \sigma_l(x)} p_{ii}(u) \\
&= 1_{X_{\varphi_l}}(x) \sum_{i=1}^n \sum_{u \in X_l: \sigma_l(u) = \sigma_l(x)} p_{ii}(u) \\
&= 1_{X_{\varphi_l}}(x) \sum_{i=1}^n 1_{X_l} \sum_{u : \sigma(u) = \sigma(x)} p_{ii}(u) \\
&= 1_{X_{\varphi_l} \cap X_l}(x) \sum_{i=1}^n 1_{X_l} \sum_{u : \sigma(u) = \sigma(x)} p_{ii}(u),
\end{align*}
while
\begin{align*}
\mathrm{tr}_{\varphi}\hphantom{.}p(x)
&= 1_{X_\varphi}(x) \sum_{i=1}^n \sum_{u:\sigma(u) = \sigma(x)} p_{ii}(u) \\
&= 1_{X_\varphi}(x) \sum_{i=1}^n \sum_{u \in X_l:\sigma(u) = \sigma(x)} p_{ii}(u) \\
&= 1_{X_\varphi}(x) \sum_{i=1}^n 1_{\sigma^{-1}(\sigma(X_l))}(x) \sum_{u \in X_l:\sigma(u) = \sigma(x)} p_{ii}(u) \\
&= 1_{X_\varphi \cap \sigma^{-1}(\sigma(X_l))}(x) \sum_{i=1}^n \sum_{u \in X_l:\sigma(u) = \sigma(x)} p_{ii}(u).
\end{align*}
Under the assumption that $\varphi(\sigma(X_l))$, one sees that
\begin{align*}
X_{\varphi_l} \cap X_l = \varphi(\sigma(X_l))
= X_\varphi \cap \sigma^{-1}(\sigma(X_l)).
\end{align*}
Therefore $\mathrm{tr}_{\varphi}\hphantom{.}p(x) = \mathrm{tr}_{\varphi_l}\hphantom{.}p (x)$.
}
\oldc{
Let $p \in \mathcal{P}_1(C^*(R(\sigma))) \subset K_0(C^*(R(\sigma)))^+$. By Proposition \ref{prop:inductivegroupunion}, there exists a postive integer $n$ such that $p \in \mathcal{P}_1(C^*(R(\sigma_n)))$. By Renault's $j$ map from \cite[Proposition II.4.2]{Renault}, we assume that $p \in C_0(R(\sigma_n))$. Note that $R(\sigma_n)\vert_{X \setminus X_n} = X \setminus X_n$.

Since $p$ is a projection and since $R(\sigma_n)\vert_{X \setminus X_n} = X \setminus X_n$, for $x \in X \setminus X_n$, we have
\begin{align*}
p(x) = p^2(x) = p(x)^2.
\end{align*}
This implies that $p(x) \in \lbrace 0, 1 \rbrace$ for $x \in X \setminus X_n$.

Let $K$ be a compact open set with $X_n \subset K \subset X$ and such that $\vert p(x) \vert < \frac{1}{2}$ for every $x$ outside $K$. Then $p$ is identically zero outside $K$. 

Choose $k$ such that $K \subset X_k$, and let $x \in X$. If $x \in X_k \cap X_\varphi$, then
\begin{align*}
\mathrm{tr}_\varphi\hphantom{.}p(x)
&= \sum_{u: \sigma(u) = \sigma(x)} p(u) \\
&= \sum_{\substack{u: \sigma(u) = \sigma(x) \\ u \in X_k}} p(u)
\hspace{20pt}\text{since $p(u) = 0$ for $u \notin X_k \supset K$}, \\
&= \sum_{u: \sigma_k(u) = \sigma_k(x)} p(u) \\
&= \mathrm{tr}_{\varphi_k}\hphantom{.}p(x).
\end{align*}
If $x \in X \setminus X_k$, then
\begin{align*}
\mathrm{tr}_\varphi\hphantom{.}p(x)
&= \sum_{u: \sigma(u) = \sigma(x)} p(u) \\
&= p(x) \text{\hspace{20pt} since $R(\sigma)\vert_{X \setminus X_k} = X \setminus X_k$,} \\
&= 0 \hspace{35pt}\text{since $p$ vanishes outside $X_k \supset K$.}
\end{align*}
Similarly,
\begin{align*}
\mathrm{tr}_{\varphi_k}\hphantom{.}p(x)
&= \sum_{u: \sigma_k(u) = \sigma_k(x)} p(u) \\
&= p(x) \text{\hspace{20pt} by definition of $\sigma_k$,} \\
&= 0.
\end{align*}
Therefore $\mathrm{tr}_\varphi\hphantom{.}p = \mathrm{tr}_{\varphi_k}\hphantom{.}p(x)$. Note that $k$ is an arbitrary positive integer with $K \subset X_k$. Since the sets $X_k$ are nested, we have that $\mathrm{tr}_\varphi\hphantom{.}p = \mathrm{tr}_{\varphi_l}\hphantom{.}p(x)$ for all $l \geq k$.}

\newc{Now let $p \in \mathcal{P}_n(C^*(R(\sigma)))$ for some $n$. Since $C^*(R(\sigma)) = \displaystyle\lim_\rightarrow C^*(R(\sigma_m))$, it follows from Lemma \ref{lemma:inductivelimitgroupoids} that $K_0(C^*(R(\sigma))) = \displaystyle\lim_\rightarrow K_0(C^*(R(\sigma_m)))$. Then there exists a $m$ such that $q \in C^*(R(\sigma_m))$ and $p \sim q$. Note, however, that $C^*(R(\sigma_m)) = C_c(R(\sigma_m)) \subset C_c(R(\sigma))$. Then $q \in C_c(R(\sigma))$. Hence, there exists a $k$ such that $\mathrm{tr}_\varphi\hphantom{.}q = \mathrm{tr}_{\varphi_l}\hphantom{.}q$ for all $l \geq k$.  Then}

\begin{align}
\newc{\label{eqn:trvarphilq}
\mathrm{tr}_{\varphi}\hphantom{.}q
= \mathrm{tr}_{\varphi_l}\hphantom{.}q
= \mathrm{tr}_{\varphi_l}\hphantom{.}p.}
\end{align}
\newc{Let $x \in X$. Choose $j \geq l$ such that $X_j$ contains both $x$ and $\sigma(x)$. Then}
\begin{align*}
\newc{\mathrm{tr}_{\varphi}\hphantom{.}p(x)}
&= \newc{1_{X_\varphi}(x) \sum_{i=1}^n \sum_{u:\sigma(u)=\sigma(x)} p_{ii}(u)} \\
&= \newc{1_{X_\varphi}(x) \sum_{i=1}^n \sum_{u:\sigma_j(u)=\sigma_j(x)} p_{ii}(u)
\hspace{20pt}\text{since $x, \sigma(x) \in X_j$,}} \\
&= \newc{\mathrm{tr}_{\varphi_j}\hphantom{.}p(x)} \\
&= \newc{\mathrm{tr}_{\varphi_l}\hphantom{.}p(x)
\hspace{116pt}\text{by \eqref{eqn:trvarphilq}.}}
\tag*{\qedhere}
\end{align*}
\end{proof}


\begin{proof}[Proof of Part 2]\renewcommand{\qedsymbol}{}
Fix a continuous section $\varphi$ of $\sigma$. Since $X$ is second countable and totally disconnected, then we write \oldc{$X = \bigcup_{k=1}^\infty X_k$}\newc{$X = \bigcup_{k=1}^\infty \widetilde{X}_k$} as the increasing union of compact open sets \oldc{$X_k$}\newc{$\widetilde{X}_k$}. \newc{For each $k$, let $X_k = \widetilde{X_k} \cap \varphi(\sigma(X_k))$. Then $X = \bigcup_{k=1}^\infty X_k$ is the increasing union of the compact open sets $X_k$, with $\varphi(\sigma(X_k)) \subset X_k$.} Lemma \ref{lemma:tracevarphi} gives a sequence of surjective local homeomorphisms $\sigma_k: X_k \rightarrow \sigma(X_k) \sqcup (X \setminus X_k)$ satisfying properties (ii), (iii) of the lemma.

By Part 1, we have the isomorphism $\nu_k: K_0(C^*(R(\sigma_k))) \rightarrow H_0(R(\sigma_k))$ of ordered groups given by
\begin{align*}
\oldc{\nu_k([p])_{0,k}}\newc{\nu_k([p]_{0,k})} = [\mathrm{tr}_{\varphi_k}\hphantom{.}p]_k
\end{align*}
for $p \in \mathcal{P}_\infty(C^*(R(\sigma_k)))$. This isomorphism is such that $[1_V]_{0,k} \mapsto [1_V]_k$ for all compact open sets $V \subset X$.

Here $[\cdot]_{0,k}$, $[\cdot]_{0}$, $[\cdot]_k$ and $[\cdot]$ denote the equivalence classes in the groups
\begin{align*}
K_0(C^*(R(\sigma_k))),\hspace{10pt}
K_0(C^*(R(\sigma))), \hspace{10pt}
H_0(R(\sigma_k))\hspace{10pt}
\text{ and }\hspace{10pt}
H_0(R(\sigma)),
\end{align*}
respectively. Note that
\begin{align*}
K_0(C^*(R(\sigma))) = \displaystyle\lim_\rightarrow K_0(C^*(R(\sigma_k))
\hspace{10pt}
\text{and}
\hspace{10pt}
H_0(R(\sigma)) = \displaystyle\lim_\rightarrow H_0(R(\sigma_k)).
\end{align*}
For all $k \geq 1$, let
\begin{align*}
i_k: &K_0(C^*(R(\sigma_k))) \rightarrow K_0(C^*(R(\sigma_{k+1})))
\hspace{5pt}
\text{and}\\
\hspace{5pt}
j_k: & H_0(R(\sigma_k)) \rightarrow H_0(R(\sigma_{k+1}))
\end{align*}
be the connecting morphisms, and let $\alpha_k: K_0(C^*(R(\sigma_k))) \rightarrow K_0(C^*(R(\sigma)))$ and $\beta_k: H_0(R(\sigma_k)) \rightarrow H_0(R(\sigma))$ be the inclusion morphisms such that
\begin{align*}
i_k([p]_{0,k}) = [p]_{0,k+1}
\hspace{10pt}\text{and}\hspace{10pt}
\alpha_k([p]_{0,k}) = [p]_0
\hspace{10pt}
\text{for }
p \in \mathcal{P}_\infty(C^*(R(\sigma_k))), \\
j_k([f]_{k}) = [f]_{k+1}
\hspace{13pt}\text{and}\hspace{17pt}
\beta_k([f]_k) = [f]
\hspace{15pt}
\text{for }
f \in C(X, \mathbb{Z}).
\hspace{40pt}
\end{align*}
The morphisms $i_k$, $j_k$, $\alpha_k$, $\beta_k$ preserve positive elements from one group to another.

In order to apply Lemma \ref{lemma:homomorphisminductive}, we need to prove that $\nu_{k+1} \circ i_k = j_k \circ \nu_k$ for all $k \geq 1$. In fact, for a fixed $k$ and for a compact open $V \subset X$, we have
\begin{align*}
j_k \circ \nu_n([1_V]_{0,k})
= j_k([1_V]_k)
= [1_V]_{k+1}
= \nu_{k+1}([1_V]_{0, k+1})
= \oldc{\nu_{k+1} \circ i_k([1_V])}\newc{\nu_{k+1} \circ i_k([1_V]_{0,k})}.
\end{align*}
Note that the elements $[1_V]$ generate $H_0(R(\sigma_k))$. By the isomorphism $\nu_k$, the classes \oldc{$[1_V]_0$}\newc{$[1_V]_{0,k}$} generate $K_0(C^*(R(\sigma_k)))$. So the equation above implies that $j_k \circ \nu_k = \nu_{k+1} \circ i_k$. Lemma \ref{lemma:homomorphisminductive} gives an isomorphism $\nu: \oldc{K_0(R(\sigma))}\newc{K_0(C^*(R(\sigma)))} \rightarrow H_0(R(\sigma))$ that makes the diagram below commutative for all $k$.
\begin{equation*}
\begin{tikzcd}
K_0(C^*(R(\sigma_k)) \arrow[r,"\nu_k"] 
\arrow{d}{}[swap]{\alpha_k}
& H_0(R(\sigma_k)) \arrow[d, "\beta_k"]\\
K_0(C^*(R(\sigma)) \arrow{r}{\nu}[swap]{}
& H_0(R(\sigma))
\end{tikzcd}
\end{equation*}
Then, for all compact open subset $V \subset X$ we have that
\begin{align*}
\nu([1_V]_0) = \nu \circ \alpha_k([1_V]_{0,k})
= \beta_k \circ \nu_k([1_V]_{0,k})
= \beta_k([1_V]_k)
= [1_V].
\end{align*}

Now let $n \geq 1$ and $p \in \mathcal{P}_n(C^*(R(\sigma))) \subset K_0(C^*(R(\sigma)))^+$. \newc{By Lemma \ref{lemma:tracevarphi}, there exists a $k$ such that $\mathrm{tr}_{\varphi_k}\hphantom{.}p = \mathrm{tr}_{\varphi}\hphantom{.}p$. then}
\begin{align}
\nu([p]_0)
&= \nu(\alpha_k([p]_{0,k}) \nonumber\\
&= \beta_k \circ \nu_k([p]_{0,k}) \nonumber\\
&= \beta_k([\mathrm{tr}_{\varphi_k}\hphantom{.}p]_k)
\hspace{20pt}\text{by Part 1,}\nonumber\\
&= [\mathrm{tr}_{\varphi_k}\hphantom{.}p] \nonumber\\
&= [\mathrm{tr}_{\varphi}\hphantom{.}p]. \label{eqn:nutrvarphi}
\end{align}
This completes the proof of Part 2.
\end{proof}

\begin{proof}[Proof of Part 3]
The proof is analogous to Part 2. Let $G$ be an AF groupoid. By Proposition \ref{prop:AFRsigma}, we write $G = \displaystyle\lim_\rightarrow R(\sigma_n)$, where $\sigma: G^{(0)} \rightarrow Y_n$ is a surjective local homeomorphism of totally disconnected spaces. As in Part 2, let $i_n: K_0(C^*(R(\sigma_n))) \rightarrow K_0(C^*(R(\sigma_{n+1})))$ and $j_n: H_0(R(\sigma_n)) \rightarrow H_0(R(\sigma_{n+1}))$ be the inclusion maps. By Part 2, we have an isomorphism $\nu_n: K_0(C^*(R(\sigma_n))) \rightarrow H_0(R(\sigma_n))$ satisfying the properties 1--3 of this theorem for all $n$.

Lemma \ref{lemma:homomorphisminductive} gives an isomorphism $\nu: K_0(C^*(G)) \rightarrow H_0(G)$ of ordered groups such that the we have the following commutative diagram for all $n$, where $\alpha_n$ and $\beta_n$ are the inclusion maps.
\begin{equation*}
\begin{tikzcd}
K_0(C^*(R(\sigma_n)) \arrow[r,"\nu_n"] 
\arrow{d}{}[swap]{\alpha_n}
& H_0(R(\sigma_n)) \arrow[d, "\beta_n"]\\
K_0(C^*(G) \arrow{r}{\nu}[swap]{}
& H_0(G)
\end{tikzcd}
\end{equation*}
Using the same ideas to prove \oldc{\eqref{eqn:trvarphik} and}\eqref{eqn:nutrvarphi} in Part 2, we have that $\nu$ satisfies properties 1--3. This completes the proof of the theorem.
\end{proof}

\begin{corollary}
\label{corollary:1VgenerateK0}
Given an AF groupoid $G$, $K_0(C^*(G))^+$ is generated  by elements of the form $[1_V]_0$, where $V \subset G^{(0)}$ are compact open subsets.
\end{corollary}
\begin{proof}
Recall that $H_0(G)^+ = \lbrace [f] : f \in C_c(X, \mathbb{N}) \rbrace$. Since $G^{(0)}$ is locally compact, Hausdorff, second countable, and totally disconnected, then every $f \in C_c(G^{(0)}, \mathbb{N})$ is a linear combination of the form $f = \alpha_1 1_{V_1} + \dots + \alpha_n 1_{V_n}$, where all $\alpha_i$ non-negative integers, and $V_i$ are compact open. This implies that the classes of the form $[1_V]$, for $V \subset G^{(0)}$ compact open, generate $H_0(G)^+$.

By Theorem \ref{thm:K0H0}, there is an ordered group isomorphism $K_0(C^*(G)) \rightarrow H_0(G))^+$ that maps $[1_V]_0$ to $[1_V]$. Since the equivalence classes $[1_V]$, for $V \subset G^{(0)}$ compact open, generate $H_0(G)^+$, then the corresponding equivalence classes $[1_V]_0$ generate $K_0(C^*(G))^+$.
\end{proof}


\chapter{The commutative diagram}
\label{section:diagram}

Recall from the introduction that we want to find a group which is isomorphic to $K_0(B)$ and more manageable, where $\mathcal{G}$ is a Deaconu-Renault groupoid,  $B$ is the C*-algebra of the skew product groupoid $\mathcal{G}(c)$, and $c: \mathcal{G} \rightarrow \mathbb{Z}$ is the continuous cocycle given by $c(x,k,y) = k$, called the \newterm{canonical cocycle}.

By studying this new group which is isomorhic to $K_0(B)$, we will find a condition on $\sigma$ that is equivalent to the condition $H_\beta \cap K_0(B)^+ = \lbrace 0 \rbrace$ from Brown's theorem (see Theorem \ref{thm:brown} on page \pageref{thm:brown}).

We prove the diagram \eqref{eqn:diagram} below by using the techniques for homology groups described in Chapter \ref{section:K0H0}, and by applying Theorem \ref{thm:K0H0} from the same chapter. 


\begin{equation}
\tag{\ref{eqn:diagram}}
\begin{tikzcd}
      K_0(B) \arrow[r, "K_0(\beta)"] \arrow{d}{\cong}[swap]{\circledRed{1} \hspace{6pt}}
      & K_0(B) \arrow{d}{}[swap]{\cong} \\
      H_0(\mathcal{G}(c))  \arrow[r, "{[\widetilde{\beta}]}_{\mathcal{G}(c)}"] \arrow{d}{\cong}[swap]{\circledRed{2} \hspace{6pt}} & H_0(\mathcal{G}(c)) \arrow{d}{}[swap]{\cong}\\
      H_0(c^{-1}(0))  \arrow{r}{[\sigma_\ast] }[swap]{} & H_0(c^{-1}(0))   
\end{tikzcd}
\end{equation}

The action $\beta$ is described in Section \ref{subsection:crossedproducts:beta}. For a Deaconu-Renault groupoid $\mathcal{G}$, $\beta \in \mathrm{Aut}(B)$ is such that $\beta(f)(x,k,y,a) = f(x,k,y,a + 1)$ for $(x,k,y,a) \in \mathcal{G}(c)$, $f \in C_c(\mathcal{G}(c))$.

Note that the diagram has two parts. We prove the first part in Section \ref{section:diagram:part1} using Theorem \ref{thm:K0H0} from the previous chapter. In Section \ref{section:diagram:part2}, we prove the second part of the diagram by using the results on homology groups of Chapter \ref{section:homology}.

In order to avoid confusion among the equivalence classes of the different groups in the diagram, we use the following convention:
\begin{itemize}
 \item $[ \hphantom{f} ]_{K_0(B)}$ denotes the equivalence class in $K_0(B)$,
 \item $[ \hphantom{f} ]_{\mathcal{G}(c)}$ is the equivalence class in $H_0(\mathcal{G}(c))$,
 \item $[ \hphantom{f} ]$ is the equivalence class in $H_0(c^{-1}(0))$
\end{itemize}

In this chapter, we fix $X$ to be a locally compact, Hausdorff, second countable, totally disconnected space, and we let $\sigma: X \rightarrow X$ be a surjective local homeomorphism. Here $\mathcal{G}$ denotes the Deaconu-Renault groupoid for $\sigma: X \rightarrow X$, and we fix the notation $B = C^*(\mathcal{G}(c))$.
\par\nobreak\section{Part 1 of the diagram}
\label{section:diagram:part1}

We will \oldc{to }apply Theorem \ref{thm:K0H0} to find the isomorphism $K_0(B) \cong H_0(\mathcal{G}(c))$. Note that Theorem \ref{thm:K0H0} applied because $\mathcal{G}(c)$ is AF by \cite[Corollary 5.2]{FKPS}. This theorem gives an ordered group isomorphism $\mu: H_0(\mathcal{G}(c)) \rightarrow K_0(B)$ such that, for $V \subset X$ compact open and for $a \in \mathbb{Z}$, we have $\mu([1_{V \times \lbrace a \rbrace}]_{\mathcal{G}(c)}) = [1_{V \times \lbrace a \rbrace}]_{K_0(B)}$.

Below we define the map $\widetilde{\beta}$ of the diagram.

\begin{definition}
\label{def:betatilde}
Define $\widetilde{\beta}: C_c(X \times \mathbb{Z}, \mathbb{Z}) \rightarrow C_c(X \times \mathbb{Z}, \mathbb{Z})$ by $\oldc{\beta(f)(x,a)}\newc{\widetilde{\beta}(f)(x,a)} = f(x, a+1)$, for $(x,a) \in X \times \mathbb{Z}$.
\end{definition}

Note that \oldc{for $f \in \oldc{C_c(X, \mathbb{Z})}$}\newc{$\widetilde{\beta}$ is the restriction of $\beta$ to $C_c(X \times \mathbb{Z}, \mathbb{Z})$. Indeed, for $f \in C_c(X \times \mathbb{Z}, \mathbb{Z})$,} we have $\widetilde{\beta}(f) = \beta(f)$. In particular, this equality holds if $f = 1_{V \times \lbrace a \rbrace}$ for a compact open $V \subset X$ and for $a \in \mathbb{Z}$. Now we study the ordered group morphism $[\widetilde{\beta}]$ on $H_0(\mathcal{G}(c))$.

\begin{lemma}
\label{lemma:betatildeH0}
The map $[\widetilde{\beta}]_{\mathcal{G}(c)}: H_0(\mathcal{G}(c)) \rightarrow H_0(\mathcal{G}(c))$ is well-defined.
\end{lemma}
\begin{proof}
By Remark \ref{rmk:rhobracket}, $[\widetilde{\beta}]_{\mathcal{G}(c)}$ is given by
\begin{align*}
[\widetilde{\beta}]_{\mathcal{G}(c)}([f]_{\mathcal{G}(c)}) = [\widetilde{\beta}(f)]_{\mathcal{G}(c)}
\end{align*}

In order to prove that $[\widetilde{\beta}]_{\mathcal{G}(c)}$ is well-defined, we only need to show that, given $f_1, f_2 \in C_c(X \times \mathbb{Z}, \mathbb{Z})$ which are equivalent in $H_0(\mathcal{G}(c))$, we have $\widetilde{\beta}(f_1) \sim \widetilde{\beta}(f_2)$. So, let $f_1, f_2 \in C_c(X \times \mathbb{Z}, \mathbb{Z})$ be equivalent in $H_0(\mathcal{G}(c))$. Then there is an $F \in C_c(\mathcal{G}(c), \mathbb{Z})$ such that $f_1 = f_2 + \partial_1 F$. Given $(x,a) \in X \times \mathbb{Z}$, we have
\begin{align*}
\beta(\partial_1 F)(x,a)
&= \partial_1 F(x, a+ 1) \\
&= s_\ast(F)(x, a+1) - r_\ast(F)(x, a+1) \\
\oldc{
&= \sum_{g: r(g) = (x,a+1)} [F(g^{-1}) - F(g)] \\
&= \sum_{(x,k,y) \in \mathcal{G}} [F(y,-k,x, a + 1 + k) - F(x, k,y,a+1)] \\
&= \sum_{(x,k,y) \in \mathcal{G}} [\beta(F)(y,-k,x,a + k) - \beta(F)(x,k,y,a)] \\
&= \sum_{h : r(h) = (x,a)} [\beta(F)(h^{-1}) - \beta(F)(h)] \\}
&= \newc{\sum_{(g,b):s(g,b) = (x, a +1)} F(g,b) - \sum_{(g,b):r(g,b) = (x, a +1)} F(g,b)} \\
&= \newc{\sum_{g \in \mathcal{G}: s(g) = x} F(g,a+1 - c(g)) - \sum_{g \in \mathcal{G}: r(g) = x} F(g,a+1)} \\
&= \newc{\sum_{g \in \mathcal{G}: s(g) = x} \beta(F)(g, a - c(g)) - \sum_{g \in \mathcal{G}: r(g) = x} \beta(F)(g,a)} \\
&= s_\ast(\beta(F))(x,a) - r_\ast(\beta(F))(x,a) \\
&= \partial_1(\beta(F))(x,a).
\end{align*}
Thus, $\widetilde{\beta}(f_1) = \widetilde{\beta}(f_2) + \partial_1(\beta(F))$. Note that $\beta(F) \in C_c(\mathcal{G}(c), \mathbb{Z})$. Therefore $\beta(f_1)$ and $\beta(f_2)$ are equivalent.
\end{proof}

\begin{proposition}
The first part of the diagram \eqref{eqn:diagram} is commutative.
\end{proposition}
\begin{proof}
Since \oldc{$H_0(c^{-1}(0))$}\newc{$H_0(\mathcal{G}(c))$} is generated by elements of the form $[1_{V \times \lbrace a \rbrace}]_{\mathcal{G}(c)}$ for $V \subset X$ compact open and $a \in \mathbb{Z}$, then we only need to prove that, for all $V \subset X$ compact open and $a \in \mathbb{Z}$,
\begin{align*}
K_0(\beta) \circ \mu([1_{V \times \lbrace a \rbrace}]_{\mathcal{G}(c)})
= \mu \circ [\widetilde{\beta}]_{\mathcal{G}(c)}([1_{V \times \lbrace a \rbrace}]_{\mathcal{G}(c)}).
\end{align*}
So, fix $V$ and $a$. Then
\begin{align*}
K_0(\beta) \circ \mu([1_{V \times \lbrace a \rbrace}]_{\mathcal{G}(c)})
&= K_0(\beta)([1_{V \times \lbrace a \rbrace}]_{K_0(B)}) \\
&= [\beta(1_{V \times \lbrace a \rbrace})]_{K_0(B)} \\
&= [ \widetilde{\beta}(1_{V \times \lbrace a \rbrace})]_{K_0(B)} \\
&= \mu([\widetilde{\beta}(1_{V \times \lbrace a \rbrace})]_{\mathcal{G}(c)}) \\
&= \mu \circ [\widetilde{\beta}]_{\mathcal{G}(c)}([1_{V \times \lbrace a \rbrace}]_{\mathcal{G}(c)}).
\end{align*}
Therefore the result holds.
\end{proof}

\par\nobreak\section{Part 2 of the diagram}
\label{section:diagram:part2}

In the second part of the diagram, we have an isomorphism between homology groups of different groupoids. We will obtain this isomorphism by applying the techniques of homological similarity of groupoids described in Section \ref{subsection:homology:similarity}.

Now we define the groupoid $c^{-1}(0)$.
\begin{definition}
Define the set
\begin{align*}
c^{-1}(0) = \lbrace (x,y) \in X \times X: \sigma^n(x) = \sigma^n(y) \text{ for some }n \in \mathbb{N} \rbrace.
\end{align*}
We equip this set with the operations $(x,y)(y,z) = (x,z), (x,y)^{-1} = (y,x)$ and with the range a source maps $r,s: c^{-1}(0) \rightarrow X$ given by $r(x,y) = x$ and $s(x,y) = y$. We also equip the set with the subspace topology from $X \times X$. Then $c^{-1}(0)$ a locally compact, Hausdorff, second countable, \'etale, totally disconnected groupoid.
\end{definition}

We use the notation $c^{-1}(0)$ because of the following. If $\mathcal{G}$ is the Deaconu-Renault groupoid and $c: \mathcal{G} \rightarrow \mathbb{Z}$ is the canonical cocycle, then $c^{-1}(0)$ is isomorphic to the subgroupoid $\lbrace (x,0,y) \in \mathcal{G} \rbrace$ of $\mathcal{G}$. The isomorphism is defined by $(x,y) \mapsto (x,0,y)$.

Given $\varphi: X \rightarrow X$ to be a continuous section of $\sigma$, we define $\varphi^0 = id_X$ and we set $\varphi^a = \sigma^{-a}$ for every integer $a < 0$. This notation will be useful when we study the similarity of $\mathcal{G}(c)$ and $c^{-1}(0)$. 

The next lemma shows that a surjection $\sigma:X \rightarrow W$ always has a continuous section $\varphi$. The ideas of the next lemma are taken from the proof of \cite[Theorem 4.10]{FKPS} by Farsi et al.

\begin{lemma}
\label{lemma:continuoussection}
The surjective local homeomorphism $\sigma$ has a continuous section $\varphi: W \rightarrow X$. Moreover, $\varphi$ is a local homeomorphism
\end{lemma}
\begin{proof}
Since $X$ is second countable and totally disconnected, and since $\sigma$ is a local homeomorphism, there is a cover $\lbrace \mathcal{U}_n \rbrace_{n=1}^\infty$ of $X$ by compact open sets such that $\sigma$ is injective on each $\mathcal{U}_n$.

We define a sequence $\lbrace V_n \rbrace_{n=1}^\infty$ such that $\lbrace \sigma(V_n) \rbrace_{n=1}^\infty$ will be a partition of \oldc{$X$}\newc{$W$, and such that $\sigma\vert_{V_n}: V_n \rightarrow \sigma(V_n)$ is a homeomorphism for all $n$.} Let $V_1 = \mathcal{U}_1$ and, for every $n \geq 0$, set
\begin{align*}
V_{n+1} = \mathcal{U}_{n+1} \setminus \oldc{\sigma\vert_{\mathcal{U}_n}^{-1}}\newc{\sigma^{-1}} \left( \sigma \left( \bigcup_{i=1}^n \mathcal{U}_i \right) \right)
=  \mathcal{U}_{n+1} \setminus \bigcup_{i=1}^n \oldc{\sigma\vert_{\mathcal{U}_n}^{-1}}\newc{\sigma^{-1}} (\sigma(\mathcal{U}_i )).
\end{align*}
Note that each $V_n$ is compact open, and that $\sigma$ is injective on each $V_n$. Also, for every $n \geq 0$, we have that $\sigma(V_n)$ is compact \newc{open} and
\begin{align*}
\sigma(V_{n+1}) = \sigma(\mathcal{U}_{n+1}) \setminus \bigcup_{i=1}^n \sigma(\mathcal{U}_i).
\end{align*}
Then the sets $\sigma(V_n)$ are pairwise disjoint. Moreover, since $\lbrace \mathcal{U}_n \rbrace_{n=1}^\infty$ covers $X$, and $\sigma$ is surjective, then we have
\begin{align*}
\bigcup_{n=1}^\infty \sigma(V_n)
&= \sigma(\mathcal{U}_1) \cup
\oldc{\bigcup_{n=0}^\infty}\newc{\bigcup_{n=1}^\infty} \left(\sigma(\mathcal{U}_{n+1}) \setminus \bigcup_{i=1}^n \sigma(\mathcal{U}_i) \right)\\
&= \bigcup_{n=1}^\infty \sigma(\mathcal{U}_n)\\
&= \sigma \left(\bigcup_{n=1}^\infty \mathcal{U}_n \right)\\
&= \sigma(X)\\
&= \oldc{X}\newc{W}.
\end{align*}
Therefore $\lbrace \sigma(V_n) \rbrace_{n=1}^\infty$ is a partition of \oldc{$X$}\newc{$W$}.

Let $\varphi: \oldc{X}\newc{W} \rightarrow X$ be such that, for all $n \geq 1$, $\varphi\vert_{\sigma(V_n)}: \sigma(V_n) \rightarrow V_n$ is defined by
\begin{align}
\label{eqn:continuossection}
\varphi\vert_{\sigma(V_n)} = \sigma\vert_{V_n}^{-1}.
\end{align}
Then $\varphi$ is a continuous function by definition. Moreover, $\varphi$ is a local homeomorphism. Finally, we show that $\varphi$ is a section \newc{of $\sigma$}. Let $x \in X$. Then there exists a unique $n \geq 1$ such that \oldc{$x \in \sigma(V_n)$}\newc{$x \in V_n$}. Then, by \eqref{eqn:continuossection}, we have
\begin{align*}
\sigma(\varphi(x)) = \sigma( \sigma\vert_{\sigma(V_n)}^{-1}(x)) = x.
\end{align*}
Therefore $\varphi$ is a continuous section of $\sigma$.
\end{proof}

That $X$ is totally disconnected and second countable was fundamental in the proof above when we chose a cover of $X$ by compact open sets $\mathcal{U}_n$. Until the rest of this section, we fix $\varphi$ to be a continuous section of $\sigma$.

Let us define the map that induces the similarity of the groupoids we are studying.

\begin{lemma}
\label{lemma:rhophi}
There exists a homomorphism $\rho_\varphi: \mathcal{G}(c) \rightarrow c^{-1}(0)$ given by
\begin{align*}
\rho_\varphi(x,k,y,a) = (\varphi^a(x), \varphi^{a+k}(y)).
\end{align*}
Moreover, $\rho_\varphi$ is a local homeomorphism.
\end{lemma}
\begin{proof}
First we show that $\rho_\varphi$ is well-defined. Let $(x,k,y,a) \in \mathcal{G}(c)$, and let $m,n \in \mathbb{N}$ be such that $\sigma^m(x) = \sigma^n(y)$ and $k = m - n$. We consider different values of $a$ and $a+k$.

\begin{itemize}
\item Suppose $a \geq 0$ and $a + k \geq 0$.

By definition of $\varphi$, we have
\begin{align*}
\sigma^a(\varphi^a(x)) = x
\text{\hspace{10pt}and\hspace{10pt}}
\sigma^{a+k}(\varphi^{a+k}(y)) = y.
\end{align*}
Then
\begin{align*}
\sigma^{m+a}(\varphi^a(x))
&= \sigma^m(\sigma^a(\varphi^a(x)))
= \sigma^m(x)
= \sigma^n(y), \text{\hspace{10pt}and} \\
\sigma^{m+a}(\varphi^{a+k}(y))
&= \sigma^{n+a+k}(\varphi^{a+k}(y)), \text{\hspace{10pt} since $m=n+k$,}\\
&=\sigma^n(\sigma^{a+k}(\varphi^{a+k}(y))) \\
&= \sigma^n(y).
\end{align*}
This implies that $(\varphi^a(x), \varphi^{a+k}(y)) \in c^{-1}(0)$.

\item Suppose $a \geq 0$ and $a+k < 0$.
Then
\begin{align*}
\sigma^a(\varphi^a(x)) = x
\text{\hspace{10pt}and\hspace{10pt}}
\varphi^{a+k}(y) = \sigma^{-a-k}(y).
\end{align*}
Then
\begin{align*}
\sigma^{m+a}(\varphi^a(x))
&= \sigma^m(\sigma^a(\varphi^a(x)))
=  \sigma^m(x)
= \sigma^n(y), \text{\hspace{10pt}and}\\
\sigma^{m+a}(\varphi^{a+k}(y))
&=  \sigma^{m+a}(\sigma^{-a-k}(y))
= \sigma^{m+a-a-k}(y)
= \sigma^{m-k}(y)
= \sigma^n(y).
\end{align*}
Hence $(\varphi^a(x), \varphi^{a+k}(y)) \in c^{-1}(0)$.
\item Suppose $a < 0$ and $a+k \geq 0$.

Note that $m+a = n+k+a \geq 0$. Using this fact, we have
\begin{align*}
\sigma^{m+a}(\varphi^a(x))
&= \sigma^{m+a}(\sigma^{-a}(x))
= \sigma^m(x)
= \sigma^n(y), 
\text{\hspace{10pt} and}\\
\sigma^{m+a}(\varphi^{a+k}(y))
&= \sigma^{n+k+a}(\varphi^{a+k}(y))
= \sigma^n(\sigma^{k+a}(\varphi^{a+k}(y)))
= \sigma^n(y).
\end{align*}
Then $(\varphi^a(x), \varphi^{a+k}(y)) \in c^{-1}(0)$.
\item Suppose $a < 0$ and $a+k < 0$.
Then
\begin{align*}
\sigma^m(\varphi^a(x))
&= \sigma^m(\sigma^{-a}(x))
= \sigma^{-a}(\sigma^m(x))
= \sigma^{-a}(\sigma^n(y))
= \sigma^{n-a}(y), \\
\sigma^m(\varphi^{a+k}(y))
&= \sigma^m(\sigma^{-a-k}(y))
= \sigma^{m-k-a}(y)
= \sigma^{n-a}(y).
\end{align*}
Then $(\varphi^{a}(x), \varphi^{a+k}(y)) \in c^{-1}(0)$. Therefore $\rho_\varphi$ is well-defined.
\end{itemize}
Next we show that \oldc{$\rho$}\newc{$\rho_\varphi$} is a homomorphism. Given $(x,k,y,a), (y,l,z,a+k) \in \mathcal{G}(c)$, we have
\begin{align*}
\oldc{\rho_\varphi((x,k,y,a)^{-1})
&= \rho_\varphi(y,-k,x,a+k) \\
&= (\varphi^{a+k}(y), \varphi^a(x)) \\
&= (\varphi^a(x), \varphi^{a+k}(y))^{-1}\\
&= \rho_\varphi((x,k,y,a))^{-1}, \text{\hspace{10pt}and}\\}
\rho_\varphi((x,k,y,a)(y,l,z,a+k))
&= \rho_\varphi(x,k+l,z,a)\\
&= (\varphi^a(x), \varphi^{a+k+l}(z))\\
&= (\varphi^a(x), \varphi^{a+k}(y))(\varphi^{a+k}(y), \varphi^{a+k+l}(z))\\
&= \rho_\varphi(x,k,y,a)\rho_\varphi(y,l,z,a+k).
\end{align*}
Then $\rho_\varphi$ is a homomorphism.

Finally, we prove that $\rho_\varphi$ is a local homeomorphism. By Lemma \ref{lemma:rhon-equivalent}, we only need to show that $\rho^{(0)}$ is a local homeomorphism. Fix $a \in \mathbb{Z}$. Then we have the function
\begin{align*}
\rho_\varphi^{(0)}\vert_{X \times \lbrace a \rbrace}: X \times \lbrace a \rbrace &\rightarrow X\\
(x,a) &\mapsto \varphi^a(x).
\end{align*}
Note that $\sigma$ is a local homeomorphism by assumption and that $\varphi$ is also a local homeomorphism by Lemma \ref{lemma:continuoussection}. Thus $\varphi^a$ is local homeomorphism. This implies that $\rho_\varphi^{(0)}$ is a local homeomorphism on each clopen subset $X \times \lbrace a \rbrace$ of $X \times \mathbb{Z}$. Then $\rho_\varphi^{(0)}$ is a local homeomorphism and, therefore, so is $\rho_\varphi$.
\end{proof}

Next we show that $\rho_\varphi$ induces an isomorphism of $H_0(\mathcal{G}(c))$ and $H_0(c^{-1}(0))$. 

\begin{proposition}
\label{prop:similarityGcc0}
$H_0(\rho_\varphi): H_0(\mathcal{G}(c)) \rightarrow H_0(c^{-1}(0))$ is an isomorphism and has inverse $H_0(\eta)$, where $\eta: c^{-1}(0) \rightarrow \mathcal{G}(c)$ is defined by $\eta(x,y) = (x,0,y,0)$.
\end{proposition}
\begin{proof}
It follows from the definition that $\eta$ is a well-defined homomorphism. Note that $\eta^{(0)}$ is defined by $\oldc{\eta{(0)}(x)}\newc{\eta^{(0)}(x)} = (x,0)$. Thus $\eta^{(0)}$ is a local homeomorphism. By Lemma \ref{lemma:rhon-equivalent}, $\eta$ is also a local homeomorphism.

Now we show that $\rho_\varphi$ and $\eta$ determine a similarity between $\mathcal{G}(c)$ and $c^{-1}(0)$. Let $(x,y) \in c^{-1}(0)$. Then
\begin{align*}
\rho_\varphi \circ \eta
(x,y)
= \rho_\varphi(x,0,y,0)
= (\varphi^0(x), \varphi^0(y))
= (x,y).
\end{align*}
So $\rho_\varphi \circ \eta = \mathrm{id}_{c^{-1}(0).}$.

Now we show that $\eta \circ \rho_\varphi$ is homologically similar to $\mathrm{id}_{\mathcal{G}(c)}$. Let $\theta: X \times \mathbb{Z} \rightarrow \mathcal{G}(c)$ be defined by
\begin{align*}
\theta(x,a) =  (x,-a, \varphi^a(x), a).
\end{align*}
Note this map is well-defined. In fact, if $a \geq 0$, we have that $\sigma^a(\varphi^a(x)) = x$. Then $(x,-a,\varphi^a(x),a) \in \mathcal{G}(c)$. If $a < 0$, then $\varphi^a(x) = \sigma^{-a}(x)$, which implies that $(x,-a, \varphi^a(x),a) \in \mathcal{G}(c)$. This function is also continuous \newc{because it continuous on each coordinate.} \oldc{Let $\lbrace (x_n, a_n) \rbrace_{n \geq 1}$ be a sequence in $X \times \mathbb{Z}$ converging to $(x,a)$. Since $X \times \lbrace a \rbrace$ is clopen, we can assume that $a_n = a$ for each $n$. Since $\varphi^a$ is continuous, we have that $\varphi^a(x_n) \rightarrow \varphi^a(x)$. This implies that
\begin{align*}
(x_n, -a, \varphi^a(x_n), a) \rightarrow (x,-a,\varphi^a(x), a).
\end{align*}
Thus $\theta$ is continuous.}

Let $g = (x,k,y,a) \in \mathcal{G}(c)$. Then
\begin{align*}
\theta(r(g)) \eta \circ \rho_\varphi(g)
&= \theta(x,a) \eta \circ \rho_\varphi(x,k,y,a) \\
&= (x,-a, \varphi^a(x), a) \eta \circ \rho_\varphi(x,k,y,a) \\
&= (x, - a, \varphi^a(x), a) \eta(\varphi^a(x), \varphi^{a+k}(y)) \\
&= (x, - a, \varphi^a(x), a) \oldc{(\varphi^a(x), 0, \varphi^{a+k}(y))}\newc{(\varphi^a(x), 0, \varphi^{a+k}(y),0)} \\
&= (x, - a, \varphi^{a+k}(y), a),
\text{\hspace{7pt} and}\\
\mathrm{id}_{\mathcal{G}(c)}(g) \theta(s(g))
&= (x,k,y,a)\theta(y,a+k) \\
&= (x,k,y,a)(y,-a -k, \varphi^{a+k}(y), a+k)\\
&= (x, -a, \varphi^{a+k}(y),a).
\end{align*}
So $\eta \circ \rho_\varphi$ and $\mathrm{id}_{\mathcal{G}(c)}$ are homologically similar. Therefore, $\mathcal{G}(c)$ and $c^{-1}(0)$ are homologically similar. Moreover, by Corollary \ref{corollary:isoHn}, we have an isomorphism $H_0(\rho_\eta): H_0(\mathcal{G}(c)) \rightarrow H_0(c^{-1}(0))$  with inverse $H_0(\eta)$.
\end{proof}

Now we show that the second part of the diagram commutes.

\begin{proposition}
The diagram
\par\nobreak\input{figures/fig-diagram-part2}
commutes.
\end{proposition}
\begin{proof}
By Proposition \ref{prop:similarityGcc0}, $H_0(\rho_\varphi): H_0(\mathcal{G}(c)) \rightarrow H_0(c^{-1}(0))$ is an isomorphism with inverse $H_0(\eta)$.

In order to show that the diagram above commutes, we prove that $(\rho_\varphi^{(0)})_\ast \circ \widetilde{\beta} \circ \eta^{(0)}_\ast = \sigma_\ast$. Let $f \in C_c(X, \mathbb{Z})$ and $x \in X$. Then
\begin{align*}
(\rho_\varphi)_\ast^{(0)} \circ \widetilde{\beta} \circ \eta_\ast^{(0)}(f)(x)
&= \sum_{(y,a): \rho_\varphi(y,a) = x} \widetilde{\beta} \circ \eta_\ast^{(0)}(f)(y,a)\\
&= \sum_{a \in \mathbb{Z}} \sum_{y: \varphi^a(y) = x} \widetilde{\beta} \circ \eta_\ast^{(0)}(f)(y,a),
\text{\hspace{10pt}by the def. of $\rho_\varphi$,}\\
&= \sum_{a \in \mathbb{Z}} \sum_{y: \varphi^a(y) = x} \eta_\ast^{(0)}(f)(y,a + 1),
\text{\hspace{10pt}by Definition \ref{def:betatilde}}.
\end{align*}
Note that
\begin{align*}
\eta_\ast^{(0)}(f)(y,a+1)
&= \begin{cases}
f(y) & \text{for } a = -1, \\
0 & \text{otherwise.}
\end{cases}
\end{align*}
Then
\begin{align*}
(\rho_\varphi)_\ast^{(0)} \circ \widetilde{\beta} \circ \eta_\ast^{(0)}(f)(x)
&= \sum_{y: \varphi^{-1}(y) = x} f(y)
= \sum_{y: \sigma(y) = x} f(y)
= \sigma_\ast(f)(x).
\end{align*}
It follows from Remark \ref{rmk:rhobracket} that
\begin{align*}
[\sigma_\ast]
= H_0(\rho) [\widetilde{\beta}] H_0(\eta)
= H_0(\rho) [\widetilde{\beta}] H_0(\rho)^{-1}
\end{align*}
Therefore, the diagram commutes.
\end{proof}

\chapter{Main result}
\label{section:result}

Our main result is Theorem \ref{thm:AFEDR}, where we give a condition on $\sigma: X \rightarrow X$ that makes $C^*(\mathcal{G})$ AF embeddable. The condition is
\begin{align}
\label{eqn:maincondition}
C^*(\mathcal{G}) \text{ is AFE }
\Leftrightarrow
\newc{\mathrm{Im}}(\sigma_\ast - \mathrm{id}) \cap C_c(X, \mathbb{N}) = \lbrace 0 \rbrace.
\end{align}

We prove this condition by applying the commutative diagram of Chapter \ref{section:diagram} to find a condition which is equivalent to the equation $H_\beta \cap K_0(B) = \lbrace 0 \rbrace$ of Brown's theorem (Theorem \ref{thm:brown} on page \pageref{thm:brown}).

\newc{It follows from the commutative diagram that $C^*(\mathcal{G})$ is \textit{not} AFE if, and only if, there are functions $f \in C_c(X, \mathbb{Z})$, $h \in C_c(X, \mathbb{N})$ such that}
\begin{align*}
\newc{[\sigma_\ast(f) - f] = [h],}
\end{align*}
\newc{with $[h] \neq [0]$. By Lemma \ref{lemma:traceH0Rsigma}, we have that $h \neq 0$. Hence, $f$ is also nonzero. The equation above holds if, and only if, there is an $F \in C_c(c^{-1}(0), \mathbb{Z})$ such that}
\oldc{It \oldc{follow}\newc{follows} from the commutative diagram that $C^*(\mathcal{G})$ is \textit{not} AFE if, and only if, there is an \oldc{$F \in C_c(X, c^{-1}(0))$}\newc{$F \in C_c(c^{-1}(0), \mathbb{Z})$} and if there are nonzero functions $f \in C_c(X, \mathbb{Z})$, $h \in C_c(X, \mathbb{N})$ such that}
\begin{align}
\label{eqn:maincondition2}
\sigma_\ast(f) - f = h + \partial_1 F.
\end{align}

We simplify the condition above by excluding $\partial_1 F$ from the equation. As we will see, for a sufficiently large $n$, we have $\sigma_\ast^n (\partial_1 F) = \partial_1 \sigma_\ast^n (F) = 0$. If we let $\widetilde{f} = \sigma_\ast^n(f)$, $\widetilde{h} = \sigma_\ast^n(h)$ and if we apply $\sigma_\ast^n$ to both sides of \eqref{eqn:maincondition2}, we get
\begin{align}
\label{eqn:maincondition3}
\sigma_\ast(\widetilde{f}) - \widetilde{f} = \widetilde{h}.
\end{align}
With this equation, we obtain the condition \eqref{eqn:maincondition} by negation.

We begin this chapter by defining $\sigma_\ast(F)$ for functions $F \in C_c(c^{-1}(0), \mathbb{Z})$, using the fact that $c^{-1}(0)$ is the increasing union of the sets $R(\sigma^n)$.

\begin{lemma}
\label{lemma:sigmaastRsigma}
Fix $n \in \mathbb{N}$ and let $F \in C_c(R(\sigma^n), \mathbb{Z})$. We define $\sigma_\ast(F) \in C_c(R(\sigma^n), \mathbb{Z})$ by
\begin{align}
\label{eqn:sigmaastRsigma}
 \sigma_\ast(F)(x,y) = \sum_{\substack{u: \hphantom{f} \sigma(u) = x \\ v: \hphantom{f} \sigma(v) = y}} F(u,v).
\end{align}
Then $\sigma_*(F)$ is well-defined. In particular, if $n \geq 1$, we have $\sigma_\ast(F) \in C_c(R(\sigma^{n-1}), \mathbb{Z})$.
\end{lemma}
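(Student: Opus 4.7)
The plan is to verify four things in turn: (a) the sum defining $\sigma_\ast(F)(x,y)$ is finite for every $(x,y)$; (b) $\sigma_\ast(F)$ is continuous on $R(\sigma^n)$; (c) $\sigma_\ast(F)$ has compact support; and (d) when $n\geq 1$, the support is contained in $R(\sigma^{n-1})$. Integer-valuedness is then automatic as a finite sum of integers.

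For (a), I fix $(x,y)$ and note that the map $\sigma\times\sigma: X\times X\to X\times X$ is a local homeomorphism, so its fibre over $(x,y)$ is discrete in $X\times X$. Since $\mathrm{supp}(F)$ is compact in $R(\sigma^n)\subset X\times X$, a discrete set meets it in only finitely many points, so only finitely many terms in the sum are nonzero. For (d), if $(u,v)\in R(\sigma^n)$ then $\sigma^n(u)=\sigma^n(v)$, so $\sigma^{n-1}(\sigma(u))=\sigma^{n-1}(\sigma(v))$; that is, $(\sigma(u),\sigma(v))\in R(\sigma^{n-1})$. Therefore any $(x,y)$ outside $R(\sigma^{n-1})$ contributes an empty sum. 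For (c), the support of $\sigma_\ast(F)$ is contained in $(\sigma\times\sigma)(\mathrm{supp}(F))$, which is the continuous image of a compact set and hence compact.

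The main obstacle is (b), continuity. I would fix $(x_0,y_0)\in R(\sigma^{n-1})$ and list the finitely many points $(u_1,v_1),\dots,(u_k,v_k)\in\mathrm{supp}(F)$ with $\sigma(u_i)=x_0$ and $\sigma(v_i)=y_0$. Using the local homeomorphism property together with Hausdorffness of $X\times X$, I can choose pairwise disjoint open neighbourhoods $U_i\ni u_i$ and $V_i\ni v_i$ on which $\sigma$ restricts to a homeomorphism onto open sets $\sigma(U_i)$, $\sigma(V_i)$. The set
\[
K\;=\;\mathrm{supp}(F)\setminus\bigcup_{i=1}^k(U_i\times V_i)
\]
is compact, so $(\sigma\times\sigma)(K)$ is compact and does not contain $(x_0,y_0)$; pick an open neighbourhood $W\subseteq\bigcap_i\bigl(\sigma(U_i)\times\sigma(V_i)\bigr)$ of $(x_0,y_0)$ disjoint from $(\sigma\times\sigma)(K)$. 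Then for every $(x,y)\in W$ the only contributions to the sum come from $\bigcup_i(U_i\times V_i)$, and on each piece the preimage is given by the continuous local inverses $\varphi_i=(\sigma|_{U_i})^{-1}$ and $\psi_i=(\sigma|_{V_i})^{-1}$. Consequently
\[
\sigma_\ast(F)(x,y)=\sum_{i=1}^k F\bigl(\varphi_i(x),\psi_i(y)\bigr)\qquad\text{for }(x,y)\in W,
\]
exhibiting $\sigma_\ast(F)$ as a finite sum of continuous functions in a neighbourhood of $(x_0,y_0)$. At any point $(x_0,y_0)$ where no $(u,v)\in\mathrm{supp}(F)$ lies above it, the same argument with $k=0$ produces a neighbourhood on which $\sigma_\ast(F)$ vanishes identically. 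Continuity at every point follows, completing the proof.
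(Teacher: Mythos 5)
Your proof is correct in substance, but it takes a genuinely different route from the paper. The paper exploits total disconnectedness: it observes that $C_c(R(\sigma^n),\mathbb{Z})$ is spanned by characteristic functions of the basic compact open bisections $\mathcal{U}_{A,B}^{n}$ (with $\sigma^n$ injective on the compact open sets $A$ and $B$ and $\sigma^n(A)=\sigma^n(B)$), invokes linearity, and then verifies by a direct two-sided pointwise check that $\sigma_\ast(1_{\mathcal{U}_{A,B}^{n}})=1_{\mathcal{U}_{\sigma(A),\sigma(B)}^{n-1}}$; the case $n=0$ is handled by reduction to the earlier definition of $\sigma_\ast$ on $C_c(X,\mathbb{Z})$. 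You instead verify finiteness of the fibre sum, continuity, compact support, and the support condition directly, using only that $\sigma\times\sigma$ is a local homeomorphism. Your argument is more elementary in that it never uses total disconnectedness and would apply to any local homeomorphism, at the cost of a more delicate continuity argument; the paper's argument is shorter and produces the explicit and reusable identity on indicator functions (which the paper in fact uses again, e.g.\ in the proof of the main theorem). Two small points to tighten: (1) in the finiteness step, a discrete subset can meet a compact set in infinitely many points unless it is also closed, so you should note that the fibre $(\sigma\times\sigma)^{-1}(x,y)$ is closed (being the preimage of a point in a Hausdorff space); (2) the first coordinates $u_1,\dots,u_k$ of the fibre points need not be distinct, so you cannot ask for the $U_i$ themselves to be pairwise disjoint --- what you need (and what Hausdorffness of $X\times X$ gives, since the points $(u_i,v_i)$ are distinct) is that the product neighbourhoods $U_i\times V_i$ are pairwise disjoint, which is exactly what prevents double counting in the formula $\sum_i F(\varphi_i(x),\psi_i(y))$.
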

\begin{proof}
When $n = 0$, this map is well-defined because we identify $R(\sigma^0)$ with $X$ and then apply Definition \ref{def:sigmaast}.

Fix $n \geq 1$. Note that $C_c(R(\sigma^n), \mathbb{Z})$ is spanned by the characteristic functions of the sets
\begin{align*}
\mathcal{U}_{A,B}^{n}  = \lbrace (x,y) : x \in A, y \in B, \sigma^n(x) = \sigma^n(y) \rbrace,
\end{align*}
for $A, B \subset X$ compact open with $\sigma^n(A) = \sigma^n(B)$, and $\sigma^n$ injective on $A$ and $B$. Since the map $F \mapsto \sigma_\ast^n(F)$ is linear, we only need to show that the map is well-defined for these characteristic functions. In particular, we will show that $\sigma_\ast(1_{\mathcal{U}_{A,B}^{n}}) = 1_{\mathcal{U}_{\sigma(A),\sigma(B)}^{n-1}} \in C_c(R(\sigma^{n-1}), \mathbb{Z})$. In fact,
\begin{itemize}
\item Suppose $1_{\mathcal{U}_{\sigma(A),\sigma(B)}^{n-1}}(x,y) = 1$.

Then $(x,y) \in \mathcal{U}_{\sigma(A),\sigma(B)}^{n-1}$.

Then there exists $u \in A$ and $v \in B$ with $\sigma(u) = x$ and $\sigma(v) = y$. Moreover,
\begin{align*}
\sigma^n(u) = \sigma^{n-1}(\sigma(x)) = \sigma^{n-1}(\sigma(y)) = \sigma^n(v).
\end{align*}
Since $\sigma$ is injective on $A$ and $B$, such $(u,v)$ is unique. By equation \eqref{eqn:sigmaastRsigma}, we have that
\begin{align*}
\sigma_\ast(1_{\mathcal{U}_{A,B}^{n}})(x,y) = 1_{\mathcal{U}_{A,B}^{n}}(u,v) = 1.
\end{align*}
\item Suppose $\sigma_\ast(1_{\mathcal{U}_{A,B}^n})(x,y) \neq 0$.

Then, by \eqref{eqn:sigmaastRsigma} there are $u,v$ such that
\begin{align*}
& \sigma(u) = x, \text{\hspace{10pt}}
\sigma(v) = y, \text{\hspace{10pt}}
\sigma^n(u) = \sigma^n(v), \text{\hspace{10pt}}
u \in A,\text{\hspace{10pt}and }
v \in B.
\end{align*}
Since $\sigma$ is injective on $A$ and $B$, it follows that $u, v$ are unique. Thus we have $\sigma_\ast(1_{A,B}^n)(x,y) = 1$. Moreover,
\begin{align*}
&
x \in \sigma(A), \text{\hspace{10pt}}
y \in \sigma(B), \text{\hspace{10pt}}
\sigma^{n-1}(x) = \sigma^{n-1}(y)\\
\Rightarrow\text{\hspace{10pt}}&
(x,y) \in \mathcal{U}_{\sigma(A),\sigma(B)}^{n-1}\\
\Rightarrow\text{\hspace{10pt}}&
1_{\mathcal{U}_{\sigma(A),\sigma(B)}^{n-1}}(x,y) = 1.
\end{align*}
\end{itemize}
Therefore $\sigma_\ast(1_{\mathcal{U}_{A,B}^{n}}) = 1_{\mathcal{U}_{\sigma(A),\sigma(B)}^{n-1}}$.
\end{proof}

\begin{remark}
Note that the definition above coincides with Definition \ref{def:sigmaast} of page \pageref{def:sigmaast} when $F \in C_c(c^{-1}(0), \mathbb{Z})$ has support on the unit space, that we identify with $X$. In this case, we have
for $x \in X$,
\begin{align*}
\oldc{\sigma_\ast F(x)}\newc{\sigma_\ast (F)(x)}
= \sum_{\substack{y: \sigma(y) = x \\ z: \sigma(z) = x}} F(y,z)
= \sum_{y: \sigma(y) = x} F(y).
\end{align*}
\end{remark}

Now we prove that $\sigma_\ast$ and $\partial_1$ commute.

\begin{lemma}
\label{lemma:partialsigmacommute}
Let $F \in C_c(R(\sigma^n), \mathbb{Z})$. Then $\partial_1^{(n)}(\sigma_\ast(F)) = \sigma_\ast(\partial_1^{(n)}(F))$. Here, $\partial_1^{(n)}: C_c(R(\sigma^n), \mathbb{Z}) \rightarrow C_c(X, \mathbb{Z})$ denotes the operator from Lemma \ref{lemma:partial1F}.
\end{lemma}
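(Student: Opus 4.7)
The plan is to reduce the identity to characteristic functions of the basic compact open bisections $\mathcal{U}_{A,B}^{n}$ that already appeared in the proof of Lemma \ref{lemma:sigmaastRsigma}. Both $\sigma_\ast$ and $\partial_1^{(n)}$ are $\mathbb{Z}$-linear, so it suffices to check
\begin{align*}
\partial_1^{(n)}\bigl(\sigma_\ast(1_{\mathcal{U}_{A,B}^{n}})\bigr)
 = \sigma_\ast\bigl(\partial_1^{(n)}(1_{\mathcal{U}_{A,B}^{n}})\bigr)
\end{align*}
for all compact open $A, B \subset X$ with $\sigma^n(A) = \sigma^n(B)$ and $\sigma^n$ injective on each. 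Since these characteristic functions span $C_c(R(\sigma^n), \mathbb{Z})$, the general case will follow.

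First I would compute the right-hand side. From Lemma \ref{lemma:partial1F} I expect $\partial_1^{(n)}(1_{\mathcal{U}_{A,B}^{n}}) = 1_A - 1_B$, that is, the difference of the source and range characteristic functions of the bisection. Applying $\sigma_\ast$ (in the sense of Definition \ref{def:sigmaast}) and using that $\sigma$ is injective on $A$ and on $B$, one checks directly from the defining formula that $\sigma_\ast(1_A) = 1_{\sigma(A)}$ and $\sigma_\ast(1_B) = 1_{\sigma(B)}$, so the right-hand side equals $1_{\sigma(A)} - 1_{\sigma(B)}$.

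For the left-hand side, Lemma \ref{lemma:sigmaastRsigma} already gives $\sigma_\ast(1_{\mathcal{U}_{A,B}^{n}}) = 1_{\mathcal{U}_{\sigma(A),\sigma(B)}^{n-1}}$, a function supported in $R(\sigma^{n-1}) \subset R(\sigma^{n})$, on which $\partial_1^{(n)}$ agrees with $\partial_1^{(n-1)}$. Applying the same formula for $\partial_1$ used above to this new characteristic function yields $1_{\sigma(A)} - 1_{\sigma(B)}$, matching the right-hand side.

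The main obstacle I anticipate is bookkeeping rather than genuine difficulty: one must ensure that $\sigma(A)$ and $\sigma(B)$ are themselves compact open sets on which $\sigma^{n-1}$ is injective, so that $\mathcal{U}_{\sigma(A),\sigma(B)}^{n-1}$ is a legitimate basic bisection; this follows because $\sigma$ is an open local homeomorphism and the injectivity of $\sigma^n = \sigma^{n-1} \circ \sigma$ on $A$ and $B$ forces $\sigma^{n-1}$ to be injective on $\sigma(A)$ and $\sigma(B)$. After refining $A$ and $B$ if needed so that these standing hypotheses hold (which does not affect the spanning argument), the verification reduces to the direct computation above.
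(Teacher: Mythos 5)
Your proposal is correct, but it proves the lemma by a different route than the paper. The paper's proof is a direct pointwise computation for arbitrary $F$: it expands $\sigma_\ast(\partial_1^{(n)}(F))(x)$ as a double sum over preimages, reindexes (splitting the inner sum over $\sigma^n(z)=\sigma^{n-1}(x)$ into a sum over $v$ with $\sigma^{n-1}(v)=\sigma^{n-1}(x)$ and then over $z$ with $\sigma(z)=v$), and recognises the result as $\partial_1^{(n-1)}(\sigma_\ast(F))(x)$, finishing with the observation that $\partial_1^{(n)}$ and $\partial_1^{(n-1)}$ agree on $C_c(R(\sigma^{n-1}),\mathbb{Z})$. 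You instead reduce by linearity to the spanning characteristic functions $1_{\mathcal{U}_{A,B}^{n}}$ and reuse the computation $\sigma_\ast(1_{\mathcal{U}_{A,B}^{n}}) = 1_{\mathcal{U}_{\sigma(A),\sigma(B)}^{n-1}}$ already established in Lemma \ref{lemma:sigmaastRsigma}; this makes the argument a two-line verification on basis elements at the cost of repeating the spanning/linearity boilerplate. Both are valid; the paper's version avoids any appeal to the spanning set and so is marginally more self-contained, while yours makes the structural reason for commutativity (both operators act predictably on basic bisections) more transparent. Two small points to tidy up: (1) check your sign against Lemma \ref{lemma:partial1F} --- with the formula $\partial_1^{(n)}(F)(y) = \sum_{z:\,\sigma^n(z)=\sigma^n(y)}[F(z,y)-F(y,z)]$ used in the paper one gets $\partial_1^{(n)}(1_{\mathcal{U}_{A,B}^{n}}) = 1_B - 1_A$ rather than $1_A - 1_B$; since you apply the same convention to both sides the identity is unaffected, but the statement of the intermediate formula should match the cited lemma; (2) you should dispose of the case $n=0$ separately (as the paper does, via $\partial_1^{(0)}=0$), since $\mathcal{U}_{\sigma(A),\sigma(B)}^{n-1}$ only makes sense for $n\geq 1$. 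No refinement of $A$ and $B$ is actually needed: the injectivity of $\sigma^{n}$ on $A$ and $B$ already forces injectivity of $\sigma$ on $A,B$ and of $\sigma^{n-1}$ on $\sigma(A),\sigma(B)$, exactly as you note.
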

\begin{proof}
Note that the result holds for $n = 0$ because $\partial_1^{(0)} = 0$. Fix $n \geq 1$ and let $x \in X$. Then
\begin{align*}
\sigma_\ast(\partial^{(n)}_1(F))(x)
&= \sum_{y: \sigma(y) = x} \partial_1^{(n)}(F)(y) \\
&= \sum_{y: \sigma(y) = x} \left(\sum_{z: \sigma^n(z) = \sigma^n(y)} [F(z,y) - F(y,z)] \right)\\
&= \sum_{y: \sigma(y) = x} \left( \sum_{z: \sigma^n(z) = \sigma^{n-1}(x)} [F(z,y) - F(y,z)] \right), \\ 
&= \sum_{y: \sigma(y) = x} \left( \sum_{v: \sigma^{n-1}(v) = \sigma^{n-1}(x)} \left( \sum_{z: \sigma(z) = v} [F(z,y) - F(y,z)] \right) \right) \\
&= \sum_{v: \sigma^{n-1}(v) = \sigma^{n-1}(x)} \left( \sum_{\substack{y: \sigma(y) = x \\ z:\sigma(z) = v}} [F(z,y) - F(y,z)] \right)\\
&= \sum_{v: \sigma^{n-1}(v) = \sigma^{n-1}(x)} [\sigma_\ast(F)(v,x) - \sigma_\ast(F)(x,v)] \\
&= \partial_1^{(n-1)}(\sigma_\ast(F))(x)
\end{align*}
Note that $\sigma_\ast(F) \in C_c(R(\sigma^{n-1}), \mathbb{Z})$ by Lemma \ref{lemma:sigmaastRsigma}. Then $\partial^{(n)}_1(\sigma_\ast(F)) = \partial^{(n-1)}_1(\sigma_\ast(F))$ by Lemma \ref{lemma:partial1F}. Therefore $\sigma_\ast(\partial_1^{(n)}(F)) = \partial_1^{(n)}(\sigma_\ast(F))$.
\end{proof}


Now we prove our main result. The following theorem finds a condition on $\sigma$ that characterises when $C^*(\mathcal{G})$ is AFE.




\begin{customthm}{1}
\newc{Let $\sigma$ be a surjective local homeomorphism on a locally compact, Hausdorff, second countable, totally disconnected space $X$. Denote by $\mathcal{G}$ be the Deaconu-Renault groupoid corresponding to $\sigma$. Then the following are equivalent:}
\begin{enumerate} [(i)]
\item $C^*(\mathcal{G})$ is AF embeddable,
\item $C^*(\mathcal{G})$ is quasidiagonal,
\item $C^*(\mathcal{G})$ is stably finite,
\item $\mathrm{Im}(\sigma_\ast - \mathrm{id}) \cap C_c(X, \mathbb{N}) = \lbrace 0 \rbrace$,
\end{enumerate}
\newc{where the map $\sigma_\ast: C_c(X, \mathbb{Z}) \rightarrow C_c(X, \mathbb{Z})$ is defined by}
\begin{align*}
\newc{\sigma_\ast(f)(x) = \sum_{y: \sigma(y) = x} f(y)
\hspace{20pt}
\text{for $x \in X$, $f \in C_c(X, \mathbb{Z})$.}}
\end{align*}
\end{customthm}
\begin{proof}
Let $c: \mathcal{G} \rightarrow \mathbb{Z}$ be the continuous cocycle given by $c(x,k,y) = k$ for $(x,k,y) \in \mathcal{G}$. By \cite[Corollary 5.2]{FKPS}, we have that $\mathcal{G}(c)$ is an AF groupoid. Let $B = C^*(\mathcal{G}(c))$. Then $C^*(\mathcal{G}(c))$ is AF.

Let $\beta$ be the action on $B$ from Section \ref{subsection:crossedproducts:beta}. Brown's theorem (Theorem \ref{thm:brown}) gives the equivalence of the\oldc{for} conditions below.
\begin{enumerate}[(i')]
\item $B \rtimes_\beta \mathbb{Z}$  is AFE,
\item $B \rtimes_\beta \mathbb{Z}$ is quasidiagonal,
\item $B \rtimes_\beta \mathbb{Z}$ is stably finite,
\item $H_{\beta} \cap K_0(B)^+ = \lbrace 0 \rbrace$.
\end{enumerate}

By Proposition \ref{prop:beta}, $B \rtimes_\beta \mathbb{Z}$ and $C^*(\mathcal{G}) \rtimes_{\alpha^c} \mathbb{T} \rtimes_{\widehat{\alpha}^c} \mathbb{Z}$ are isomorphic. By Takai duality, $C^*(\mathcal{G}) \rtimes_{\alpha^c} \mathbb{T} \rtimes_{\widehat{\alpha}^c} \mathbb{Z}$ and $C^*(\mathcal{G})$ are stably isomorphic. So $B \rtimes_\beta \mathbb{Z}$ and $C^*(\mathcal{G})$ are stably isomorphic.

Since AF embeddability, quasidiagonality and stable finiteness are preserved under stable isomorphisms, we have the equivalence below:
\begin{enumerate}[(i)]
\item $C^*(\mathcal{G})$  is AFE,
\item $C^*(\mathcal{G})$ is quasidiagonal,
\item $C^*(\mathcal{G})$ is stably finite,
\item[\refstepcounter{enumi}(iv')] $H_{\beta} \cap K_0(B)^+ = \lbrace 0 \rbrace$.
\end{enumerate}

Now we study the condition (iv'). By the commutative diagram \newc{\eqref{eqn:diagram}}, (iv') is false if, and only if, there are \newc{non-zero} $f \in C_c(X, \mathbb{Z})$ and $h \in C_c(X, \mathbb{N})$ such that
\begin{align}
\label{eqn:AFEDR1}
[\sigma_\ast(f) - f] = [h].
\end{align}
Suppose \eqref{eqn:AFEDR1} holds. Then there exists $F \in C_c(c^{-1}(0), \mathbb{Z})$ such that
\begin{align*}
\sigma_\ast(f) - f = h + \partial_1 F.
\end{align*}
By the topological properties of $c^{-1}(0)$, there exists $n \geq 0$ such that $F \in C_c(R(\sigma^n), \mathbb{Z})$. By Lemma \ref{lemma:partial1F}, we have
\begin{align}
\label{eqn:AFEDR2}
\sigma_\ast(f) - f = h + \partial_1^{(n)} F.
\end{align}
Recall from Lemma \ref{lemma:sigmaastRsigma} that $\sigma_\ast^n(F) \in C_c(X, \mathbb{Z})$. Then $\partial_1(\sigma_\ast^n(F)) = 0$.

Let $\widetilde{f} = \sigma_\ast^n(f)$ and $\widetilde{h} = \oldc{\sigma_\ast(h)^n}\newc{\sigma_\ast^n(h)} \in C_c(X, \mathbb{N})$. \newc{Since $h$ is non-negative, we have that $\widetilde{h} \neq 0$ by definition of $\sigma_\ast^n(h)$.} By applying $\sigma_\ast^n$ to both sides to equation \ref{eqn:AFEDR2}, we obtain
\begin{align*}
\sigma_\ast(\widetilde{f}) - \widetilde{f}
&= \widetilde{h} + \sigma_\ast^n(\partial_1^{(n)} F) \\
&= \widetilde{h} + \partial_1^{(n)}(\sigma_\ast^n( F))
\hspace{15pt}\text{by Lemma \ref{lemma:partialsigmacommute}}\\
&= \widetilde{h}.
\end{align*}
We have just proved that if $C^*(\mathcal{G})$ is not AFE, then $\mathrm{Im}(\sigma_\ast - \mathrm{id}) \cap C_c(X, \mathbb{N}) \neq \lbrace 0 \rbrace$. We will prove the converse.

Now suppose that $\mathrm{Im}(\sigma_\ast - \mathrm{id}) \cap C_c(X, \mathbb{N}) \neq \lbrace 0 \rbrace$. Then there are $f \in C_c(X, \mathbb{N})$ and $h \in C_c(X, \mathbb{N})$ such that \newc{$h \neq 0$ and}
\begin{align*}
\sigma_\ast(f) - f = h.
\end{align*}
\newc{It follows from Lemma \ref{lemma:traceH0Rsigma} that $[h] \neq 0$.} Then equation (iv') holds, which implies that $C^*(\mathcal{G})$ is not AFE.
\end{proof}

\begin{corollary}
\label{corollary:sigmanNotAFE}
Let $X$ be a locally compact Hausdorff second countable totally disconnected space. Let $\sigma: X \rightarrow X$ be a surjective local homeomorphism. Let $\mathcal{G}$ be the Deaconu-Renault groupoid. Then the following are equivalent:
\begin{enumerate}[(1)]
\item $C^*(\mathcal{G})$ is AFE,
\item $\mathrm{Im}(\sigma_\ast - \mathrm{id}) \cap C_c(X, \mathbb{N}) = \lbrace 0 \rbrace$,
\item $\mathrm{Im}(\sigma_\ast^n - \mathrm{id}) \cap C_c(X, \mathbb{N}) = \lbrace 0 \rbrace$ for some $n \geq 1$, and
\item $\mathrm{Im}(\sigma_\ast^n - \mathrm{id}) \cap C_c(X, \mathbb{N}) = \lbrace 0 \rbrace$ for all $n \geq 1$.
\end{enumerate}
\end{corollary}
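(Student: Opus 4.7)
The plan is to deduce the four-way equivalence by combining Theorem \ref{thm:AFEDR} (which gives (1) $\Leftrightarrow$ (2)) with a purely algebraic manipulation of the operator $\sigma_\ast$ on $C_c(X, \mathbb{Z})$. So what remains is to establish the equivalence of (2), (3) and (4). I will argue (2) $\Rightarrow$ (4) $\Rightarrow$ (3) $\Rightarrow$ (2); the two implications starting at (4) are essentially trivial, and the two real statements, (2) $\Rightarrow$ (4) and (3) $\Rightarrow$ (2), will both fall out of a single factorization identity.

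The main tool will be the observation that $\sigma_\ast$ commutes with itself, so the polynomial factorization
\begin{align*}
\sigma_\ast^n - \mathrm{id} = (\sigma_\ast - \mathrm{id})(\mathrm{id} + \sigma_\ast + \cdots + \sigma_\ast^{n-1}) = (\mathrm{id} + \sigma_\ast + \cdots + \sigma_\ast^{n-1})(\sigma_\ast - \mathrm{id})
\end{align*}
holds as operators on $C_c(X, \mathbb{Z})$. I will also use that $\sigma_\ast$ is a positive operator on $C_c(X, \mathbb{Z})$: directly from the defining formula, $f \in C_c(X, \mathbb{N})$ implies $\sigma_\ast(f) \in C_c(X, \mathbb{N})$, and moreover if $f \neq 0$ then $\sigma_\ast(f) \neq 0$, because $f(y_0) > 0$ forces $\sigma_\ast(f)(\sigma(y_0)) \geq f(y_0) > 0$.

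With these two ingredients in hand, the four implications are short. For (4) $\Rightarrow$ (3) and (4) $\Rightarrow$ (2), I just specialize to $n=1$. For (2) $\Rightarrow$ (4): given $g \in C_c(X, \mathbb{Z})$ with $(\sigma_\ast^n - \mathrm{id})(g) \in C_c(X, \mathbb{N})$, setting $f = (\mathrm{id} + \sigma_\ast + \cdots + \sigma_\ast^{n-1})(g)$ gives $(\sigma_\ast - \mathrm{id})(f) = (\sigma_\ast^n - \mathrm{id})(g) \in C_c(X, \mathbb{N})$, which (2) forces to be zero. For (3) $\Rightarrow$ (2), I argue by contrapositive: if $f \in C_c(X, \mathbb{Z})$ satisfies $(\sigma_\ast - \mathrm{id})(f) = h$ with $0 \neq h \in C_c(X, \mathbb{N})$, then for every $n \geq 1$,
\begin{align*}
(\sigma_\ast^n - \mathrm{id})(f) = h + \sigma_\ast(h) + \cdots + \sigma_\ast^{n-1}(h)
\end{align*}
is a sum of non-negative functions in which the $h$ summand is already non-zero, so the total lies in $C_c(X, \mathbb{N}) \setminus \{0\}$. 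Hence $\mathrm{Im}(\sigma_\ast^n - \mathrm{id}) \cap C_c(X, \mathbb{N}) \neq \{0\}$ for every $n$, so (3) fails.

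I do not anticipate any serious obstacle: the entire corollary is a direct algebraic consequence of the commuting-operator factorization and the positivity of $\sigma_\ast$, once Theorem \ref{thm:AFEDR} is in hand. The only mildly subtle point is the preservation of non-vanishing under $\sigma_\ast^k$, which is handled by the one-line argument above about evaluating at $\sigma(y_0)$.
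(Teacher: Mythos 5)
Your proof is correct, and it is organized differently from the paper's. The paper proves the cycle via the trivial implications $(4) \Rightarrow (2) \Rightarrow (3)$ together with $(3) \Rightarrow (4)$, so its one substantive step is ``failure of $(4)$ for \emph{some} $m$ implies failure for \emph{every} $n$''; it attempts this with a telescoping sum applied to $F = \sigma_\ast^m(f)$, resting on the identity $\sigma_\ast^{k+1}(\sigma_\ast^m(f)) - \sigma_\ast^k(\sigma_\ast^m(f)) = \sigma_\ast^k(h)$, which only holds when $m = 1$; in general one must first pass to $\sigma_\ast^{mn} - \mathrm{id}$ and then use exactly your geometric-series factorization to come back down to exponent $n$. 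Your cycle $(2) \Rightarrow (4) \Rightarrow (3) \Rightarrow (2)$ distributes the work so that each nontrivial step really is one line: $(2) \Rightarrow (4)$ pulls an element of $\mathrm{Im}(\sigma_\ast^n - \mathrm{id})$ back into $\mathrm{Im}(\sigma_\ast - \mathrm{id})$ via $f = (\mathrm{id} + \sigma_\ast + \cdots + \sigma_\ast^{n-1})(g)$, and $(3) \Rightarrow (2)$ only ever needs the $m=1$ telescoping, where $(\sigma_\ast^n - \mathrm{id})(f) = \sum_{k=0}^{n-1}\sigma_\ast^k(h)$ is unproblematic. The positivity facts you record --- that $\sigma_\ast$ maps $C_c(X,\mathbb{N})$ into itself and does not kill a non-zero non-negative function --- are exactly what is needed to see the resulting sums are non-zero (and in fact, for $(3)\Rightarrow(2)$, it already suffices that the summand $h$ is non-zero and the others non-negative). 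So your argument is valid and, in the $(3)\Leftrightarrow(4)$ portion, cleaner and more robust than the printed one.
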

\begin{proof}
The equivalence $(1) \Leftrightarrow (2)$ follows from Theorem \ref{thm:AFEDR}. The implications $(4) \Rightarrow (2) \Rightarrow (3)$ are straightforward. We show $(3) \oldc{\rightarrow}\newc{\Rightarrow} (4)$ by contrapositive.

Suppose (4) is false. Then, for some $m \geq 1$, there are nonzero $f \in C_c(X, \mathbb{Z})$ and $h \in C_c(X, \mathbb{N})$ such that $\sigma_\ast^m(f) - f = h$. Let $n \geq 1$ be arbitrary. Then, for all $k = 0, 1, \dots, n$, we have
\begin{align*}
\sigma_\ast^k(\sigma_\ast^m(f)) - \sigma_\ast^k(f) = \oldc{\sigma_\ast^*(f)}\newc{\sigma_\ast^k(h)}.
\end{align*}

Let $F = \sigma_\ast^m(f)$ and $H = h + \sigma_\ast(h) + \dots + \sigma_\ast^{n-1}(h)$. Note that $H \in C_c(X, \mathbb{N})$ and is nonzero. By a telescoping sum, we have
\begin{align*}
\sigma_\ast^n(F) - F
&= \oldc{\sigma_\ast^n(\sigma_\ast(f))}\newc{\sigma_\ast^n(\sigma_\ast^m(f))} - \sigma_\ast^m(f) \\
&= \sum_{k=0}^{n-1} [ \sigma_\ast^{k+1}(\sigma_\ast^m(f)) - \oldc{\sigma_\ast^k(\sigma_\ast^n(f))}\newc{\sigma_\ast^k(\sigma_\ast^m(f))}] \\
&= \sum_{k=0}^{n-1} \sigma_\ast^k(h) \\
&= H.
\end{align*}
Thus $\mathrm{Im}(\sigma_\ast^n - \mathrm{id}) \cap C_c(X, \mathbb{N}) \neq \lbrace 0 \rbrace$. Since $n$ is arbitrary, $(3)$ is false. Therefore, $(3) \Rightarrow (4)$.
\end{proof}

\begin{remark}
\label{rmk:sigmakgeq}
Using the same telescoping sums of the corollary before, we can show that, for $f \in C_c(X, \mathbb{Z})$ satisfying $\sigma_\ast(f) \geq f$, we have $\sigma_\ast^n(f) \geq \sigma_\ast^{n-1}(f) \geq \hdots \geq f$ for all $n \geq 1$.  This property will be useful in Corollary \ref{corollary:graphAFE} on page \pageref{corollary:graphAFE}, where we study the AFE property of graph algebras as an example of our main theorem.
\end{remark}

\chapter{Examples}
\label{section:examples}

In this chapter we show that Theorem \ref{thm:AFEDR} generalises two known results in the literature. Under certain conditions, we see in Section \ref{subsection:examples:graphs} that Schafhauser's result on graph algebras \cite{Schafhauser-AFEgraph} is an application of this theorem.

Our main result also generalises the study of the crossed product $C(X) \rtimes_\sigma \mathbb{Z}$ by Brown \cite[Theorem 11.5]{Brown-quasidiagonal} (or \cite[Theorem 9]{Pimsner} by Pimsner). We prove this fact in Section \ref{subsection:examples:crossedproduct}.

\section{Graph algebras}
\label{subsection:examples:graphs}

Before we apply our main result to graph algebras, we define graphs. We omit the definition of graph algebras here, since we do not need to know its definition to apply our main result. See the book \cite{Raeburn} by Reaburn for an introduction to graph algebras.


A \newterm{directed graph} (or simply a graph) denotes an $E = (E^{0}, E^{1}, r, s)$ equipped with two countable sets $E^0, E^1$, and two functions $r, s: E^1 \rightarrow E^0$. The elements of $E^0$ are called \newterm{vertices}, and the elements of $E^1$ are called edges. We say that $E$ is row-finite if $r^{-1}(v)$ is finite for every vertex $e$. A vertex $v$ such that $r^{-1}(v) = \emptyset$ is called a \newterm{source} of the graph, and a vertex $w$ such that \oldc{$s^{-1} = \emptyset$}\newc{$s^{-1}(w) = \emptyset$} is called a \newterm{sink}.

For $n \geq 2$, a finite sequence of edges $\mu = \mu_1 \mu_2 \dots \mu_n$ is denoted a \newterm{path} of length $n$ if $s(\mu_i) = r(\mu_{i+1})$ for $i= 1, \dots, n-1$. In this case, define $r(\mu) = r(\mu_1)$ and $s(\mu) = s(\mu_n)$. We denote by $E^n$ the set of all paths of length $n$, and we let $E^\ast = \bigcup_{n=0}^\infty E^n$ be the set of all finite paths. Every vertex $v$ is finite path of length $0$ with $r(v) = s(v) = v$.

Given two finite paths $\mu \in E^m$ and $\nu \in E^n$ such that $s(\mu) = r(\nu)$, we define the finite path $\mu \nu \in E^{m + n}$ as follows: if both $m, n \geq 1$, then
\begin{align*}
\mu\nu = \mu_1 \dots \mu_m \nu_1 \dots \nu_n.
\end{align*}
If $m = 0$, we let $\mu \nu = \nu$. If $n = 0$, then we define $\mu \nu = \mu$.

Similarly, an infinite sequence of edges $x = x_1 x_2 \dots$ is an infinite path if $s(x_i) = r(x_{i+1})$ for all $i \geq 1$. We let $r(x) = r(x_1)$. \newc{We denote by $E^\infty$ the set of all infinite paths. Moreover, for a finite path $\mu$ of length $n \geq 1$, we let $Z(\mu)$ denote the set of infinite paths $x = x_1 x_2 \dots$ such that $x_1 \dots x_n = \mu$. Similarly, for $v \in E^{(0)}$, $Z(v)$ denotes the set of infinite paths with range $v$. The sets of the form $Z(\mu)$ generate a topology on $E^\infty$. See \cite{Raeburn} for more details.}

For a finite path $\mu \in E^*$ such that $s(\mu) = r(x)$, we define the infinite path $\mu x$ in a similar way to the definition of $\mu \nu$ above.



We apply Theorem \ref{thm:AFEDR} to study the following theorem:

\begin{theorem} \cite{Schafhauser-AFEgraph}
\label{thm:SchafhauserAFEgraph}
For a countable graph $E$, the following are equivalent:
\begin{enumerate}[(i)]
\item $C^*(E)$ is AFE,
\item $C^*(E)$ is quasidiagonal,
\item $C^*(E)$ is stably finite,
\item $C^*(E)$ is finite,
\item No cycle in $E$ has an entrance.
\end{enumerate}
\end{theorem}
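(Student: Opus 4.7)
The plan is to realise $C^*(E)$ as the C*-algebra of a Deaconu-Renault groupoid and invoke Theorem \ref{thm:AFEDR}. For a row-finite graph $E$ with no sinks, the infinite path space $E^\infty$ is locally compact, Hausdorff, second countable and totally disconnected, the shift map $\sigma(x_1 x_2 \dots) = x_2 x_3 \dots$ is a surjective local homeomorphism, and the resulting Deaconu-Renault groupoid $\mathcal{G}_E$ satisfies $C^*(\mathcal{G}_E) \cong C^*(E)$ by \cite{KPRR}. For general countable graphs one can first pass to a Drinen--Tomforde desingularisation $F$, under which $C^*(E)$ and $C^*(F)$ are Morita equivalent (so that (i), (ii), (iii), (iv) transfer) and cycles with entrances correspond (so (v) transfers); thus I may assume $E$ is row-finite with no sinks.

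Applying Theorem \ref{thm:AFEDR} immediately gives the equivalence of (i), (ii), (iii) with the groupoid condition $\mathrm{Im}(\sigma_\ast - \mathrm{id}) \cap C_c(E^\infty, \mathbb{N}) = \lbrace 0 \rbrace$. The main combinatorial task becomes to show this is equivalent to (v), ``no cycle in $E$ has an entrance.'' From the defining formula for $\sigma_\ast$, a short calculation yields the key identities $\sigma_\ast(1_{Z(\mu)}) = 1_{Z(\mu_2 \dots \mu_n)}$ for a path $\mu = \mu_1 \dots \mu_n$ of length $n \geq 2$, $\sigma_\ast(1_{Z(\mu_1)}) = 1_{Z(s(\mu_1))}$ for a single edge, and $\sigma_\ast(1_{Z(v)}) = \sum_{e:\, r(e) = v} 1_{Z(s(e))}$ for a vertex $v$; the last is the one that sees whether $v$ is entered by more than one edge.

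For the direction ``cycle with entrance implies condition fails,'' I would construct an explicit witness. Given a cycle $\mu = \mu_1 \dots \mu_n$ whose base vertex $v = r(\mu)$ admits an entering edge $e \neq \mu_n$, a suitable telescoping combination of $1_{Z(\mu_1 \dots \mu_k)}$ together with $1_{Z(v)}$ produces $f \in C_c(E^\infty, \mathbb{Z})$ for which $\sigma_\ast(f) - f$ is nonnegative and whose support contains $Z(s(e))$, hence is nonzero. The opposite direction is the technical heart of the argument. I plan to argue by contradiction: if $\sigma_\ast(f) - f = h$ with $h \in C_c(E^\infty, \mathbb{N})$ nonzero, then Remark \ref{rmk:sigmakgeq} gives $\sigma_\ast^n(f) - f = \sum_{k=0}^{n-1} \sigma_\ast^k(h)$ with right-hand side non-decreasing in $n$; under the no-entrance hypothesis the preimages under $\sigma$ of points near $\mathrm{supp}(h)$ are uniquely constrained along any cycle, so the iterates $\sigma_\ast^k(h)$ push mass onto cylinder sets indexed by ever-longer paths that eventually escape any prescribed compact set, contradicting compact support of $\sigma_\ast^n(f)$. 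Making this ``paths escape to infinity'' argument precise, and correctly handling the interplay between cycle and non-cycle portions of $E$, is the main obstacle.

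Finally, for the remaining equivalences involving (iv), the implication (iii) $\Rightarrow$ (iv) is immediate from the definition of stable finiteness. For (iv) $\Rightarrow$ (v), I would argue by contrapositive using the classical construction (see \cite{KPR, BPRS}): a cycle with an entrance yields a Murray--von Neumann equivalence between a vertex projection and a proper subprojection of itself, exhibiting an infinite projection in $C^*(E)$ and violating finiteness. Combined with (v) $\Rightarrow$ (i) supplied by Theorem \ref{thm:AFEDR} and the easy implications (i) $\Rightarrow$ (ii) $\Rightarrow$ (iii), all five conditions are equivalent.
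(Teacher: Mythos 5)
Your overall strategy is exactly the one the paper follows in Corollary \ref{corollary:graphAFE}: identify $C^*(E)$ with $C^*(G_E)$ via \cite{KPRR}, apply Theorem \ref{thm:AFEDR}, and reduce everything to showing that $\mathrm{Im}(\sigma_\ast-\mathrm{id})\cap C_c(E^\infty,\mathbb{N})=\{0\}$ is equivalent to ``no cycle has an entrance.'' Your identities for $\sigma_\ast$ on cylinder sets are correct and your telescoping witness for the easy direction works (indeed $f=\sum_{k=1}^{n}1_{Z(\mu_k\cdots\mu_n)}$ gives $\sigma_\ast(f)-f=1_{Z(s(\mu))}-1_{Z(\mu)}\geq 1_{Z(e_1)}$ directly, which is marginally cleaner than the paper's use of Lemma \ref{lemma:sigmapath} together with Corollary \ref{corollary:sigmanNotAFE}). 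The implications involving (iv) are handled the same way as in the paper.

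The genuine gap is in the hard direction. Your proposed mechanism --- iterating $\sigma_\ast$ pushes the mass of $h$ onto cylinders over ever-longer paths that leave every compact set, contradicting compact support --- is only valid for infinite paths all of whose edges are distinct (this is precisely Lemmas \ref{lemma:Feventuallyzero} and \ref{lemma:Feventuallyzerobackwards}). It fails outright for eventually periodic paths: if $\sigma^p(x)=x$, the orbit of $x$ is $\sigma$-invariant and finite, so no mass escapes to infinity there, yet these are exactly the points where cycles live and where $h$ must be shown to vanish. The paper needs a separate, non-topological argument for this case (items (b)--(e) of the proof of Corollary \ref{corollary:graphAFE}, supported by Lemmas \ref{lemma:EAFEpaths}, \ref{lemma:cycleexit} and \ref{lemma:sigmapyx}): one first shows $f\leq 0$ on every path that merely enters an entrance-free cycle, deduces $\sigma_\ast^p(f)(x)\leq f(x)$ at periodic $x$ because the extra preimages contribute nonpositively, and then plays this against the monotonicity $f\leq\sigma_\ast(f)\leq\cdots\leq\sigma_\ast^p(f)$ of Remark \ref{rmk:sigmakgeq} to force equality, hence $h=0$ on the cycle and $f=0$ on the incoming paths. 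Without this second mechanism your contradiction does not close. A secondary issue: the Drinen--Tomforde desingularisation (in the source/sink convention used here) produces a row-finite graph with no sources but does not remove sinks, so ``I may assume $E$ is row-finite with no sinks'' is not justified by Morita equivalence alone; the paper itself only proves the equivalence under the additional hypotheses of no sinks and no sources, and cites \cite{Schafhauser-AFEgraph} for the general countable case.
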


As we see below, under certain conditions, a graph algebra is the C*-algebra of a Deaconu-Renault groupoid.
\begin{remark}
\label{rmk:CEisoCGE}
Let $E$ be a row-finite directed graph with no sources. By \cite[Theorem 4.2]{KPRR}, there exists an isomorphism $C^*(E) \cong C^*(G_E)$ given by $1_e \mapsto 1_{Z(e,r(e))}$ , where
\begin{align*}
G_E = \lbrace (x_0 z, \vert x_0 \vert - \vert y_0 \vert , y_0 z) : x_0, y_0 \in E^*, z \in E^\infty \rbrace,
\end{align*}
and $Z(e, s(e)) = \lbrace (e x, 1, x) : x \in E^\infty, r(x) = s(e) \rbrace$.
The set $G_E$ is an example of the Deaconu-Renault, where map $\sigma$ is given by
\begin{align*}
\sigma(x_1 x_2 \hdots) = x_2 x_3 \hdots.
\end{align*}
This groupoid is almost identical to the groupoid defined in \cite[Definition 2.3]{KPRR} by Kumjian, Pask, Raeburn and Renault. The difference between their groupoid and $G_E$ is that here we are considering a different notation, which is compatible with Raeburn's book \cite{Raeburn}. In \cite{KPRR}, two neighbouring edges $x_i, x_{i+1}$ in a path satisfies $r(x_i) = s(x_{i+1})$, instead of the equality $s(x_i) = r(x_{i+1})$ that consider here. Also, the groupoid they use has elements of the form $(x, -k, y)$ for all $(x,k,y)$ in the groupoid $G_E$.
\end{remark}

We want to use the isomorphism $C^*(E) \cong C^*(G_E)$, so we fix here $E$ to be a row-finite directed graph with no \oldc{sinks}\newc{sources}. Note also that the map $\sigma$ is surjective if, and only if, $E$ has no sinks. Since we want $\sigma$ to be surjective in order to apply Theorem \ref{thm:AFEDR}, we assume in this section that $E$ has no sinks.

The following lemma studies the elements of $E^\infty$ when condition (v) of Theorem \ref{thm:SchafhauserAFEgraph} is false.


\begin{lemma}
\label{lemma:EAFEpaths}
Let $E$ be a row-finite graph with no sources nor sinks. Suppose that no cycle in $E$ has an entrance. Given $x \in E^\infty$, then either
\begin{enumerate}
\item all edges of $x$ are distinct, or
\item there are unique $n \geq 0, p \geq 1$ such that
    \begin{itemize}
    \item $\sigma^{n+p}(x) = \sigma^n(x)$ and
    \item $x_1, \hdots, x_{n+p}$ are distinct.
    \end{itemize}
\end{enumerate}
Given $x \in E^\infty$ and $i, j$ such that $x_i \neq x_j$, then $r(x_i) \neq r(x_j)$. Moreover, let $y \in E^\infty$ be such that $\sigma(y) = x$. Then (1) holds for $x$ if, and only if, it holds for $y$ too.
\end{lemma}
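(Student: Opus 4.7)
The engine of the argument will be the following rigidity observation, which I will establish first: $(\star)$ if $r(x_k)$ lies on any simple cycle $C$ of $E$, then the no-entrance hypothesis says $r(x_k)$ has a unique incoming edge, which must be the cycle edge at $r(x_k)$; hence $x_k$ is that edge, and iterating at $r(x_{k+1}) = s(x_k)$ forces every subsequent edge, so the tail $x_k x_{k+1}\cdots$ traces $C$ periodically. With $(\star)$ in hand, if $x$ has a repeated edge $x_i = x_j$ with $i<j$, then $x_{i+1}\cdots x_j$ is a closed path in $E$ (its range and source both equal $s(x_i)=s(x_j)$), hence contains a simple cycle $C$; some $r(x_k)$ with $i<k\leq j$ is a vertex of $C$, so by $(\star)$ the path $x$ is eventually periodic. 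I will then take $p$ to be the minimal eventual period of $x$, let $C^\ast$ be the unique simple cycle of length $p$ that $x$ eventually traces, and let $n \geq 0$ be the smallest integer with $\sigma^{n+p}(x) = \sigma^n(x)$.

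To verify that $x_1,\ldots,x_{n+p}$ are distinct I will argue by contradiction, supposing $x_c = x_d$ with $1 \leq c<d\leq n+p$. If $d \leq n$, the previous paragraph yields a simple cycle $C$ and some $r(x_k)$ on $C$ with $c < k \leq d \leq n$; by $(\star)$ and the fact that $x$ eventually traces $C^\ast$, the cycles $C$ and $C^\ast$ share the tail of $x$ and hence coincide, forcing $r(x_k)$ onto $C^\ast$ and contradicting the minimality of $n$. If $c \leq n < d$, then $x_d$ lies on $C^\ast$, hence so does $r(x_c) = r(x_d)$, and then $(\star)$ at position $c$ forces periodicity from $c \leq n$, contradicting minimality again. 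If $n+1 \leq c < d \leq n+p$, then $x_c$ and $x_d$ are two edges traversed in a single period of the simple cycle $C^\ast$, but the edges of a simple cycle are pairwise distinct. Uniqueness of the pair $(n, p)$ will then follow because two candidate pairs $(n_1, p_1),(n_2, p_2)$ must have $n_1 + p_1 = n_2 + p_2$ (otherwise the longer distinct list would contain $x_{n_i+1}=x_{n_i+p_i+1}$), and the periodicity relations then force $n_1 = n_2$.

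For the second assertion I will prove the contrapositive $r(x_i) = r(x_j) \Rightarrow x_i = x_j$: the equality $r(x_i) = r(x_j)$ makes $x_i x_{i+1}\cdots x_{j-1}$ a closed path, so the same cycle-extraction argument places this common vertex on $C^\ast$, and uniqueness of the incoming edge at a cycle vertex forces $x_i = x_j$. The last assertion will then fall out: writing $y = y_1 x_1 x_2 \cdots$ with $s(y_1) = r(x_1)$, property $(1)$ for $y$ trivially implies $(1)$ for $x$; conversely, if $(1)$ holds for $x$ and $y_1 = x_k$ for some $k \geq 1$, then $r(x_1) = s(y_1) = s(x_k) = r(x_{k+1})$ while $x_1 \neq x_{k+1}$, contradicting the second assertion. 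The hard part, I expect, will be keeping the case analysis circular-free: $(\star)$ must be in place before it is used to exclude pre-periodic repetitions, and I need to argue carefully that any simple cycle encountered by $x$ after the onset of periodicity necessarily coincides with $C^\ast$ — a consequence of the no-entrance hypothesis making the simple cycle through any given vertex unique.
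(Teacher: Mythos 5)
Your proposal is correct, and it rests on the same engine as the paper's proof --- the no-entrance hypothesis forces a vertex lying on a cycle to receive a unique edge, which then propagates forward along the path --- but you organise the argument quite differently. The paper never isolates your rigidity lemma $(\star)$: it defines $n$ as the index of the first repeated edge and $p$ as the first return time of $x_{n+1}$, proves range-injectivity by taking a minimal pair $i<j$ with $r(x_i)=r(x_j)$ and exhibiting $x_i\cdots x_{j-1}$ as a cycle with entrance $x_j$, and then proves $\sigma^{n+p}(x)=\sigma^n(x)$ by an explicit induction ($x_{n+p+k}=x_{n+k}$ for all $k$), each step being a fresh cycle-with-entrance contradiction; the final assertion about $y$ is handled by unwinding condition (2) for $y$. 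Your route --- extract a simple cycle from any repetition, use $(\star)$ to lock the entire tail onto that cycle, then define $p$ as the minimal eventual period and $n$ minimally --- buys two things the paper leaves implicit: the distinctness of $x_1,\dots,x_{n+p}$, which the paper merely asserts after defining $n$ and $p$ (and which genuinely requires the no-entrance hypothesis, as your three-case analysis shows), and an explicit verification that the pair $(n,p)$ is unique, which the paper omits entirely. Your derivation of the last assertion from range-injectivity is also cleaner than the paper's. The only points requiring care are the ones you already flag: the simple cycle extracted from the closed walk attached to $x_c=x_d$ need only contain \emph{some} $r(x_k)$ with $c<k\le d$, so before applying $(\star)$ at position $c$ in the range-injectivity argument you must first note that the tail from $k$ reaches position $j$ and hence places $r(x_j)=r(x_i)$ on the cycle; and two simple cycles sharing an edge coincide under the no-entrance hypothesis, so $C^\ast$ is well defined. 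Both go through, so there is no gap.
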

\begin{proof}
Let $x \in E^\infty$. If all edges of $x$ are distinct, we have that $x_{n+p+1} \neq x_{n+1}$ for all $n \geq 0, p \geq 1$. Then (2) is false.

Now suppose that (1) is false. Then $x$ has repeated edges. Let $n \geq 0$ be the smallest number such that $x_{n+1}$ appears at least twice on $x$. Let $p$ be the smallest number $p \geq 1$ with $x_{n+p+1} =  x_{n+1}$. Then $x_1, \hdots, x_{n+p}$ are distinct.

We show that, for $x \in E^\infty$, two different edges have different ranges. Suppose this is false. Then we can choose $i < j$ such that $x_i \neq x_j$, $r(x_i), \hdots, r(x_{j-1})$ are distinct, and $r(x_i) = r(x_j)$. Let $\mu = x_i \hdots x_{j-1}$. Then $\mu$ is a cycle with an entrance, which is a contradiction. Therefore, distinct edges in $x$ must have different ranges.


We claim that $\sigma^{n+p}(x) = \sigma^n(x)$. We will prove this by induction, showing that, for all $k \geq 1$,
\begin{enumerate}[(a)]
\item $x_{n+p+k} = x_{n+k}$, and
\item $x_{n+k}, \hdots, x_{n+p+k-1}$ are distinct.
\end{enumerate}
This already holds when $k = 1$. Assume this is satisfied for an arbitrary $k \geq 1$.

Suppose $x_{n+p+k+1} \neq x_{n+k+1}$. Let $\mu = x_{n+k} x_{n+k+1} \hdots x_{n+p+k-1}$. Then $\mu$ is a cycle by items (a) and (b). Also,
\begin{align*}
r(x_{n+p+k+1}) = s(x_{n+p+k}) = s(x_{n+k}) = r(x_{n+k+1}).
\end{align*}
This implies that $\mu$ is a cycle with an entrance. This is a contradiction. Therefore, we must have $x_{n+p+k+1} = x_{n+k+1}$. Since item (b) holds for $k$, we have that $$x_{n+k+1}, \hdots, x_{n+p+k-1}, x_{n+p+k} = x_{n+k}$$ are distinct elements. Therefore, by induction, (2) holds for $x$.

Now let $y \in E^\infty$ be such that $\sigma(y) = x$. Assume assume that (1) holds for $x$. Suppose that $y$ has repeated edges. Then there are $n \geq 0, p \geq 1$ such that $y_1, \hdots, y_{n+p}$ are distinct and $\sigma^{n+p}(y) = \sigma^n(y)$. This implies that
\begin{align*}
x_{n+p+1} = y_{n+p+2} = y_{n_2} = x_{n+1}.
\end{align*}
Then $x$ has repeated edges, which is a contradiction. We have just proved that, if (1) holds for $x$, it also holds for $y$.

It is straightforward that, if $y$ has distinct edges, so does $x = \sigma(y)$.
\end{proof}

In order to prove that $C^*(E)$ not AFE implies that $E$ has a cycle with an entrance, we will need to understand $\sigma_\ast(f)$ when $E$ has no cycle with an entrance. In order to do that, we need to understand the paths in $\sigma^{-1}(x)$ for $x$ in each of the two cases of Lemma \ref{lemma:EAFEpaths}. The previous lemma studied the case when $x$ has distinct edges. Now we will consider some paths with repeated edges.

\begin{lemma}
\label{lemma:cycleexit}
Let $E$ be a row-finite graph with no sources nor sinks. Suppose that no cycle in $E$ has an entrance. Let $x \in E^\infty$ be such that condition (2) of Lemma \ref{lemma:EAFEpaths} holds for $n, p \geq 1$ (note that $n \neq 0$ \newc{is assumed}). Given $y \in E^\infty$ with $\sigma(y) = x$, then it satisfies condition (2) of that lemma for $n+1$ and $p$.
\end{lemma}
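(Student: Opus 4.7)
The plan is to verify both halves of condition (2) for $y$ with the pair $(n+1, p)$: the iteration identity $\sigma^{(n+1)+p}(y) = \sigma^{n+1}(y)$ and distinctness of the first $n+p+1$ edges of $y$. The iteration identity will be immediate from $\sigma(y) = x$, since $\sigma^{n+1}(y) = \sigma^{n}(\sigma(y)) = \sigma^{n}(x)$ and $\sigma^{n+p+1}(y) = \sigma^{n+p}(x)$, and these agree by the hypothesis on $x$.

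For distinctness, I will first use $\sigma(y) = x$ to observe that $y_{k+1} = x_k$ for every $k \geq 1$, so the tail $y_2, \ldots, y_{n+p+1}$ equals $x_1, \ldots, x_{n+p}$, which is distinct by hypothesis. What remains is to rule out $y_1 = x_k$ for any $k \in \{1, \ldots, n+p\}$. Assuming such an equality, comparing sources gives
\[
r(x_1) = r(y_2) = s(y_1) = s(x_k) = r(x_{k+1}).
\]
Lemma \ref{lemma:EAFEpaths} then forces $x_1 = x_{k+1}$, because distinct edges appearing in $x$ have distinct ranges.

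The main obstacle, and the place where both the cycle structure and the hypothesis $n \geq 1$ enter, is a short case analysis on $k$. If $k+1 \leq n+p$, then $x_1$ and $x_{k+1}$ are both entries of the distinct tuple $x_1, \ldots, x_{n+p}$, forcing $k+1 = 1$, which contradicts $k \geq 1$. If instead $k = n+p$, the cycle identity $\sigma^{n+p}(x) = \sigma^{n}(x)$ yields $x_{k+1} = x_{n+p+1} = x_{n+1}$, so $x_1 = x_{n+1}$; the hypothesis $n \geq 1$ makes $1 \neq n+1$, so this again contradicts distinctness of the tuple.

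Uniqueness of the witnessing pair for $y$ will then be automatic from the uniqueness clause of Lemma \ref{lemma:EAFEpaths}. It is worth remarking that the hypothesis $n \neq 0$ is genuinely needed: if $x$ were purely periodic with period $p$ (the $n=0$ case), the choice $y_1 = x_p$ would produce a preimage that is again purely periodic with the same period and with $n = 0$, so the shift $n \mapsto n+1$ in the exponent would fail; the argument above breaks down at precisely the case $k=n+p$, which becomes $k=p$ with no contradiction available.
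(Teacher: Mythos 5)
Your proof is correct and follows essentially the same route as the paper's: verify the iteration identity directly from $\sigma(y)=x$, reduce distinctness to ruling out $y_1 = x_k$ for $1 \le k \le n+p$, and derive a contradiction from the source/range identity $r(x_1) = s(y_1) = s(x_k) = r(x_{k+1})$ together with the ``distinct edges have distinct ranges'' clause of Lemma \ref{lemma:EAFEpaths} (the paper phrases the contradiction in terms of the ranges being distinct, you in terms of the edges, and your two-case split versus the paper's three cases is immaterial). Your closing remark on why $n \neq 0$ is essential matches the parenthetical in the statement and is a correct diagnosis of where the argument would break.
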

\begin{proof}
By Lemma \ref{lemma:EAFEpaths}, $y$ does not have distinct edges. Also
\begin{align*}
\sigma^{n+1+p}(y) = \sigma^{n+p}(x) = \sigma^n(x) = \sigma^{n+1}(y). 
\end{align*}
Then we only need to show that $y_1, \hdots, y_{n+p+1}$ are distinct. Since $\sigma(y) = x$, we need to prove that $y_1, x_1, \hdots, x_{n+p}$ are distinct.

Suppose that this is false. Then, there exists $1 \leq l \leq n + p$ such that $y_1 = x_l$. \oldc{Assume this is false. Then there exists $l$ such that $y_1 = x_l$.} Let us study different cases:
\begin{itemize}
\item Suppose $1 \leq l \leq n$.
\oldc{If $l > 1$, let $\mu = x_1 \hdots x_{l-1} y_1$. Otherwise, let $\mu = y_1$. In both cases, $\mu$ is a cycle. Also,} \newc{Then}
\begin{align*}
r(x_{l+1}) = s(x_l) = s(y_1) = r(x_1).
\end{align*}
Since $l +1 \leq n + p$, this contradicts the fact that $r(x_1), \hdots, r(x_{n+p})$ are distinct.

\item If $l = n + p$, then
\begin{align*}
r(x_1) = s(y_1) = s(x_{n+p}) = r(x_{n+1}).
\end{align*}
This implies that $r(x_1), \hdots, r(x_{n+p})$ are not distinct, which is a contradiction.

\item If $n + 1 \leq l \leq n + p -1$, then
\begin{align*}
r(x_1) = s(y_1) = s(x_l) = r(x_{l+1}).
\end{align*}
This also leads to a contradiction.
\end{itemize}
Therefore, $r(y_1), r(x_1), \hdots, r(x_{n+p})$ are distinct and $y$ satisfies condition (2) of Lemma \ref{lemma:EAFEpaths} for $n+1$ and $p$.
\end{proof}

\begin{lemma}
\label{lemma:sigmapyx}
Let $E$ be a row-finite graph without sinks or sources and such that no cycle has an entrance. Let $x \in E^\infty$ satisfy condition (2) of Lemma \ref{lemma:EAFEpaths} for $n = 0$, $p \geq 1$. If $\sigma^p(y) = x$ and $y$ satisfies condition (2) of the same lemma for $n = 0$, then $y = x$.
\end{lemma}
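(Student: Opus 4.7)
The plan is to exploit the fact that condition (2) of Lemma \ref{lemma:EAFEpaths} with $n=0$ forces a path to be purely periodic with a distinct initial block. Concretely, $x = (x_1 \dots x_p)^\infty$ with $x_1, \dots, x_p$ distinct, and $y = (y_1 \dots y_q)^\infty$ with $y_1, \dots, y_q$ distinct for some $q \geq 1$. I will first observe that $p$ is in fact the \emph{minimal} period of $x$: any period $d < p$ would give $x_{d+1} = x_1$, contradicting the distinctness of $x_1, \dots, x_p$. Likewise $q$ is the minimal period of $y$.

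Next, I would rewrite $\sigma^p(y) = x$ as the pointwise identity $y_{p+i} = x_i$ for every $i \geq 1$. The tail $y_{p+1} y_{p+2} \dots$ inherits the period $q$ from $y$, while as a copy of $x$ it has minimal period $p$; hence $p \mid q$, and I write $q = kp$ with $k \geq 1$.

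The main step — and the only real obstacle — is to rule out $k \geq 2$. The plan is to combine $y_{p+i} = x_i$ with the period $p$ of $x$ to obtain
\[
y_{jp+1} \;=\; x_{(j-1)p+1} \;=\; x_1 \qquad \text{for every } j \geq 1,
\]
and then to use the period $q = kp$ of $y$ to get $y_1 = y_{kp+1} = x_1$. Thus the $k$ indices $1,\, p+1,\, 2p+1,\, \dots,\, (k-1)p+1$ are pairwise distinct (since $p \geq 1$), all lie in $\{1, \dots, q\}$, and $y$ takes the common value $x_1$ at each. For $k \geq 2$ this contradicts the distinctness of $y_1, \dots, y_q$, so we must have $k = 1$ and $q = p$. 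Finally, the period $p$ of $y$ combined with $y_{p+i} = x_i$ yields $y_i = x_i$ for every $i \geq 1$, i.e.\ $y = x$. Once the divisibility $p \mid q$ is extracted, the distinctness of the initial segment of $y$ closes the argument almost immediately; the delicate point is simply to set up the period-comparison between $y$ and its tail carefully.
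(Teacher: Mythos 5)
Your proposal is correct and follows essentially the same route as the paper: both extract the divisibility $p \mid q$ (the paper gets it from $\sigma^{q}(x)=x$ after applying $\sigma^p$ to $\sigma^q(y)=y$, you get it from the tail $\sigma^p(y)=x$), and both then use the distinctness of $y_1,\dots,y_q$ together with $y_{p+i}=x_i$ to force $q=p$ and hence $y=\sigma^p(y)=x$. Your write-up is in fact more explicit than the paper's about why $k\geq 2$ is impossible, which the paper dispatches in one terse sentence.
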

\begin{proof}
Suppose $y$ satisfies condition (2) of Lemma \ref{lemma:EAFEpaths} for $n_1 = 0$ and $p_1 \geq 1$. Then $\sigma^{p_1}(y) = y$. By applying $\sigma^p$ on both sides of the equation, we get $\sigma^{p_1}(y) = x$. The properties of $x$ imply that $p_1$ is a multiple of $p$. However, since $\sigma^p(y) = x$ and $y_1, \hdots, y_{p_1}$ are distinct, we have that $p_1 = p$. Therefore, $y = \sigma^p(y) = x$.
\end{proof}

The following lemmas study functions in $C_c(E^\infty, \mathbb{Z})$, where $E$ has no cycle with an entrance. These results will be used to prove that $C^*(E)$ is AFE.

\begin{lemma}
\label{lemma:Feventuallyzero}
Let $E$ be a row-finite graph without sinks or sources and such that no cycle has an entrance. Let $x \in E^\infty$ have distinct edges. Given $F \in C_c(E^\infty, \mathbb{Z})$, there exists $n \geq 1$ such that $F(\sigma^k(x)) = 0$ for all $k \geq n$.
\end{lemma}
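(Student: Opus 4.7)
The plan is to exploit the fact that the space $E^\infty$ is locally compact, Hausdorff, and totally disconnected (by the hypothesis on $E$ and the standard cylinder-set topology described in the preceding paragraph), together with the very strong restriction that all edges of $x$ are distinct.

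First I would reduce to the case of a basic indicator. Since $E^\infty$ is totally disconnected and the sets $Z(\mu)$ (for $\mu \in E^*$ of length $\geq 1$) together with the sets $Z(v)$ (for $v \in E^0$) form a basis of compact open sets, every $F \in C_c(E^\infty, \mathbb{Z})$ is a finite $\mathbb{Z}$-linear combination of indicator functions of such cylinders: one partitions the compact support of $F$ into finitely many level sets of $F$ and then writes each level set as a finite disjoint union of cylinders. It therefore suffices to show, for a single cylinder $Z(\mu)$, that $\sigma^k(x) \in Z(\mu)$ for only finitely many $k$.

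Next I would use the distinctness hypothesis together with Lemma~\ref{lemma:EAFEpaths}. If $\mu$ is a finite path of length $m \geq 1$, then $\sigma^k(x) \in Z(\mu)$ forces $x_{k+1}\ldots x_{k+m} = \mu_1 \ldots \mu_m$; in particular $x_{k+1} = \mu_1$. Since the edges $x_1, x_2, \ldots$ are pairwise distinct, the edge $\mu_1$ appears at most once among them, so there is at most one such $k$. If instead $\mu$ is a vertex $v \in E^0$, then $\sigma^k(x) \in Z(v)$ means $r(x_{k+1}) = v$; by the ``distinct ranges'' clause of Lemma~\ref{lemma:EAFEpaths}, the vertices $r(x_1), r(x_2), \ldots$ are pairwise distinct, so again at most one $k$ works.

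Combining, for each of the finitely many cylinders $Z(\mu_1), \ldots, Z(\mu_N)$ appearing in the decomposition of $F$, at most one value of $k$ sends $\sigma^k(x)$ into it; choose $n$ to be larger than all such $k$ and then $F(\sigma^k(x)) = 0$ for every $k \geq n$. The only place where a subtlety arises is the length-zero cylinders, and that is handled precisely by the second half of Lemma~\ref{lemma:EAFEpaths}, which is why the hypothesis ``no cycle has an entrance'' enters the argument.
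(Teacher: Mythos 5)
Your proof is correct, and it rests on the same essential fact as the paper's: by the ``distinct ranges'' clause of Lemma~\ref{lemma:EAFEpaths}, the vertices $r(x_1), r(x_2), \dots$ are pairwise distinct, so the orbit $\sigma^k(x)$ eventually escapes any compact set. The packaging differs, though. The paper argues by contradiction: assuming infinitely many $k_i$ with $F(\sigma^{k_i}(x)) \neq 0$, it covers $\mathrm{supp}\, F$ by the sets $\{Z(v)\}_{v \in E^0}$ and observes that infinitely many distinct $Z(v_i)$, $v_i = r(\sigma^{k_i}(x))$, meet the support, so no finite subcover exists, contradicting compactness. You instead argue directly, decomposing $F$ into a finite $\mathbb{Z}$-linear combination of cylinder indicators $1_{Z(\mu)}$ (which is legitimate: the level sets of a compactly supported locally constant $\mathbb{Z}$-valued function are compact open, hence finite disjoint unions of basic cylinders) and showing each cylinder is visited at most once --- using distinctness of the edges themselves for $|\mu| \geq 1$ and the distinct-ranges clause for $|\mu| = 0$. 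Your route is slightly more constructive and makes explicit where the two halves of Lemma~\ref{lemma:EAFEpaths} are used, at the cost of carrying the decomposition step; the paper's version avoids the decomposition entirely by working only with vertex cylinders and the compactness of $\mathrm{supp}\, F$. Both are complete.
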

\begin{proof}
Suppose this is false for $x$. Then we can find an strictly increasing sequence $k_1, k_2, \hdots$ of positive integers such that $F(\sigma^{k_i}(x)) \neq 0$ for all $i \geq 1$. Note that $\lbrace Z(v) \rbrace_{v \in E^0}$ is a cover of $\mathrm{supp}\hphantom{.} F$ by open sets. We will show that it is not possible to choose a finite subcover, by proving that there are infinitely many distinct vertices $v_i$ such that $Z(v_i) \cap \mathrm{supp}\hphantom{.} F \neq \emptyset$.

Given $i \geq 1$, let $v_i = r(\sigma^{k_i}(x))$. Since $F(\sigma^{k_i}(x)) \neq 0$, we have that $Z(v_i) \cap \mathrm{supp}\hphantom{.} F \neq \emptyset$. By Lemma \ref{lemma:EAFEpaths}, the open sets $Z(v_i)$ are distinct. Therefore $F$ does not have a finite subcover by subsets in $\lbrace Z(v) \rbrace_{v \in E^0}$. This implies that $F$ is not compactly supported, which is a contradiction. Therefore, there is an $n \geq 1$ such that $F(\sigma^k(x)) = 0$ for all $k \geq n$.
\end{proof}

By a similar argument, we can prove the following:
\begin{lemma}
\label{lemma:Feventuallyzerobackwards}
Let $E$ be a row-finite graph without sinks or sources and such that no cycle has an entrance. Let $\lbrace x^{(k)} \rbrace_{k \geq 1}$ be a sequence in $E^\infty$ such that
\begin{itemize}
\item $\sigma(x^{(k+1)}) = x^{(k)}$,
\item $r(x^{(1)}), r(x^{(2)}), \hdots$ are distinct.
\end{itemize}
Then there exists $n \geq 1$ such that $F(x^{(k)}) = 0$ for all $k \geq n$.
\end{lemma}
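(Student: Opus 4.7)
The plan is to mimic the proof of Lemma \ref{lemma:Feventuallyzero} almost verbatim, using the distinctness hypothesis on the ranges $r(x^{(k)})$ in place of the distinctness of vertices obtained from Lemma \ref{lemma:EAFEpaths}.

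First I would argue by contradiction. Suppose there are infinitely many indices $k_1 < k_2 < \cdots$ with $F(x^{(k_i)}) \neq 0$. Since $F$ is compactly supported on $E^\infty$, the open cover $\{Z(v)\}_{v \in E^0}$ of $\mathrm{supp}\, F$ must admit a finite subcover. So it will suffice to exhibit infinitely many distinct vertices $v_i$ with $Z(v_i) \cap \mathrm{supp}\, F \neq \emptyset$, which contradicts compactness.

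Next I would let $v_i = r(x^{(k_i)})$. Because $F(x^{(k_i)}) \neq 0$ and $x^{(k_i)} \in Z(v_i)$, we get $Z(v_i) \cap \mathrm{supp}\, F \neq \emptyset$ for every $i$. The hypothesis that $r(x^{(1)}), r(x^{(2)}), \ldots$ are all distinct guarantees that the vertices $v_i$ are pairwise distinct, hence the sets $Z(v_i)$ are pairwise disjoint open subsets of $E^\infty$ each meeting $\mathrm{supp}\, F$. This contradicts the compactness of $\mathrm{supp}\, F$, so the assumption fails and some $n$ must exist with $F(x^{(k)}) = 0$ for all $k \geq n$.

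The only mildly subtle point is that, unlike in Lemma \ref{lemma:Feventuallyzero} where one invokes Lemma \ref{lemma:EAFEpaths} to obtain distinct vertices, here distinctness is already given as part of the hypothesis, so no additional combinatorial work on the graph is required. I do not expect any real obstacle; the proof is essentially a one-paragraph compactness argument paralleling the previous lemma.
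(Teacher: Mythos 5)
Your proof is correct and follows exactly the route the paper indicates: the paper states the lemma is proved "by the same strategy of Lemma \ref{lemma:Feventuallyzero}, defining $v_i = r(x^{(k_i)})$", which is precisely your compactness argument with the distinctness of the ranges taken from the hypothesis rather than from Lemma \ref{lemma:EAFEpaths}. No differences to report.
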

In order to prove this lemma, just use the same strategy of Lemma \ref{lemma:Feventuallyzero}, defining $v_i = r(x^{(k_i)})$.

We need the next lemma to prove that $C^*(E)$ is not AFE when the graph has a cycle with an entrance.
\begin{lemma}
\label{lemma:sigmapath}
Let $E$ be a row-finite directed graph with no sinks and no sources. Let $\mu \in E^*$. Then
\begin{align*}
\sigma_\ast^{\vert \mu \vert + 1}(1_{Z(\mu)}) = \sum_{\substack{e \in E^1 \\ r(e) = s(\mu)}} 1_{Z(s(e))}.
\end{align*}
\end{lemma}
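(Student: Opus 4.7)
The plan is to establish the identity pointwise at an arbitrary $x \in E^\infty$, by parameterising the preimage set $\{y \in E^\infty : \sigma^{|\mu|+1}(y) = x\} \cap Z(\mu)$ explicitly. Writing $n = |\mu|$ and iterating the definition of $\sigma_\ast$, we have
\begin{align*}
\sigma_\ast^{n+1}(1_{Z(\mu)})(x) = \sum_{y \,:\, \sigma^{n+1}(y) = x} 1_{Z(\mu)}(y) = \bigl|\{y \in Z(\mu) : \sigma^{n+1}(y) = x\}\bigr|,
\end{align*}
so the task reduces to a combinatorial count of such $y$.

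Next, I would decompose such a $y$ coordinate by coordinate. The condition $y \in Z(\mu)$ forces $y_1 \cdots y_n = \mu$, while $\sigma^{n+1}(y) = x$ forces $y_{n+2} y_{n+3} \cdots = x$. Hence $y$ is of the form $y = \mu \cdot e \cdot x$, where $e = y_{n+1}$ is a single edge; the path conditions $s(\mu_n) = r(y_{n+1})$ and $s(y_{n+1}) = r(y_{n+2}) = r(x)$ translate into the two constraints $r(e) = s(\mu)$ and $s(e) = r(x)$. Conversely, any edge $e$ satisfying these two constraints yields a bona fide $y \in Z(\mu)$ with $\sigma^{n+1}(y) = x$, and distinct $e$ give distinct $y$; this bijection is the one case that needs to be checked carefully (and uniformly handles $n = 0$, where $\mu$ is just a vertex and the relation $s(\mu) = \mu$ holds).

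With this bijection in hand, the count becomes
\begin{align*}
\sigma_\ast^{n+1}(1_{Z(\mu)})(x) = \bigl|\{e \in E^1 : r(e) = s(\mu),\ s(e) = r(x)\}\bigr| = \sum_{\substack{e \in E^1 \\ r(e) = s(\mu)}} 1_{Z(s(e))}(x),
\end{align*}
where the last equality simply rewrites the condition $s(e) = r(x)$ as $x \in Z(s(e))$. Row-finiteness of $E$ guarantees the sum is finite at each $x$, so the right-hand side is a well-defined element of $C_c(E^\infty, \mathbb{Z})$, and since $x$ was arbitrary the identity holds.

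The only mild obstacle is bookkeeping at the extreme cases $|\mu| = 0$ (where $\mu$ is a vertex and the concatenation $\mu \cdot e \cdot x$ degenerates to $e \cdot x$) and $|\mu| = 1$ (where $\mu_2 \cdots \mu_n$ is empty); treating $y = \mu \cdot e \cdot x$ uniformly as "$\mu$ followed by $e$ followed by $x$" avoids a case split. Everything else is a direct unwinding of the definitions.
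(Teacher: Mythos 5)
Your proposal is correct and follows essentially the same route as the paper's own proof: evaluate pointwise at $x$, iterate the definition of $\sigma_\ast$ to reduce to counting $y \in Z(\mu)$ with $\sigma^{|\mu|+1}(y) = x$, and parameterise these $y$ as $\mu e x$ with $r(e) = s(\mu)$ and $s(e) = r(x)$. Your extra remarks on the degenerate cases $|\mu| \in \{0,1\}$ and on row-finiteness are sensible additions but do not change the argument.
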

\begin{proof}
Let $x \in E^\infty$. Then
\begin{align*}
\sigma_\ast^{\vert \mu \vert + 1}(1_{Z(\mu)})(x)
= \sum_{y: \sigma^{\vert \mu \vert + 1}(y) = x} 1_{Z(\mu)}(y).
\end{align*}
If $1_{Z(\mu)}(y) = 1$, we must have $y = \mu e x$ for some $e \in E^1$. Then
\begin{align*}
\sigma_\ast^{\vert \mu \vert + 1}(1_{Z(\mu)})(x) = \sum_{\substack{e \in E^1 \\ r(e) = s(\mu) \\ s(e) = r(x)}} 1
= \sum_{\substack{e \in E^1 \\ r(e) = s(\mu)}} 1_{Z(s(e))}(x).
\tag*{\qedhere}
\end{align*}
\end{proof}

Now we show that Schafhauser's theorem \cite{Schafhauser-AFEgraph} is an application of our Theorem \ref{thm:AFEDR}, when $E$ has no sinks nor sources.

\begin{corollary}
\label{corollary:graphAFE}
Let $E$ be a row-finite graph with no sinks and no sources. Then the following are equivalent:
\begin{enumerate}[(i)]
\item $C^*(E)$ is AFE,
\item $C^*(E)$ is quasidiagonal,
\item $C^*(E)$ is stably finite,
\item $C^*(E)$ is finite,
\item No cycle in $E$ has an entrance.
\end{enumerate}
\end{corollary}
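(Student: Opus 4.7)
The plan is to deduce the corollary from Theorem \ref{thm:AFEDR} via the isomorphism $C^*(E) \cong C^*(G_E)$ of Remark \ref{rmk:CEisoCGE}, where $G_E$ is the Deaconu-Renault groupoid of the shift $\sigma: E^\infty \to E^\infty$. Under the standing hypotheses on $E$, the map $\sigma$ is a surjective local homeomorphism on a locally compact, Hausdorff, second countable, totally disconnected space, so Theorem \ref{thm:AFEDR} applies and identifies (i), (ii), (iii) with each other and with the algebraic condition $\mathrm{Im}(\sigma_\ast - \mathrm{id}) \cap C_c(E^\infty, \mathbb{N}) = \{0\}$. The implication (iii) $\Rightarrow$ (iv) is trivial.

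I plan to close the cycle of implications via (iv) $\Rightarrow$ (v) and (v) $\Rightarrow$ (i). For (iv) $\Rightarrow$ (v) I would argue by contrapositive using standard graph algebra machinery: if some cycle $\mu$ has an entrance at $r(\mu_j)$, the partial isometry $s_\mu \in C^*(E)$ satisfies $s_\mu^* s_\mu = p_{s(\mu)} = p_{r(\mu)}$ while the Cuntz--Krieger relation at $r(\mu_j)$ forces $s_\mu s_\mu^* < p_{r(\mu)}$ strictly, exhibiting $p_{r(\mu)}$ as an infinite projection in $C^*(E)$, so $C^*(E)$ is not finite.

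The main technical step is (v) $\Rightarrow$ (i), which I plan to prove by contradiction using the algebraic criterion from Theorem \ref{thm:AFEDR} together with the lemmas of this section. Suppose no cycle has an entrance, yet $\sigma_\ast(f) - f = h$ for some $f \in C_c(E^\infty, \mathbb{Z})$ and some non-zero $h \in C_c(E^\infty, \mathbb{N})$. Pick $x_0 \in \mathrm{supp}(h)$; by Remark \ref{rmk:sigmakgeq}, $\sigma_\ast^n(f)(x_0) \geq f(x_0) + h(x_0) > f(x_0)$ for every $n \geq 1$. Lemma \ref{lemma:EAFEpaths} splits the analysis into two cases. In Case A, where $x_0$ has all distinct edges, Lemma \ref{lemma:EAFEpaths} propagates this property to every preimage in the backward-orbit tree and shows that every downward chain in that tree has distinct ranges; Lemma \ref{lemma:Feventuallyzerobackwards} then forces $f$ to vanish eventually along every such chain. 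Combined with the compactness of $\mathrm{supp}(f)$ and the finite branching of the preimage tree, a König-style argument yields $\sigma^{-n}(x_0) \cap \mathrm{supp}(f) = \emptyset$ for all sufficiently large $n$, whence $\sigma_\ast^n(f)(x_0) = 0$, contradicting the strict lower bound. In Case B, where $x_0$ is eventually periodic, Lemmas \ref{lemma:cycleexit} and \ref{lemma:sigmapyx} show that the periodic preimages of $x_0$ are exactly its cyclic shifts while non-periodic preimages have linearly growing pre-periods; compactness of $\mathrm{supp}(f)$ forces the non-periodic branches to leave the support, reducing the identity to a finite linear relation on the finite cyclic orbit of $x_0$, from which the contradiction follows (analogously to the unique-predecessor case already treated after Corollary \ref{corollary:sigmanNotAFE}).

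The hard part will be the König-style uniformity argument in Case A of (v) $\Rightarrow$ (i): upgrading Lemma \ref{lemma:Feventuallyzerobackwards}'s per-chain eventual vanishing of $f$ to a uniform depth beyond which no element of $\sigma^{-n}(x_0)$ meets $\mathrm{supp}(f)$. This relies on the finite branching of the preimage tree inherited from the local-homeomorphism hypothesis on $\sigma$, and on controlling how chains with distinct-range edges can enter the finite family of cylinders covering $\mathrm{supp}(f)$.
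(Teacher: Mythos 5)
Your overall architecture is sound and close to the paper's: reduce to Theorem \ref{thm:AFEDR} via $C^*(E)\cong C^*(G_E)$, get (i)$\Leftrightarrow$(ii)$\Leftrightarrow$(iii), take (iii)$\Rightarrow$(iv) for free, and prove (iv)$\Rightarrow$(v) by exhibiting an infinite projection when a cycle has an entrance; that last step is exactly what the paper does, and your cycle of implications even spares you the paper's separate explicit construction showing ``cycle with entrance $\Rightarrow$ not AFE''. The genuine problem is in your main technical step (v)$\Rightarrow$(i), in Case A. Even if your K\"onig-style claim held and gave $\sigma^{-n}(x_0)\cap\mathrm{supp}(f)=\emptyset$ for large $n$, the conclusion $\sigma_\ast^n(f)(x_0)=0$ does \emph{not} contradict the lower bound $\sigma_\ast^n(f)(x_0)\geq f(x_0)+h(x_0)$: it only forces $f(x_0)\leq -h(x_0)<0$, which is perfectly consistent. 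To close Case A you also need the \emph{forward} half of the argument, which your sketch omits entirely: the paper first uses Lemma \ref{lemma:Feventuallyzerobackwards} to show $f\leq 0$ on every path with distinct edges, and then uses the forward-orbit Lemma \ref{lemma:Feventuallyzero} (the ranges $r(\sigma^k(x))$ are distinct, so the forward orbit eventually leaves $\mathrm{supp}(f)$) to rule out $f(x)<0$ via the inequality $f(\sigma^k(x))\leq f(x)<0$ for all $k$. Only the combination yields $f\equiv 0$, hence $h\equiv 0$, on such paths.

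Separately, the K\"onig argument rests on a false premise and an unjustified uniformization. Row-finiteness bounds $|r^{-1}(v)|$, not $|s^{-1}(v)|$, and $|\sigma^{-1}(y)|=|s^{-1}(r(y))|$, so the preimage tree need not be finitely branching; moreover Lemma \ref{lemma:Feventuallyzerobackwards} only gives eventual vanishing of $f$ along a single backward chain with distinct ranges, and a compactness/K\"onig extraction would only produce a branch whose nodes have \emph{descendants} in $\mathrm{supp}(f)$, not a branch lying in $\mathrm{supp}(f)$, so no contradiction with that lemma arises. Finally, Case B is substantially under-specified: the paper needs a genuine induction on the pre-period (items (b)--(e) of its proof), including a non-obvious argument for pre-period $1$ built from Lemma \ref{lemma:sigmapyx} and the bound $f\le 0$ on non-periodic preimages; ``reducing to a finite linear relation on the cyclic orbit'' does not capture how the contributions of the non-periodic branches are controlled.
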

\begin{proof}
Note that $C^*(E) \cong C^*(G_E)$ by Remark \ref{rmk:CEisoCGE}. We will prove that $C^*(E)$ is not AFE if, and only if, $E$ has a cycle with an entrance. First we prove the implication $(\Rightarrow)$. So, suppose that $C^*(E)$ is not AFE. By Theorem \ref{thm:AFEDR}, there are two non-zero functions $f \in C_c(E^\infty, \mathbb{Z})$, $h \in C_c(E^\infty, \mathbb{N})$ such that $\sigma_\ast(f) - f = h$.

Assume that no cycle in $E$ has an entrance. With this assumption, we will find a contradiction by showing that $h = 0$. In order to do this, we will consider the different types of infinite paths $x$ given by Lemma \ref{lemma:EAFEpaths}:

\begin{enumerate}[(a)]
\item If $x$ has distinct edges, then $h(x) = 0$

Let $x \in E^\infty$ have distinct edges. Suppose that $f(x) > 0$. Since $\sigma_\ast(f)(x) \geq f(x)$, there exists $x^{(2)} \in E^\infty$ with distinct edges such that $\sigma(x^{(2)}) = x$ and $f(x^{(2)}) > 0$. By repeating this argument, we can find a sequence of infinite paths $x^{(k)}$ such that $x^{(1)} = x$ and, for all $k \geq 1$, $\sigma(x^{(k+1)}) = x^{(k)}$ and $f(x^{(k)}) > 0$.

Since $x$ has distinct edges, Lemma  \ref{lemma:Feventuallyzerobackwards} implies that there exists $k$ such that $f(x^{(k)}) = 0$, which is a contradiction. Therefore, we must have $f(x) \leq 0$. In particular, Lemma \oldc{\ref{lemma:cycleexit} implies that $f(y) \leq 0$ for all $y$ with $\sigma(y) = x$.}\newc{\ref{thm:SchafhauserAFEgraph} implies that $y$ has distinct edges. Thus $f(y) \leq 0$ for all $y$ with $\sigma(y) = x$.} 

Now suppose that $f(x) < 0$. Then
\begin{align*}
f(\sigma(x))
\leq \sigma_\ast(f)(\sigma(x))
= \sum_{y: \sigma(y) = \sigma(x)} f(y)
\leq f(x),
\end{align*}
since $f(y) \leq 0$ for all $y$ with $\sigma(y) = x$, as have proved. By repeating this argument, we have that, for all $k \geq 1$, $f(\sigma^k(x)) \leq f(x) < 0$. This contradicts Lemma \ref{lemma:Feventuallyzero}.

Therefore, $f(x) = 0$ for all $x \in E^\infty$ with distinct edges. Applying the definition of $\sigma_\ast$ and that all $y$ with $\sigma(y) = x$ have distinct edges, we also have that $\sigma_\ast(f)(x) = 0$. Therefore $h(x) = 0$.

\item If $x$ satisfies condition (2) of Lemma \ref{lemma:EAFEpaths} for $n, p \geq 1$, then $f(x) \leq 0$

Here we will apply the arguments of item (a). Let $x \in E^\infty$ have repeated edges and be such that condition (2) of Lemma \ref{lemma:EAFEpaths} holds for \oldc{$n \geq 1$, $p$}\newc{$n,p \geq 1$}. Moreover, assume that $f(x) > 0$. Since $\sigma_\ast(f)(x) \geq f(x)$, there exists $x^{(2)} \in E^\infty$ with $\sigma(x^{(2)}) = x$ and $f(x^{(2)}) > 0$. By Lemma \ref{lemma:cycleexit}, we have $r(x^{(2)}) \neq r(x^{(1)})$ and $x^{(2)}$ satisfies condition (2) of Lemma \ref{lemma:EAFEpaths} for $n+1$, $p$.

By repeating this argument, we can find a sequence $x^{(k)}$ in $E^\infty$ such that $x^{(1)} = x$, $\sigma(x^{(k+1)}) = x^{(k)}$, the vertices $r(x^{(1)}), r(x^{(2)}), \hdots$ are distinct, and $f(x^{(k)}) > 0$ for all $k \geq 1$. This contradicts Lemma \ref{lemma:Feventuallyzerobackwards}. Therefore, $f(x) \leq 0$ for all $x$ satisfying condition (2) of Lemma \ref{lemma:EAFEpaths} for $n, p \geq 1$.

\item If $x$ satisfies condition (2) of Lemma \ref{lemma:EAFEpaths} for $n = 0, p \geq 1$, then $h(x) = 0$ and $\sigma_\ast^p(f)(x) = f(x)$

Let $x \in E^\infty$ be such that condition (2) of Lemma \ref{lemma:EAFEpaths} holds for $n = 0, p \geq 1$. Then $\sigma^p(x) = x$ and
\begin{align*}
\sigma_\ast^p(f)(x)
&= \sum_{y: \sigma^p(y) = x} f(y) \\
&= \sum_{\substack{y: \sigma^p(y) = x \\ y \neq x}} f(y) + f(x) \\
&\leq f(x).
\end{align*}
by Lemma \ref{lemma:sigmapyx} and item (b). 
Then, by Remark \ref{rmk:sigmakgeq},
\begin{align*}
f(x) \leq \sigma_\ast(f)(x) \leq \sigma_\ast^2(f)(x) \leq \hdots \leq \sigma_\ast^p(f)(x) \leq f(x).
\end{align*}
This implies that $h(x) = \sigma_\ast(f)(x) - f(x) = 0$.

\item If $x$ satisfies condition (2) of Lemma  \ref{lemma:EAFEpaths} for $n = 1, p \geq 1$, then $f(x) = 0$

\oldc{
Note that $\sigma(x)$ satisfies condition (2) of that lemma for $n_1 = 0$ and $p$. Then, by item (c),
\begin{align*}
f(\sigma(x))
&= \sigma_\ast^p(f)(\sigma(x)) \\
&= \sum_{y: \sigma^p(y) = \sigma(x)} f(y) \\
&= f(\sigma(x)) + \sum_{\substack{y: \sigma^p(y) = \sigma(x) \\ y \neq \sigma(x)}} f(y).
\end{align*}
Note that $\sigma(x)$ is the unique element in the sum satisfying the condition (2) of Lemma  \ref{lemma:EAFEpaths} for $n = 0$. Then, by item (b)
\begin{align*}
f(\sigma(x)) = f(\sigma(x)) + \sum_{\substack{y: \sigma^p(y) = \sigma(x) \\ y \neq \sigma(x)}} f(y) \geq f(\sigma(x)).
\end{align*}
Then, by item (b), we have that $f(x) = 0$.
}
\newc{
Let $z_0 = \sigma^n(x)$ and, for every $k \geq 1$, let $z_k = \sigma^{kn(2p-1)}(z_0)$. Note that $\sigma^p(z_0) = z_0$. Moreover, $z_0$ satisfies condition (2) of Lemma \ref{lemma:EAFEpaths} for $0$ and $p$. We show that $\sigma^n(z_{k+1}) = z_k$. Indeed,
\begin{align*}
\sigma^n(z_{k+1})
&= \sigma^n(\sigma^{(k+1)n(2p-1)}(z_0)) \\
&= \sigma^{kn(2p-1)+n(2p-1) + n}(z_0)   \\
&= \sigma^{kn(2p-1) + 2pn}(z_0) \\
&= \sigma^{kn(2p-1)}(\sigma^{2pn}(z_0)) \\
&= \sigma^{kn(2p-1)}(z_0) \\
&= z_k.
\end{align*}
It follows from Lemma \ref{lemma:EAFEpaths} that $z_k$ satisfies condition (2) of Lemma \ref{lemma:EAFEpaths} for $0$ and $p$. For each $k$, we have
\begin{align}
\label{eqn:fzk}
f(z_k)
\leq \sigma_\ast^n(f)(z_k)
= \sum_{y: \sigma^n(y) = z_k} f(y)
= f(z_{k+1}) + \sum_{\substack{y: \sigma^n(y) = z_k \\ y \neq z_{k+1}}} f(y).
\end{align}
But if $y \neq z_{k+1}$ and $\sigma^n(y) = z_k$, then Lemma \ref{lemma:sigmapyx} implies that $y$ satisfies condition (2) of Lemma \ref{lemma:EAFEpaths} for $n$ and $p$. So $f(y) \leq 0$ by item (b) of this lemma. Thus $f(z_k) \leq f(z_{k+1})$.
}

\newc{Note that $z_p = z_0$. In fact,
\begin{align*}
z_p
= \sigma^{pn(2p-1)}(z_0)
= \sigma^{pn(2p-1)}(\sigma^n(x))
= \sigma^n(x)
= z_0.
\end{align*}
This implies that $f(z_0) = f(z_1) = \dots = f(z_p)$. By \eqref{eqn:fzk}, $f(y) = 0$ for all $y \neq z_1$ such that $\sigma^n(y) = z_0 = \sigma^n(x)$. Since $z_1$ satisfies condition (2) of Lemma \ref{lemma:EAFEpaths} for $0$ and $p$, we have that $x \neq z_1$. Therefore $f(x) = 0$.
}
\item For every $x \in E^\infty$ with repeated edges, we have $h(x) = 0$ 

Let $x \in E^\infty$ have repeated edges. Then $x$ satisfies condition (2) of Lemma \ref{lemma:EAFEpaths} for $n \geq 0$ and $p \geq 1$. We will show by induction that $f(x) = 0$ for $n \geq 1$. Note that item (d) proves this for $n = 1$.

Now, suppose that $f$ is zero for an arbitrary $n \geq 1$. Let $x$ satisfy condition (2) of Lemma \ref{lemma:EAFEpaths} for $n+1$ and $p$. Then, $\sigma(x)$ satisfies that condition for $n$ and $p$. Also,
\begin{align*}
0
&\geq \sum_{y: \sigma(y) = \sigma(x)} f(y)
\hspace{15pt}\text{by item (b) and Lemma \ref{lemma:cycleexit}}\\
&= \sigma_\ast(f)(\sigma(x))\\
&\geq f(\sigma(x)) \\
&= 0
\hspace{15pt}\text{by hypothesis}.
\end{align*}
This implies that $\sum_{y: \sigma(y) = \sigma(x)} f(y) = 0$. Then item (b) implies that $f(x) = 0$. By induction $f(x) = 0$ for any $x$ satisfying condition (2) for $n \geq 1$. Then,
by Lemma \ref{lemma:cycleexit}
\begin{align*}
h(x) = \sigma_\ast(f)(x) - f(x) = \sum_{y: \sigma(y) = x} f(y) - f(x) = 0.
\end{align*}
Note that the case $n = 0$ was proved in item (c). Therefore, $h(x) = 0$ for every $x$ with repeated edges.
\end{enumerate}
Let $x \in E^\infty$. In item (a), we showed that $h(x) = 0$ if $x$ has distinct edges; in item (e), we proved that $h(x) = 0$ if $x$ has repeated edges. By Lemma \ref{lemma:EAFEpaths}, this implies that $h = 0$, which is a contradiction. Therefore, $E$ has a cycle with an entrance.

Finally, we show $(\Leftarrow)$. Suppose $E$ has a cycle with an entrance. Then there is a finite path $\mu \in E^*$ with $s(\mu) = r(\mu)$, and there exists $e_1 \in E^1$ such that $r(e_1) = s(\mu)$ and $e_1 \neq \mu_{1}$. Since $E$ has no sinks, all cycles in $E$ must have length greater than zero. Then $\vert \mu \vert \geq 1$. 

Note that $0 \leq \sigma_\ast(1_{Z(\mu)}) \leq 1_{Z(s(\mu_1))}$. In fact, $\sigma_\ast(1_{Z(\mu)}) \geq 0$ by definition of $\sigma_\ast$. Let $x \in E^\infty$. Then
\begin{align*}
\sigma_\ast(1_{Z(\mu)})(x)
&= \sum_{y: \sigma(y) = x} 1_{Z(\mu)}(y) \\
&= \begin{cases}
1_{Z(\mu)}(\mu_1 x) & \text{if } r(x) =s(\mu_1), \\
0 & \text{otherwise.}
\end{cases} \\
& \leq 1_{Z(s(\mu_1))}(x).
\end{align*}

By Lemma \ref{lemma:sigmapath}, we have
\begin{align*}
\sigma_\ast^{\vert \mu \vert + 1}(1_{Z(\mu)}) &= \sum_{\substack{e \in E^1 \\ r(e) = s(\mu)}} 1_{Z(s(e))}
 \geq 1_{Z(s(\mu_1))} + 1_{Z(s(e_1))} 
 \geq \sigma_\ast(1_{Z(\mu)}) + 1_{Z(s(e_1))}.
\end{align*}
Then there exists a non-zero function $h \in C_c(E^\infty, \mathbb{N})$ such that
\begin{align*}
\sigma_\ast^{\vert \mu \vert}(\sigma_\ast(1_{Z(\mu)})) - \sigma_\ast(1_{Z(\mu)}) = h.
\end{align*}
By \oldc{Theorem \ref{thm:AFEDR}}\newc{Corollary \ref{corollary:sigmanNotAFE}}, this implies that $C^*(E) \cong C^*(G_E)$ is not AFE.

So far we have proved (i) $\Leftrightarrow$ (v). By Theorem \ref{thm:AFEDR}, we have the equivalence (i) $\Leftrightarrow$ (ii) $\Leftrightarrow$ (iii) $\Leftrightarrow$ (v). The implication (iii) $\Rightarrow$ (iv) is straightforward from the definition of stably finite C*-algebras (see Definition 5.1.1 and Lemma 5.1.2 of \cite{RLL-Ktheory}). We prove the (iv) $\Rightarrow$ (v) by copying the argument from \cite[Theorem 1.1]{Schafhauser-AFEgraph}.

Let $\mu, \nu$ be distinct paths such that $r(\mu) = s(\mu) = r(\nu)$. By the isomorphism $C^*(G_E) \cong C^*(E)$, we identify the following elements of graph algebras with elements of $C^*(G_E)$: 
\begin{align*}
s_\mu = 1_{\mathcal{U}_{Z(\mu), Z(s(\mu))}^{\vert \mu \vert, 0}},
\hspace{15pt}
p_{r(\mu)} = 1_{Z(r(\mu))}
\hspace{15pt}
\text{and}\hspace{10pt}
p_{s(\mu)} = 1_{Z(s(\mu))}.
\end{align*}
Then
\begin{align*}
s_\mu^* s_\mu = p_{r(\mu)}
\hspace{15pt}\text{and}\hspace{15pt}
s_\mu s_\mu^* < s_\mu s_\mu^* + s_\nu s_\nu^* \leq p_{r(\mu)}.
\end{align*}
This implies that $p_{r(\mu)}$ is an infinite projection and, therefore, $C^*(E)$ s infinite.
\end{proof}

Corollary \ref{corollary:graphAFE} also helps us to study the AF embedabillity for C*-algebras of graphs that are not row-finite. In \cite[Theorem 2.11]{DrinenTomforde}, Drinen and Tomforde show that the C*-algebra of an arbitrary graph $E$ is Morita equivalent to the C*-algebra of a row-finite graph $F$ with no sources\footnote{In their paper, they say that $F$ has no sinks instead. But here we are assuming the notation for paths that is compatible with Raeburn's book \cite{Raeburn}, as explained before.}, called the \newterm{desingularisation} of $E$. Since the AFE property is preserved under Morita equivalence, we have that $C^*(E)$ is AFE if, and only if, $C^*(F)$ is AFE.

\section{The crossed product $C(X) \rtimes_\sigma \mathbb{Z}$}
\label{subsection:examples:crossedproduct}

Now we apply Theorem \ref{thm:AFEDR} to show that our result generalises Pimsner's theorem \cite[Theorem 9]{Pimsner} (See also \cite[Theorem 11.5]{Brown-quasidiagonal} by Brown). There he assumes $X$ is a compact metric space, while here we assume the particular case where $X$ is locally compact, Hausdorff, second countable, and totally disconnected.

\begin{remark}
In \cite{Deaconu}, Deaconu shows that, for a compact Hausdorff space with a homeomorphism $\sigma:X \rightarrow X$, $C(X) \rtimes_\sigma \mathbb{Z}$ is isomorphic to $C^*(\mathcal{G})$. We will use this isomorphism to study when $C(X) \rtimes_\sigma \mathbb{Z}$ is AFE.
\end{remark}


\begin{definition}
We say that $\sigma$ \newterm{compresses} an open set $\mathcal{U} \subset X$ if $\sigma(\mathcal{U}) \subsetneq \mathcal{U}$. The homeomorphism $\sigma$ \newterm{compresses no \newc{compact open} sets} if the following condition holds for all $\mathcal{U} \subset X$ \newc{compact} open:
\begin{align*}
\sigma({\mathcal{U}}) \subset \mathcal{U} \Rightarrow \sigma(\mathcal{U}) = \mathcal{U}.
\end{align*}
\end{definition}

Note that $\sigma$ compresses no \newc{compact open} sets if no \newc{compact} open subset of $X$ is compressed by $\sigma$.

\begin{remark}
In \cite{Brown-quasidiagonal}, $\sigma$ compresses no \newc{compact open} sets if, for all $\mathcal{U} \subset X$ open with $\sigma(\overline{\mathcal{U}}) \subset \mathcal{U}$, we have $\sigma(\mathcal{U}) = \mathcal{U}$. However, \oldc{we are assuming here that $X$ is totally disconnected. Then every } \newc{under the assumptions of the corollary, every open subset of $X$ is also closed.}
\end{remark}

\newc{This corollary extends \cite[Theorem 9]{Pimsner}  and \cite[Theorem 11.5]{Brown-quasidiagonal}.}

\begin{corollary}
\label{corollary:sigmacompresses}
Let $X$ be a locally compact, Hausdorff, second countable, totally disconnected space. Then the following are equivalent:
\begin{enumerate}[(i)]
\item $\oldc{C(X)}\newc{C_0(X)} \rtimes_\sigma \mathbb{Z}$ is AFE,
\item $\oldc{C(X)}\newc{C_0(X)} \rtimes_\sigma \mathbb{Z}$ is quasidiagonal,
\item $\oldc{C(X)}\newc{C_0(X)} \rtimes_\sigma \mathbb{Z}$ is stably finite
\item $\sigma$ compresses no \newc{compact open} sets.
\end{enumerate}
\end{corollary}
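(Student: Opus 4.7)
The plan is to exploit the isomorphism $C_0(X) \rtimes_\sigma \mathbb{Z} \cong C^*(\mathcal{G})$ noted in the preceding remark (which identifies the crossed product by a homeomorphism with the C*-algebra of the associated Deaconu--Renault groupoid), so that Theorem~\ref{thm:AFEDR} applies directly. That theorem gives (i)$\Leftrightarrow$(ii)$\Leftrightarrow$(iii) together with their equivalence to the condition
\[
\mathrm{Im}(\sigma_\ast - \mathrm{id}) \cap C_c(X, \mathbb{N}) = \{0\},
\]
so the remaining task is to prove this algebraic condition is equivalent to~(iv).

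The direction $\neg$(iv)$\Rightarrow$ failure of the algebraic condition is a direct computation: if $\sigma(\mathcal{U}) \subsetneq \mathcal{U}$ for some compact open $\mathcal{U}$, set $f := -1_\mathcal{U}$; because $\sigma$ is a homeomorphism, $\sigma_\ast(f)(x) = f(\sigma^{-1}(x)) = -1_{\sigma(\mathcal{U})}(x)$, and hence $\sigma_\ast(f) - f = 1_{\mathcal{U} \setminus \sigma(\mathcal{U})}$ is a nonzero element of $C_c(X, \mathbb{N})$. For the converse, suppose $f \in C_c(X, \mathbb{Z})$ and $h \in C_c(X, \mathbb{N}) \setminus \{0\}$ satisfy $\sigma_\ast(f) - f = h$. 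The relation rearranges to $f(\sigma(y)) \leq f(y)$ for every $y \in X$, and I would look for a compressed compact open among the level sets $C_n := \{x \in X : f(x) \leq -n\}$ for $n \geq 1$: each is compact open (being contained in $\mathrm{supp}(f)$), and the inequality $f \circ \sigma \leq f$ immediately gives $\sigma(C_n) \subset C_n$. To exhibit strict inclusion, pick $x_0$ with $h(x_0) \geq 1$; then $f(\sigma^{-1}(x_0)) = f(x_0) + h(x_0) > f(x_0)$, and with $n := -f(x_0)$ we have $x_0 \in C_n$ but $\sigma^{-1}(x_0) \notin C_n$, so $\sigma(C_n) \subsetneq C_n$ and $\sigma$ compresses the compact open $C_n$.

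The main obstacle is guaranteeing that the choice $n = -f(x_0)$ satisfies $n \geq 1$, which requires locating $x_0$ with both $h(x_0) \geq 1$ and $f(x_0) \leq -1$. I plan to handle the remaining case via the symmetry $C_0(X) \rtimes_\sigma \mathbb{Z} = C_0(X) \rtimes_{\sigma^{-1}} \mathbb{Z}$: the algebraic condition for $\sigma$ is equivalent to the one for $\sigma^{-1}$, since $\sigma_\ast(f) - f = h$ can be rewritten as $(\sigma^{-1})_\ast(-f) - (-f) = (\sigma^{-1})_\ast(h)$, so running the level-set argument with the roles of positive and negative values of $f$ swapped reduces any remaining edge case to the one already treated.
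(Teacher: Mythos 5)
Your reduction to the algebraic condition $\mathrm{Im}(\sigma_\ast-\mathrm{id})\cap C_c(X,\mathbb{N})=\{0\}$ via Theorem \ref{thm:AFEDR}, and your converse direction (a compressed compact open $\mathcal{U}$ gives the witness $f=-1_{\mathcal{U}}$), both match the paper exactly. Your forward direction is a genuinely different and cleaner route than the paper's, which builds the summed function $H=\sum_{k\ge0}h\circ\sigma^k$ and extracts a compressed set from level sets of $f$ and $H$. Your level sets $C_n=\{x:f(x)\le-n\}$, $n\ge1$, are indeed compact open with $\sigma(C_n)\subset C_n$, and strict inclusion follows correctly whenever you can locate $x_0$ with $h(x_0)\ge1$ and $f(x_0)\le-1$.

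The gap is the final symmetry step, and it is not a removable technicality. Passing to $\sigma^{-1}$ and $-f$ is legitimate at the level of the algebraic condition, but running the level-set argument for $\sigma^{-1}$ produces a compact open $\mathcal{V}$ with $\sigma^{-1}(\mathcal{V})\subsetneq\mathcal{V}$, i.e.\ $\mathcal{V}\subsetneq\sigma(\mathcal{V})$ --- a set \emph{expanded} by $\sigma$, not compressed. On a compact space one would pass to the complement $X\setminus\mathcal{V}$, which is then compressed by $\sigma$; on a non-compact space that complement is not compact, and condition (iv) as written refers only to compression by $\sigma$. To see that no patch can close this case, take $X=\mathbb{Z}\cup\{-\infty\}$, where $\mathbb{Z}$ is discrete and the neighbourhoods of $-\infty$ are the sets $\{-\infty\}\cup(-\infty,-N]$, with $\sigma(n)=n+1$ and $\sigma(-\infty)=-\infty$. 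Then $f=1_{\{-\infty\}\cup(-\infty,0]}$ gives $\sigma_\ast(f)-f=1_{\{1\}}\neq0$, so the algebraic condition fails; yet every nonempty compact open $\mathcal{U}$ has $\mathcal{U}\cap\mathbb{Z}$ nonempty and bounded above, so $\max(\mathcal{U}\cap\mathbb{Z})+1\in\sigma(\mathcal{U})\setminus\mathcal{U}$ and $\sigma$ compresses no compact open set. Thus the failure of the algebraic condition does not force (iv) to fail in the literal sense, and what your argument actually proves is the symmetric statement that neither $\sigma$ nor $\sigma^{-1}$ compresses a compact open set. (The same example shows that the function $H$ in the paper's own proof need not be continuous, so the set $A=\{u:f(u)\ge1,\ H(u)\ge1\}$ used there need not be compact; the difficulty lies in the asymmetric formulation of (iv) for non-compact $X$ rather than in your level-set idea, which is sound up to that point.)
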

\begin{proof}
Conditions (i)-(iii) are equivalent by Theorem \ref{thm:AFEDR}. Here we show the equivalence $(iv) \Leftrightarrow (i)$. We show that $\oldc{C(X)}\newc{C_0(X)} \rtimes_\sigma \mathbb{Z}$ is not AFE if, and only if, $\sigma$ compresses a \newc{compact} open subset of $X$.

Suppose $C_0(X) \rtimes_\sigma \mathbb{Z}$ is not AFE. By Theorem \ref{thm:AFEDR}, there are non-zero functions $f \in C_c(X, \mathbb{Z})$ and $h \in C_c(X, \mathbb{N})$ such that \begin{align}
\label{eqn:sigmacompresses1}
\sigma_\ast(f) - f = h.
\end{align}
Since $\sigma$ is a homeomorphism, the definition of $\sigma_\ast$ implies that $\sigma_\ast(f) = f \circ \sigma^{-1}$. By applying $\sigma$ to both sides of \eqref{eqn:sigmacompresses1}, we have
\begin{align}
\label{eqn:sigmacompresses2}
f - f \circ \sigma = h \circ \sigma.
\end{align}
Note that $h \circ \sigma \in C_c(X, \mathbb{N})$ is non-zero, and $f \geq f \circ \sigma$.

\newc{For every integer $n \geq 0$, define the function $H_n = \sum_{i=0}^n h \circ \sigma^i$. then $H_n \in C_c(X, \mathbb{N})$ is nonzero. Similarly, define $H(x)$ for every $x \in X$ by}
\begin{align*}
\newc{H(x) = \sum_{k=0}^\infty h \circ \sigma^k(x).}
\end{align*}
\newc{We claim that the equation above defines a continuous function $H: X \rightarrow \mathbb{N}$. It is straightforward that $H$ assumes values in $\mathbb{N} \cup \lbrace \infty \rbrace$. Suppose that there exists an $x \in X$ such that $H(x) = \infty$. Then there exists an sequence of indices $k_1 > k_2 > \dots$ such that}
\begin{align*}
\newc{f \circ \sigma^{k_i}(x) > f \circ \sigma^{k_{i+1}}(x).}
\end{align*}
\newc{This implies that $f$ is not bounded, which is a contradiction. Therefore $H(X) \subset \mathbb{N}$, and then $H$ is a well-defined function.}

\newc{Now we show that $H$ is continuous. Let $M$ be the maximum of $h$. Then $0 \leq h \circ \sigma^i(x) \leq M$ for all $x \in X$ and $i \geq 1$. Fix $x \in X$. We apply the continuity of $h$ and $\sigma$, and the donimated convergence theorem to obtain}
\begin{align*}
\newc{
\lim_{u \rightarrow x} H(u)
= \lim_{u \rightarrow x} \sum_{n=0}^\infty h \circ \sigma^n(u)
=  \sum_{n=0}^\infty \lim_{u \rightarrow x} h \circ \sigma^n(u)
=  \sum_{n=0}^\infty h \circ \sigma^n(x)
= H(x).}
\end{align*}
\newc{Therefore $H$ is continuous.}

\newc{The function $H$ is nonzero. So choose an $x$ with $H(x) \geq 1$. Before we find a compact open set that is compressed by $\sigma$, we show that $f(x) \leq 1$. Suppose that $f(x) \geq 1$. We claim that there is an $j_0$ such that}
\begin{align*}
\newc{h(\sigma^{-j}(x)) = 0 \hspace{15pt} \text{for all $j \geq j_0$.}}
\end{align*}
\newc{Assume this is false, then the sequence of $\sigma^{-j}(x)$ has a subsequence $\sigma^{-j_l}(x)$ such that $h(\sigma^{-j_l}(x)) > 0$ for all $l \geq 1$. Then $f(\sigma^{-j_{l+1}})(x) > f(\sigma^{-j_{l}})(x)$. This implies that $f$ is not bounded, which is a contradiction.}

\newc{Let $x_0 = \sigma^{-j_0}(x)$. Then $f(x_0) \geq f(x)$ and $H(x_0) \geq H(x)$. Define the set}
\begin{align*}
\newc{ A = \lbrace u \in X : f(u) \geq 1 \hspace{10pt}\text{and}\hspace{10pt} H(u) \geq 1 \rbrace. }
\end{align*}
\newc{Then $A$ is compact and $x_0 \in A$. For every nonzero integer $n$, define the set}
\begin{align*}
\newc{K_n = \lbrace u : f(x) \geq 1 \hspace{10pt}\text{and}\hspace{10pt} H_n(u) = 0 \rbrace}.
\end{align*}
\newc{Then $K_n$ is compact and $\sigma^{-n}(x_0) \in A \cap K_n$. Note that $K_{n+1} \subset K_n$. By the finite intersection property, there exists an $x_\infty$ such that $x_\infty \in A \cap K_n$ for all $n$. Then $H_n(x_\infty) = 0$ for all $n$, which implies that $H(x_\infty) = 0$. However, $H_n(x_\infty) \geq 1$ because $x_\infty \in A$.  This is a contradiction. Therefore, $f(x) \leq 0$.}

\newc{Let $n \geq 0$ be such that $h \circ \sigma^{n+1}(x) > 0$. By applying the equation \eqref{eqn:sigmacompresses2} several times, we have $f(\sigma^n(x)) < f(x) \leq 0$. We make an abuse of notation and replace $\sigma^n(x)$ by $x$. Then}
\begin{align*}
\newc{f(x) < 0,\hspace{10pt}
H(x) > 0, \hspace{10pt}\text{and}\hspace{10pt} h(\sigma(x)) > 0.}
\end{align*}
\newc{Define the sets}
\begin{align*}
\newc{\mathcal{U}}&= \newc{\lbrace u : f(u) = f(x), H(\sigma(u)) = H(\sigma(x)), h \circ \sigma(u) = h \circ \sigma(x) \rbrace, \hspace{10pt}\text{and}} \\
\newc{V} &= \newc{\lbrace u : f(u) < 0, H(\sigma(u)) < H(\sigma(x)) \rbrace.}
\end{align*}
\newc{Both sets are compact open. The set $\mathcal{U}$ contains $x$. We will show that $\sigma(\mathcal{U}) \subset V$. Let $u \in \mathcal{U}$. Then}
\begin{align*}
\newc{f(\sigma(u)) = f(u) - h(\sigma(u)) = f(x) - h(\sigma(x)) < f(x) < 0,}
\end{align*}
\newc{and}
\begin{align*}
\newc{H(\sigma^2(u)) = H(\sigma(u)) - h(\sigma(u)) = H(\sigma(x)) - h(\sigma(x)) < H(\sigma(x)).}
\end{align*}
\newc{Then $\sigma(u) \in V$. As a consequence, $V \neq \emptyset$.}

\newc{Now we show that $\sigma(V) \subset V$. Let $u \in V$. Then}
\begin{align*}
\newc{f(\sigma(u)) = f(u) - h(\sigma(u)) \leq f(u) < 0,}
\end{align*}
\newc{and}
\begin{align*}
\newc{H(\sigma^2(u)) = H(\sigma(u)) - h(\sigma(u))
\leq H(\sigma(u)) < H(\sigma(x)).}
\end{align*}
\newc{Then $\sigma(u) \in V$. Therefore, we have a compact open set $\mathcal{U} \cup V$ is such that $\sigma(\mathcal{U} \cup V) \subset V \subsetneq \mathcal{U} \cup V$. This implies that $\sigma$ compresses $\mathcal{U} \cup V$.}

Conversely, suppose that $\sigma$ compresses a \newc{compact} open subset $\mathcal{U} \subset X$. Let $f = - 1_\mathcal{U}$. Then, by definition of $\sigma_\ast$,
\begin{align*}
\sigma_\ast(f) - f
= - 1_\mathcal{U} \circ \sigma^{-1} + 1_\mathcal{U}
= 1_\mathcal{U} - 1_{\sigma(\mathcal{U})}.
\end{align*}
Since $\mathcal{U} \subsetneq \sigma(\mathcal{U})$, the function $\newc{h = 1_{\mathcal{U} \setminus \sigma(\mathcal{U})}} \in C_c(X, \mathbb{N})$ is nonzero and satisfies $\sigma_\ast(f) - f = h$. Therefore, $C(X) \rtimes_\sigma \mathbb{Z}$ is not AFE.
\end{proof}

\section{Topological graph algebras}
\label{subsection:examples:topologicalgraphs}

In this section we prove that Theorem \ref{thm:AFEDR} is an application of Theorem 6.7 of \cite{Schafhauser-topologicalgraphs} by Schafhauser for topological graph algebras. We define topological graphs as introduced by Katsura in \cite{KatsuraI} and then studied in a series of papers \cite{KatsuraII, KatsuraIII, KatsuraIV}. Here we will use the definition from \cite{Schafhauser-topologicalgraphs} that uses $s$ instead of $d$, and that assumes the spaces $E^0, E^1$ are Hausdorff.

\begin{definition}
A \newterm{topological graph} $E = (E^0, E^1, s, r)$ consists of two locally compact Hausdorff spaces, and two maps $r, s: E^1 \rightarrow E^0$, where $s$ is a local homeomorphism and $r$ is continuous. A $v \in E^{0}$ is called a \newterm{sink} if $s^{-1}(v) = \emptyset$. We say that $E$ is compact if $E^0, E^1$ are compact and $s$ is surjective.
\end{definition}

Note that every graph is an example of topological graph. Now we define $\varepsilon$-pseudopaths as in Schafhauser's paper.

\begin{definition}
Let $E$ be a topological graph, and let $\varepsilon > 0$. Let $d$ be a metric on $E^0$ compatible with the topology. An \newterm{$\varepsilon$-pseudopath} in $E$ is a finite sequence $(e_1, e_2, \dots, e_n)$ in $E^1$ such that for each $i=1, \dots, n-1$, $d(s(e_i), r(e_{i+1})) < \varepsilon$. We write $r(\alpha) = r(e_1)$ and $s(\alpha) = s(e_n)$. An $\varepsilon$-pseudopath $\alpha$ is called an \newterm{$\varepsilon$-pseudoloop} based at \oldc{$s(\alpha)$}\newc{$r(\alpha)$} if $d(r(\alpha), s(\alpha)) < \varepsilon$.
\end{definition}

Note that we make a slight change of notation by writing the $\alpha$ as $(e_1, \dots, e_n)$ instead of the notation $(e_n, \dots, e_1)$ from \cite{Schafhauser-topologicalgraphs}. We do this to make our notation compatible with the notation we used for directed graphs. Indeed, if $E$ is a directed graph, than every path is an $\varepsilon$-path in our notation. \newc{We also write that a $\varepsilon$-pseudoloop $\alpha$ is based at $r(\alpha)$ instead of $s(\alpha)$.}

Now we state the theorem that characterises the AFE property of topological graph algebras. Here we write $r$ instead of $s$ in item 5 to agree with our notation.

\begin{theorem}
\label{thm:Schafhausertopologicalgraph}
\cite[Theorem 6.7]{Schafhauser-topologicalgraphs} If $E$ is a compact topological graph with no sinks, then the following are equivalent:
\begin{enumerate}[(i)]
\item $C^*(E)$ is AF embeddable;
\item $C^*(E)$ is quasidiagonal;
\item $C^*(E)$ is stably finite;
\item $C^*(E)$ is finite;
\item $r$ is injective and for every $v \in E^0$ and $\varepsilon > 0$, there exists an $\varepsilon$-pseudoloop in $E$ based at $v$.
\end{enumerate}
\end{theorem}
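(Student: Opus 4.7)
The plan is to establish the cycle $(\mathrm{i}) \Rightarrow (\mathrm{ii}) \Rightarrow (\mathrm{iii}) \Rightarrow (\mathrm{iv}) \Rightarrow (\mathrm{v}) \Rightarrow (\mathrm{i})$. The first three implications are standard for any separable C*-algebra: AF embeddability passes to quasidiagonality (subalgebras of quasidiagonal algebras are quasidiagonal), quasidiagonality implies stable finiteness by Rosenberg's theorem, and stable finiteness trivially implies finiteness. So the real content lies in $(\mathrm{iv}) \Rightarrow (\mathrm{v})$ and $(\mathrm{v}) \Rightarrow (\mathrm{i})$.

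For $(\mathrm{iv}) \Rightarrow (\mathrm{v})$ I would argue by contrapositive and split into two cases. If $r$ is not injective, then some $v \in E^0$ admits two distinct preimages $e_1, e_2 \in r^{-1}(v)$, and the Cuntz--Pimsner--Katsura relations for $C^*(E)$ yield mutually orthogonal projections $s_{e_1} s_{e_1}^* , s_{e_2} s_{e_2}^*$ both dominated by $p_v$, so that $p_v$ is an infinite projection at $v$, as in the final step of Corollary \ref{corollary:graphAFE}. If instead some $v_0 \in E^0$ and $\varepsilon_0 > 0$ admit no $\varepsilon_0$-pseudoloop based at $v_0$, I would use compactness of $E^0$ together with the local-homeomorphism property of $s$ to produce a compact open set whose iterated image under the ``shift'' $r \circ s^{-1}$ (defined locally) is strictly contained in itself, mirroring the compression argument in Corollary \ref{corollary:sigmacompresses}. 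This strict compression translates via the core subalgebra description of $C^*(E)$ into a projection that is strictly subequivalent to itself, contradicting $(\mathrm{iv})$.

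For $(\mathrm{v}) \Rightarrow (\mathrm{i})$ the strategy is to reproduce, in the topological-graph setting, the route already used in this thesis for Deaconu--Renault groupoids. Consider the gauge action $\gamma : \mathbb{T} \to \mathrm{Aut}(C^*(E))$, let $B = C^*(E)^\gamma$ be the AF core (an AF algebra when $E$ is compact with $r$ injective), and use Takai duality to obtain a stable isomorphism $C^*(E) \otimes \mathcal{K} \cong B \rtimes_\beta \mathbb{Z}$ with $\beta = \widehat{\gamma}$. Brown's theorem (Theorem \ref{thm:brown}) then reduces AF embeddability of $C^*(E)$ to the positive-cone condition $H_\beta \cap K_0(B)^+ = \lbrace 0 \rbrace$. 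One identifies $K_0(B)^+$ with an appropriate cone of continuous $\mathbb{N}$-valued functions on $E^0$ (using Katsura's description of the AF core) and $H_\beta$ with the image of an $r_\ast - s_\ast$-type coboundary operator; the existence of $\varepsilon$-pseudoloops at every vertex is precisely the topological manifestation of non-compression needed to rule out a nontrivial positive element of $H_\beta$.

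The main obstacle is this final translation: converting the metric, $\varepsilon$-pseudoloop condition into the algebraic positive-cone condition $H_\beta \cap K_0(B)^+ = \lbrace 0 \rbrace$ in the genuinely topological, non-discrete setting. In the discrete graph and Deaconu--Renault cases handled above, paths are literal and $K_0(B)$ is tractable via $H_0$ of an AF groupoid as in Theorem \ref{thm:HK}; for a general compact topological graph one must approximate true recurrence by $\varepsilon$-pseudoloops using metric and compactness arguments, then verify that such approximations are compatible with Katsura's K-theoretic description of the core and with the induced dynamics of $\beta$ on $K_0(B)$.
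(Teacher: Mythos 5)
This statement is not proved in the thesis at all: it is quoted verbatim (up to a change of notation) from Schafhauser, \cite[Theorem 6.7]{Schafhauser-topologicalgraphs}, and is used only as a benchmark against which Corollary \ref{corollary:Schafhausersapp} is compared. So there is no in-paper proof to match your proposal against; what the thesis actually establishes is the special case $E = (X, X, \sigma, \mathrm{id})$, and even there the full equivalence with condition (v) is obtained only when $X$ is compact. Your proposal is therefore attempting something strictly stronger than anything proved in this document.

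As a proof attempt in its own right, the outline has two genuine gaps. First, in the $(\mathrm{iv}) \Rightarrow (\mathrm{v})$ step your treatment of the case ``$r$ not injective'' borrows the discrete-graph argument from the end of Corollary \ref{corollary:graphAFE}, but for a topological graph $E^1$ need not be discrete, so there is no partial isometry $s_e$ attached to a single edge $e$: the generators of $C^*(E)$ come from the C*-correspondence over $C_0(E^0)$, and one must manufacture orthogonal elements from functions supported near the two preimages, using that non-injectivity of the continuous map $r$ persists on an open set. Moreover, even in the discrete case, two edges $e_1 \neq e_2$ with $r(e_1) = r(e_2) = v$ give $s_{e_1}s_{e_1}^* + s_{e_2}s_{e_2}^* \leq p_v$ but do not by themselves exhibit $p_v$ as an infinite projection --- one also needs a path returning to $v$ (this is exactly why the graph-algebra condition is ``no cycle has an entrance'' rather than ``$r$ is injective''), so some recurrence supplied by compactness must enter here and is missing from your sketch. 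Second, and more seriously, the implication $(\mathrm{v}) \Rightarrow (\mathrm{i})$ is the substantive content of the theorem, and you explicitly leave its key step --- converting the metric $\varepsilon$-pseudoloop hypothesis into the order-theoretic condition $H_\beta \cap K_0(B)^+ = \lbrace 0 \rbrace$ for the core --- as an acknowledged ``obstacle''. Identifying where the difficulty lies is not the same as resolving it; as written, the proposal is a plausible strategy (and broadly consistent with how both Schafhauser and this thesis attack such problems via Brown's theorem and Takai duality), but it is not a proof.
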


Note that for a Deaconu-Renault groupoid $\mathcal{G}$ corresponding to $\sigma: X \rightarrow X$, then $E = (E^0, E^1, s, r) := (X,X, \sigma, \mathrm{id})$ is a topological graph with $C^*(E) \cong C^*(\mathcal{G})$ by \cite[Proposition 10.9]{KatsuraII}. By the theorem above, if $X$ is compact, then the C*-algebra $C^*(\mathcal{G})$ is AFE if, and only, $C^*(\mathcal{G})$ is finite. 

\newc{We show that for the topological graph corresponding to a Deaconu-Renault groupoid with compact unit space, the equivalence (i)-(iii) and (v) of Schafhauser's \cite[Theorem 6.7]{Schafhauser-topologicalgraphs} holds.} \oldc{We show that for the topological graph corresponding to $\mathcal{G}$, Theorem \ref{thm:AFEDR} is an application of the theorem above. Note that we do not assume that $X$ is compact.}\oldc{ We do not study the finiteness of $C^*(\mathcal{G})$ since Schafhauser's proof of the implication (iv) $\Rightarrow$ (v) of \cite[Theorem 6.7]{Schafhauser-topologicalgraphs} depends on the compactness of the topological graph.}

\begin{corollary}
\label{corollary:Schafhausersapp}
Let $E = (E^0, E^1, s, r) := (X,X, \sigma, \mathrm{id})$ be the topological graph corresponding to $\mathcal{G}$. Then the following are equivalent:
\begin{enumerate}[(i)]
\item $C^*(\mathcal{G})$ is AF embeddable;
\item $C^*(\mathcal{G})$ is quasidiagonal;
\item $C^*(\mathcal{G})$ is stably finite;
\item $\mathrm{Im}(\sigma_\ast - \mathrm{id}) \cap C_c(X, \mathbb{N}) = \lbrace 0 \rbrace$.
\end{enumerate}
\newc{Moreover, consider the following statement:}
\begin{enumerate}[(i)]
  \setcounter{enumi}{4}
\item for every $x \in X$ and $\varepsilon > 0$, there is an $\varepsilon$-pseudoloop in $E$ based at $x$.
\end{enumerate}
\newc{Then $(v) \Rightarrow (iv)$. If $X$ is compact, then all items above are equivalent.}
\end{corollary}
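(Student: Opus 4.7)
The equivalence of (i)--(iv) is a direct restatement of Theorem \ref{thm:AFEDR}, requiring nothing new. My plan is therefore to establish the compact-case equivalence via Schafhauser's Theorem \ref{thm:Schafhausertopologicalgraph}, and then separately prove $(v) \Rightarrow (iv)$ in general.

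For the compact case, I would first verify that $E = (X, X, \sigma, \mathrm{id})$ satisfies Schafhauser's hypotheses: when $X$ is compact, so are $E^0 = E^1 = X$, and $s = \sigma$ is surjective, so $E$ is a compact topological graph. Since $\sigma$ is surjective, $s^{-1}(v) = \sigma^{-1}(v) \neq \emptyset$ for every $v \in X$, so $E$ has no sinks. Moreover, $r = \mathrm{id}$ is injective, so Schafhauser's fifth condition reduces exactly to our (v). Applying his theorem via the isomorphism $C^*(\mathcal{G}) \cong C^*(E)$ of Katsura yields the equivalence of (v) with (i)--(iii), and combining with the equivalence (i)--(iv) from Theorem \ref{thm:AFEDR} gives the full equivalence of (i)--(v).

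The substantive claim is $(v) \Rightarrow (iv)$ without the compactness assumption. I would argue by contrapositive: suppose there exist $f \in C_c(X, \mathbb{Z})$ and $0 \neq h \in C_c(X, \mathbb{N})$ with $\sigma_\ast(f) - f = h$, pick $x_0 \in \mathrm{supp}(h)$ with $h(x_0) > 0$, and aim to produce a small $\varepsilon > 0$ for which no $\varepsilon$-pseudoloop based at $x_0$ can exist. The plan is to choose $\varepsilon$ smaller than the modulus of uniform continuity of $f$ and $h$ on the compact set $K = \mathrm{supp}(f) \cup \mathrm{supp}(h)$, then use Corollary \ref{corollary:sigmanNotAFE} to pass to a sufficiently large iterate $\sigma_\ast^n$, and finally telescope the identity $\sigma_\ast(f) - f = h$ along any candidate pseudoloop to derive a contradiction with $h(x_0) > 0$.

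The main obstacle is precisely this telescoping step in the pseudoloop setting: since $\sigma_\ast(f)(e_{i+1}) = \sum_{y \in \sigma^{-1}(e_{i+1})} f(y)$ involves \emph{all} preimages of $e_{i+1}$ and not merely $e_i$, the contributions from the extra "branches" of $\sigma^{-1}$ must be carefully controlled. I would handle this by adapting the case analysis of Corollary \ref{corollary:graphAFE} -- distinguishing pseudoloops that approximate eventually periodic orbits from those whose approximating orbits have distinct iterates -- and using uniform continuity of $f$ and $h$ on $K$ together with Lemma \ref{lemma:partialsigmacommute} to transfer the combinatorial argument used for graphs to the topological setting.
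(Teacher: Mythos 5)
The equivalence of (i)--(iv) is indeed immediate from Theorem \ref{thm:AFEDR}, and your reduction of Schafhauser's fifth condition to (v) (using that $r=\mathrm{id}$ is injective and surjectivity of $\sigma$ rules out sinks) is fine; note, though, that the paper proves $(iv)\Rightarrow(v)$ in the compact case \emph{directly} (from a failed pseudoloop at $x$ it builds $\Omega=\bigcup_{i\ge 0}\sigma^i(V)$ with $\sigma(\Omega)\subsetneq\Omega$ and takes $f=1_{\sigma(\Omega)}$), rather than quoting Schafhauser -- which matters here because the point of the section is to \emph{recover} his equivalence from Theorem \ref{thm:AFEDR}, not to assume it.

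The genuine gap is in $(v)\Rightarrow(iv)$. Your plan rests on a ``telescoping along the pseudoloop'' step that you yourself identify as the main obstacle and then do not resolve; the tools you invoke do not close it. Lemma \ref{lemma:partialsigmacommute} concerns $\partial_1$ and has nothing to do with pseudo-orbits, and the case analysis of Corollary \ref{corollary:graphAFE} uses exact shift-orbits and the combinatorics of infinite paths, for which there is no direct analogue when the orbit is only an $\varepsilon$-pseudo-orbit and $\sigma_\ast(f)(e_{i+1})$ sums over \emph{all} preimages. Moreover, your choice of base point $x_0\in\mathrm{supp}(h)$ needs justification: to rule out pseudoloops at $x_0$ you must know that $h$ vanishes on the chain-recurrent part of the dynamics, and that is a global statement (obtained by summing $h$ over cycles of an induced finite map), not something local uniform continuity delivers. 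The paper's route avoids all of this: since $f$ and $\sigma_\ast(f)$ are compactly supported, continuous and integer-valued and $\sigma$ is a local homeomorphism, one writes $f=\sum_{i=1}^n b_i 1_{V_i}$ and $\sigma_\ast(f)=\sum_{i=1}^n a_i 1_{V_i}$ over disjoint compact open sets with $\sigma(V_i)=V_{\rho(i)}$ for a surjection $\rho:\{1,\dots,n\}\to\{1,\dots,m\}$; the relation $a_j=\sum_{\rho(i)=j}b_i$ together with $h\neq 0$ forces $n>m$, and then any point $x\in V_n$ works: once $\varepsilon$ is smaller than the distance from each $V_j$ to its complement, every $\varepsilon$-pseudo-orbit starting at $x$ is trapped in $V_1\cup\dots\cup V_m$, which is disjoint from $V_n\supset B(x,\varepsilon)$, so no $\varepsilon$-pseudoloop based at $x$ exists. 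You would need to supply an argument of this finite-combinatorial type (or prove that $h$ is supported on such ``transient'' sets) for your proposal to go through.
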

\begin{proof}
Theorem \ref{thm:AFEDR} gives the equivalence (i) $\Leftrightarrow$ (ii) $\Leftrightarrow$ (iii) $\Leftrightarrow$ (iv). 

We show (v) $\Rightarrow$ (iv) by proving that not (iv) $\Rightarrow$ not (v). Suppose (iv) is false. \newc{Then there are nonzero functions $f \in C_c(X, \mathbb{Z}), h \in C_c(X, \mathbb{N})$ such that $\sigma_\ast(f) - f = h$. Since $\sigma_\ast(f)$ and $f$ are continuous and compactly supported functions assuming values in $\mathbb{Z}$, and since $\sigma$ is a local homeomorphism, then there are disjoint compact open subsets $V_1, \dots, V_n$, there is a number $1 \leq m \leq n$, and a surjective function $\rho: \lbrace 1, \dots, n \rbrace \rightarrow \lbrace 1, \dots, m \rbrace$ such that:
\begin{itemize}
\item $\sigma$ is injective on each $V_i$,
\item $\sigma(V_i) = V_{\rho(i)}$,
\item There are integers $a_1, \dots, a_n$ and nonzero integers $b_1, \dots, b_m$ such that
\begin{align*}
\sigma_\ast(f) = \sum_{i=1}^n a_i 1_{V_i}
\hspace{15pt}\text{and}\hspace{15pt}
f = \sum_{i=1}^n b_i 1_{V_i}.
\end{align*}
\end{itemize}
Note that, for every $j = 1, \dots, m$,}
\begin{align*}
\newc{a_j = \sum_{i: \rho(i) = j} b_i.}
\end{align*}
\newc{In fact, let $x \in V_j$. Then}
\begin{align*}
\newc{
a_j = \sigma_\ast(f)(x) 
= \sum_{u : \sigma(u) = x} f(u) 
= \sum_{u : \sigma(u) = x} \sum_{i=1}^m  b_i 1_{V_i}(u)
= \sum_{i: \rho(i) = j}^n b_i.}
\end{align*}
\newc{We claim that $n > m$. Suppose this is false. Then $\rho$ is a bijection with $a_j = b_{\rho^{-1}(j)}$. Since $h$ is nonzero, then there exists a $l$ such that $a_j > b_j$. This implies that}
\begin{align*}
 & \newc{\sum_{i = 1}^n a_i > \sum_{i = 1}^n b_i} \\
\newc{\Rightarrow}& \newc{\sum_{i = 1}^n b_{\rho^{-1}(j)} > \sum_{i = 1}^n b_i} \\
\newc{\Rightarrow}& \newc{\sum_{i = 1}^n b_i > \sum_{i = 1}^n b_i,}
\end{align*}
\newc{which is a contradiction. Therefore, $n > m$.}

\newc{Note that $a_i > 0$ for all $i$. Let $x \in V_n$. Then, for every $l > 0$, we have that $\sigma^l(x) \in V_1 \cup \dots \cup V_m$. Let $\varepsilon > 0$ be such that $B(x, \varepsilon) \subset V_n$. This implies that there is no $\varepsilon$-pseudoloop in $E$ based at $x$. Therefore, (v) is false.}

\newc{Now assume that $X$ is compact. We}  prove (iv) $\Rightarrow$ (v) by showing not (v) $\Rightarrow$ not (iv). Suppose (v) is false. Then there are $x \in X$ and $\varepsilon > 0$ such that there exists no $\varepsilon$-pseudoloop in $E$ based at $x$. \newc{By continuity, there exists a set $V \subset B(x, \varepsilon)$, that is an open neighbourhood of $x$ such that}
\begin{align*}
d(\sigma(u), \sigma(x)) < \varepsilon
\hspace{15pt}
\text{for all $u \in V$}.
\end{align*}
Note that
\begin{align}
\label{eqn:sigmaiVcapVempty}
\sigma^i(V) \cap V = \emptyset
\hspace{10pt}\text{for all $i \geq 1$.}
\end{align}
Indeed, suppose this is false for some $i$. Then let $u \in \sigma^i(V) \cap V$, \newc{and define $\alpha = (x, \sigma(u), \sigma^2(u), \dots, \sigma^{i-1}(u))$. Then $\alpha$ is $\varepsilon$-pseudopath. Since $\sigma^i(u) \in V \subset B(x, \varepsilon)$, then $d(\sigma^i(u), x) < \varepsilon$, which implies that $\alpha$ is an $\varepsilon$-pseudoloop based at $x$, which is a contradiction.}

\newc{Let $\Omega = \bigcup_{i=0}^\infty \sigma^i(V)$. Then this set is compact open because $X$ is compact. By \eqref{eqn:sigmaiVcapVempty}, we have}
\begin{align*}
\sigma(\Omega) \subset \Omega
\hspace{15pt}\text{and}\hspace{15pt}
\sigma(\Omega) \cap V =\emptyset.
\end{align*}
So $\sigma(\Omega) \subsetneq \Omega$. Let \newc{$f = 1_{\sigma(\Omega)}$, and define $h = \sigma_\ast(f) - f$. Then, for all $y \in X$, we have}
\begin{align*}
\newc{\sigma_\ast(f)(y)
= \sum_{u: \sigma(u) = y} 1_{\sigma(\Omega)}(u)
\geq 1_\Omega(y)
\geq 1_{\sigma(\Omega)}(y)
= f(y).}
\end{align*}
\newc{This implies that $h \in C_c(X, \mathbb{N})$. Moreover,}
\begin{align*}
\newc{h = \sigma_\ast(f) - f \geq 1_\Omega - 1_{\sigma(\Omega)} = 1_{\Omega \setminus \sigma(\Omega)} \neq 0.}
\end{align*}
Therefore, (iv) is false.
\end{proof}

\newc{\begin{remark}
When $X$ is not compact, the implication $(iv) \Rightarrow (v)$ might not be true. For example, consider $X = \mathbb{Z}$ and define $\sigma: X \rightarrow X$ by $\sigma(n) = n+1$. Then $C^*(\mathcal{G})$ is AF embeddable, but there are no $\varepsilon$-pseudoloops for $0 < \varepsilon \leq 1/2$.
\end{remark}}

\begin{remark}
Note that for graph algebras and for C*-algebras of compact topological graphs, the C*-algebra is AF embeddable if, and only if, it is finite. We do not know if this equivalence holds for C*-algebras of Deaconu-Renault groupoids. We believe this is an open question.
\end{remark}

\appendix
\chapter{Multiplier algebras}


We study multiplier algebras, since we need them to study some properties of crossed products. We use results from \cite[Section 2.1]{Murphy}.

\begin{definition}
A \newterm{multiplier} of a C*-algebra $A$ is a pair $(L,R)$ of bounded linear maps of $A$ into $A$ such that
\begin{align*}
L(a)b = L(ab), \text{\hspace{10pt}}
aR(b) = R(ab), \text{\hspace{10pt}and\hspace{10pt}}
R(a)b = aL(b), \text{ for } a,b \in A.
\end{align*}
\end{definition}

\begin{example}
Let $A$ be a C*-algebra, and fix $c \in A$. Then define $L_c, R_c: A \rightarrow A$ by
\begin{align*}
L_c(a) = ca
\hspace{10pt}
\text{and}
\hspace{10pt}
R_c(a) = ac.
\end{align*}
Note that these two operators multiply elements of $A$ by a constant $c$. We claim that $(L_c, R_c)$ is a multiplier. In fact, $L_c$, $R_c$ are linear and bounded operators with norm $\Vert c \Vert$. Also, for $a, b \in A$, we have
\begin{align*}
L_c(a)b = cab =L_c(ab),
\hspace{5pt}
aR_c(b) = abc = R_c(ab)
\hspace{5pt}\text{ and }
R_c(a)b = acb = aL_c(b).
\end{align*}
Thus $(L_c, R_c)$ is a multiplier of $A$.
\end{example}

\begin{lemma}
\label{lemma:conditionmultiplier}
Suppose $(L,R)$ is a pair of maps of $A$ into $A$ such that $R(a)b = aL(b)$ for all $a, b \in A$. Then $(L,R)$ is a multiplier of $A$: the maps $L$ and $R$ are necessarily linear and bounded, and satisfy $L(a)b = L(ab)$ and $aR(b) = R(ab)$. Further, we then have $\Vert L \Vert = \Vert R \Vert$ for the operator norm on $B(A)$.
\end{lemma}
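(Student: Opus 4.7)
The plan is to carry out the proof in four short steps, using throughout the fact that in a C*-algebra, whenever $au=0$ for every $a$ (or $ua=0$ for every $a$), one has $u=0$: take $a = u^*$ to get $u^*u = 0$ (resp.\ $uu^* = 0$), hence $u=0$.

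First I would establish linearity of $L$ and $R$. For $b_1,b_2\in A$, $\lambda\in\mathbb{C}$, and any $a\in A$, the hypothesis gives
\begin{align*}
a\,L(\lambda b_1 + b_2)
 = R(a)(\lambda b_1 + b_2)
 = \lambda R(a)b_1 + R(a)b_2
 = a\bigl(\lambda L(b_1) + L(b_2)\bigr).
\end{align*}
Since this holds for every $a$, the cancellation principle forces $L(\lambda b_1+b_2)=\lambda L(b_1)+L(b_2)$. Linearity of $R$ is symmetric: apply $(\lambda a_1+a_2)L(b)$ on the right by an arbitrary $b$ and cancel on the right.

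Second, I would derive the identities $L(ab)=L(a)b$ and $R(ab)=aR(b)$. For any $x\in A$, applying the hypothesis twice gives $xL(a)b = R(x)ab = xL(ab)$, so $x\bigl(L(a)b-L(ab)\bigr)=0$ for every $x$, hence $L(ab)=L(a)b$ by cancellation. Similarly, $aR(b)c = abL(c) = R(ab)c$ for every $c$, giving $aR(b)=R(ab)$.

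Third, I would invoke the closed graph theorem. Suppose $a_n\to 0$ and $L(a_n)\to y$ in $A$. For any $a\in A$, $aL(a_n)=R(a)a_n\to 0$ by continuity of multiplication in $A$, so $ay=0$ for every $a$, whence $y=0$. Thus $L$ has closed graph and is bounded; an identical argument (using $R(a_n)b = a_nL(b)\to 0$) shows $R$ is bounded.

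Finally, for the norm equality, I would observe that
\begin{align*}
\Vert R(a)b\Vert = \Vert aL(b)\Vert \leq \Vert a\Vert\,\Vert L\Vert\,\Vert b\Vert,
\end{align*}
giving $\Vert R\Vert\leq \Vert L\Vert$. For the reverse direction, I would use an approximate identity $(e_\lambda)$ for $A$: since $e_\lambda L(b)\to L(b)$,
\begin{align*}
\Vert L(b)\Vert
 = \lim_\lambda \Vert e_\lambda L(b)\Vert
 = \lim_\lambda \Vert R(e_\lambda)b\Vert
 \leq \Vert R\Vert\,\Vert b\Vert,
\end{align*}
yielding $\Vert L\Vert\leq\Vert R\Vert$. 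The only real subtlety is the bootstrap: one cannot invoke closed graph until linearity is in hand, but the hypothesis itself suffices to prove linearity via the $C^*$-cancellation trick, after which everything else is a direct computation.
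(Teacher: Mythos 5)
The paper states this lemma without proof, deferring to \cite[Section 2.1]{Murphy}, and your argument is precisely the standard one given there: linearity and the identities $L(ab)=L(a)b$, $R(ab)=aR(b)$ via the C*-cancellation $au=0$ for all $a$ $\Rightarrow$ $u=0$ (taking $a=u^*$), boundedness via the closed graph theorem, and the norm equality via an approximate identity. The proof is correct; the only step you compress is passing from $\Vert R(a)b\Vert\leq\Vert a\Vert\,\Vert L\Vert\,\Vert b\Vert$ to $\Vert R(a)\Vert\leq\Vert L\Vert\,\Vert a\Vert$, which needs the same approximate-identity (or $b=R(a)^*/\Vert R(a)\Vert$) trick you already use in the reverse direction, so nothing is missing in substance.
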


Now we equip the set of multipliers with operations that make it a C*-algebra.

\begin{definition}
Let $A$ be a C*-algebra. Given $T \in B(A)$, we define $T^\sharp \in B(A)$ by $T^\sharp(a) = T(a^*)^*$.
\end{definition}

\begin{lemma}
Let $A$ be a C*-algebra. For $a \in A$, let $(L_a, R_a)$ be a multiplier of $A$ defined by $L_a(b) = ab$ and $R_a(b) = ba$. Then
\begin{enumerate}[1)]
\item $(L_{\lambda a + \mu b}, R_{\lambda a + \mu b}) = (\lambda L_a + \mu L_b, \lambda R_a + \mu R_b)$,
\item $(L_{ab}, R_{ab}) = (L_a L_b, R_b R_a)$,
\item $(L_{a^*},R_{a^*}) = (R_a^\sharp, L_a^\sharp)$, and
\item $\Vert a \Vert = \Vert L_a \Vert = \Vert R_a \Vert$.
\end{enumerate}
If $A$ has an identity, then every multiplier of $A$ has the form $(L_a, R_a)$.
\end{lemma}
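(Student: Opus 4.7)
The plan is to verify items (1)--(3) by direct computation on an arbitrary element of $A$, applying the definitions of $L_a, R_a$ together with associativity and the involution identity $(xy)^* = y^*x^*$. For (1), evaluating both sides at $c \in A$ gives $(\lambda a + \mu b)c$ on the $L$-side and $c(\lambda a + \mu b)$ on the $R$-side, and each equals the corresponding sum. For (2), I would compute $L_{ab}(c) = abc = L_a(L_b(c))$ and $R_{ab}(c) = cab = R_b(R_a(c))$, noting the order reversal for right multiplication. For (3), direct computation gives $R_a^\sharp(b) = R_a(b^*)^* = (b^*a)^* = a^*b = L_{a^*}(b)$, and symmetrically $L_a^\sharp(b) = (ab^*)^* = ba^* = R_{a^*}(b)$.

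For item (4), the upper bounds $\Vert L_a \Vert, \Vert R_a \Vert \leq \Vert a \Vert$ follow immediately from submultiplicativity of the C*-norm. For the lower bound, the plan is to invoke the C*-identity: assuming $a \neq 0$, take $b = a^*/\Vert a \Vert$, which has norm $1$; then $L_a(b) = aa^*/\Vert a \Vert$ has norm $\Vert aa^*\Vert/\Vert a \Vert = \Vert a \Vert^2/\Vert a \Vert = \Vert a \Vert$, so $\Vert L_a \Vert \geq \Vert a \Vert$. The same choice of $b$ applied to $R_a$ gives $R_a(b) = a^*a/\Vert a \Vert$, of norm $\Vert a \Vert$, so $\Vert R_a \Vert \geq \Vert a \Vert$.

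For the final statement, suppose $A$ has an identity $1_A$ and let $(L,R)$ be an arbitrary multiplier. Set $c = L(1_A)$. Then for every $b \in A$, the axiom $L(x)y = L(xy)$ with $x = 1_A$, $y = b$ yields $L(b) = L(1_A)b = cb = L_c(b)$, so $L = L_c$. To identify $R$, I would use the cross-relation $R(x)y = xL(y)$ from Lemma \ref{lemma:conditionmultiplier} with $y = 1_A$ to obtain $R(x) = xL(1_A) = xc = R_c(x)$, so $R = R_c$ and hence $(L,R) = (L_c, R_c)$.

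I do not foresee a real obstacle here; the lemma is a sequence of straightforward algebraic verifications together with a single invocation of the C*-identity in item (4). The only points requiring care are the order reversal in (2) for right multiplication, and, in the unital case, using the cross-relation $R(x)y = xL(y)$ rather than $xR(y) = R(xy)$ to extract the formula $R(x) = xc$.
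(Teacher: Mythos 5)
Your proof is correct. Note that the paper itself gives no proof of this lemma --- it is quoted from Murphy's book (\cite[Section 2.1]{Murphy}) along with the other multiplier facts in that appendix --- so there is nothing to compare against; your direct verification (pointwise computation for (1)--(3), the C*-identity via $b = a^*/\Vert a\Vert$ for the lower bound in (4), and extraction of $c = L(1_A)$ together with the cross-relation $R(a)b = aL(b)$ in the unital case) is exactly the standard argument and fills the gap completely.
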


The following proposition shows that, for a C*-algebra $A$, the set of multipliers \oldc{of }is a C*-algebra containing $A$ in some sense.

\begin{proposition}
\label{prop:multiplieralgebra}
Let $A$ be a C*-algebra. Then with the operations
\begin{enumerate}[1)]
\item $\lambda(L_1, R_1) + \mu(L_2, R_2) = (\lambda L_1 + \mu L_2, \lambda R_1 + \mu R_2)$,
\item $(L_1, R_1)(L_2, R_2) = (L_1 L_2, R_2 R_1)$,
\item $(L, R)^* = (R^\sharp, L^\sharp)$, and
\item $\Vert (L, R) \Vert = \Vert L \Vert = \Vert R \Vert$,
\end{enumerate}
the set $M(A)$ of multipliers of $A$ is a C*-algebra with identity $1_{M(A)} = (id, id)$, called the \newterm{multiplier algebra} of $A$. The map $\iota_A : a \mapsto (L_a, R_a)$ is an isometric $\ast$-isomorphism of $A$ onto a closed ideal of in $M(A)$. 
If $A$ has an identity, then $\iota_A$ is an isomorphism of $A$ onto $M(A)$.
\end{proposition}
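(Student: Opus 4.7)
The plan is to proceed in four steps: verify that $M(A)$ is closed under the given operations and satisfies the $*$-algebra axioms with unit $(\mathrm{id},\mathrm{id})$; establish that $\|(L,R)\|$ is a complete submultiplicative norm; prove the C*-identity; and analyse the embedding $\iota_A$.

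For the algebraic structure, by Lemma \ref{lemma:conditionmultiplier} it suffices to verify the single cross-relation $R(a)b = aL(b)$ in order to show that a pair lies in $M(A)$. For the product $(L_1L_2,\,R_2R_1)$, I would chain the multiplier relations for $(L_2,R_2)$ and $(L_1,R_1)$: $R_2R_1(a)b = R_1(a)L_2(b) = aL_1L_2(b)$. For the involution $(R^\sharp,L^\sharp)$, taking adjoints in $R(b^*)a^* = b^*L(a^*)$ yields $L^\sharp(a)b = aR^\sharp(b)$. The remaining $*$-algebra axioms (associativity, distributivity, conjugate-linearity and antimultiplicativity of the involution) follow from direct computation with the composition of maps, and $(\mathrm{id},\mathrm{id})$ is clearly a two-sided unit.

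The norm $\|(L,R)\| := \|L\| = \|R\|$ is unambiguous by the preceding Lemma, and submultiplicativity follows from the standard operator-norm bound. Completeness is automatic: if $(L_n,R_n)$ is Cauchy in $M(A)$ then $L_n \to L$ and $R_n \to R$ in $B(A)$, and the identity $R_n(a)b = aL_n(b)$ passes to the limit. The main obstacle is the C*-identity $\|(L,R)^*(L,R)\| = \|(L,R)\|^2$. Multiplying out gives $(L,R)^*(L,R) = (R^\sharp L,\,R L^\sharp)$, whose norm equals $\|R^\sharp L\|$. The upper bound $\|R^\sharp L\| \leq \|R^\sharp\|\|L\| = \|L\|^2$ is standard, so the real work is the reverse estimate. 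I would use the multiplier relation with $x = L(a)^*$ and $y = a$ to obtain
\[
L(a)^* L(a) = R(L(a)^*)\,a = R^\sharp(L(a))^*\,a,
\]
and then apply the C*-identity in $A$:
\[
\|L(a)\|^2 = \|L(a)^*L(a)\| \leq \|R^\sharp(L(a))\|\,\|a\| \leq \|R^\sharp L\|\,\|a\|^2.
\]
Taking the supremum over $\|a\|\leq 1$ then yields $\|L\|^2 \leq \|R^\sharp L\|$, completing the C*-identity.

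Finally, the preceding Lemma shows that $\iota_A$ is an isometric $*$-homomorphism, so its image is automatically closed. To see $\iota_A(A)$ is a two-sided ideal, for $(L,R)\in M(A)$ and $a\in A$ I would verify $L\circ L_a = L_{L(a)}$ (from $L(ab)=L(a)b$) and $R_a\circ R = R_{L(a)}$ (from the cross-relation $R(b)a=bL(a)$), so that $(L,R)\,\iota_A(a) = \iota_A(L(a))$; the right-sided product is symmetric, using $aR(b)=R(ab)$. The final assertion, that $\iota_A$ is an isomorphism when $A$ is unital, is precisely the last sentence of the preceding Lemma.
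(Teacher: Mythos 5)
Your proposal is correct. The paper does not prove this proposition at all --- it is quoted verbatim from Murphy's book (Section~2.1), so there is no in-paper argument to compare against; what you have written is essentially the standard textbook proof. All the delicate points are handled properly: you reduce closure under the operations to the single cross-relation $R(a)b = aL(b)$ via Lemma~\ref{lemma:conditionmultiplier}, and the one genuinely nontrivial step --- the lower bound $\Vert L\Vert^2 \le \Vert R^\sharp L\Vert$ in the C*-identity, obtained from $L(a)^*L(a) = R(L(a)^*)\,a$ --- is exactly the right computation. The ideal property and the unital case are likewise correctly deduced from the preceding lemmas.
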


\begin{remark}
In order to avoid using the $(L,R)$ in proofs concerning multipliers, given $\rho = (L, R) \in M(A)$ and $a \in A$, we use the following notation in this text
\begin{align*}
\rho \cdot a = L(a)
\hspace{10pt}\text{and}\hspace{10pt}
a \cdot \rho = R(a).
\end{align*}
\end{remark}

\begin{theorem}
Suppose $\pi$ is a nondegenerate representation of a C*-algebra $A$ on a Hilbert space $H$. Then there is a unique representation $\bar{\pi}: M(A) \rightarrow B(H)$ such that
\begin{align*}
\bar{\pi}(m)\pi(a) = \pi(ma)
\text{\hspace{10pt} for $a \in A$  and $m \in M(A)$.}
\end{align*}
\end{theorem}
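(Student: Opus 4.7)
The plan is to construct $\bar{\pi}$ by using a bounded approximate identity for $A$ to approximate each multiplier by elements of $A$, where $\pi$ is already defined.

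First I would fix a bounded approximate identity $(e_\lambda)$ for $A$ and, for a given $m \in M(A)$, consider the net $(\pi(m \cdot e_\lambda))_\lambda$ in $B(H)$. By part 4 of Proposition \ref{prop:multiplieralgebra}, $\Vert m \cdot e_\lambda \Vert \leq \Vert m \Vert \Vert e_\lambda \Vert$, so this net is uniformly bounded by $\Vert m \Vert$. Next I would check that the net converges strongly on the dense subspace $\pi(A)H$: for $\xi = \pi(a)\zeta$ with $a \in A$ and $\zeta \in H$, using associativity of the action of $M(A)$ on $A$,
\begin{align*}
\pi(m \cdot e_\lambda)\pi(a)\zeta = \pi((m \cdot e_\lambda) a)\zeta = \pi(m \cdot (e_\lambda a))\zeta \longrightarrow \pi(m \cdot a)\zeta
\end{align*}
since $e_\lambda a \to a$ in $A$ and the left multiplication $L$ of $m = (L, R)$ is continuous. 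A uniformly bounded net that converges strongly on a dense subspace converges strongly on all of $H$, so I can define $\bar{\pi}(m)$ as this strong limit, and the displayed computation yields the identity $\bar{\pi}(m)\pi(a) = \pi(m \cdot a)$.

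Then I would verify that $\bar{\pi}\colon M(A) \to B(H)$ is a $*$-homomorphism. Linearity is immediate from linearity of $m \mapsto L_m$. For multiplicativity, given $m, n \in M(A)$ and $\xi = \pi(a)\zeta$,
\begin{align*}
\bar{\pi}(mn)\pi(a)\zeta = \pi((mn) \cdot a)\zeta = \pi(m \cdot (n \cdot a))\zeta = \bar{\pi}(m)\pi(n \cdot a)\zeta = \bar{\pi}(m)\bar{\pi}(n)\pi(a)\zeta,
\end{align*}
and density of $\pi(A)H$ gives $\bar{\pi}(mn) = \bar{\pi}(m)\bar{\pi}(n)$. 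Preservation of the involution follows from the identity $(m \cdot a)^* = a^* \cdot m^*$ (which itself is immediate from the definition $(L, R)^* = (R^\sharp, L^\sharp)$): for $\xi = \pi(a)\zeta$ and $\eta = \pi(b)\omega$ in $\pi(A)H$, one computes $\langle \bar{\pi}(m)\xi, \eta\rangle = \langle \xi, \bar{\pi}(m^*)\eta\rangle$ by shifting the $m^*$ across using $\pi$-invariance and the adjoint identity, and density again extends to all of $H$.

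Finally, uniqueness is automatic: any representation $\bar{\pi}'$ satisfying $\bar{\pi}'(m)\pi(a) = \pi(m \cdot a)$ is determined by its values on the dense set $\pi(A)H$, so it equals $\bar{\pi}$. The mildly delicate point is checking that the strong limit is independent of the choice of approximate identity; this is not really a problem because the identity $\bar{\pi}(m)\pi(a)\zeta = \pi(m \cdot a)\zeta$ pins down the limit on the dense subspace $\pi(A)H$ regardless of which $(e_\lambda)$ was used. The main obstacle, if any, is keeping track of the bookkeeping that converts the formal multiplier action into honest products in $A$ so that nondegeneracy can do its work.
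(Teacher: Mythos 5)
Your proposal is correct. The paper does not actually supply a proof of this theorem --- it is quoted from Murphy's book \cite[Section 2.1]{Murphy} --- and your argument (define $\bar{\pi}(m)$ as the strong limit of $\pi(m\cdot e_\lambda)$ for a bounded approximate identity, using uniform boundedness plus convergence on the dense subspace spanned by $\pi(A)H$, then verify the $\ast$-homomorphism properties and uniqueness on that dense subspace) is exactly the standard proof one finds there; the only cosmetic point is that one should work with the \emph{linear span} of $\pi(A)H$, which is dense by nondegeneracy.
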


\begin{remark}
Given $\pi$, $a$ and $m$ as in the previous lemma, we also have the equation $
\pi(am) = \pi(a)\bar{\pi}(m)$. In fact,
\begin{align*}
\pi(am)
= [\pi((am)^*)]^*
= [\pi(m^* a^*)]^*
= [\bar{\pi}(m^*) \pi(a^*)]^*
= \pi(a) \bar{\pi}(m).
\end{align*}
\end{remark}

\begin{lemma}
\label{lemma:equalmultipliers}
Let $(L, R), (P,Q)$ be two multipliers of a C*-algebra $A$. Let $A'$ be a dense subset of $A$. 
\begin{enumerate}[(i)]
\item If $La' = Pa'$ for all $a' \in A'$, then $(L, R) = (P, Q)$.
\item If $Ra' = \oldc{Qa}\newc{Qa'}$ for all $a' \in A'$, then $(L, R) = (P, Q)$.
\end{enumerate}
\end{lemma}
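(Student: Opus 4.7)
The plan is to combine continuity of the bounded maps involved with the cross-identity $R(a)b = aL(b)$ from the definition of a multiplier, which is the only tool that connects the ``$L$-coordinate'' to the ``$R$-coordinate.''

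For (i), I would first upgrade the hypothesis $La' = Pa'$ on $A'$ to the equality $L = P$ on all of $A$: by Lemma \ref{lemma:conditionmultiplier}, both $L$ and $P$ are bounded linear, hence continuous, and $A'$ is dense, so they agree everywhere. To deduce $R = Q$, I would apply the multiplier identity to both pairs: for arbitrary $a, b \in A$,
\begin{align*}
R(a) b = a L(b) = a P(b) = Q(a) b,
\end{align*}
so $(R(a) - Q(a)) b = 0$ for every $b \in A$.

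The key small step is then to pass from ``$x b = 0$ for all $b \in A$'' to ``$x = 0$'' in a C*-algebra, which I would do by plugging in $b = x^*$ and invoking the C*-identity $\|x x^*\| = \|x\|^2$. Applying this to $x = R(a) - Q(a)$ for each $a$ gives $R = Q$, and hence $(L,R) = (P,Q)$ as elements of $M(A)$.

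Part (ii) will be completely symmetric: density plus continuity promotes $R = Q$ to all of $A$, and then the same identity $R(a) b = a L(b)$ read the other way gives $a (L(b) - P(b)) = 0$ for every $a \in A$, which I kill by choosing $a = (L(b) - P(b))^*$. The main ``obstacle'' is really just staying organised about which coordinate has been pinned down and using the bridging identity to transfer the equality to the other coordinate; there is no deep analytic difficulty.
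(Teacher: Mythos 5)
Your argument is correct and is essentially the paper's own proof: continuity plus density gives $L = P$, and the bridging identity $R(a)b = aL(b) = aP(b) = Q(a)b$ transfers the equality to the other coordinate. Your explicit justification of the step ``$xb = 0$ for all $b$ implies $x = 0$'' (via $b = x^*$ and the C*-identity) is a small but welcome elaboration of a step the paper leaves implicit.
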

\begin{proof}
We only show item (i) because the proof of (ii) is analogous. Suppose that $La' = Pa'$ for all $a' \in A'$. By continuity, we have $L = P$. Let $a, b \in A$. Since $(L, R), (P, Q)$ are multipliers, we have
\begin{align*}
R(a) b = a L(b) = a P(b) = Q(a) b.
\end{align*}
Since $b$ is arbitrary, then $R(a) = Q(a)$. But $a$ is also arbitrary, therefore $R = Q$.
\end{proof}

\chapter{Takai duality}
Takai duality is an important result in the theory of crossed products. Let $\alpha: Z \rightarrow \mathrm{Aut}\hphantom{.}A$ by an action of a locally compact abelian group $Z$ by automorphisms on a C*-algebra $A$. Takai duality implies that, up to a stable isomorphism, we can recover a C*-algebra $A$ from the crossed product $A \rtimes_\alpha Z$. The results here are taken from Blackadar's \cite[Sections II.10.1 and II.10.5]{Blackadar} and Williams's \cite[Chapter 7]{Williams-crossedproducts} books.

As we explained in the introduction, Takai duality will be important to understand the AFE property of the Deaconu-Renault groupoid.

\begin{definition}
Suppose $Z$ is a locally compact abelian group. Let $\widehat{Z}$ be the set of all continuous homomorphisms from $Z$ to the circle group $\mathbb{T}$ (these are called the \newterm{characters} of $Z$). Then $\widehat{Z}$ is an abelian group under the pointwise multiplication, and it is a topological group under the topology of uniform convergence on compact sets. $\widehat{Z}$ is called the \newterm{(Pontrjagin) dual group} of $Z$.
\end{definition}

\begin{example}
$\oldc{\widehat{Z}}\newc{\widehat{\mathbb{Z}}} \cong \mathbb{T}$ via $n \mapsto z^n$ for fixed $z \in \mathbb{T}$, and $\widehat{\mathbb{T}} \cong \mathbb{Z}$ via $z \mapsto z^n$ for fixed $n \in \mathbb{Z}$.
\end{example}

\begin{lemma}
\cite[page 190]{Williams-crossedproducts} Let $(A, Z, \alpha)$ be a dynamical system. Then there exists a unique action $\widehat{\alpha}: \widehat{Z} \rightarrow \mathrm{Aut}(A \rtimes_\alpha Z)$ such that, for $f \in C_c(Z, A)$ and $\gamma \in \widehat{Z}$, $\widehat{\alpha}_\gamma(f) \in C_c(Z, A)$ is given by
\begin{align*}
\widehat{\alpha}_\gamma(f)(s) = \overline{\gamma(s)}f(s)
\hspace{15pt}
\text{for }s \in Z.
\end{align*}
Moreover, $(A \rtimes_\alpha Z, \widehat{Z}, \widehat{\alpha})$ is a dynamical system called the \newterm{dual system}, and $\widehat{\alpha}$ is called the \newterm{dual action}.
\end{lemma}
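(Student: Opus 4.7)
The plan is to define $\widehat{\alpha}_\gamma$ on the dense subalgebra $C_c(Z, A)$ by the given formula, verify that it is a $\ast$-homomorphism, show that it extends to an automorphism of $A \rtimes_\alpha Z$, and then verify that $\gamma \mapsto \widehat{\alpha}_\gamma$ is a strongly continuous group homomorphism. Uniqueness will then be automatic because $C_c(Z, A)$ is dense in $A \rtimes_\alpha Z$.

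First I would check that, for fixed $\gamma$, the map $\widehat{\alpha}_\gamma(f)(s) = \overline{\gamma(s)} f(s)$ is a $\ast$-homomorphism on $C_c(Z, A)$ with the convolution product and involution
\begin{align*}
(f \ast g)(s) = \int_Z f(t) \alpha_t(g(t^{-1} s))\, dt, \hspace{15pt} f^*(s) = \alpha_s(f(s^{-1}))^*.
\end{align*}
The key algebraic identity is that $\overline{\gamma(t)}\,\overline{\gamma(t^{-1}s)} = \overline{\gamma(s)}$ because $\gamma$ is a character, which immediately gives multiplicativity; the involution identity follows from $\gamma(s^{-1}) = \overline{\gamma(s)}$.

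The crucial step is to show that $\widehat{\alpha}_\gamma$ is bounded with respect to the universal norm on $A \rtimes_\alpha Z$ (this is where one invokes the universal property rather than a brute L$^1$-estimate). Given a covariant representation $(\pi, u)$ of $(A, Z, \alpha)$, I would define $v_s = \overline{\gamma(s)} u_s$. Then $v$ is a strongly continuous unitary representation of $Z$, and $v_s \pi(a) v_s^{-1} = u_s \pi(a) u_s^{-1} = \pi(\alpha_s(a))$ since scalars commute with $\pi(a)$, so $(\pi, v)$ is again covariant. A direct calculation shows $(\pi \rtimes v)(f) = (\pi \rtimes u)(\widehat{\alpha}_\gamma(f))$, hence $\Vert \widehat{\alpha}_\gamma(f) \Vert \leq \Vert f \Vert$. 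Applying the same argument to $\gamma^{-1}$ and $\widehat{\alpha}_{\gamma^{-1}}$ gives the reverse inequality and shows that $\widehat{\alpha}_\gamma$ is isometric, so it extends to an automorphism of $A \rtimes_\alpha Z$. The equations $\widehat{\alpha}_\gamma \widehat{\alpha}_{\gamma'} = \widehat{\alpha}_{\gamma \gamma'}$ and $\widehat{\alpha}_1 = \mathrm{id}$ hold on $C_c(Z, A)$ by inspection and extend to the full crossed product by density.

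The last thing to check is strong continuity of $\gamma \mapsto \widehat{\alpha}_\gamma$. It suffices to verify continuity at the trivial character on the dense subspace $C_c(Z, A)$, since the family $\lbrace \widehat{\alpha}_\gamma \rbrace$ is uniformly bounded by $1$. For $f \in C_c(Z, A)$ with support in a compact set $K$, I would bound
\begin{align*}
\Vert \widehat{\alpha}_\gamma(f) - f \Vert_{A \rtimes_\alpha Z} \leq \Vert \widehat{\alpha}_\gamma(f) - f \Vert_{L^1} = \int_K |\gamma(s) - 1|\,\Vert f(s) \Vert\, ds,
\end{align*}
and conclude using that $\gamma \to 1$ uniformly on the compact set $K$ in the topology of $\widehat{Z}$. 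The main obstacle in this whole argument is the boundedness step: one must resist trying to estimate the universal norm directly and instead exploit the covariant-representation perspective, where the character $\gamma$ is absorbed into the unitary piece of the representation. Everything else is essentially bookkeeping with convolution formulas.
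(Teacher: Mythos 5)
The paper states this lemma without proof, citing Williams's book (page 190), so there is no internal argument to compare against; your proposal is the standard proof from that reference and it is correct. In particular, the key step---absorbing the character into the unitary part of a covariant representation via $v_s = \overline{\gamma(s)}u_s$ to get the universal-norm bound, rather than attempting a direct estimate---is exactly the right move, and the $L^1$-domination argument for strong continuity on $C_c(Z,A)$ combined with uniform boundedness closes the proof.
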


\begin{example}
\label{ex:dualactionT}
If $(A, \mathbb{T}, \alpha)$ is a dynamical system, the dual action $\widehat{\alpha}: \mathbb{Z} \rightarrow \mathrm{Aut}(A \rtimes_\alpha \mathbb{T})$ is given by
\begin{align*}
\widehat{\alpha}_n(f)(z) = z^{-n} f(z)
\hspace{20pt}
\text{for }f \in C(\mathbb{T}, A), z \in \mathbb{T}, n \in \mathbb{Z}.
\end{align*}
\end{example}

\begin{theorem}[Takai duality] \cite[Theorem 7.1]{Williams-crossedproducts}
\label{thm:Takaiduality}
Suppose that $Z$ is an abelian group and that $(A, Z, \alpha)$ is a dynamical system. Then there exists an isomorphism from $(A \rtimes_\alpha Z) \rtimes_{\widehat{\alpha}} \widehat{Z}$ onto $A \otimes \mathcal{K}(L^2(Z))$.
\end{theorem}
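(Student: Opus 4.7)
The plan is to construct an explicit covariant representation of the iterated crossed product on a Hilbert space of the form $H\otimes L^2(Z)$ and to identify its image with $A\otimes\mathcal{K}(L^2(Z))$. Fix a faithful nondegenerate representation $\pi:A\to B(H)$ and form the regular covariant pair $(\widetilde\pi,\lambda)$ of $(A,Z,\alpha)$ on $L^2(Z,H)\cong H\otimes L^2(Z)$ by
\begin{align*}
(\widetilde\pi(a)\xi)(s)=\pi(\alpha_{s^{-1}}(a))\xi(s),\qquad (\lambda_t\xi)(s)=\xi(s-t).
\end{align*}
Integrating as in Section~\ref{section:crossedproducts:crossedproducts} yields a homomorphism $\widetilde\pi\rtimes\lambda:A\rtimes_\alpha Z\to B(H\otimes L^2(Z))$ which is faithful because $Z$ is amenable (so full and reduced crossed products coincide).

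Second, I would extend this representation to the doubly-crossed product by adjoining multiplication unitaries $M:\widehat Z\to U(H\otimes L^2(Z))$ defined by $(M_\gamma\xi)(s)=\gamma(s)\xi(s)$. A short calculation on the generators $\widetilde\pi(a)$ and $\lambda_t$ shows $M_\gamma\widetilde\pi(a)M_\gamma^*=\widetilde\pi(a)$ and $M_\gamma\lambda_t M_\gamma^*=\gamma(t)\lambda_t$, which matches the formula for $\widehat\alpha$ on the dense subalgebra $C_c(Z,A)$ of $A\rtimes_\alpha Z$ given in the preceding lemma. Hence $(\widetilde\pi\rtimes\lambda,M)$ is a covariant pair for $((A\rtimes_\alpha Z),\widehat Z,\widehat\alpha)$ and the universal property of the crossed product produces an integrated $*$-homomorphism
\begin{align*}
\Phi:(A\rtimes_\alpha Z)\rtimes_{\widehat\alpha}\widehat Z\longrightarrow B(H\otimes L^2(Z)).
\end{align*}

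Third, I would identify the image of $\Phi$ with $\pi(A)\otimes\mathcal{K}(L^2(Z))$. The key tool is Fourier–Plancherel: under the unitary $\mathcal F:L^2(Z)\to L^2(\widehat Z)$, the translations $\lambda_t$ become multiplications by the characters $\gamma\mapsto\overline{\gamma(t)}$, and the multiplications $M_\gamma$ become translations by $\gamma$. Consequently the C*-subalgebra of $B(L^2(Z))$ generated by $\{\lambda_t:t\in Z\}\cup\{M_\gamma:\gamma\in\widehat Z\}$ is isomorphic to $C_0(\widehat Z)\rtimes\widehat Z$, which by the Stone–von Neumann/imprimitivity theorem is $\mathcal{K}(L^2(\widehat Z))\cong\mathcal{K}(L^2(Z))$. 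Combining this with the fact that $\widetilde\pi(A)$ commutes appropriately with the $M$ and $\lambda$ operators to fill out $\pi(A)\otimes\mathcal K(L^2(Z))$ gives the desired description of $\mathrm{Im}\,\Phi$.

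The main obstacle will be establishing that $\Phi$ is injective, not merely surjective onto $\pi(A)\otimes\mathcal K(L^2(Z))$. The cleanest approach is to show $\Phi$ is isometric on the dense subalgebra $C_c(\widehat Z, C_c(Z,A))$ by writing the integrated operator explicitly and using the Plancherel identification above to invert it. Equivalently, one can argue stagewise: faithfulness of $\widetilde\pi\rtimes\lambda$ is already in hand, and then one shows that attaching the multiplication representation $M$ produces a faithful covariant pair of the dual system because $C^*(\widehat Z)\cong C_0(Z)$ acts faithfully via $M$. Either route concludes that $\Phi$ descends to an isomorphism onto $A\otimes\mathcal{K}(L^2(Z))$, proving Takai duality.
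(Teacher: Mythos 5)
The paper does not prove this theorem: it is quoted verbatim from Williams's book (\cite[Theorem 7.1]{Williams-crossedproducts}) in an appendix and used as a black box, so there is no internal proof to compare against. Judged on its own terms, your outline is the classical ``spatial'' proof in the spirit of Takai's original argument: represent $A\rtimes_\alpha Z$ regularly on $H\otimes L^2(Z)$, adjoin multiplication unitaries to get a covariant pair for the dual system, and identify the image via Plancherel and Stone--von Neumann. Williams's own proof, the one the paper relies on, takes a different route: it factors the isomorphism through $C_0(Z,A)\rtimes_{\mathrm{lt}\otimes\alpha}Z$, uses an exterior equivalence between $\mathrm{lt}\otimes\alpha$ and $\mathrm{lt}\otimes\iota$, and only then invokes $C_0(Z)\rtimes_{\mathrm{lt}}Z\cong\mathcal{K}(L^2(Z))$; that route stays at the level of universal properties and sidesteps the injectivity problem you correctly identify as the crux. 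Your approach is viable, but it is the harder of the two to make rigorous.

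Three concrete gaps. First, a sign: with the paper's convention $\widehat{\alpha}_\gamma(f)(s)=\overline{\gamma(s)}f(s)$, your computation $M_\gamma\lambda_tM_\gamma^*=\gamma(t)\lambda_t$ shows that $M_\gamma$ implements $\widehat{\alpha}_{\overline{\gamma}}$, not $\widehat{\alpha}_\gamma$; the covariant pair must be $(\widetilde\pi\rtimes\lambda,\,\gamma\mapsto M_{\overline{\gamma}})$. This is harmless but the claim ``which matches the formula for $\widehat\alpha$'' is false as written. Second, and more seriously, your fallback argument for injectivity does not work: the fact that $C^*(\widehat{Z})\cong C_0(Z)$ acts faithfully via $M$ and that $\widetilde\pi\rtimes\lambda$ is faithful does not imply that the integrated form $\Phi$ of the covariant pair is faithful --- integrated forms of covariant pairs with faithful constituents can have large kernels, which is exactly why one normally passes to the regular representation on $H\otimes L^2(Z)\otimes L^2(\widehat{Z})$. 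Your representation lives on $H\otimes L^2(Z)$ with no extra tensor factor, so you must actually carry out the Plancherel computation showing $\Phi$ is unitarily equivalent to a regular representation of the dual system (faithful since $\widehat{Z}$ is amenable); that computation is the theorem, not a remark. Third, the phrase ``commutes appropriately to fill out $\pi(A)\otimes\mathcal{K}(L^2(Z))$'' conceals that $\widetilde\pi(a)$ is \emph{not} $\pi(a)\otimes 1$ but acts fibrewise by $\pi(\alpha_{s^{-1}}(a))$; to land in an honest tensor product you need the density of the span of $s\mapsto\alpha_{s^{-1}}(a)f(s)$ in $C_0(Z,A)$, or equivalently the exterior-equivalence unitary that Williams's proof makes explicit. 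None of these is fatal to the strategy, but as written the proposal asserts rather than proves the two steps where all the content of Takai duality resides.
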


Note that the AFE property is preserved under stable isomorphisms. So, if we show that the C*-algebra $A \rtimes_{\alpha} \mathbb{T} \rtimes_{\widehat{\alpha}} \mathbb{Z}$ is AFE, then Takai duality implies that $A$ is also AFE. We will use this idea to understand when the C*-algebra $C^*(\mathcal{G})$ of the Deaconu-Renault is AFE, as explained in the introduction.


\chapter{Inductive limits}
\label{appendix:inductivelimits}

There is a notion of convergence in category theory that describes when a sequence of objects that are connected by morphisms converge to an object called the inductive limit. Here we study inductive limits and some of its properties.

Since category theory is general, the results in this appendix apply to groups, groupoids and C*-algebras. We use Section 6.2 of R\o{}rdam, Larsen  and Laustsen's book on K-theory \cite{RLL-Ktheory} as a reference.

\begin{definition}
\label{def:inductivesequence}
An \newterm{inductive sequence} in a category $\mathscr{C}$ is a sequence $\lbrace A_n \rbrace_{n = 1}^\infty$ of objects in $\mathscr{C}$ and a sequence $\varphi_n: A_n \rightarrow A_{n+1}$ of morphisms in $\mathscr{C}$, usually written
$$
\begin{tikzcd}
A_1 \arrow{r}{\varphi_1}[swap]{} &
A_2 \arrow{r}{\varphi_2}[swap]{} &
A_3 \arrow{r}{\varphi_3}[swap]{} &
\cdots
\end{tikzcd}
$$
For $m > n$ we shall also consider the composed morphisms
\begin{align*}
\varphi_{m,n} = \varphi_{m-1} \circ \varphi_{m-2} \circ \cdots \circ \varphi_n: A_n \rightarrow A_m,
\end{align*}
which, together with the morphisms $\varphi_n$, are called the \newterm{connecting morphisms} (or \newterm{connecting maps}).
\end{definition}

\begin{definition}
\label{def:inductivelimit}
An \newterm{inductive limit} of the inductive sequence
$$
\begin{tikzcd}
A_1 \arrow{r}{\varphi_1}[swap]{} &
A_2 \arrow{r}{\varphi_2}[swap]{} &
A_3 \arrow{r}{\varphi_3}[swap]{} &
\cdots
\end{tikzcd}
$$
in a category $\mathscr{C}$ is a system $(A, \lbrace \mu_n \rbrace_{n=1}^\infty)$, where $A$ is an object in $\mathscr{C}$, $\mu_n: A_n \rightarrow A$ is a morphism in $\mathscr{C}$ for each $n \geq 1$, and where the following two conditions hold.
\begin{enumerate}[(i)]
\item The diagram
$$
\begin{tikzcd}
A_n \arrow{rr}{\varphi_n}[swap]{} \arrow{dr}{}[swap]{\mu_n} &&
A_{n+1} \arrow{dl}{\mu_{n+1}}[swap]{}\\
 & A
\end{tikzcd}
$$
commutes for each positive integer $n$.

\item If $(B, \lbrace \lambda_n \rbrace_{n=1}^\infty)$ is a system, where $B$ is an object in $\mathscr{C}$, $\lambda_n: A_n \rightarrow B$ is a morphism in $\mathscr{C}$ for each $n$, and where $\lambda_n = \lambda_{n+1} \circ \varphi_n$ for all  $n \geq 1$, then there is one and only one morphism $\lambda: A \rightarrow B$ making the diagram
$$
\begin{tikzcd}
& A_n \arrow{dl}{}[swap]{\mu_n} \arrow{dr}{\lambda_n}[swap]{} \\
A \arrow{rr}{}[swap]{\lambda} && B
\end{tikzcd}
$$
commutative for each $n$.
\end{enumerate}
In this text, will call the morphisms $\mu_n$ the \newterm{inclusion morphisms} of $A$
\end{definition}

\begin{remark}
Inductive limits, when they exist, are essentially unique in the sense that if $(A, \lbrace \mu_n \rbrace)$ and $(B, \lbrace \lambda_n \rbrace)$ are inductive limits of the same inductive sequence, then there is an isomorphism $\lambda:A \rightarrow B$ making the diagram of item (ii) in Definition \ref{def:inductivelimit} commutative. See \cite[page 93]{RLL-Ktheory} for details. Because of the essential uniqueness of inductive limits (when they exist), we shall refer to \textit{the} inductive limit (rather than \textit{an} inductive limit).
\end{remark}

\begin{notation}
The inductive limit $(A, \lbrace \mu_n \rbrace)$ is denoted by $\displaystyle\lim_\rightarrow (A_n, \varphi_n)$, or more briefly by $\displaystyle\lim_\rightarrow A_n$.
\end{notation}



\begin{example}
For each $n$, let $A_n = \mathbb{Z}$ and let $\oldc{\varphi}\newc{\varphi_n}: A_n \rightarrow A_{n+1}$ be given by $\varphi_n(k) = 2k$. Let $\mathbb{Z}\left[\frac{1}{2} \right] = \left\lbrace \frac{k}{2^n} : k \in \mathbb{Z}, n \in \mathbb{N} \right\rbrace$ be the set of diadic numbers. We claim that $\mathbb{Z}\left[\frac{1}{2} \right] = \displaystyle\lim_\rightarrow A_n$.

In fact, let $\mu_n: A_n \rightarrow \mathbb{Z}\left[\frac{1}{2} \right]$ be given by $\mu_n(k) = \frac{k}{2^n}$. We will show that item (i) of Definition \ref{def:inductivelimit} holds. Indeed, for $k \in A_n$, we have
\begin{align*}
\mu_{n+1} \circ \varphi_n(k)
= \mu_{n+1}(2k)
= \frac{2k}{2^{n+1}}
= \frac{k}{2^n}
= \mu_n(k).
\end{align*}

Now we check property (ii). Let $(B, \lbrace \lambda_n \rbrace_{n=1}^\infty )$ be a system where $B$ is a group and $\lambda_n: A_n \rightarrow B$ is a homomorphism satisfying $\lambda_n = \lambda_{n+1} \circ \varphi_n$ for all $n \geq 1$. Define $\lambda: \mathbb{Z}\left[\frac{1}{2} \right] \rightarrow B$ by
\begin{align}
\label{eqn:lambdadiadic}
\lambda\left(\frac{k}{2^n}\right) = \lambda_n(k).
\end{align}
We need to show that $\lambda$ is well-defined, i.e., that \eqref{eqn:lambdadiadic} does not depend on the choice of $n$. In fact, note that $\frac{2k}{2^{n+1}} = \frac{k}{2^n}$ and
\begin{align*}
\lambda_{n+1}(2k) = \lambda_{n+1} \circ \varphi_n(k)
= \lambda_n(k).
\end{align*}
By applying this argument recursively, we have that $\lambda$ is well-defined.

Note that the diagram of property (ii) holds for $\lambda$. Indeed, for $k \in A_n$, we have
\begin{align*}
\lambda \circ \mu_n(k) = \lambda \left( \frac{k}{2^n} \right)
= \lambda_n(k).
\end{align*}
Finally, we need to prove that $\lambda$ is the only homomorphism for which this diagram commutes. Suppose that $\widetilde{\lambda}:A \rightarrow B$ us another homomorphism with the same property. Then
\begin{align*}
\widetilde{\lambda}\left( \frac{k}{2^n} \right)
= \widetilde{\lambda}(\mu_n(k))
= \lambda_n(k)
= \lambda \left( \frac{k}{2^n} \right).
\end{align*}
This implies that $\widetilde{\lambda} = \lambda$. Therefore, $\mathbb{Z}\left[\frac{1}{2} \right] = \displaystyle\lim_\rightarrow A_n$.
\end{example}

The following proposition shows that any sequence of abelian groups has an inductive limit.

\begin{proposition} \cite[Propositions 6.2.5 and 6.2.6]{RLL-Ktheory} \label{prop:inductivegroupunion} Each sequence
$$
\begin{tikzcd}
A_1 \arrow[r, "\varphi_1"] &
A_2 \arrow[r, "\varphi_2"] &
A_3 \arrow[r, "\varphi_3"]  &
\cdots
\end{tikzcd}
$$
of abelian groups has an inductive limit $(A, \lbrace \beta_n \rbrace)$, and $A = \cup_{n=1}^\infty \beta_n(A_n)$. Moreover, if the groups $A_n$ are ordered and the homomorphisms $\varphi_n$ are positive, let
\begin{align*}
A^+ = \bigcup_{n=1}^\infty \beta_n(A_n^+).
\end{align*}
Then $(A, A^+)$ is an ordered abelian group, $\beta_n$ is a positive group homomorphism for each $n$, and $((A, A^+), \lbrace \beta_n \rbrace_{n=1}^\infty)$ is the inductive limit of the sequence in the category of ordered abelian groups.
\end{proposition}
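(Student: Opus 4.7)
The plan is to construct $A$ as equivalence classes of the disjoint union $\bigsqcup_n A_n$, verify the universal property in the category of abelian groups, and then show that the cone $A^+$ inherits the axioms of an ordered abelian group from the $A_n^+$.

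First I would define $\sim$ on $\bigsqcup_{n \geq 1} A_n$ by declaring, for $a \in A_n$ and $b \in A_m$, that $a \sim b$ iff $\varphi_{k,n}(a) = \varphi_{k,m}(b)$ for some $k \geq m,n$ (with the convention $\varphi_{k,k} = \mathrm{id}$); transitivity is immediate from composition of the $\varphi_{j,i}$. Set $A = \bigsqcup A_n / \sim$, let $\beta_n : A_n \to A$ send $a$ to its class $[a]$, and define $[a] + [b] := [\varphi_{k,n}(a) + \varphi_{k,m}(b)]$ for any common upper bound $k$; independence of $k$ and of representatives is a direct check. This makes $A$ into an abelian group with $\beta_{n+1} \circ \varphi_n = \beta_n$ by construction, and $A = \bigcup_n \beta_n(A_n)$. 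For the universal property, given a system $(B,\{\lambda_n\})$ with $\lambda_{n+1} \circ \varphi_n = \lambda_n$, I would define $\lambda : A \to B$ by $\lambda([a]) := \lambda_n(a)$ for $a \in A_n$; compatibility makes $\lambda$ well-defined on $\sim$-classes and a homomorphism, and uniqueness follows from $A = \bigcup \beta_n(A_n)$ together with the forced identity $\lambda \circ \beta_n = \lambda_n$.

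For the ordered case I set $A^+ := \bigcup_n \beta_n(A_n^+)$ and check the axioms of an ordered abelian group. Closure under addition follows by lifting representatives to a common level $k$ and using $A_k^+ + A_k^+ \subset A_k^+$ together with the positivity of $\varphi_{k,n}$ and $\varphi_{k,m}$. The identity $A^+ - A^+ = A$ reduces to the same identity in each $A_n$ via $A = \bigcup \beta_n(A_n)$. Positivity of each $\beta_n$ is immediate from the definition of $A^+$. Finally, if $(B,B^+)$ is an ordered abelian group and each $\lambda_n$ is positive, the induced $\lambda : A \to B$ sends $\beta_n(A_n^+)$ into $B^+$, hence $\lambda(A^+) \subset B^+$, giving the universal property in the category of ordered abelian groups.

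The main obstacle is verifying $A^+ \cap (-A^+) = \{0\}$. Suppose $\beta_n(a) = -\beta_m(b)$ with $a \in A_n^+$ and $b \in A_m^+$. Then $[\varphi_{k,n}(a) + \varphi_{k,m}(b)] = [0]$ at some common level $k$, and by the definition of $\sim$ there exists $k' \geq k$ with $\varphi_{k',n}(a) + \varphi_{k',m}(b) = 0$ in $A_{k'}$. Both summands lie in $A_{k'}^+$ by positivity of the connecting maps, so $\varphi_{k',n}(a) = -\varphi_{k',m}(b) \in A_{k'}^+ \cap (-A_{k'}^+) = \{0\}$, since $A_{k'}$ is an ordered abelian group. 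Therefore $\beta_n(a) = \beta_{k'}(\varphi_{k',n}(a)) = 0$, completing the argument.
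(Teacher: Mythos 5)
The paper does not prove this proposition; it is quoted verbatim from R{\o}rdam--Larsen--Laustsen \cite[Propositions 6.2.5 and 6.2.6]{RLL-Ktheory} and used as a black box. Your proof is the standard direct-limit construction (quotient of the disjoint union by the eventually-equal relation, universal property via $A=\bigcup_n\beta_n(A_n)$) and it is correct, including the only genuinely non-formal step: pushing a relation $\beta_n(a)=-\beta_m(b)$ down to a finite level $k'$ where $\varphi_{k',n}(a)+\varphi_{k',m}(b)=0$ and invoking $A_{k'}^+\cap(-A_{k'}^+)=\{0\}$ to get $A^+\cap(-A^+)=\{0\}$.
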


The following lemma shows that we can induce a homomorphism of inductive limits $\mu: \displaystyle\lim_\rightarrow A_n \rightarrow \displaystyle\lim_\rightarrow B_n$ given a  sequence of homomorphisms $\mu_n: A_n \rightarrow B_n$ satisfying certain conditions. This will be useful in Chapter \ref{section:K0H0} when we study the isomorphism $K_0(C^*(G)) \cong H_0(G)$ for AF groupoids $G$ since, in this case, these groups are inductive limits.

\begin{lemma}
\label{lemma:homomorphisminductive}
Let $A$ be an object in a category $\mathscr{C}$ such that $A$ is the inductive limit of an inductive sequence with connecting morphisms $i_n: A_n \rightarrow A_{n+1}$ and inclusion morphisms $\alpha_n: A_n \rightarrow A$. Similarly, let $B$ be an inductive limit in $\mathscr{C}$ with connecting morphisms $j_n: B_n \rightarrow B_{n+1}$ and inclusion morphisms $\beta_n: B_n \rightarrow B$. Let $\lbrace \mu_n \rbrace_{n = 0}^\infty$ be a sequence of morphisms $\mu_n : A_n \rightarrow B_n$ satisfying $\mu_{n+1} \circ i_n = j_n \circ \mu_n$. Then there exists a unique morphism $\mu: A \rightarrow B$ making the diagram below commutative.
\begin{equation}
\label{eqn:homomorphisminductive}
\begin{tikzcd}
A_n \arrow[r,"\mu_n"] 
\arrow{d}{}[swap]{\alpha_n}
& B_n \arrow[d, "\beta_n"]\\
A \arrow{r}{\mu}[swap]{}
& B
\end{tikzcd}
\end{equation}
Moreover, if each $\mu_n$ is an isomorphism, we have that $\mu$ is an isomorphism.
\end{lemma}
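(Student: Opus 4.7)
The plan is to apply the universal property of inductive limits twice: once to construct $\mu$ and once to verify its inverse in the isomorphism case.

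First I would define, for each $n$, the composite morphism $\lambda_n = \beta_n \circ \mu_n : A_n \to B$. I must check that the family $\lbrace \lambda_n \rbrace$ satisfies the compatibility condition $\lambda_{n+1} \circ i_n = \lambda_n$ required to invoke the universal property of $A = \lim_\rightarrow A_n$. This is a short computation: using the hypothesis $\mu_{n+1} \circ i_n = j_n \circ \mu_n$ and the fact that $\beta_{n+1} \circ j_n = \beta_n$ (property (i) of Definition \ref{def:inductivelimit} applied to $B$), we get
\begin{align*}
\lambda_{n+1} \circ i_n = \beta_{n+1} \circ \mu_{n+1} \circ i_n = \beta_{n+1} \circ j_n \circ \mu_n = \beta_n \circ \mu_n = \lambda_n.
\end{align*}
Then property (ii) of Definition \ref{def:inductivelimit} applied to $(B, \lbrace \lambda_n \rbrace)$ gives a unique morphism $\mu: A \to B$ with $\mu \circ \alpha_n = \lambda_n = \beta_n \circ \mu_n$, which is exactly the commutativity of diagram \eqref{eqn:homomorphisminductive}. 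Uniqueness of $\mu$ subject to this diagram is immediate from the uniqueness clause in the universal property.

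For the second statement, suppose each $\mu_n$ is an isomorphism with inverse $\mu_n^{-1}: B_n \to A_n$. Applying $\mu_n^{-1}$ on both sides of $\mu_{n+1} \circ i_n = j_n \circ \mu_n$ gives $i_n \circ \mu_n^{-1} = \mu_{n+1}^{-1} \circ j_n$, so the family $\lbrace \mu_n^{-1} \rbrace$ also satisfies the compatibility condition, now with the roles of $A$ and $B$ exchanged. The construction above then yields a morphism $\nu: B \to A$ with $\nu \circ \beta_n = \alpha_n \circ \mu_n^{-1}$. To conclude $\mu$ is an isomorphism, I would show $\nu \circ \mu = \mathrm{id}_A$ and $\mu \circ \nu = \mathrm{id}_B$ by invoking uniqueness in the universal property: both $\nu \circ \mu$ and $\mathrm{id}_A$ satisfy
\begin{align*}
(\nu \circ \mu) \circ \alpha_n = \nu \circ \beta_n \circ \mu_n = \alpha_n \circ \mu_n^{-1} \circ \mu_n = \alpha_n,
\end{align*}
so the uniqueness clause forces $\nu \circ \mu = \mathrm{id}_A$, and symmetrically for $\mu \circ \nu$.

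I do not expect any substantial obstacle; the entire argument is a pure diagram chase driven by the universal property, and no structure specific to the underlying category (groups, groupoids, C*-algebras) is needed. The only mild subtlety is making sure the compatibility identity transfers cleanly when inverting the $\mu_n$, but this is the one-line manipulation above.
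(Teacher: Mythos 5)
Your proposal is correct and follows essentially the same route as the paper's proof: define $\lambda_n = \beta_n \circ \mu_n$, verify compatibility, invoke the universal property for existence and uniqueness of $\mu$, and in the isomorphism case build the inverse from the $\mu_n^{-1}$ and identify $\nu \circ \mu$ with $\mathrm{id}_A$ (and $\mu \circ \nu$ with $\mathrm{id}_B$) via the uniqueness clause. No gaps.
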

\begin{proof}
For every $n \geq 1$, let $\lambda_n: A_n \rightarrow B$ be defined by $\lambda_n = \beta_n \circ \mu_n$. Then we have
\begin{align*}
\lambda_n
= \beta_n \circ \mu_n
= \beta_{n+1} \circ j_n \circ \mu_n
= \beta_{n+1} \circ \mu_{n+1} \circ i_n
= \lambda_{n+1} \circ i_n.
\end{align*}
Then, by definition of inductive limits, there exists a unique $\mu: A \rightarrow B$ making the diagram below commutative.
$$
\begin{tikzcd}
&A_n \arrow{dl}{}[swap]{\alpha_n}
\arrow{dr}{\lambda_n}[swap]{}\\
A \arrow{rr}{\mu}[swap]{}
&& B
\end{tikzcd}
$$
By definition of $\mu$, the diagram \eqref{eqn:homomorphisminductive} is commutative.

Now suppose that each $\mu_n$ is invertible. By similar arguments, there exists a unique $\kappa: B \rightarrow A$ such that the diagram below is commutative.
\begin{equation*}
\begin{tikzcd}
B_n \arrow[r,"\mu_n^{-1}"] 
\arrow{d}{}[swap]{\beta_n}
& A_n \arrow[d, "\alpha_n"]\\
B \arrow{r}{\kappa}[swap]{}
& A
\end{tikzcd}
\end{equation*}
We claim that $\kappa = \mu^{-1}$. Note that both diagrams below are commutative.
\begin{equation*}
\begin{tikzcd}
B_n \arrow[rr,"\mu_n \circ \mu_n^{-1}"] 
\arrow{d}{}[swap]{\beta_n}
&& B_n \arrow[d, "\beta_n"]\\
B \arrow{rr}{\mu \circ \kappa}[swap]{}
&& B
\end{tikzcd}
\hspace{40pt}
\begin{tikzcd}
B_n \arrow[rr,"\mu_n \circ \mu_n^{-1}"] 
\arrow{d}{}[swap]{\beta_n}
&& B_n \arrow[d, "\beta_n"]\\
B \arrow{rr}{1_B}[swap]{}
&& B
\end{tikzcd}
\end{equation*}
Property (ii) of Definition \ref{def:inductivelimit} implies that $\mu \circ \kappa = 1_B$. Analogously, we can show that $\kappa \circ \mu = 1_A$. Therefore $\kappa = \mu^{-1}$.
\end{proof}

Renault gives a definition of inductive limits of groupoids \cite[Section III.1]{Renault} that looks more specific than Definition \ref{def:inductivelimit}. The following lemma shows that Renault's definition is an application of inductive limits when the groupoids and the connecting maps satisfy certain conditions.

\begin{lemma}
\label{lemma:inductivelimitgroupoids}
Let $\lbrace G_n \rbrace_{n=1}^\infty$ be an increasing sequence of locally compact, Hausdorff, second countable, \'etale groupoids with the same unit space. Suppose that, for each $n$, $G_n$ is open in $G_{n+1}$ and such that $G_n$ is equipped with the subspace topology from $G_{n+1}$. Let $G$ be the union of these groupoids. Then we can equip $G$ with the topology such that
\begin{align}
\label{eqn:inductivelimittopology}
V \text{ is open in $G$ }
\Leftrightarrow
V \cap G_n \text{ is open in $G_n$ for all $n$.}
\end{align}
We call this topology the \newterm{inductive limit topology}. Moreover, $G$ is the inductive limit of the sequence $\lbrace G_n \rbrace_{n=1}^\infty$, where both the connecting morphisms $\varphi_n: G_n \rightarrow G_{n+1}$ and the inclusion morphisms $\mu_n: G_n \rightarrow G$ are inclusion maps.
\end{lemma}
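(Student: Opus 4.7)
The plan is to verify first that \eqref{eqn:inductivelimittopology} defines a topology on $G$, and then check the two defining properties of inductive limit. Let $\tau = \lbrace V \subset G : V \cap G_n \text{ is open in } G_n \text{ for all } n \rbrace$. Since intersection with a fixed set commutes with arbitrary unions and finite intersections, $\tau$ is closed under these operations, and $\emptyset, G \in \tau$; so $\tau$ is a topology. I would then check that this topology restricts to the original topology on each $G_n$: if $U \subset G_n$ is open in $G_n$, then for $m \geq n$ the set $U$ is open in $G_m$ because $G_n$ is open in $G_{n+1}$, and by induction in $G_m$; for $m < n$, $U \cap G_m$ is open in $G_m$ since $G_m$ is open in $G_n$ (with the subspace topology) and thus $U \cap G_m$ is the intersection of $U$, open in $G_n$, with $G_m$. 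Conversely, if $V \in \tau$ then $V \cap G_n$ is open in $G_n$ by definition, so the inclusion $\mu_n$ is continuous and an open embedding.

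Next, I would verify property (i) of Definition \ref{def:inductivelimit}. Since $\varphi_n$, $\mu_n$ and $\mu_{n+1}$ are all set-theoretic inclusions, the equality $\mu_{n+1} \circ \varphi_n = \mu_n$ holds trivially on $G_n$, and each map is a continuous groupoid homomorphism by the previous step. Hence the triangle commutes.

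For the universal property (ii), let $(H, \lbrace \lambda_n \rbrace_{n=1}^\infty)$ be another compatible system, so that $\lambda_n : G_n \rightarrow H$ are morphisms satisfying $\lambda_n = \lambda_{n+1} \circ \varphi_n$. Define $\lambda: G \rightarrow H$ by $\lambda(g) = \lambda_n(g)$ whenever $g \in G_n$. This is well-defined: if $g \in G_n \subset G_m$ with $n \leq m$, then $\lambda_m(g) = \lambda_m(\varphi_{m,n}(g)) = \lambda_n(g)$ by iterated application of the compatibility relation. The map $\lambda$ is a groupoid homomorphism because each $\lambda_n$ is and the groupoid operations on $G$ restrict to those on each $G_n$. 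For continuity, let $W \subset H$ be open; then for each $n$, $\lambda^{-1}(W) \cap G_n = \lambda_n^{-1}(W)$ is open in $G_n$ by continuity of $\lambda_n$, so $\lambda^{-1}(W) \in \tau$. Uniqueness is immediate: any morphism $\lambda': G \rightarrow H$ with $\lambda' \circ \mu_n = \lambda_n$ must satisfy $\lambda'(g) = \lambda_n(g) = \lambda(g)$ for $g \in G_n$, and since $G = \bigcup_n G_n$ we get $\lambda' = \lambda$.

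The main technicality, if anything, lies in confirming that the topology $\tau$ genuinely extends each subspace topology rather than creating a finer one. The proof above resolves this by using the hypothesis that each $G_n$ is open in $G_{n+1}$, which is essential: it guarantees that a set already open in $G_n$ remains open in all later $G_m$ and hence in $G$. Once this is in hand, the rest of the argument is a formal unwinding of the universal property.
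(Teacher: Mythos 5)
Your proof is correct and follows essentially the same route as the paper's: equip $G$ with the topology \eqref{eqn:inductivelimittopology}, note that property (i) is trivial for inclusions, and establish (ii) by defining $\lambda$ piecewise via $\lambda(g) = \lambda_n(g)$ for $g \in G_n$. You supply several details the paper leaves implicit --- that $\tau$ is genuinely a topology, that it restricts to the given topology on each $G_n$ (using the openness of $G_n$ in $G_{n+1}$), and that $\lambda$ is continuous --- all of which are correct and strengthen the argument without changing its structure.
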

\begin{proof}
If we equip $G$ with the operations induced from $G_n$, then $G$ becomes a groupoid. It follows from the topological properties of $G_n$ that \eqref{eqn:inductivelimittopology} defines a topology on $G$. Moreover, $G$ is locally compact Hausdorff second countable \'etale.

We only need to prove that $G$ is the inductive limit of the sequence of groupoids $G_n$. Since the maps $\varphi_n$ and $\mu_n$ are inclusions, it is straightforward that $\mu_{n+1} \circ \varphi_n = \mu_n$ for all $n$. Then property (i) of Definition \ref{def:inductivelimit} holds.

We will show property (ii) of the same definition. Let $(H, \lbrace \lambda_n \rbrace_{n=1}^\infty)$ be a system, where $H$ is a locally compact Hausdorff second countable \'etale groupoid and each $\lambda_n: G_n \rightarrow H$ is a continuous homomorphism satisfying $\lambda_n = \lambda_{n+1} \circ \varphi_n$. Then we can define $\lambda: G \rightarrow H$ by
\begin{align*}
\lambda(g) = \lambda_n(g)
\hspace{15pt}\text{if }g \in G_n.
\end{align*}
Since $G$ is the union of the $G_n$, then $\lambda$ is unique and well-defined. Also, $\lambda_n = \lambda \circ \mu_n$ for all $n$. Therefore, $G$ is the inductive limit of the sequence $\lbrace G_n \rbrace_{n=1}^\infty$.
\end{proof}

The following lemma shows that the K-theory of groupoid C*-algebras is continuous with respect to inductive limits of groupoids.

\begin{lemma}
Let $G = \displaystyle\lim_\rightarrow G_n$ be the inductive limit of a sequence of locally compact, Hausdorff, second countable, \'etale groupoids. Then $C^*(G) = \displaystyle\lim_\rightarrow C^*(G_n)$ and $K_\ast(C^*(G)) = \displaystyle\lim_\rightarrow K_\ast(C^*(G_n))$.
\end{lemma}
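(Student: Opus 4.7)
The plan is to establish the C*-algebra identity $C^*(G) = \varinjlim C^*(G_n)$ first, and then derive the K-theory statement as an immediate consequence of the continuity of K-theory under sequential inductive limits (a standard result, e.g.\ \cite[Theorem 6.3.2]{RLL-Ktheory}).

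For the C*-algebraic part, I would begin by constructing the connecting and inclusion $*$-homomorphisms. By Lemma \ref{lemma:inductivelimitgroupoids}, each $G_n$ is an open subgroupoid of $G_{n+1}$ (and of $G$) carrying the subspace topology. Extension by zero therefore gives an injective $*$-algebra homomorphism $C_c(G_n) \hookrightarrow C_c(G_{n+1})$, and similarly $C_c(G_n) \hookrightarrow C_c(G)$. Using the universal property of the full groupoid C*-algebra (every $I$-norm decreasing $*$-representation of $C_c$ extends), these extend to $*$-homomorphisms $\iota_n: C^*(G_n) \to C^*(G_{n+1})$ and $\mu_n: C^*(G_n) \to C^*(G)$ satisfying $\mu_{n+1}\circ\iota_n = \mu_n$. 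A standard argument using faithful representations and the fact that the restriction of a representation of $C^*(G_{n+1})$ to $C_c(G_n)$ factors through $C^*(G_n)$ shows these maps are injective.

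Next I would verify the universal property of Definition \ref{def:inductivelimit}. Given any C*-algebra $B$ and a compatible family $\lambda_n: C^*(G_n) \to B$ with $\lambda_{n+1}\circ\iota_n = \lambda_n$, one defines $\lambda$ on $\bigcup_n \mu_n(C^*(G_n))$ by $\lambda(\mu_n(a)) = \lambda_n(a)$; this is well-defined by compatibility and contractive since each $\lambda_n$ is. The heart of the argument is to show this union is dense in $C^*(G)$. Since $C_c(G)$ is dense in $C^*(G)$ in the $I$-norm (and hence in the C*-norm), it suffices to show every $f \in C_c(G)$ lies in some $C_c(G_n)$. This is the key topological point: the support of $f$ is compact in $G$, and $\{G_n\}_n$ is an open cover of $G$ by the inductive limit topology; compactness yields a finite subcover, and the nesting $G_n \subset G_{n+1}$ then forces $\mathrm{supp}(f) \subset G_N$ for some $N$, so $f \in C_c(G_N)$. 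Having density, $\lambda$ extends uniquely to a $*$-homomorphism $C^*(G) \to B$, establishing the universal property.

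For the K-theory statement, once $C^*(G) = \varinjlim C^*(G_n)$ with connecting maps $\iota_n$, I would apply continuity of the K-functors under sequential inductive limits of C*-algebras, which immediately gives $K_*(C^*(G)) = \varinjlim K_*(C^*(G_n))$ with connecting maps $K_*(\iota_n)$. The main obstacle is the density argument above, and specifically justifying that compact subsets of $G$ (equipped with the inductive limit topology) must be absorbed by some $G_n$; this relies essentially on Lemma \ref{lemma:inductivelimitgroupoids} and on the openness of each $G_n$ in $G$, and is the only genuinely nontrivial ingredient of the proof.
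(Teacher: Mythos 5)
Your proposal is correct and follows essentially the same route as the paper: the crux in both arguments is that a compact subset of $G$ in the inductive limit topology must be contained in some $G_n$, so that $C_c(G) = \bigcup_n C_c(G_n)$ is dense in $C^*(G)$, after which the K-theory statement follows from continuity of $K_*$ under sequential inductive limits \cite[Theorem 6.3.2]{RLL-Ktheory}. The only difference is one of bookkeeping: the paper realises $\lim_\rightarrow C^*(G_n)$ concretely as $\overline{\bigcup_n C^*(G_n)}$ inside $C^*(G)$ and checks the two inclusions, whereas you verify the universal property directly.
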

\begin{proof}
First we show that $\displaystyle\lim_\rightarrow C^*(G_n) \subset C^*(G)$. Let $f \in C_c(G_n)$. Since $G_n$ is open in $G$, we make an abuse of notation and identify $f$ as a function of $C_c(G)$ with support in $G_n$. Thus $C_c(G_n) \subset C_c(G) \subset C^*(G)$ . This implies that
\begin{align}
\label{eqn:CastGnoverline}
C^*(G_n) = \overline{C_c(G)} \subset C^*(G).
\end{align}
Now let $f \in C_c(G)$. Then there exists a compact set $K$ containing the support of $f$. Since $G$ is the increasing union of the sets $G_n$, there exists an $n$ such that $K \subset G_n$. By the inductive limit topology, the function $G_n \rightarrow \mathbb{C}$ given by $x \mapsto f(x)$ is continuous and compactly supported. We make an abuse of notation and identify this function with $f \in C_c(G_n)$. Since $f$ is arbitrary, we have $C_c(G) = \bigcup_{n=1}^\infty C_c(G_n)$. Thus
\begin{align*}
C^*(G) = \overline{\bigcup_{n=1}^\infty C_c(G_n)} \subset \overline{\bigcup_{n=1}^\infty C^*(G_n)} = \lim_\rightarrow C^*(G_n)
\end{align*}
So, by \eqref{eqn:CastGnoverline}, we have $C^*(G) = \displaystyle\lim_\rightarrow C^*(G_n)$. It follows from \cite[Theorem 6.3.2]{RLL-Ktheory} and \cite[Proposition 8.2.7]{RLL-Ktheory} that $K_\ast(C^*(G)) = \displaystyle\lim_\rightarrow K_\ast(C^*(G_n))$.
\end{proof}

\chapter{AF groupoids}
\label{appendix:AFgroupoids}

There are different definitions for AF groupoids in the literature. The first was given by Renault \cite[Definition III.1.1]{Renault} in his book, and he proved in \cite[Proposition III.1.15]{Renault} that AF groupoids determine AF algebras up to isomorphism. Because of this result, we can see AF groupoids as the groupoid analogs of AF algebras. 

Later on, other definitions for such groupoids have been introduced. In 2003 \cite{Renault-AF}, Renault defined AF equivalence relations. In 2004, Giordano, Putnam and Skau \cite[Definition 3.7]{GPS} gave a different but equivalent definition. In 2012, Matui \cite[Definition 2.2]{Matui} defined AF groupoids. His definition was less general than Renault's definition from 1980 \cite[Definition III.1.1]{Renault}, since Matui's definition required the unit space to be compact. Finally, in 2019, Farsi, Kumjian, Pask and Sims \cite[Definition 4.9]{FKPS} gave another definition of AF groupoids, along the lines of Matui, but for groupoids with non-compact unit spaces.

In a still in progress paper with Astrid an Huef, Lisa Orloff Clark and Camila Sehnem, we prove the equivalence of Renault's and Farsi et al's definition.

Here we give Farsi et al's definition. In the same way that every AF algebra is approximated by a sequence of finite dimensional C*-algebras, an AF groupoid is approximated by a sequence of more basic groupoids, called elementary.

\newc{
\begin{definition}
Let $X, Y$ be locally compact, Hausdorff, second countable, totally disconnected spaces. Let $\sigma: X \rightarrow Y$ be a surjective local homeomorphism. Define the set
\begin{align*}
R(\sigma) = \lbrace (u,v) \in X \times X: \sigma(u) = \sigma(v) \rbrace,
\end{align*}
and equip it with the operations $(u,v)(v,w) = (u,w)$ and $(u,v)^{-1} = (v,u)$. Then $R(\sigma)$ becomes a groupoid. We endow this set with the subspace topology from $X \times X$. Then $R(\sigma)$ is locally compact, Hausdorff, second countable, \'etale, and totally disconnected.
\end{definition}
}

\begin{definition}
A locally compact, Hausdorff, second countable, totally disconnected groupoid is said to be \newterm{elementary} if it is isomorphic to the groupoid $R(\sigma)$ for some surjective local homeomorphism $\sigma: X \rightarrow Y$ between locally compact, Hausdorff, second countable, totally disconnected spaces.
\end{definition}

\begin{definition}
\label{def:AFFKPS}
We say that a groupoid is \newterm{AF} if it is the increasing union of open elementary groupoids with the same unit space.
\end{definition}

It follows from Lemma \ref{lemma:inductivelimitgroupoids} that an AF groupoid is an inductive limit of a sequence of elementary groupoids. We use the example below in the commutative diagram \eqref{eqn:diagram} from page \pageref{eqn:diagram}.

\begin{example}
Let $X$ be a locally compact, Hausdorff, second countable, totally disconnected space, and let $\sigma: X \rightarrow X$ be a surjective local homeomorphism. Then
\begin{align*}
c^{-1}(0) = \lbrace (x, y) \in X^2: \sigma^n(x) = \sigma^n(y) \text{ for some } n \in \mathbb{N} \rbrace
\end{align*}
is an AF groupoid. In fact, it is the increasing union of the open subgroupoids $R(\sigma^n)$.
\end{example}

\begin{proposition}
\label{prop:AFRsigma}
Let $G$ be an AF groupoid. Write $G$ as the inductive limit of elementary groupoids $G = \displaystyle\lim_\rightarrow G_n$. Then, for each $n$, there exists a locally compact Hausdorff, second countable, totally disconnected space $Y_n$, and there is a surjective local homeomorphism $\sigma_n: G^{(0)} \rightarrow Y_n$ such that $G_n = R(\sigma_n)$.
\end{proposition}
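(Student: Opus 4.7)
The plan is to unpack the definition of an elementary groupoid and use the fact that all $G_n$ in the inductive sequence share the common unit space $G^{(0)}$. Fix $n$. Since $G_n$ is elementary, by definition there exist locally compact, Hausdorff, second countable, totally disconnected spaces $X_n, Z_n$, a surjective local homeomorphism $\tau_n : X_n \to Z_n$, and a groupoid isomorphism $\phi_n : G_n \to R(\tau_n)$. The goal is to transport $\tau_n$ to a map with domain $G^{(0)}$.

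First I would restrict $\phi_n$ to unit spaces: since groupoid isomorphisms preserve units, $\psi_n := \phi_n\big|_{G_n^{(0)}}$ is a homeomorphism $G_n^{(0)} \to R(\tau_n)^{(0)}$. Using that $R(\tau_n)^{(0)} = \{(x,x) : x \in X_n\}$ is canonically homeomorphic to $X_n$ via the diagonal, I identify $\psi_n$ with a homeomorphism $G_n^{(0)} \to X_n$. Because the $G_n$ share the unit space of $G$, we have $G_n^{(0)} = G^{(0)}$, so $\psi_n : G^{(0)} \to X_n$. Then I set $Y_n := Z_n$ (which inherits the required topological properties) and define $\sigma_n := \tau_n \circ \psi_n : G^{(0)} \to Y_n$. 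As a composition of a homeomorphism and a surjective local homeomorphism, $\sigma_n$ is itself a surjective local homeomorphism.

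Finally I would verify the equality $G_n = R(\sigma_n)$, interpreted up to the canonical isomorphism. Consider the map $\Phi_n : R(\sigma_n) \to R(\tau_n)$ defined by $(x,y) \mapsto (\psi_n(x), \psi_n(y))$. By construction of $\sigma_n$, we have the equivalence
\begin{align*}
\sigma_n(x) = \sigma_n(y) \iff \tau_n(\psi_n(x)) = \tau_n(\psi_n(y)),
\end{align*}
so $\Phi_n$ is a well-defined bijection, and it is a groupoid isomorphism and a homeomorphism because $\psi_n$ is. Composing with $\phi_n^{-1}$ yields an isomorphism $\phi_n^{-1} \circ \Phi_n : R(\sigma_n) \to G_n$, giving the desired identification.

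There is no genuine mathematical obstacle; the only subtlety is bookkeeping, namely ensuring the base space for $\sigma_n$ is $G^{(0)}$ rather than the abstract $X_n$ supplied by the definition of elementary. The identification via $\psi_n$ is what transports the witnessing local homeomorphism onto the correct space, and once this identification is in place, the equality $G_n = R(\sigma_n)$ follows immediately from the corresponding statement $G_n \cong R(\tau_n)$.
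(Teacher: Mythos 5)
Your proposal is correct and follows essentially the same route as the paper: both transport the witnessing local homeomorphism $\tau_n$ (the paper's $\rho_n$) to $G^{(0)}$ by composing with the restriction of the groupoid isomorphism to unit spaces, and then verify that $G_n$ is recovered as $R(\sigma_n)$. The only cosmetic difference is in the last step, where the paper uses principality to identify $G_n$ with a subset of $G^{(0)}\times G^{(0)}$ and checks set equality directly, while you exhibit the explicit isomorphism $\phi_n^{-1}\circ\Phi_n$; these amount to the same verification.
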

\begin{proof}
Let $G$ be an AF groupoid written as the inductive limit of elementary groupoids $G_n$. For every $n$, there are locally compact, Hausdorff, second countable, totally disconnected spaces $X_n, Y_n$, and there is a surjective local homeomorphism $\rho_n: X_n \rightarrow Y_n$ such that $G_n \cong R(\rho_n)$.

Let $\mu_n: G_n \rightarrow R(\rho_n)$ be an isomorphism. Note that $G_n^{(0)} = G^{(0)}$. Define the map $\sigma_n: G^{(0)} \rightarrow Y_n$ by
\begin{align*}
\sigma_n(x) = \rho_n \circ \mu_n(x),
\hspace{20pt}
\text{ for $x \in G^{(0)}$.}
\end{align*}
Since $\mu_n$ is an isomorphism of \'etale groupoids, we have that $\mu_n\vert_{G^{(0)}}: G^{(0)} \rightarrow X_n$ is a homeomorphism. Since $\rho_n$ is a surjective local homeomorphism, and since $\sigma_n$ is the composition of $\mu\vert_{G^{(0)}}$ and $\rho_n$, then $\sigma_n$ is a surjective local homeomorphism.

Now we prove that $G_n = R(\sigma_n)$. The isomorphism $\mu_n$ implies that $G_n$ is principal. Moreover,
\begin{align*}
G_n \subset G^{(0)}_n \times G^{(0)}_n = G^{(0)} \times G^{(0)}.
\end{align*}
So, given $(u,v) \in G^{(0)} \times G^{(0)}$, we have
\begin{align*}
(u,v) \in G_n
&\Leftrightarrow (\mu_n(u), \mu_n(v)) \in R(\rho_n) \\
&\Leftrightarrow \rho(\mu_n(u)) = \rho(\mu_n(v)) \\
&\Leftrightarrow \sigma_n(u) = \sigma_n(v).
\end{align*}
Therefore, $G_n = R(\sigma_n)$.
\end{proof}

\chapter{Projections in $C_0(X, M_n)$}
\label{appendix:C0XMn}

We fix $X$ to be a locally compact, Hausdorff, second countable, totally disconnected space, and $n$ to be a positive integer. We show that two projections in $C_0(X, M_n)$ are equivalent if, and only if, they have the same trace.

We begin this appendix by studying equivalences of projections in $M_n$, and then we generalise the results on $M_n$ to $C_0(X, M_n)$.

\begin{definition}
Let $\mathrm{tr}: M_n \rightarrow \mathbb{C}$ be the standard trace given by
\begin{align*}
\mathrm{tr}
\begin{bmatrix}
a_{11} & a_{12} & \cdots & a_{1n} \\
a_{21} & a_{22} & \cdots & a_{2n} \\
\vdots & \vdots & \ddots & \vdots \\
a_{n1} & a_{n2} & \cdots & a_{nn}
\end{bmatrix}
= \sum_{i=1}^n a_{ii}.
\end{align*}
We call $\tr a$ the \newterm{trace} of $a \in M_n$.
\end{definition}

\begin{remark}
\label{rmk:traceproperty}
Note that $\mathrm{tr}(ab) = \mathrm{tr}(ba)$ for $a,b \in M_n$. In fact,
\begin{align*}
\mathrm{tr}(ab)
= \sum_{i=1}^n (ab)_{ii}
= \sum_{i=1}^n \sum_{j=1}^n a_{ij} b_{ji}
= \sum_{j=1}^n \sum_{i=1}^n b_{ji} a_{ij}
= \sum_{j=1}^n (ba)_{jj}
= \mathrm{tr}(ba).
\end{align*}
\end{remark}


\begin{corollary}
\label{corollary:psumv}
Let $p \in M_n$ be a projection with trace $k$. Then there are $k$ orthonormal vectors $v_1, \hdots, v_k$ such that $p = \sum_{i=1}^k v_i v_i^*$.
\end{corollary}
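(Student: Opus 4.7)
The plan is to invoke the spectral theorem for the self-adjoint matrix $p$. Since $p$ is a projection we have $p = p^* = p^2$, so its minimal polynomial divides $x^2 - x$, which means every eigenvalue of $p$ lies in $\lbrace 0, 1 \rbrace$. By the spectral theorem for normal matrices, there exists an orthonormal basis $v_1, \dots, v_n$ of $\mathbb{C}^n$ consisting of eigenvectors of $p$; reordering, assume that $p v_i = v_i$ for $i = 1, \dots, m$ and $p v_i = 0$ for $i = m+1, \dots, n$, for some $0 \le m \le n$.

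First I would identify $m$ with the trace $k$. Since the trace is invariant under conjugation by unitaries (by Remark \ref{rmk:traceproperty}, $\tr(uau^*) = \tr(a)$), and since $p$ is unitarily equivalent to the diagonal matrix with $m$ ones and $n-m$ zeros, we obtain $\tr p = m$, hence $m = k$.

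Next I would verify the identity $p = \sum_{i=1}^k v_i v_i^*$. Both sides are linear operators on $\mathbb{C}^n$, so it suffices to check they agree on the basis $v_1, \dots, v_n$. For $j \le k$, using $v_i^* v_j = \delta_{ij}$,
\begin{align*}
\Bigl( \sum_{i=1}^k v_i v_i^* \Bigr) v_j
= \sum_{i=1}^k v_i (v_i^* v_j)
= v_j
= p v_j,
\end{align*}
and for $j > k$ we similarly get $\bigl( \sum_{i=1}^k v_i v_i^* \bigr) v_j = 0 = p v_j$. Hence the two operators coincide, yielding the desired decomposition with $v_1, \dots, v_k$ orthonormal.

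There is no real obstacle here; the result is a direct application of the spectral theorem plus the fact that the trace counts the multiplicity of the eigenvalue $1$ for a projection. The only small point to be careful about is the convention that $v v^*$ denotes the rank-one operator $w \mapsto (v^* w) v$, which is exactly the orthogonal projection onto $\mathbb{C} v$ when $v$ is a unit vector, so that the sum $\sum_{i=1}^k v_i v_i^*$ is the orthogonal projection onto $\mathrm{span}(v_1, \dots, v_k) = \mathrm{range}(p)$.
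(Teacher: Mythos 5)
Your proof is correct and follows essentially the same route as the paper: both produce an orthonormal basis $v_1, \dots, v_n$ of $\mathbb{C}^n$ whose first $k$ vectors span the range of $p$ and then verify the identity $p = \sum_{i=1}^k v_i v_i^*$ by checking it on that basis. The only cosmetic difference is that you obtain the basis from the spectral theorem and identify $k$ with the multiplicity of the eigenvalue $1$ via unitary invariance of the trace, whereas the paper takes an orthonormal basis of $p(\mathbb{C}^n)$ directly and extends it.
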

\begin{proof}
\newc{Let $\lbrace v_1, \dots, v_k \rbrace$ be an orthonormal basis of $p(\mathcal{C}^k$. If $k < n$, find $v_{k+1}, \dots, v_n$ such that $\lbrace v_1, \dots, v_n \rbrace$ is an orthonormal basis of $\mathbb{C}^n$. For $j = 1, \dots, k$, there exists an $x \in \mathbb{C}^n$ such that $v_j = px$. Then
\begin{align*}
pv_j = p^2 x = px = v_j
\hspace{15pt}\text{and}\hspace{15pt}
\sum_{i=1}^k v_i v_i^* v_j = v_j.
\end{align*}
For $k < j \leq n$, we have
\begin{align*}
pv_j = 0
\hspace{15pt}\text{and}\hspace{15pt}
\sum_{i=1}^k v_i v_i^* v_j = 0.
\end{align*}
Therefore $p = \sum_{i=1}^k v_i v_i^*$.
}
\end{proof}
\oldc{
By Schur's lemma, there exists a unitary matrix and an upper triangular matrix $\Lambda$ such that $p = u \Lambda u^*$. Since $p$ is self-adjoint, we have that $\Lambda$ is diagonal. Moveover, the fact that $p$ is a projection implies that the elements in the diagonal of $\Lambda$ are zeros and ones.

Let $k$ be the trace of $p$. Then $k = \tr u \Lambda u^* = \tr \Lambda$ is the number of ones of $\Lambda$. For simplicity, we assume that the first $k$ elements of $\mathrm{diag}\hphantom{.} \Lambda$ are ones (If this is not the case, we can find an elementary matrix $e$ such that $\widetilde{\Lambda} = e^{-1} \Lambda e$ has this property and $p = \widetilde{u} \widetilde{\Lambda} \widetilde{u}^*$ with $\widetilde{u} = u e$).

For each $i = 1, \hdots, n$, let $v_i$ be the $i^{\mathrm{th}}$ column of $u$. Since $u$ is unitary, then $v_1, \dots, v_n$ is an orthonormal basis of $\mathbb{C}^n$. We prove that $p$ and $\sum_{i=1}^k v_i v_i^*$ are equal by comparing the images of both matrices on the basis $v_1, \dots, v_n$.

Given $i \leq k$,
\begin{align*}
pv_i = u \Lambda u^* v_i = u \Lambda e_i = u e_i = v_i
\hspace{15pt}&\text{and}\hspace{15pt}
\sum_{l=1}^k v_l v_l^* v_i = v_i. 
\intertext{If $i > k$,}
pv_i = u \Lambda u^* v_i = u \Lambda e_i = 0
\hspace{15pt}&\text{and}\hspace{15pt}
\sum_{l=1}^k v_l v_l^* v_i = 0. 
\end{align*}
Since $v_1, \hdots, v_n$ form an orthonormal basis of $\mathbb{C}^n$, we have $p = \sum_{l=1}^k v_l v_l^*$.}
\begin{remark}
\label{rmk:dimp}
Note that this corollary implies that $\tr p$ is the dimension of $p(\mathbb{C}^k)$.
\end{remark}

\begin{remark}
\label{rmk:idsumv}
By the similar calculations of Corollary \ref{corollary:psumv}, we can show that for $v_1, \hdots, v_n$ orthonormal basis of $\mathbb{C}^n$, we have $\mathrm{id} = \sum_{i=1}^n v_i v_i^*$.
\end{remark}

Now we prove the connection between the trace and the equivalence of projections in $M_n$. The equivalence $(ii) \Leftrightarrow (i)$ of the next lemma is Exercise 2.9 of R{\o}rdan, Larsen and Laustsen's book \cite{RLL-Ktheory}.
\begin{lemma}
\label{lemma:matrixtrace}
Let $p, q$ be projections in $M_n$. Then the following are equivalent:
\begin{enumerate}[(i)]
\item $p \sim q$,
\item $\mathrm{tr}(p) = \mathrm{tr}(q)$,
\item $p \sim_u q$.
\end{enumerate}
\end{lemma}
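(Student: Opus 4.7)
The plan is to prove the equivalences cyclically: (i) $\Rightarrow$ (ii) $\Rightarrow$ (iii) $\Rightarrow$ (i). Each implication should be short, relying on the trace's cyclicity (Remark \ref{rmk:traceproperty}) and the explicit decomposition of projections as sums $\sum v_i v_i^*$ (Corollary \ref{corollary:psumv}).

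First, for (i) $\Rightarrow$ (ii): if $p \sim q$ in the Murray--von Neumann sense, then there is a $v \in M_n$ with $v^*v = p$ and $vv^* = q$. Applying Remark \ref{rmk:traceproperty} gives $\mathrm{tr}(p) = \mathrm{tr}(v^*v) = \mathrm{tr}(vv^*) = \mathrm{tr}(q)$. This is immediate.

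Next, for (ii) $\Rightarrow$ (iii), I would use Corollary \ref{corollary:psumv}: setting $k = \mathrm{tr}(p) = \mathrm{tr}(q)$, write $p = \sum_{i=1}^k v_i v_i^*$ and $q = \sum_{i=1}^k w_i w_i^*$ for orthonormal systems $\{v_i\}$ and $\{w_i\}$. Extend each to an orthonormal basis $v_1, \dots, v_n$ and $w_1, \dots, w_n$ of $\mathbb{C}^n$, and define $u \in M_n$ by $u v_i = w_i$ for $i = 1, \dots, n$. Then $u$ is unitary since it carries one orthonormal basis onto another. Using Remark \ref{rmk:idsumv} to write $\mathrm{id} = \sum_{i=1}^n v_i v_i^*$, one checks directly that $upu^*(w_j) = w_j$ for $j \leq k$ and $upu^*(w_j) = 0$ for $j > k$, which matches the action of $q$, so $upu^* = q$ and hence $p \sim_u q$.

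Finally, for (iii) $\Rightarrow$ (i): if $q = upu^*$ for a unitary $u$, set $v = up$. Then $v^*v = pu^*up = p^2 = p$ and $vv^* = upp^*u^* = upu^* = q$, so $p \sim q$. The main thing to be careful about is setting up the bases cleanly in the (ii) $\Rightarrow$ (iii) step so that the unitary constructed genuinely conjugates $p$ to $q$; the rest is just bookkeeping with the trace's cyclicity property and the decomposition provided by the previous corollary.
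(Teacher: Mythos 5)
Your proposal is correct and follows essentially the same route as the paper: trace cyclicity for (i) $\Rightarrow$ (ii), the decomposition $p = \sum_{i=1}^k v_i v_i^*$ from Corollary \ref{corollary:psumv} extended to orthonormal bases to build the unitary $u = \sum_i w_i v_i^*$ for (ii) $\Rightarrow$ (iii), and $v = up$ for (iii) $\Rightarrow$ (i). The only cosmetic difference is that you verify $upu^* = q$ by checking the action on the basis $\{w_j\}$ rather than expanding the sum algebraically as the paper does; both computations are equivalent.
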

\begin{proof}
First we show $(i) \Rightarrow (ii)$. Let $p, q \in M_n$ be two projections with $p \sim q$. Then there exists $v \in M_n$ with $p = v^*v$ and $q = vv^*$. Then
\begin{align*}
\tr p = \oldc{\tr u^*u = \tr uu^*}\newc{\tr v^*v = \tr vv^*} = \tr q.
\end{align*}

Now we prove $(ii) \Rightarrow (iii)$. Let $p,q \in M_n$ be projections with the same trace $k$. By Corollary \ref{corollary:psumv}, there are $v_1, \hdots, v_k$ orthonormal and $w_1, \hdots, w_k$ orthonormal such that
\begin{align*}
p = \sum_{i=1}^k v_i v_i^*
\hspace{15pt}\text{and}\hspace{15pt}
q = \sum_{i=1}^k w_i w_i^*
\end{align*}
Find $w_{k+1}, \hdots, w_n$ and $v_{k+1}, \hdots, v_n$ such that both $v_1, \hdots, v_n$ and $w_1, \hdots, w_n$ are orthonormal basis of $\mathbb{C}^n$. Let $u = \sum_{i=1}^n w_i v_i^*$. Then $u$ is unitary. In fact, by Remark \ref{rmk:idsumv},
\begin{align*}
u^*u = \sum_{i=1}^n \oldc{w_i w_i^*}\newc{v_i v_i^*} = \mathrm{id}
\hspace{15pt}\text{and}\hspace{15pt}
uu^* = \sum_{i=1}^n \oldc{v_i v_i^*}\newc{w_i w_i^*} = \mathrm{id}.
\end{align*}
Also, $p = u^* q u$. Indeed,
\begin{align*}
u^* q u
&= \sum_{i=1}^n v_i w_i^* q u \\
&= \sum_{i=1}^n v_i w_i^* \left( \sum_{j=1}^k w_j w_j^* \right) u \\
&= \sum_{i=1}^k v_i w_i^* w_i w_i^* u
\hspace{15pt}\text{since the $w_i$ are orthonormal}\\
&= \sum_{i=1}^k v_i w_i^* u \\
&= \sum_{i=1}^k v_i w_i^* \sum_{j=1}^n w_j v_j^* \\
&= \sum_{i=1}^k v_i w_i^* w_i v_i^* \\
&= \sum_{i=1}^k v_i v_i^* \\
&= p.
\end{align*}
Then $(iii)$ holds.

Finally, we show $(iii) \Rightarrow (i)$. If $(iii)$ is true, there exists a unitary such that $p = u^* q u$. Then $q = upu^*$. Let $v = up$. Then $v^*v = p$ and $vv^* = q$. Therefore $p \sim q$.
\end{proof}

\begin{remark}
Note that the implication $(iii) \Rightarrow (i)$ of the Lemma above is a known result for more general C*-algebras. See \cite[Proposition 2.2.7]{RLL-Ktheory}.
\end{remark}
Now that we know the connection between the trace and projections in $M_n$, we want to generalise this to $C_0(X, M_n)$.

The following proposition will be used to show that, for a projection $p \in C_0(X, M_n)$, the trace $\tr p$ is locally constant.

\begin{proposition} \cite[Propositions 2.2.4 and 2.2.7]{RLL-Ktheory}
\label{prop:closeprojections}
Let $p, q$ be projections in a C*-algebra $A$. If $\Vert p - q \Vert < 1$, then $p \sim_u q$.
\end{proposition}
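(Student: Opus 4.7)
The plan is to construct an explicit unitary that implements the equivalence, working in the unitization $\widetilde{A}$ if $A$ is non-unital. Define $z = pq + (1-p)(1-q) \in \widetilde{A}$. I would verify three properties: (a) $pz = zq$, (b) $z$ is invertible, and (c) the modulus $|z| = (z^*z)^{1/2}$ commutes with $q$. Once these hold, the polar decomposition $z = u|z|$ produces a unitary $u$ (since $z$ is invertible), and (a) together with (c) yields $pu = uq$, hence $u^*pu = q$, giving $p \sim_u q$.

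Property (a) is immediate from $p^2 = p$ and $p(1-p) = 0$: $pz = p \cdot pq + p(1-p)(1-q) = pq$ and $zq = pq \cdot q + (1-p)(1-q)q = pq$. Property (b) hinges on the key algebraic identity
\begin{align*}
1 - z = p + q - 2pq = (2p-1)(p-q),
\end{align*}
which one checks by expanding the right-hand side using $p^2 = p$. Because $2p-1$ is a self-adjoint unitary (a symmetry), one has $\|2p-1\| = 1$, so
\begin{align*}
\|1 - z\| \leq \|2p-1\| \, \|p - q\| = \|p-q\| < 1,
\end{align*}
and the Neumann series then gives invertibility of $z$ in $\widetilde{A}$. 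For property (c), a direct expansion yields
\begin{align*}
z^*z = (qp + (1-q)(1-p))(pq + (1-p)(1-q)) = qpq + (1-q)(1-p)(1-q),
\end{align*}
and both summands are fixed under left and right multiplication by $q$ (the first because $q \cdot qpq = qpq = qpq \cdot q$, the second because $(1-q) \cdot q = 0 = q \cdot (1-q)$). So $q$ commutes with $z^*z$, and therefore with its positive square root $|z|$ and with $|z|^{-1}$.

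Once these three ingredients are in place, the conclusion is short: from $pz = zq$ I substitute $z = u|z|$ to get $pu|z| = u|z|q = uq|z|$, then cancel the invertible $|z|$ to obtain $pu = uq$, i.e.\ $u^*pu = q$. I don't anticipate any deep obstacle, since everything reduces to the single factorisation $1 - z = (2p-1)(p-q)$ and some bookkeeping; the only point needing care is the non-unital case, which is handled by carrying out the argument in $\widetilde{A}$ and observing that $z - 1 = -(2p-1)(p-q) \in A$, so that $u \in \widetilde{A}$ is a genuine unitary implementing the Murray--von Neumann unitary equivalence of $p$ and $q$.
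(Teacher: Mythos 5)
Your argument is correct and complete: the identity $1-z=(2p-1)(p-q)$ with $\Vert 2p-1\Vert=1$ gives invertibility of $z=pq+(1-p)(1-q)$ in $\widetilde{A}$, the intertwining $pz=zq$ and the commutation of $q$ with $z^*z$ are verified correctly, and the polar decomposition then yields a unitary $u\in\widetilde{A}$ with $u^*pu=q$. The paper does not prove this proposition at all but simply cites \cite[Propositions 2.2.4 and 2.2.7]{RLL-Ktheory}, and your argument is precisely the standard one found there, so there is nothing to compare beyond noting that you have supplied the proof the paper delegates to its reference.
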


\newc{It follows from this proposition that if a projection $p$ has norm $\Vert p \Vert < 1$, then $p = 0$.} Now we define the trace operator on $\mathcal{P}(C_0(X, M_n))$.

\begin{lemma}
\label{lemma:tracewelldefined}
There exists a map $\mathrm{tr}: \mathcal{P}(C_0(X, M_n)) \rightarrow C_c(X, \mathbb{N})$ given by
\begin{align}
\label{eqn:continuoustrace}
\tr p(x) = \mathrm{tr}(p(x)),
\end{align}
for $p \in \mathcal{P}(C_0(X, M_n))$ and $x \in X$. We denote $\tr p$ to be the \newterm{trace} of $p$.
\end{lemma}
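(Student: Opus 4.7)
The plan is to verify three facts: that $\mathrm{tr}(p(x))$ lies in $\mathbb{N}$ for each $x$, that the resulting function $x \mapsto \mathrm{tr}(p(x))$ is continuous (equivalently, locally constant, since $\mathbb{N}$ is discrete and $X$ is totally disconnected), and that it has compact support.

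First I would observe that for each $x \in X$, the element $p(x) \in M_n$ is a projection, since $p$ is a projection in $C_0(X, M_n)$ and the evaluation map at $x$ is a $*$-homomorphism. By Corollary \ref{corollary:psumv} (or Remark \ref{rmk:dimp}), $\mathrm{tr}(p(x))$ equals the dimension of $p(x)(\mathbb{C}^n)$, so it takes values in $\{0, 1, \dots, n\} \subset \mathbb{N}$.

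Next I would prove that $x \mapsto \mathrm{tr}(p(x))$ is locally constant. Fix $x_0 \in X$. By continuity of $p$, there is a neighbourhood $U$ of $x_0$ with $\Vert p(x) - p(x_0) \Vert < 1$ for every $x \in U$. Since $p(x)$ and $p(x_0)$ are both projections in $M_n$, Proposition \ref{prop:closeprojections} gives $p(x) \sim_u p(x_0)$, and Lemma \ref{lemma:matrixtrace} then yields $\mathrm{tr}(p(x)) = \mathrm{tr}(p(x_0))$. Hence the trace function is locally constant, and therefore continuous as a map into $\mathbb{N}$ with the discrete topology.

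Finally I would show that $\mathrm{tr} \circ p$ is compactly supported. Because $p \in C_0(X, M_n)$, the set $K = \{x \in X : \Vert p(x) \Vert \geq 1/2\}$ is compact. For $x \notin K$ we have $\Vert p(x) \Vert < 1/2 < 1$, and a projection of norm strictly less than $1$ must be zero (this is the immediate consequence of Proposition \ref{prop:closeprojections} applied with $q = 0$, which was noted in the remark preceding the statement). Thus $p(x) = 0$ and $\mathrm{tr}(p(x)) = 0$ off the compact set $K$, so $\mathrm{tr} \circ p \in C_c(X, \mathbb{N})$. There is no serious obstacle here: the only subtlety is recognising that local constancy of the trace on close projections in $M_n$ is exactly what Proposition \ref{prop:closeprojections} together with Lemma \ref{lemma:matrixtrace} deliver.
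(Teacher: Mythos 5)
Your proposal is correct and follows essentially the same route as the paper: values in $\mathbb{N}$ via the dimension interpretation of the trace, local constancy via Proposition \ref{prop:closeprojections} combined with Lemma \ref{lemma:matrixtrace} on a neighbourhood where $\Vert p(x) - p(x_0)\Vert < 1$, and compact support from the $C_0$ condition forcing $p$ to vanish (hence have zero trace) outside a compact set. The only cosmetic difference is that you name the compact set as $\{x : \Vert p(x)\Vert \geq 1/2\}$ and conclude $p(x) = 0$ directly, whereas the paper concludes $\mathrm{tr}(p(x)) = \mathrm{tr}(0) = 0$ by the same two results; these are interchangeable.
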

\begin{proof}
It is straightforward from \eqref{eqn:continuoustrace} that the map is linear on the set $\mathcal{P}(C_0(X, M_n))$. So we only need to prove that it is well-defined.

\begin{itemize}
\item $\mathrm{tr}(p) \in C(X, \mathbb{N})$

Let $p \in \mathcal{P}(C_0(X, M_n))$. Given $x \in X$, Remark \ref{rmk:dimp} implies that
\begin{align*}
\mathrm{tr}(p)(x) = \mathrm{tr}(p(x)) = \dim (p(x)(\mathbb{C}^n)) \in \mathbb{N}.
\end{align*}
Then $\mathrm{tr}(p)$ assumes values in $\mathbb{N}$.

We still need to prove that $\mathrm{tr}(p)$ is continuous. Fix $x \in X$. Since $p$ is continuous, there exists an open neighbourhood $V \subset X$ of $x$ such that
\begin{align}
\label{eqn:tracewelldefined1}
\Vert p(x) - p(y) \Vert < 1
\text{\hspace{10pt}for }
y \in V.
\end{align}
Then, for $y \in V$, Proposition \ref{prop:closeprojections} and Lemma \ref{lemma:matrixtrace} imply that $\tr(p(x)) = \tr(p(y))$. Hence, $\mathrm{tr}(p)$ is constant on $V$. Since $x$ is arbitrary, we have that $\mathrm{tr}(p)$ is continuous, and then $\mathrm{tr}(p) \in C(X, \mathbb{N})$.

\item $\mathrm{tr}(p)$ is compactly supported

Let $p \in \mathcal{P}(C_0(X, M_n))$. Then there exists a compact subset $K \subset X$ such that
\begin{align}
\label{eqn:tracewelldefined2}
\Vert p(x) - 0 \Vert
= \Vert p(x) \Vert
< 1
\text{\hspace{10pt}for all }
x \in X \setminus K.
\end{align}
Let $x \in X \setminus K$. By \eqref{eqn:tracewelldefined2}, Proposition \ref{prop:closeprojections} and Lemma \ref{lemma:matrixtrace},
\begin{align*}
\mathrm{tr}(p(x))
= \mathrm{tr}(0)
= 0.
\end{align*}
Hence $\mathrm{tr}(p)$ is compactly supported and, therefore, it is well-defined. \qedhere
\end{itemize}
\end{proof}

\begin{remark}
\label{rmk:projectioncompactlysupported}
Note that every projection in $C_0(X, M_n)$ is an element of $C_c(X, M_n)$. In fact, let $p \in \mathcal{P}(C_0(X, M_n))$. By Lemma \ref{lemma:tracewelldefined}, its trace is compactly supported. Let $K$ be the compact support of $\mathrm{tr}(p)$. By Lemma \ref{lemma:matrixtrace}, we have that, for $x \in X \setminus K$,
\begin{align*}
\dim(p(x)\mathbb{C}^n)
= \mathrm{tr}(p(x))
= 0.
\end{align*}
Then $p(x) = 0$. Therefore $p \in C_c(X, M_n)$.
\end{remark}



Now we prove the main result of this appendix.

\begin{proposition}
\label{prop:traceCXMn}
Let $p, q \in C_c(X, M_n)$ be two projections. Then the following are equivalent:
\begin{enumerate}[(i)]
\item $p \sim q$,
\item $\tr p = \tr q$,
\item For every compact subset $K \subset X$ such that $p, q$  vanish outside $K$, \oldc{and }there is a $u \in C_c(X, M_n)$ supported on $K$ such that
\begin{align*}
u^*u = uu^* = \mathrm{id} 1_K
\hspace{15pt}\text{and}\hspace{15pt}
 p = u^* q u.
\end{align*}
\end{enumerate}
\end{proposition}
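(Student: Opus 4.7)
My plan is to prove the cycle $(iii) \Rightarrow (i) \Rightarrow (ii) \Rightarrow (iii)$. The first two implications are short, and the last is the main content.

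For $(iii) \Rightarrow (i)$: given $u \in C_c(X, M_n)$ supported on $K$ with $u^*u = uu^* = \mathrm{id}\cdot 1_K$ and $p = u^*qu$, I set $v = qu \in C_c(X, M_n)$. Then $v^*v = u^*q^*qu = u^*qu = p$, and since $q$ is supported in $K$, $vv^* = quu^*q = q(\mathrm{id}\cdot 1_K)q = q$. So $p \sim q$ via $v$.

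For $(i) \Rightarrow (ii)$: if $p \sim q$ via $v \in C_c(X, M_n)$, then pointwise $v(x)^*v(x) = p(x)$ and $v(x)v(x)^* = q(x)$, so $p(x) \sim q(x)$ in $M_n$ for every $x$, hence by Lemma \ref{lemma:matrixtrace} $\mathrm{tr}(p(x)) = \mathrm{tr}(q(x))$, i.e.\ $\mathrm{tr}\,p = \mathrm{tr}\,q$.

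The main step is $(ii) \Rightarrow (iii)$. Fix a compact set $K$ on which $p$ and $q$ vanish outside; by Remark \ref{rmk:projectioncompactlysupported} I may assume $K$ contains the supports. Since $\mathrm{tr}\,p = \mathrm{tr}\,q \in C_c(X, \mathbb{N})$ and $X$ is locally compact Hausdorff and totally disconnected, I can partition $K$ into finitely many pairwise disjoint compact open sets $K_1, \dots, K_m$ on each of which $\mathrm{tr}\,p \equiv k_i$ is constant. It then suffices to construct, independently on each $K_i$, a continuous $u_i: K_i \to M_n$ with $u_i(x)^*u_i(x) = u_i(x)u_i(x)^* = \mathrm{id}$ and $u_i(x)^* q(x) u_i(x) = p(x)$, for then $u := \sum_i u_i 1_{K_i}$ (extended by zero) does the job.

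Fix $i$ and $x_0 \in K_i$. Using Corollary \ref{corollary:psumv} and Remark \ref{rmk:idsumv}, choose orthonormal bases $v_1^0, \dots, v_n^0$ and $w_1^0, \dots, w_n^0$ of $\mathbb{C}^n$ so that $p(x_0) = \sum_{j=1}^{k_i} v_j^0 (v_j^0)^*$ and $q(x_0) = \sum_{j=1}^{k_i} w_j^0 (w_j^0)^*$. For $x$ near $x_0$ define the continuous $M_n$-valued functions
\begin{align*}
\tilde v_j(x) = \begin{cases} p(x) v_j^0 & j \leq k_i \\ (\mathrm{id} - p(x)) v_j^0 & j > k_i \end{cases},
\qquad
\tilde w_j(x) = \begin{cases} q(x) w_j^0 & j \leq k_i \\ (\mathrm{id} - q(x)) w_j^0 & j > k_i \end{cases}.
\end{align*}
At $x_0$ these recover the chosen bases, so by continuity the Gram matrices are close to the identity in a neighbourhood of $x_0$; applying the Gram--Schmidt process, which is a continuous operation on frames close to orthonormal, produces orthonormal bases $v_1(x), \dots, v_n(x)$ of $\mathbb{C}^n$ with $v_1(x), \dots, v_{k_i}(x)$ spanning $p(x)\mathbb{C}^n$, and similarly $w_j(x)$. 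Setting $u(x) = \sum_{j=1}^n w_j(x) v_j(x)^*$ gives, by the computation in the proof of Lemma \ref{lemma:matrixtrace}, a continuous unitary with $u(x)^* q(x) u(x) = p(x)$ on this neighbourhood of $x_0$. Using total disconnectedness, I shrink this neighbourhood to a compact open one contained in $K_i$. Compactness of $K_i$ yields a finite subcover, which I refine to a finite pairwise disjoint compact open partition of $K_i$; the locally defined unitaries paste to a continuous unitary $u_i: K_i \to M_n$ with the required properties.

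The principal obstacle is the construction of the continuous local unitary. The Gram--Schmidt step is the delicate point: the fibrewise argument from Lemma \ref{lemma:matrixtrace} must be made to depend continuously on $x$, and it is here that I exploit that rank-$k_i$ projections near $p(x_0)$ admit continuous local frames obtained by projecting a fixed basis and re-orthonormalising. The totally disconnected hypothesis is what allows me to pass from these local continuous unitaries to a globally continuous one by decomposing $K_i$ into finitely many clopen pieces, avoiding any cocycle/transition obstruction.
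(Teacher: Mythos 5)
Your proposal is correct, and the skeleton (reduce to a clopen partition of $K$, build a unitary locally, paste using total disconnectedness) matches the paper's; but the core of $(ii)\Rightarrow(iii)$ is carried out by a genuinely different mechanism. The paper partitions $K$ into compact open sets $V_i$ on which $p$ and $q$ each vary by less than $\tfrac12$ from sample values $p(x_i)$, $q(x_i)$, forms the locally constant projections $\widetilde p=\sum_i p(x_i)1_{V_i}$ and $\widetilde q=\sum_i q(x_i)1_{V_i}$ together with a locally constant unitary $v$ conjugating one to the other via Lemma \ref{lemma:matrixtrace}, and then invokes Proposition \ref{prop:closeprojections} (projections at distance less than $1$ are unitarily equivalent) in the unital corner of functions supported on $K$ to produce unitaries $u_p$, $u_q$ joining $p$ to $\widetilde p$ and $\widetilde q$ to $q$; the final $u$ is the product $u_q v u_p$. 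You instead construct the conjugating unitary directly as a continuous field, by projecting a fixed orthonormal basis adapted to $p(x_0)$ through $p(x)$ and $\mathrm{id}-p(x)$ and re-orthonormalising by Gram--Schmidt, which yields continuous local frames and hence a continuous local unitary $u(x)=\sum_j w_j(x)v_j(x)^*$; this is sound, since near $x_0$ the projected vectors stay linearly independent, the first $k_i$ of them span $p(x)\mathbb{C}^n$ because $\operatorname{tr}p$ is locally constant, and Gram--Schmidt depends continuously on a linearly independent frame. What the paper's route buys is brevity: it outsources the analytic content to the cited stability result for close projections. What your route buys is self-containedness and an explicit formula for $u$ on each piece, at the cost of having to argue the continuity of the frame construction by hand. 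Two cosmetic points: your Murray--von Neumann partial isometry in $(i)\Rightarrow(ii)$ lives a priori in $C_0(X,M_n)$ rather than $C_c(X,M_n)$ (harmless, since your argument is pointwise), and, exactly as in the paper, the partition of $K$ into compact open pieces implicitly requires $K$ itself to be open --- a hypothesis that is anyway forced by the requirement that $\mathrm{id}\,1_K=u^*u$ be continuous.
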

\begin{proof}
Let us prove $(i) \Rightarrow (ii)$. Suppose that $p, q \in C_0(X, M_n)$ are two projections with $p \sim q$. Then there exists $v \in C_0(X, M_n)$ such that $p = v^*v$ and $q = vv^*$. Then, by Remark \ref{rmk:traceproperty} and by the definition of the trace on $C_0(X, M_n)$, we have $\mathrm{tr}(p) = \mathrm{tr}(q)$. Thus, $(ii)$ holds.

Now we prove $(ii) \Rightarrow (iii)$. Let $p, q \in C_0(X, M_n)$ be projections with the same trace. Remark \ref{rmk:projectioncompactlysupported} implies that $p, q \in C_c(X, M_n)$. Choose an arbitrary compact subset $K \subset X$ such that both $p$ and $q$ vanish outside $K$.

Since $p$ and $q$ are continuous in the supremum norm, and since $K$ is compact open, there is a collection of elements $x_1, \hdots, x_m \in K$ and a partition $V_1, \hdots, V_m$ of $K$ by disjoint compact open subsets such that, for each $i = 1, \hdots, m$, we have
\begin{align}
\label{eqn:traceCXMn1}
x_i \in V_i,
\hspace{10pt}
\Vert p(y) - p(x_i) \Vert < \tfrac{1}{2},
\text{\hspace{10pt}and }
\Vert q(y) - q(x_i) \Vert < \tfrac{1}{2},
\text{\hspace{15pt}for }
y\in V_i.
\end{align}

Let $i = 1, \hdots, m$. By assumption, $\mathrm{tr}(p(x_i)) = \mathrm{tr}(q(x_i))$. It follows from Lemma \ref{lemma:matrixtrace} that there exists an unitary $v_i \in M_n$ such that $p(x_i) = v_i^* q(x_i) v_i$.

Let
\begin{align*}
\widetilde{p} = \displaystyle\sum_{i=1}^m p(x_i)1_{V_i},
\text{\hspace{10pt}}
\widetilde{q} = \displaystyle\sum_{i=1}^m q(x_i)1_{V_i}
\text{\hspace{10pt}and\hspace{10pt}}
v = \sum_{i=1}^m v_i 1_{V_i}.
\end{align*}
Then $\widetilde{p}$ and $\widetilde{q}$ are projections in $C_0(X, M_n)$. Also, $\widetilde{p} = v^* \widetilde{q} v$ and $v^*v = vv^* = \mathrm{id}1_K$.

Using the norm of $C_0(X, M_n)$, equation \eqref{eqn:traceCXMn1} implies that
\begin{align}
\label{eqn:traceCXMn2}
\Vert p - \widetilde{p} \Vert \leq \tfrac{1}{2}
\text{\hspace{10pt}and\hspace{10pt}}
\Vert q - \widetilde{q} \Vert \leq \tfrac{1}{2}.
\end{align}

Let $A \subset C_0(X, M_n)$ be the $\ast$-subalgebra of functions supported on $K$. Then $A$ is a unital C*-algebra with unit $\mathrm{id} 1_K$. By equation \eqref{eqn:traceCXMn2} and Proposition \ref{prop:closeprojections}, there are unitary elements $u_p, u_{q} \in A$ such that
\begin{align*}
p = u_p^* \widetilde{p} u_p
\hspace{15pt}\text{and}\hspace{15pt}
\widetilde{q} = u_{q}^* q u_{q}.
\end{align*}

Note that $u_p$ and $u_{q}$ are unitary in $A$, not necessarily in $C_0(X, M_n)$. Then
\begin{align*}
u_p^* u_p = u_p u_p^* = \mathrm{id}1_K
\hspace{15pt}\text{and}\hspace{15pt}
u_{q}^* u_{q} = u_{q} u_{q}^* = \mathrm{id} 1_K.
\end{align*}
Let $u = u_{q} v u_p$. Then $u^*u = uu^* = \mathrm{id} 1_K$, and
\begin{align*}
u^* q u
= u_p^* v^* u_q^* q u_q v u_p 
= u_p^* v^* \widetilde{q} v u_p 
= u_p^* \widetilde{p} u_p 
= p.
\end{align*}
Therefore $(iii)$ holds.

Finally, we prove $(iii) \Rightarrow (i)$. Suppose $(iii)$ holds. \oldc{Fix a compact subset $K \subset X$ containing the supports of $p$ and $q$.}  Let $v = qu$. Then, by assumption,
\begin{align*}
v^* v = u^* q u = p,
\end{align*}
and
\begin{align*}
v v^*
= q u u^* q
= q (\mathrm{id} 1_K) q
= q q 
= q.
\end{align*}
Note that $1_K q = q$ because $q$ has support in $K$.
Therefore $(i)$ is true.
\end{proof}


\chapter{Deaconu-Renault Groupoids}
\label{appendix:Deaconu-Renault}

In this appendix, we study Deaconu-Renault groupoids and we state some of their basic properties present in the literature. The construction of this class of groupoids depends on the notion of local homeomorphism. Given the topological spaces $X$ and $Y$, a map $\sigma: X \rightarrow Y$ is a \newterm{local homeomorphism} if every $x \in X$ has an open neighbourhood $\mathcal{U}$ such that $\sigma(\mathcal{U})$ is open and $\sigma\vert_{\mathcal{U}}: \mathcal{U} \rightarrow \sigma(\mathcal{U})$ is a homeomorphism.

In this appendix, $X$ always denotes a locally compact, Hausdorff, second countable space, and $\sigma: X \rightarrow X$ is a surjective local homeomorphism. We define the set
\begin{align*}
\mathcal{G} = \lbrace (x,k,y) \in X \times \mathbb{Z} \times X : \exists m, n \in \mathbb{N}, k = m - n, \sigma^m(x) = \sigma^n(y) \rbrace,
\end{align*}
and we endow it with the range and source maps $r,s: \mathcal{G} \rightarrow \mathcal{G}$ with 
\begin{align*}
r(x,k,y) = (x,0,x)
\text{\hspace{7pt} and \hspace{7pt}}
s(x,k,y) = (y,0,y);
\end{align*}
we define the set $
\mathcal{G}^{(2)} = \lbrace ((x, k_1, y), (y,k_2 ,z)): (x, k_1, y), (y, k_2, z) \in \mathcal{G} \rbrace
$, and we define the product $\mathcal{G}^{(2)} \rightarrow \mathcal{G}$ and inverse $\mathcal{G} \rightarrow \mathcal{G}$ by
\begin{align*}
(x, k_1, y)(y, k_2, z) = (x, k_1 + k_2, z),
\text{\hspace{10pt}}
(x,k,y)^{-1} = (y, -k, x).
\end{align*}
With these operations, $\mathcal{G}$ becomes a groupoid. The unit space is $\mathcal{G}^{(0)} = \lbrace (x,0,x) : x \in X \rbrace$, which is identified with $X$ by the map $(x,0,x) \mapsto x$.

Now we equip $\mathcal{G}$ with the topology that makes it a Deaconu-Renault groupoid. We use the notation $\mathcal{U}_{A,B}^{m,n}$ from\oldc{ in} \cite{ThomsenO2} to denote the open sets that generate the topology of $\mathcal{G}$.

\begin{proposition}
Let $\mathcal{G}$ be as above. Then, \oldc{$\mathcal{G}$ is groupoid. Moreover, }for all pairs of open subsets $A, B \subset X$ and for integers $m,n \geq 0$ such that $\sigma\vert_A^m$ and $\sigma\vert_B^n$ are injective with $\sigma^m(A) = \sigma^n(B)$, define the subset
\begin{align*}
\mathcal{U}_{A,B}^{m,n}
= \lbrace (x, m - n, y) \in A \times \mathbb{Z} \times B : \sigma^m(x) = \sigma^n(y) \rbrace.
\end{align*}
Then the sets $\mathcal{U}_{A,B}^{m,n}$ generate a topology on $\mathcal{G}$ that makes the groupoid a locally compact, Hausdorff, second countable and \'etale. Further, $\mathcal{G}$ is amenable. We denote the groupoid $\mathcal{G}$ endowed with this topology a \newterm{Deaconu-Renault groupoid}.
\end{proposition}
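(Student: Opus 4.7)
The plan is to verify the basis axioms for the collection $\{\mathcal{U}_{A,B}^{m,n}\}$, then check each of the topological conditions (locally compact, Hausdorff, second countable, \'etale), and finally appeal to an extension argument via the canonical cocycle to obtain amenability.

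First I would show that the sets $\mathcal{U}_{A,B}^{m,n}$ form a basis. Coverage follows from the definition of $\mathcal{G}$: given $(x,k,y) \in \mathcal{G}$, pick $m,n \geq 0$ with $k = m - n$ and $\sigma^m(x) = \sigma^n(y)$, then use the local homeomorphism property of $\sigma$ to find open neighbourhoods $A \ni x$ and $B \ni y$ on which $\sigma^m$ and $\sigma^n$ are injective and for which $\sigma^m(A) = \sigma^n(B)$ (shrinking as needed and using surjectivity and continuity of $\sigma$). For the intersection axiom, given $(x,k,y) \in \mathcal{U}_{A_1,B_1}^{m_1,n_1} \cap \mathcal{U}_{A_2,B_2}^{m_2,n_2}$, replace $m_i, n_i$ by $m = \max(m_1,m_2)$ and $n = m - k$; on sufficiently small neighbourhoods of $x$ and $y$, the conditions $\sigma^{m_1}(x) = \sigma^{n_1}(y)$ and $\sigma^{m_2}(x) = \sigma^{n_2}(y)$ both translate to $\sigma^m(x) = \sigma^n(y)$, yielding a basis set around $(x,k,y)$ inside the intersection.

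Next I would establish the topological properties. For \'etale, observe that on $\mathcal{U}_{A,B}^{m,n}$ the source map $s(x,k,y) = y$ is the restriction of the continuous projection $X \times \mathbb{Z} \times X \to X$, and the assumption that $\sigma^m\vert_A$ and $\sigma^n\vert_B$ are injective with $\sigma^m(A) = \sigma^n(B)$ makes $s$ a bijection from $\mathcal{U}_{A,B}^{m,n}$ onto $B$; its inverse $y \mapsto ((\sigma^m\vert_A)^{-1}(\sigma^n(y)), m - n, y)$ is continuous, so $s$ is a local homeomorphism; the same argument handles $r$. Hausdorffness of $\mathcal{G}$ follows from that of $X \times \mathbb{Z} \times X$ together with the fact that the relation $\sigma^m(x) = \sigma^n(y)$ is closed for each fixed $(m,n)$, so local separation by basis sets is available using that $X$ is Hausdorff. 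Second countability comes from a countable basis of $X$: choose a countable basis of compact open-like sets (or just a countable basis) and consider the countable family of $\mathcal{U}_{A,B}^{m,n}$ with $A,B$ in this basis and $m,n \in \mathbb{N}$. Local compactness follows because each point has a basis neighbourhood homeomorphic (via $s$) to an open subset of $X$, which is locally compact.

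Finally, for amenability I would use the canonical cocycle $c: \mathcal{G} \to \mathbb{Z}$ defined by $c(x,k,y) = k$. The kernel $c^{-1}(0) = \bigcup_n R(\sigma^n)$ is an AF groupoid (as noted in the excerpt and in \cite[Corollary 5.2]{FKPS}), and AF groupoids are amenable since they are inductive limits of elementary (hence proper, hence amenable) groupoids. Since $\mathbb{Z}$ is an amenable group and $c$ is a continuous cocycle with amenable kernel, the extension result for groupoid amenability (see Renault's treatment of skew products, or equivalently the fact that $\mathcal{G}$ is Morita equivalent to $\mathcal{G}(c) \rtimes \mathbb{Z}$ with $\mathcal{G}(c)$ AF) yields amenability of $\mathcal{G}$. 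The main obstacle I anticipate is the bookkeeping in the basis axiom verification, particularly aligning the exponents $(m_1,n_1)$ and $(m_2,n_2)$ so that the two equations $\sigma^{m_i}(x) = \sigma^{n_i}(y)$ can be simultaneously captured by a single $\mathcal{U}_{A,B}^{m,n}$; everything else reduces to standard manipulations with local homeomorphisms and known results on amenability of AF groupoids.
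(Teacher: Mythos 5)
Your proposal is correct in outline, but it takes a genuinely different route from the paper: the paper does not prove any of this directly, it simply cites \cite[Theorem 1]{Deaconu} (with \cite[Theorem 8]{Frausino-thesis} as a less technical alternative) for the topological assertions and \cite[Proposition 2.9]{Renault-Cuntzlike} for amenability. What you are proposing is essentially a self-contained reproof of those cited results. Your verification of the basis axioms, the \'etale property via the bijectivity of $s$ on $\mathcal{U}_{A,B}^{m,n}$ onto $B$ with explicit continuous inverse $y \mapsto ((\sigma^m\vert_A)^{-1}(\sigma^n(y)), m-n, y)$, and the deduction of local compactness from the resulting local homeomorphism onto open subsets of $X$ are all sound; the one place to be slightly more careful than your sketch is the Hausdorff argument, where the cleanest statement is that the generated topology \emph{refines} the subspace topology from $X \times \mathbb{Z} \times X$ (every set $(U \times \lbrace k \rbrace \times V) \cap \mathcal{G}$ is a union of basic sets by the coverage argument), so Hausdorffness is inherited from the coarser, manifestly Hausdorff topology; similarly the intersection axiom requires the injectivity of $\sigma^{m_1 - m_2}$ near the common image point to upgrade $\sigma^{m_1}(x') = \sigma^{n_1}(y')$ to $\sigma^{m_2}(x') = \sigma^{n_2}(y')$, which is exactly the bookkeeping you flag. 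Your amenability argument via the canonical cocycle $c$, the AF (hence amenable) kernel $c^{-1}(0) = \bigcup_n R(\sigma^n)$, and the extension result for cocycles into the amenable group $\mathbb{Z}$ is precisely the mechanism behind the cited \cite[Proposition 2.9]{Renault-Cuntzlike}, so it is correct but not new. The trade-off is the usual one: the direct proof makes the thesis self-contained at the cost of a page of routine verification, whereas the paper's citation keeps the appendix short but opaque.
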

\begin{proof}
It follows from \cite[Theorem 1]{Deaconu} (see also \cite[Theorem 8]{Frausino-thesis} for a less technical proof) that $\mathcal{G}$ is locally compact, Hausdorff, second countable and \'etale. By \cite[Proposition 2.9]{Renault-Cuntzlike}, the groupoid is also amenable.
\end{proof}

Note that there are different notions of amenability for groupoids: topological amenable and measurewise amenable. By Theorem 3.3.7 of \cite{ADRenault}, these two notions coincide for groupoids that are locally compact, Hausdorff and \'etale. In this case, \cite[Proposition 6.1.8]{ADRenault} implies that the reduced and full C*-algebra of the groupoid coincide.

The first example of this groupoid was described in Renault's book \cite[Section III.2]{Renault} by a class of groupoids whose C*-algebras are Cuntz algebras, where he assumed that $X$ was the set of sequences $(x_i)_{i \in \mathbb{N}}$, \newc{with} $x_i \in \lbrace 0, 1, \dots, n \rbrace$ (or $x_i \in \mathbb{N}$ for the Cuntz algebra $\mathcal{O}_\infty$), and $\sigma$ was given by
\begin{align*}
\newc{\sigma(x_0 x_1 x_2 \dots) = x_1 x_2 \dots.}
\end{align*}
Deaconu \cite{Deaconu} generalises Renault's definition by considering compact Hausdorff spaces $X$ and assumes that $\sigma:X \rightarrow X$ is a covering map. Under these assumptions, he proved that when $\sigma$ is a homeomorphism, then $C^*(\mathcal{G})$ is isomorphic to the crossed product $C(X) \rtimes_\sigma \mathbb{Z}$.

Anantharaman-Delaroche \cite[Example 1.2 (c)]{Anantharaman-Delaroche} generalised it to the case where $\sigma$ is a surjective local homeomorphism, and $X$ is locally compact. She also gave a sufficient conditions to make the C*-algebra of this groupoid purely infinite \cite[Propositions 4.2 and 4.3]{Anantharaman-Delaroche}. Then Renault \cite{Renault-Cuntzlike} generalised this definition by considering a map $\sigma$ that is defined only on an open subset of $X$. For the purpose of our thesis, here we consider the definition by Anantharaman-Delaroche, and we impose the additional conditions on the space $X$ to be Hausdorff and second countable.

\begin{example}
Assume that $\sigma$ is a homeomorphism. Then $C^*(\mathcal{G}) \cong C_0(X) \rtimes_\sigma \mathbb{Z}$. This isomorphism was proved for $X$ compact in \cite{Deaconu}. By applying the same ideas from Chapter \ref{section:crossedproducts}, \oldc{one can set find}\newc{let $\rho: C_0(X) \rightarrow C^*(\mathcal{G})$ be} \oldc{is }the inclusion map, and $u_1 = 1_{\mathcal{U}_{X,X}^{0,1}} \in M(C^*(\mathcal{G}))$ to get the isomorphism $\rho \rtimes u: C_0(X) \rtimes_\sigma \mathbb{Z} \rightarrow C^*(\mathcal{G})$ given by
\begin{align*}
\rho \rtimes u(F)(x, k, \sigma^k(x)) = F(k)(x),
\end{align*}
for $x \in X$, $k \in \mathbb{Z}$, and $F \in C_c(\mathbb{Z}, C_0(X))$.
\end{example}

\begin{example}
Let $E$ be a row-finite directed graph with no sinks and let $E^\infty$ be the set of infinite paths of $E$, and define $\sigma: E^\infty \rightarrow E^\infty$ by $\sigma(x_1 x_2 x_3 \dots) = x_2 x_3 \dots$. Let $G_E$ be the Deaconu-Renault groupoid corresponding to $\sigma$. By \cite[Proposition 4.1]{KPRR}, the graph algebra $C^*(E)$ and $C^*(G_E)$ are isomorphic.
\end{example}

\begin{example}
\cite[Section 10.3]{KatsuraII} Assume $X$ is compact. Let $d = \sigma$ and let $r$ be the identity map on $X$. Set $E = (E^0, E^1, d, r)$. Then $E$ is a topological map. Moreover, the C*-algebra $C^*(\mathcal{G})$ of the Deaconu-Renault groupoid $\mathcal{G}$ is isomorphic to the C*-algebra $C^*(E)$ of the topological graph $E$.
\end{example}







\bibliographystyle{acm}
\bibliography{bibliography}

\end{document}